\newcommand{\field}[1]{\mathbb{#1}}
\newcommand{\R}{\field{R}}
\newcommand{\Z}{\field{Z}}
\def\x{\xi}
\def\D\theta ij#1{\dis \frac{\partial #1}{\partial \theta_i^j}}
\def\sqr{{\hskip1pt\vcenter{\vbox{\hrule height.4pt
\hbox{\vrule width.4pt height4pt\kern4pt
\vrule width.4pt}
\hrule height.4pt}}}}
\def\cal{\mathcal}
\def\trans{\ifmmode{\rm \frown\mkern-16.8mu \vert
\mkern8mu}\else{$\frown\mkern-16.8mu\vert\mkern8mu$}\fi\relax}
\def\ap{\rightarrow}
\def\dis{\displaystyle}
\def\D{\Delta}
\def\dis{\displaystyle}
\def\a{\alpha}
\def\b{\beta}
\def\g{\gamma}
\def\G{\Gamma}
\def\t{\tau} \def\d{\delta}
\def\l{\lambda}
\def\L{\Lambda} \def\n{\nu}
\def\s{\sigma}
\def\S{\Sigma}
\def\so{\underline}
\def\O{\Omega}
\def\T{\mathbf{T}}
\def\V{\mathbf{V}}
\def\H{\mathbf{H}}
\def\p{\partial}
\def\o{\omega}
\newtheorem{The}{Theorem}[subsection]
 \newtheorem{Pro}{Proposition}[subsection]
\newtheorem{Def}{Definition}[subsection]
\theoremstyle{Bold}
\newtheorem{Rem}[The]{Remark}
\newtheorem{Cor}{Corollary}[subsection]
\newtheorem{Lem}{Lemma}[subsection]
\newtheorem{Ex}{Example}[subsection]
\newtheorem{Exs}{Examples}[subsection]
\numberwithin{equation}{section}
\title{ Geometrical structures on the prolongation of  a  pre-Lie algebroid  on fibered manifolds    and  application to foliated anchored bundle  }
\begin{document}
\maketitle
\begin{center}
\author{   F. Pelletier\footnote
{Laboratoire de math\'ematiques, Universit\'e de Savoie, Campus scientifique 73 376 Le Bourget du Lac Cedex \& ANR G\'eom\'etrie de Finsler et applications} }
\end{center}
\date{}



\begin{abstract}  A pre-Lie algebroid is an anchored bundle provided with an almost Lie bracket such that the anchor is compatible with the Lie bracket of vector fields.  We firstly   show  how most geometrical structures intensively studied  in the framework of Lie algebroid  can easily be extended  in the  pre-Lie algebroid context. The principal purpose of this paper is to show that how all  these results only depend of  the foliated  structure and do not depend  of the pre-Lie algebroid structure  that we can put on a  foliated anchored bundle.  As application,  we  obtain a  Finsler connection on a foliated anchored bundle which induces the  classical Finsler connection on each leaf and  a similar  result for the Chern connection.
  \end{abstract}

  \bigskip
 {\bf  Mathematics Subject Classication 2000}: 17B66, 53C05, 93B05.

{\bf Key words}:  {\small  anchored bundle, foliation, almost Lie bracket,  pre-Lie algebroid,   almost tangent structure, almost cotangent structure, semispray, nonlinear connection, Euler- Lagrange equations, Lagrangian, metric connection, Lagrangian connection, extremals, minimizers, Finsler connection, Chern connection.}






\tableofcontents

\section{Introduction}


The geometry of second order differential equations on a manifold is closely related to the geometry of nonlinear connections. If  $S$ is semispray  there exists a canonical nonlinear connection ${\cal N}_S$ associated with $S$. Such a construction  were firstly introduced by M. Crampin \cite{C} and J. Grifone \cite{Gr}. On the other hand if $g$ is a metric on the vertical tangent bundle, we can associate to the pair $(S, g)$ a metric nonlinear connection. Using this nonlinear connection ${\cal N}_S$, we determine the whole
family of metric nonlinear connections that correspond to the pair $(S, g)$. Moreover under regularity  conditions for a Lagrangian $L$ we can associate a symplectic form on the tangent bundle and a metric on the vertical tangent.  Then, in this case,  the canonical nonlinear connection, which can be associated with
the Euler-Lagrange equations, is the unique nonlinear connection which is compatible with the metric and symplectic structure (\cite{Bu}).
Generalization of these results in the context of the prolongation of a Lie  algebroid on itself or its dual  has been the subject of an intense activity of research (Among a full of papers  see for instance \cite{We}, \cite{Lib}, \cite{CMM}, \cite{GLMM}, \cite{Va} and references inside).\\ 

We show that   these results can be generalized in the {\it framework of the prolongation  over a fibered manifold of a pre-Lie algebroid }  {\it i.e.}  an anchored bundle provided with an almost Lie bracket such that the anchor is compatible with the Lie bracket of vector fields (see section \ref{ALbracket} for a definition Remark \ref{almquasi} about this terminology)).  


More precisely, 
as it is well known, given an  anchored bundle $({\cal A}, M,\rho)$  and  $\pi:{\cal M}\ap M$  a fibered manifold, we can defined the prolongation of $\cal A$ {\it i.e.} a vector bundle  $\T{\cal M}\ap {\cal M}$ and an anchor $\hat{\rho}: \T{\cal M}\ap T{\cal M}.$ If moreover $({\cal A},M,\rho,[\;,\;]_{\mathcal{A}})$ is a pre-Lie algebroid, the almost Lie bracket $[\;,\;]_{\cal A}$ on $\cal A$  can be prolonged to an almost bracket $[\;,\;]_{\cal P}$ on $\T{\cal M}$ so that $( \T{\cal M},{\cal M},\hat{\rho},[\;,\;]_{\cal P})$ is again a pre-Lie algebroid.
Note that for any pre-Lie algebroid $({\cal A},M,\rho,[\;,\;]_{\mathcal{A}})$ the range $\rho(\mathcal{A})$ defines a Stefan-Sussmann foliation on $M$ and then we get also a  Stefan-Sussmann foliation on $\cal M$.\\
In this situation we can look for Euler section, almost tangent and almost cotangent structures, semisprays, nonlinear connections and their relative properties. As in classical Lie algebroid framework we can generalize the notion of nonlinear connection associated with a semispray on $\T{\cal M}$ and also associate to a regular (eventually local) Lagrangian $\cal L$ on $\cal M$ a unique nonlinear connection ${\cal N}_{\cal L}$ which is compatible with respect to the metric and a symplectic structure also canonically  associated with $\cal L$. Moreover, when $\cal M$ is an open subset of $\cal A$, then  $\cal L$ induces a regular Lagrangian $L_N$ on any leaf $N$ of the foliation defined $\rho({\cal A})$ and ${\cal N}_{\cal L}$ induces on $N$ the unique nonlinear connection ${\cal N}_{\cal L}$ which is compatible with respect to the metric and
symplectic structure   associated with ${L_N}$. In fact all these data on each leaf $N$ do not depend on the  initial choice of the almost Lie bracket $[\;,\;]_{\cal A}$ on $\cal A$ (such that $({\cal A},M,\rho,[\;,\;]_{\cal A})$ is a pre-Lie algebroid). Moreover, a curve is a locally minimizing geodesic for the (global) Lagrangian $L$ if and only if it is a locally minimizing geodesic in some leaf $N$ of $L_N$.\\
 In the particular case where ${\cal L}=\dis\frac{1}{2}{\cal F}^2$ where $\cal F$ is a (partial) Finsler metric on $\cal M$,  these connections ${\cal N}_{\cal L}$  and ${\cal N}_{L_N}$ are the Finsler connections on $\cal M$ and on $N$ respectively. Moreover, we can also define a Chern connection associated with $\cal L$ which also induces the Chern connection associated with $L_N$ on $N$.\\

  Now to an anchored bundle  $({\cal A}, M,\rho)$ is associated  the module of vector fields generated by the module $\{\rho(s) :  s \textrm{ section of } {\cal A}\} $. When this module  is involutive it generates an integrable distribution $\rho({\cal A})$   in the sense of Stefan-Sussmann  and then $({\cal A},M,\rho)$ is called a {\it foliated anchored bundle}. We show that $({\cal A},M,\rho)$ foliated anchored bundle if and only if $\cal A$  can be provided with a pre-Lie algebroid structure.
  This implies that {\it the previous results (in particular the results about Finsler geometry) depend only of the foliated anchored bundle}  $({\cal A},M,\rho)$ and so are independent of the pre-Lie structure $({\cal A},M,\rho,[\;,\;]_{\mathcal{A}})$ we can put on $({\cal A},M,\rho)$.\\

 A brief of outline  of this paper is as follows. Each section begins with a short  abstract  of the context and  results developed inside.  
 
  the second  section is devoted to the classical framework of the prolongation of a pre-Lie algebroid over  a fibered manifold and the notion of Euler section.
   
  We give results about relations between integrable distributions and  foliated anchored bundles and pre-Lie algebroids in section 3. We look for  the specific case of the prolongation of a foliated anchored bundle at the end of this section .
  
    The  section 4 is devoted to the introduction of geometrical objets like almost tangent structures, almost cotangent structures, semisprays and the relative properties between these data.
    
     We develop   the framework of nonlinear connection and semispray  in the context of pre-Lie algebroid  in the section 5 and then we give some links between nonlinear connections, semisprays and almost tangent structures or Euler sections.
     
      The essential results about Lagrangian metric connections and semispays are contained in Section 6. In particular, in this section  one can find the characterization of the unique Lagrangian metric connection associated with a semi-Hamiltonian almost tangent structure (see Theorem \ref{SLagMet}) which generalizes such  a classical result  about regular Lagrangian ( see for instance \cite{LPo4}). We end this section by an application to mechanical systems.
      
      In   section 6   we discuss the problem of minimization of a convex Lagrangian  and look for characterization of extremals and minimizers in terms of Hamiltonian framework. This section also contains  the essential results about  the induced structure of extremals and nonlinear connection on each leaf of the foliation defined by the anchor (see Theorem \ref{extremalN}). 
      
      The previous results are applied to the context of  partial Finsler geometry in the last section.

\section{Prolongation of   a pre-Lie algebroid over a fibered manifold}
In this section we recall the context and the essential properties about almost Lie algebroids   in reference to  a plentiful literature about Lie algebroid structure (see for example \cite{CMM}, \cite{GLMM}, \cite{GMM}, \cite{LMM}, \cite{Marl}, \cite{Marr}, \cite{PoPo} and all references inside these papers). \\

\subsection{Almost Lie bracket }\label{ALbracket}${}$\\
${}\;\;\;\;$Let  $M$ be a paracompact connected manifold   of dimension $n$. We denote by $C^{\infty}(M)$ the ring of   $C^{\infty}$ functions on  $M$ and by
    $\Xi(M)$ the  $C^{\infty}(M)$-module of smooth vector fields on $M$.
  We consider     a vector bundle $\tau:{\cal A} \ap M$ over $M$  of rank $k$ and
 $\Xi({\cal A})$ the   $C^{\infty}(M)$-module of sections of  $\cal A$. \\
 An {\bf $\bf{\cal A}$-tensor of type $(h,l)$}  will be a smooth section of the associated $\bigotimes^h{\cal A}\bigotimes^l{\cal A}^*$.\\

An  { \it  anchor  } is 	a bundle morphism $\rho:{\cal A}\ap TM$.  In this case the triple $({\cal A},M,\rho)$ is called an {\it  anchored bundle}.
Given an anchored bundle $({\cal A},M,\rho)$, a curve $\bar{c}:I\subset \R\ap {\cal A}$ is called an {\it  admissible curve} if there exists a curve $c=I\ap M$ such that:

$ \tau (c(t))=c(t)$  and $\dot{c}(t)=\rho (\bar{c}(t))$ for any $t\in I$.\\

An {\it almost Lie bracket} on an anchored bundle $({\cal A},M,\rho)$ is a bilinear map
   $[\;,\;]:\Xi({\cal A})\times \Xi({\cal A}): \ap \Xi({\cal A})$
  which satisfies the following properties:
\begin{itemize}
 \item $[\;,\;]$ is antisymmetric;
 \item the Leibnitz property:
$ [X,fY]=f . [X,Y]+df(\rho(X)).Y ,\;\;\forall X,Y\in\Xi({\cal A})$.
 \end{itemize}

A {\bf Lie bracket } is an  almost Lie bracket which  satisfies:
\begin{itemize}
\item the Jacobi identity:
 $ [X,[[Y,Z]]_{\cal A}+[Y,[[Z,X]]+[Z,[[X,Y]]=0,\;\;\forall  X,Y, Z\in\Xi({\cal A})$.
\end{itemize}
An {\bf almost Lie algebroid} is an anchored bundle $({\cal A},M,\rho)$ provided with an almost Lie bracket $[\;,\;]$. Such a structure is denoted by $({\cal A},M,\rho,[\;,\;])$. When $[\;,\;]$ is in fact a Lie bracket  the associated structure $({\cal A},M,\rho,[\;,\;])$ is called a {\bf Lie algebroid}.
In this case,  $\rho:\Xi({\cal A})\ap \Xi(M)$  is a Lie algebra morphism and, in particular, $\rho$ is compatible with the brackets on $\cal A$  and $TM$ {\it i.e.} $[\rho X,\rho Y]=\rho([X,Y]_{\cal A})$. Note that in general such a compatibility does not imply the Jacobi identity  for  $[\;,\;]_{\cal A}$ (take $\rho\equiv 0$ for instance). If we have  $[\rho X,\rho Y]=\rho([X,Y]_{\cal A})$ for all sections $X,Y\in\Xi({\cal A})$ we  will say that  $\rho$ is a {\it Lie morphism}. In this case $( {\cal A}, M, \rho),[\;,\;]_{\cal A})$ is called a {\bf   pre-Lie algebroid}. \\

\begin{Rem}\label{almquasi}${}$
\begin{enumerate}
\item  If $[\;,\;]$ is an almost Lie bracket on $\cal A$, for any skew-symmetric $\cal A$-tensor $\Theta$ of type $(2,1)$,  $[\;,\;]'=[\;,\;]+\Theta$ is also an almost Lie bracket on $\cal A$.
 \item Since the terminology  of almost Poisson bracket  seems generally adopted in the most recent papers on nonholonomic mechanics,  in our work, we have used the definition of an almost Lie algebroid given in \cite{LMM}.  Moreover  many classical  geometrical objets  like  "almost tangent structure",  "almost cotangent structure", ...  need only an almost Lie bracket in the framework of mechanics. It is the reason why this terminology  seems us to be well adapted. 
 However, on the one hand, in  \cite{PPo},  the term "algebroid" will designate  an almost algebroid such that the anchor is compatible with the Lie bracket of vector fields and  on the other hand, in \cite{PoPo} or in \cite{GrJo}  such a structure  is called an almost Lie algebroid.  In a recent preprint \cite{Yu} such a structure is called a pre-lie algebroid. It is the reason why we adopt the terminology of "pre-Lie algebroid"  to avoid any previous ambiguity.
 \end{enumerate}
  \end{Rem}

On an anchored bundle  $({\cal A},M,\rho)$, there always exists an almost Lie bracket:
let  ${\nabla}$ be a linear connection  on $\cal A$  then we get an almost Lie bracket $[\;,\;]_\nabla$ defined by

$$[\;,\;]_\nabla=\nabla_{\rho X} Y-\nabla_{\rho Y} X.$$

More generally,  an {\bf  ${\cal A}$ linear connection} $\nabla$ on  an anchored  vector bundle  $({\cal A},M,\rho)$  is a

\noindent $\R$-bilinear map $\nabla:\Xi({\cal A})\times \Xi({\cal A})\ap \Xi({\cal A})$ such that

$\nabla _{f X }Y=f\nabla _{X }Y$ and $\nabla _{ X }f Y=df(\rho X)Y+f\nabla _{X }Y$

\noindent for all functions $f$ and all $X, Y \in \Xi ({\cal A})$.

  Given a $\cal A$ linear connection on $\cal A$ for any $X,Y\in \Xi({\cal A})$,   again we get an associated  almost Lie bracket $[\;,\;]_\nabla$ defined by
  $$[X,Y]_\nabla=\nabla_X Y-\nabla_Y X.$$

 \begin{Pro}\label{setALB} ${}$\\
  The set $\mathfrak{C}$ (resp $\mathfrak{L}$)  of  $\cal A$ linear connections (resp. of almost Lie brackets) on an anchored bundle $({\cal A},M,\rho)$ has a structure of affine space modeled on the vector space of tensors $\cal A$-tensors  of type $(2,1)$ (resp. tensors $\cal A$-tensors antisymmetric of type $(2,1)$). Moreover the map $\mathfrak{T}:\nabla\ap [\;,\;]_\nabla$ is an affine  surjection $\mathfrak{C}$  to $\mathfrak{L}$.
  \end{Pro}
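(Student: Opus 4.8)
The plan is to prove three things in turn: that $\mathfrak{C}$ is a nonempty affine space modeled on the $\mathcal{A}$-tensors of type $(2,1)$, that $\mathfrak{L}$ is a nonempty affine space modeled on the skew-symmetric ones, and that $\mathfrak{T}$ is an affine surjection. The one tool underlying everything is the standard tensoriality criterion: a map $\Xi(\mathcal{A})\times\Xi(\mathcal{A})\ap\Xi(\mathcal{A})$ is $C^\infty(M)$-bilinear if and only if it is pointwise, i.e. defines an $\mathcal{A}$-tensor of type $(2,1)$. First I would take two $\mathcal{A}$ linear connections $\nabla^1,\nabla^2$ and set $T(X,Y)=\nabla^1_X Y-\nabla^2_X Y$. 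Bilinearity over $C^\infty(M)$ in $X$ is immediate from $\nabla_{fX}Y=f\nabla_X Y$, while in $Y$ the two inhomogeneous terms $df(\rho X)Y$ cancel; hence $T$ is a $(2,1)$-tensor. Conversely, for $\nabla\in\mathfrak{C}$ and any such tensor $T$ a direct check shows that $\nabla+T$ again satisfies the two defining identities of an $\mathcal{A}$ linear connection, the inhomogeneous term being carried by $\nabla$ alone. Thus the $(2,1)$-tensors act simply transitively on $\mathfrak{C}$.

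For nonemptiness of $\mathfrak{C}$ I would start from an ordinary linear connection $D$ on the bundle $\mathcal{A}\ap M$, which exists by paracompactness of $M$, and set $\nabla_X Y=D_{\rho X}Y$; the axioms for $D$ together with $\rho(fX)=f\rho X$ give exactly the two identities required. This settles the affine structure of $\mathfrak{C}$. The argument for $\mathfrak{L}$ is parallel: the difference $\Theta(X,Y)=[X,Y]^1-[X,Y]^2$ of two almost Lie brackets is antisymmetric, and its Leibnitz terms cancel, so $\Theta$ is a skew-symmetric $(2,1)$-tensor; conversely $[\;,\;]+\Theta$ is again an almost Lie bracket by Remark \ref{almquasi}(1). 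That $\mathfrak{L}$ is nonempty, and that $\mathfrak{T}$ indeed lands in $\mathfrak{L}$, both follow from the one-line verification that $[X,Y]_\nabla=\nabla_X Y-\nabla_Y X$ is antisymmetric and satisfies the Leibnitz property, which is read off directly from the connection axioms.

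It remains to analyse $\mathfrak{T}$. Fixing a base connection $\nabla^0$ and writing a general element of $\mathfrak{C}$ as $\nabla^0+T$, a one-line computation gives $\mathfrak{T}(\nabla^0+T)(X,Y)=[X,Y]_{\nabla^0}+\bigl(T(X,Y)-T(Y,X)\bigr)$, so $\mathfrak{T}$ is affine with linear part the alternation map $\mathcal{S}$ sending $T$ to the skew-symmetric tensor $(X,Y)\mapsto T(X,Y)-T(Y,X)$. Since the affine spaces are nonempty, surjectivity of $\mathfrak{T}$ reduces to surjectivity of $\mathcal{S}$; and given any skew-symmetric $(2,1)$-tensor $\Theta$, the tensor $T=\frac{1}{2}\Theta$ satisfies $\mathcal{S}(T)(X,Y)=\frac{1}{2}\Theta(X,Y)-\frac{1}{2}\Theta(Y,X)=\Theta(X,Y)$. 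Hence $\mathcal{S}$, and therefore $\mathfrak{T}$, is surjective.

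I expect no genuine obstacle: the proof is entirely formal once the tensoriality criterion is in hand. The only points needing care are the cancellation of the inhomogeneous $df(\rho X)Y$ terms — this is exactly what forces the difference of two connections, and of two brackets, to be a genuine tensor rather than a first-order operator — and the bookkeeping that identifies the linear part of $\mathfrak{T}$ with the alternation map, from which the surjectivity is then immediate.
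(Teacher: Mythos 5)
Your proposal is correct and follows essentially the same route as the paper: fix a base connection $\nabla_0$, observe that $\mathfrak{T}(\nabla_0+T)=[\;,\;]_{\nabla_0}+\bigl(T(\cdot,\cdot)-T(\cdot,\cdot)^{\mathrm{op}}\bigr)$ so that $\mathfrak{T}$ is affine with linear part the alternation, and obtain surjectivity by taking $T=\frac{1}{2}\Theta$ for a given skew-symmetric $\Theta$, exactly as in the paper's choice $\nabla=\nabla_0+\frac{1}{2}T$. The only difference is that you spell out the affine-structure and nonemptiness verifications that the paper dismisses as clear; these checks are all sound.
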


  \begin{proof}${}$\\
  The first affirmation is clear. Fix some $\cal A$ linear connection $\nabla_0$ and set  $[\;,\;]_0=[\;,\;]_{\nabla_0}$ the associated almost  Lie bracket  on $({\cal A},M,\rho)$. Given any $\nabla\in \mathfrak{L}$ we set $D=\nabla-\nabla_0$, then $\mathfrak{T}(\nabla)=[\;,\;]_\nabla$ is characterized by
 $$ [X,Y]_\nabla=[X,Y]_0+D(X,Y)-D(Y,X).$$ Therefore $\mathfrak{T}$ is an affine map. Now, given any almost bracket $[\;,\;]$  set $T=[\;,\;]-[\;,\;]_0$ then $\nabla=\nabla_0+\dis\frac{1}{2}T$ is a linear connection such that $\mathfrak{T}(\nabla)=[\;,\;].$
  \end{proof}

Let  $(x^1,\dots,x^n)$ be  a system of  local coordinates on $M$ defined on a chart domain $U$
 and  $\{e_1,\dots ,e_k\}$  a local basis of $\cal A$ on the same domain (after restriction if necessary). Then
$u=(x,a)\in{\cal A}$ can be written   $a=y^\alpha e_\alpha$
and we have:
 $$\rho(e_\alpha)=\rho ^i_\alpha \displaystyle\frac{\partial}{\partial x^i}\;\; \textrm{ and }\;\;
 [e_\alpha,e_\beta]=C_{\alpha\beta}^\gamma e_\gamma.$$

We denote by
 $\tau^*:{\cal A}^*\ap M$ the dual bundle of $\tau:{\cal A}\ap M$,
 and  $\rho^*:T^*M\ap {\cal A}^*$ the  transposed morphism.  This gives rise to the {\it almost differential}  of a function $f:M\ap \R$: $d^{\cal A}f=\rho^*\circ df$.

 The {\it almost exterior differential   } $d^{\cal A}\omega\in \Lambda^{k+1}{\cal A}^*$ is characterized by: \\
$d^{\cal A}\omega(X_0,\cdots, X_k)= \dis\sum_{j=1}^k(-1)^j{\cal L}^{\cal A}_{X_j}  \omega((X_0,\cdots, ,\hat{X}_j,\cdots,X_k))$\\
${}\;\;\;\;\;\;\;\;\;\;\;\;\;\;\;\;\;\;\;\;\;\;\;\;\;\;\;\;\;\;\;\;\;\;\;\;\;\;\;\;\;\;\;\;-\dis\sum_{0\leq i<j\leq k}(-1)^{i+j}\omega([X_i,X_j]_{\cal A}X_0,\cdots,\hat{X}_i,\cdots,\hat{X}_{j},\cdots X_k)$, \\
for $k>0$ and  for $k=0$ it is the almost  differential of a function.\\
Moreover   $({\cal A},M,\rho,[\;,\;]_{\cal A})$ is a Lie algebroid if and only if
$d^{\cal A}\circ d^{\cal A}=0.$  Of course this no more true if the Lie bracket does not satisfy the Jacobi identity. However  $({\cal A},M,\rho,[\;,\;]_{\cal A})$ is a pre algebroid if and only if $d^{\cal A}\circ \rho^*df=0$ for all $f\in C^\infty(M)$.\\

 As classically, the  {\it inner product } $i_a\o \in \Lambda^{k-1}{\cal A}^*$ is the contraction of $\o$ by $a\in{\cal A}$. The {\it Lie derivative } along  $X\in\Xi({\cal A})$ is then defined by ${\cal L}^{\cal A}_X=i_X\circ d^{\cal A}+ d^{\cal A}\circ i_X$. \\

 As in classical differential geometry on a manifold, we can define the Fr\^{o}licher-Nijenhuis bracket and
 the Schouten bracket  for ${\cal A}$-tensors  (see \cite{GrUr}).\\

\subsection{Almost Poisson bracket and almost Lie bracket }\label{Adual}${}$\\
${}\;\;\;\;$ An {\it almost Poisson bracket} is a map
  $\{\;,\;\}:C^\infty(M)\times C^\infty(M)\ap C^\infty(M) $  such that
   \begin{enumerate}
 \item[(i)] $\{\;,\;\}$  $\R-$bilinear,
 \item[(ii)] Liebnitz property:
 $\{f,gh\}=g.\{f,h\}+h.\{f,g\}.$
  \end{enumerate}
  If  moreover we have :
   \begin{enumerate}
 \item[(iii)] Jacobi identity:
 $\{f,\{g,h\}\}+\{g,\{h,f\}\}+\{h,\{f,g\}\}=0,$
 \end{enumerate}
 then  $\{\;,\;\}$ is called a {\bf  Poisson bracket}.\\

   The datum  $\{\;,\;\}$ is equivalent to the existence of a bi-vector  $P\in \Lambda^2T^*M$  \textrm{ characterized by  } $P(df,dg)=\{f,g\}$ which is  called {\bf  almost Poisson bi-vector}. Such a bi-vector can also defined by a
   morphism $P^\flat:T^*M\ap TM$ \textrm{ such  }
 $$\{f,g\}=<dg,P^{\flat} df>=-<df,P^\flat dg>=P(df,dg).$$
 called the {\bf  almost Poisson map}.

\noindent The almost Poisson bi-vector  $P$ defined a Poisson bracket if and only if
 the Schouten bracket satisfies $[P^\flat,P^\flat]=0,$

where
 $ [P^\flat,P^\flat](\omega,\omega')=P^{\flat}(L_{P^\flat\omega}\omega'-L_{P^\flat\omega'}\omega+d<\omega,P^\flat\omega'>+[P^\flat\omega,P^\flat\omega']).$\\

 Given an almost Poisson bracket $\{\;,\;\}$ on $M$,  recall that the  {\bf   Hamiltonian field}  of a smooth  map $h:{\cal A}^*\ap \R$
 is  the vector field $X_h=P^\flat dh$.

 Consider a vector bundle  $\tau:{\cal A}\ap M$  and
$\tau^*: {\cal A}^*\ap M$  its dual bundle:

$\bullet $ a function $f$ on  ${\cal A}^*$ is called {\bf  linear}  if its  restriction to each  fiber ${\cal A}^*_x=(\tau^*)^{-1}(x)$ is a linear form;

$\bullet$ an almost Poisson bracket $\{\;,\;\}$ on  ${\cal A}^*$ is called  {\bf linear} if, for any pair   $(f,g)$ of linear functions on ${\cal A}^*$, then
$\{f,g\}$ is also a linear function.\\

 If  $\{\;,\;\}$  is linear on   ${\cal A}^*$, we also say that its associated almost Poisson  bi-vector $P$ or  morphism $P^\flat$ is  linear.\\

Let  $\{e_\alpha\}$ be a local basis of  ${\cal A}$ and  $(x^i, u^\alpha)$\footnote{  index i,j,k,l... vary from $1$ to $n$ and greek index $\a, \b,\g....$ vary from $1$ to $k$ and we use the Einstein convention of summation} the associated coordinates. We denote by  $\{\epsilon^\alpha\}$ the associated dual basis on  ${\cal A}^*$ and  $(x^i,\eta_\alpha)$ the associated coordinates on ${\cal A}^*$. Therefore, in local coordinates, an almost linear Poisson bi-vecteur $P$  on  ${\cal A}^*$ can be written
$$P=\frac{1}{2}C_{\alpha\beta}^\gamma\eta_\gamma \frac{\partial}{\partial \eta_\alpha}\wedge\frac{\partial}{\partial \eta_\beta}+ \rho_\alpha^i\frac{\partial}{\partial \eta_\alpha}\wedge\frac{\partial}{\partial x^i}.$$

For any $s\in \Xi({\cal A})$ we denote by  $\Phi_s:{\cal A}^*\ap \R$ the map given by :
$\Phi_s(\sigma)=\sigma(s).$\\
Note that such a function $\Phi_s$  is a linear function on ${\cal A}^*$. \\

\noindent {\it The relation between almost linear   Poisson bracket  and almost  Lie algebroid is given by (see for instance  \cite{GLMM}, \cite{LMM}, \cite{Marl} or  \cite{PoPo}):}\\
\begin{The}
Consider  a vector bundle $\tau:{\cal A}\ap M$   and $\rho:{\cal A}\ap TM$ an anchor. The datum of an almost linear Poisson bracket $\{\;,\;\}$ on ${\cal A}$  is equivalent to the datum of an   almost linear Poisson bi-vector  $P$ on ${\cal A}^*$ characterized by the following relations:
\begin{itemize}
\item $\Phi_{[s,s']_{\cal A}}=P(d\Phi_s,d\Phi_{s'})$  $\forall s,s'\in \Xi({\cal A})$, and
\item $P(d\Phi_s,d(f\circ\tau^*))=df(\rho(s))\circ\tau^*$ $\forall s\in\Xi({\cal A})$, $\forall f\in C^\infty(M)$.
\end{itemize}
\end{The}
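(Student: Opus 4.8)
**The plan is to establish an equivalence between two structures by showing each determines the other, verifying that the defining relations are consistent and exhaustive.**

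The theorem asserts that specifying an almost linear Poisson bracket on $\mathcal{A}$ is equivalent to specifying an almost linear Poisson bi-vector $P$ on $\mathcal{A}^*$ satisfying the two displayed relations. The key observation driving the proof is that linear functions on $\mathcal{A}^*$ are exactly of the form $\Phi_s$ for $s \in \Xi(\mathcal{A})$, while functions of the form $f \circ \tau^*$ with $f \in C^\infty(M)$ are the fiberwise-constant functions. Since $C^\infty(\mathcal{A}^*)$ is locally generated as an algebra by these two families together with the Leibniz rule, a bi-vector $P$ on $\mathcal{A}^*$ is completely determined by its values on pairs $(d\Phi_s, d\Phi_{s'})$ and $(d\Phi_s, d(f \circ \tau^*))$ and $(d(f \circ \tau^*), d(g \circ \tau^*))$. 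First I would record that linearity of the Poisson structure forces $P(d(f\circ\tau^*), d(g\circ\tau^*)) = 0$, since the bracket of two fiberwise-constant functions must again be linear \emph{and} constant on fibers, hence zero; this disposes of the third type of pair and shows the two listed relations suffice to pin down $P$.

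Next I would carry out the two directions. Going from the bracket $[\;,\;]_{\mathcal{A}}$ to $P$: define $P$ on generators by the two displayed formulas, namely $P(d\Phi_s, d\Phi_{s'}) := \Phi_{[s,s']_{\mathcal{A}}}$ and $P(d\Phi_s, d(f\circ\tau^*)) := df(\rho(s))\circ\tau^*$, and extend by the Leibniz property. The task is to check this is well-defined, i.e. that it respects the $C^\infty(M)$-linearity relations among the generators: the antisymmetry of $[\;,\;]_{\mathcal{A}}$ gives antisymmetry of $P$, while the Leibniz rule $[s, f s']_{\mathcal{A}} = f[s,s']_{\mathcal{A}} + df(\rho(s))\, s'$ for the almost Lie bracket is precisely what makes the two formulas compatible with the derivation property $P(d\Phi_s, d\Phi_{fs'}) = P(d\Phi_s, (f\circ\tau^*)\,d\Phi_{s'} + \Phi_{s'}\,d(f\circ\tau^*))$. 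In local coordinates this is the direct verification that the expression reproduces the explicit bi-vector $P = \tfrac{1}{2}C_{\alpha\beta}^\gamma \eta_\gamma \partial_{\eta_\alpha}\wedge\partial_{\eta_\beta} + \rho_\alpha^i \partial_{\eta_\alpha}\wedge\partial_{x^i}$ displayed above, from which linearity is manifest.

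Conversely, given a linear bi-vector $P$, I would define a bracket on sections by reading the first relation backwards, $\Phi_{[s,s']_{\mathcal{A}}} := P(d\Phi_s, d\Phi_{s'})$, which makes sense because linearity guarantees the right-hand side is again a linear function, hence equal to $\Phi$ of a unique section; antisymmetry of $P$ yields antisymmetry of the bracket, and the second relation recovers the anchor and forces the Leibniz property. \textbf{The main obstacle I anticipate} is the well-definedness and bookkeeping in the forward direction: one must confirm that the Leibniz rule for the almost Lie bracket is \emph{equivalent} to the derivation identities forced by requiring $P$ to be a genuine bi-vector (a $C^\infty$-linear, antisymmetric object in $\Lambda^2 TM$ on $\mathcal{A}^*$), rather than merely compatible with them. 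Concretely, the delicate point is matching the non-tensorial $df(\rho(s))$ term in the Leibniz rule against the second displayed relation and checking that no further constraint (such as a Jacobi-type condition) is silently imposed—this is exactly why the statement concerns \emph{almost} Poisson and \emph{almost} Lie structures, with the Jacobi identity deliberately left out. Once this correspondence of the two defining axioms is verified, the equivalence of the two data follows, since both directions are manifestly inverse to one another on generators.
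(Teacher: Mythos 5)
The paper does not actually prove this statement: it is quoted from the literature (the text points to \cite{GLMM}, \cite{LMM}, \cite{Marl}, \cite{PoPo}) and is immediately followed only by the local coordinate expression of $P$. So there is no in-paper proof to compare against; what can be said is that your argument is the standard one for this correspondence and is essentially correct. Your identification of the generating families ($\Phi_s$ for linear functions, $f\circ\tau^*$ for basic ones), the construction of $P$ from $[\;,\;]_{\cal A}$ on generators with well-definedness checked against the Leibniz rule $[s,fs']_{\cal A}=f[s,s']_{\cal A}+df(\rho(s))s'$, and the inverse construction $\Phi_{[s,s']_{\cal A}}:=P(d\Phi_s,d\Phi_{s'})$ using injectivity of $s\mapsto\Phi_s$ all match the classical proof, and you are right that no Jacobi-type condition is silently imposed on either side.

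The one step whose justification is too loose as written is the claim that $P\bigl(d(f\circ\tau^*),d(g\circ\tau^*)\bigr)=0$. You assert that the bracket of two fiberwise-constant functions ``must again be linear and constant on fibers, hence zero,'' but the paper's definition of linearity only constrains brackets of pairs of \emph{linear} functions, so neither property is immediate. The correct route is a homogeneity argument: from $\Phi_{fs}=(f\circ\tau^*)\Phi_s$ and the Leibniz property one first deduces that $\{\Phi_s,g\circ\tau^*\}$ is basic (it is the fiberwise degree-zero part forced by $\{\Phi_{fs},\Phi_{s'}\}$ being linear), and then that $\Phi_s\{f\circ\tau^*,g\circ\tau^*\}$ is basic for every $s$, which forces $\{f\circ\tau^*,g\circ\tau^*\}=0$ off the zero section and hence everywhere by continuity. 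With that repair the argument is complete; the conclusion itself is of course consistent with the local formula for $P$ displayed in the paper, which contains no $\frac{\partial}{\partial x^i}\wedge\frac{\partial}{\partial x^j}$ term.
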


\bigskip
In {\it local coordinates} we have:\\
  $\rho = \rho_\alpha^i\dis\frac{\partial}{\partial x^i}$
   and  $[e_\alpha,e_\beta]_{\cal A}=C_{\alpha\beta}^\gamma e_\gamma$
if and only if
$P=\dis\frac{1}{2}C_{\alpha\beta}^\gamma\eta_\gamma \frac{\partial}{\partial \eta_\alpha}\wedge\frac{\partial}{\partial \eta_\beta}+ \rho_\alpha^i\frac{\partial}{\partial  \eta_\alpha }\wedge\frac{\partial}{\partial x^i}$.\\

Given a linear bi-Poisson vector on ${\cal A}^*$ associated with an almost Lie bracket $[\;,\;]_{\cal A}$ on $\cal A$, the Hamiltonian vector field ${\cal X}_h$ of a smooth map $h$ has the following local decomposition:

\begin{eqnarray}\label{locVH}
{\cal X}_h =\rho_\a^i\dis\frac{\p h}{\p \eta^\a}\frac{\p}{\p x^i}-(C_{\a\b}^\g \eta_\g\dis\frac{\p h}{\p \eta^\b}+\rho_\a^i\dis\frac{\p h}{\p x^i})\frac{\p}{\p \eta_\a}.
\end{eqnarray}\\

Given two almost Lie algebroids $({\cal A},M,\rho,[\;,\;])_{\cal A}$ and $(\bar{\cal A},M,\bar{\rho},[\;,\;]_{\bar{\cal A}})$, set    $\t^*:{\cal A}\ap M$ and $\bar{\t}^*:\bar{\cal A}^*\ap M$ their associated dual bundles respectively. We denote by $\{\;,\;\}_{\cal A}$ and $\{\;,\;\}_{\bar{\cal A}}$ the Poisson brackets on ${\cal A}^*$ and $\bar{\cal A}^*$ associated with $[\;,\;]$ and $\overline{[\;,\;]}$ respectively. A morphism bundle $\digamma:{\cal A}^*\ap \bar{\cal A}^*$ is a {\it linear almost  Poisson morphism} if we have:
$$\{\bar{\phi}\circ\digamma,\bar{\psi}\circ\digamma\}_{\cal A}=\{\bar{\phi},\bar{\psi}\}_{\bar{\cal A}}\circ \digamma$$
for all smooth functions $\bar{\phi}$ and $\bar{\psi}$ on $\bar{\cal A}^*$.\\
\noindent In this case, assume that we have two smooth functions $h$ and $ \bar{h}$ on ${\cal A}^*$ and $\bar{\cal A}^*$ respectively such that
$$\bar{h}\circ \digamma=h.$$

Then, if  ${\cal X}_h$ and $\bar{\cal X}_{\bar{h}}$ are the respective Hamiltonian vector fields of $h$ and $\bar{h}$, we then have
$$T_\eta\digamma({\cal X}_h)=\bar{\cal X}_{\bar{h}}(\digamma(\eta)).$$
In particular, if $c:I\subset \R\ap {\cal A}$ is an integral curve of ${\cal X}_h$, then $\digamma\circ c : I\ap \tilde{\cal A}^*$ is an integral curve of  $\bar{\cal X}_{\bar{h}}.$
\bigskip
Now we have the following characterization of a pre-Lie algebroid

\begin{Pro}\label{characalgebroid}${}$\\
The almost Lie algebroid $({\cal A},M,\rho,[\;,\;]_{\cal A})$ is a pre-Lie algebroid if and only if $\rho^*:T^*M\ap {\cal A}^*$ is a linear almost  Poisson morphism.
\end{Pro}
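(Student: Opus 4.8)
The plan is to unwind the morphism condition on generators and to recognize the single surviving relation as the Lie-morphism property. The cotangent bundle $T^*M$ is the dual of the tangent Lie algebroid $(TM,M,\mathrm{id},[\;,\;])$, so by the theorem recalled above it carries the canonical linear Poisson bracket $\{\;,\;\}_{TM}$. Applying the definition of a linear almost Poisson morphism with source algebroid $TM$ and target algebroid $\cal A$, the assertion that $\rho^*:T^*M\ap{\cal A}^*$ is such a morphism means exactly
$$\{\phi\circ\rho^*,\psi\circ\rho^*\}_{TM}=\{\phi,\psi\}_{\cal A}\circ\rho^*\quad\text{for all }\phi,\psi\in C^\infty({\cal A}^*).$$

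First I would reduce this identity to a check on the generating functions $\Phi_s$ ($s\in\Xi({\cal A})$) and $f\circ\tau^*$ ($f\in C^\infty(M)$). Both sides are biderivations in $(\phi,\psi)$, and the value of each at a point $\alpha\in T^*M$ depends only on the first differentials of $\phi$ and $\psi$ at $\rho^*(\alpha)$. Since in local coordinates $(x^i,\eta_\alpha)$ on ${\cal A}^*$ one has $\eta_\alpha=\Phi_{e_\alpha}$ and $x^i=x^i\circ\tau^*$, the differentials of the functions $\Phi_s$ and $f\circ\tau^*$ span $T^*_y{\cal A}^*$ at every point $y$; hence it is enough to verify the identity on pairs drawn from these two families. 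This reduction is the step that needs the most care, as it is what allows us to bypass arbitrary smooth functions.

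Next I would compute the two relevant pullbacks. For $\alpha\in T^*_xM$ the transpose satisfies $\langle\rho^*\alpha,a\rangle=\langle\alpha,\rho a\rangle$, whence $\Phi_s\circ\rho^*=\Phi^{TM}_{\rho(s)}$, where $\Phi^{TM}_X$ denotes the linear function on $T^*M$ attached to a vector field $X$; and since $\rho^*$ covers $\mathrm{id}_M$ one has $\tau^*\circ\rho^*=\pi_M$, the cotangent projection, so $(f\circ\tau^*)\circ\rho^*=f\circ\pi_M$. By the theorem, the generators bracket on ${\cal A}^*$ as $\{\Phi_s,\Phi_{s'}\}_{\cal A}=\Phi_{[s,s']_{\cal A}}$, $\{\Phi_s,f\circ\tau^*\}_{\cal A}=(df(\rho(s)))\circ\tau^*$ and $\{f\circ\tau^*,g\circ\tau^*\}_{\cal A}=0$ (the last because the local form of $P$ has no $\partial/\partial x^i\wedge\partial/\partial x^j$ term), with the analogous formulas on $T^*M$ coming from $(TM,\mathrm{id},[\;,\;])$.

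Finally I would run the three cases. For a linear and a basic generator both sides equal $(\rho(s)f)\circ\pi_M$, and for two basic generators both sides vanish, so these impose no condition. For two linear generators the identity becomes $\Phi^{TM}_{[\rho(s),\rho(s')]}=\Phi^{TM}_{\rho([s,s']_{\cal A})}$; since $X\mapsto\Phi^{TM}_X$ is injective on vector fields, this holds for all $s,s'$ if and only if $[\rho(s),\rho(s')]=\rho([s,s']_{\cal A})$, i.e. if and only if $\rho$ is a Lie morphism, which by definition is precisely the condition that $({\cal A},M,\rho,[\;,\;]_{\cal A})$ be a pre-Lie algebroid. This delivers both implications simultaneously.
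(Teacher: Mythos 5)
Your proof is correct and follows essentially the same route as the paper: reduce the Poisson-morphism identity to the generating functions (linear functions $\Phi_s$, equivalently the fibre coordinates $\eta_\alpha$, together with basic functions), compute the brackets of their pullbacks under $\rho^*$, and observe that the only surviving condition is $[\rho(s),\rho(s')]=\rho([s,s']_{\cal A})$. The paper carries this out in local coordinates $(x^i,\eta_\alpha)$ and $(x^i,\xi_i)$, whereas you phrase it invariantly via $\Phi_s\circ\rho^*=\Phi^{TM}_{\rho(s)}$, but the content is identical.
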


\begin{proof}${}$\\
Clearly it is sufficient to prove the result locally. Consider a chart $(U,x^i)$ over $M$ such that ${\cal A}$ is trivializable over $U$. Given a local basis $\{e_\a\}$ of $\cal A$ and $\{\epsilon^\alpha\}$ its dual basis. If $(x^i,\eta_\b)$ is the associated coordinate system on ${\cal A}^*$, recall that the linear almost Poisson bivector $P_{\cal A}$ associated with the almost bracket $[\;,\;]_{\cal A}$ has a local decomposition of type:
$$P_{\cal A}=\frac{1}{2}C_{\alpha\beta}^\gamma\eta_\gamma \frac{\partial}{\partial \eta_\alpha}\wedge\frac{\partial}{\partial \eta_\beta}+ \rho_\alpha^i\frac{\partial}{\partial \eta_\alpha}\wedge\frac{\partial}{\partial x^i}.$$
Therefore the associated almost Poisson bracket is characterized by:
$$\{x^i,x^j\}_{\cal A}=0,\;\;\;\; \{\eta_\a,x^i\}_{\cal A}=\rho_\a^i,\;\;\;\; \{\eta_\a,\eta_\b\}_{\cal A}=\dis\frac{1}{2}C_{\a\b}^\g \eta_\g.$$
On the other hand, if $(x^i,\xi_i)$ is the coordinate system on $T^*M$, the canonical Poisson bracket is characterized by
$$\{x^i,x^j\}=0,\;\;\;\; \{\xi_i,x^j\}=\d_i^j,\;\;\;\; \{\xi_i,\xi_j\}=0.$$
Therefore on the one hand we have

$\{x^i\circ \rho^*,x^i\circ \rho^*\}=0,\;\;\;\;  \{\eta_\a\circ\rho^*,x^j\circ\rho^*\}=\{\rho_\a^i\xi_i,x^j\}=\rho_\a^i,$\\
${}\hfill\textrm{ and }\{\eta_\a\circ\rho^*,\eta_\a\circ\rho^*\}=\{\rho_\a^i\xi_i,\rho_\b^i\xi_j\}=(\rho_\b^i\dis\frac{\p \rho_\a^l}{\p x^i}-\rho_\a^i\dis\frac{\p \rho_\b^l}{\p x^i})\xi_l.\;\;\;\;\;\;\;$\\
On the other hand we have

$\{x^i,x^j\}_{\cal A}\circ\rho^*=0,\;\;\;\; \{\eta_\a,x^i\}_{\cal A}\circ\rho^*=\rho_\a^i,\;\;\;\; \{\eta_\a,\eta_\b\}_{\cal A}\circ\rho^*=C_{\a\b}^\g \rho_\g^l\xi_l.$\\
Therefore $\rho^*$ is a linear Poisson morphism if and only  if
$$(\rho_\b^i\dis\frac{\p \rho_\a^l}{\p x^i}-\rho_\a^i\dis\frac{\p \rho_\b^l}{\p x^i}-C_{\a\b}^\g \rho_\g^l)\xi_l=0$$
for all indices $l=1,\dots,n$ and  $1\leq \a<\b\leq k$.

This condition is equivalent to:
$$(\rho_\b^i\dis\frac{\p \rho_\a^l}{\p x^i}-\rho_\a^i\dis\frac{\p \rho_\b^l}{\p x^i})-C_{\a\b}^\g \rho_\g^l=0$$
for all $1\leq\a<\b\leq k$  and $l=1,\dots,n$.
But this last condition is exactly
$$[\rho(e_\a),\rho(e_\b)]=\rho([e_\a,e_\b]_{\cal A}$$
for all $1\leq\a<\b\leq k$. This ends the proof.\\
\end{proof}

\subsection{Fibered manifold and Euler vector field}\label{verteuler}${}$\\
${}\;\;\;\;$ In the whole paper $\pi:{\cal M}\ap M$ will be a  fibered  manifold of dimension $n+h$ with a connected typical fiber.  Then this structure gives rise to a foliation on $\cal M$ whose leaves are the fiber of $\pi$. It follows that  we have an atlas $(U_\l, \Phi_\l)$ on $\cal M$ such that the transition functions

$$\Psi_{\l\mu}=\Phi_\mu\circ\Phi_\l^{-1}$$
are diffeomorphisms  of $\R^n\times \R^h$ whose type is
\begin{eqnarray}\label{chgtfib}
\Psi_{\l\mu}(x,y)=(\Psi^1_{\l\mu}(x),\Psi^2(x,y)).
\end{eqnarray}

The fibration is called {\bf locally affine}  or  {\bf  locally linear} if there exists a sub-atlas whose transition functions are of type:

$$\Psi_{\l\mu}(x,y)=(\Psi^1_{\l\mu}(x),\Psi^2(x,y))$$ where $\Psi^2(x,y)$ is an affine  or  linear transformations  of  $\R^h$ respectively  of type
\begin{center}
 $\{\Phi^A_B(x)y^B+\phi^B(x)\}_{A=1,\cdots, h}$
or  $\{\Phi^A_B(x)y^B\}_{A=1,\cdots, h}$.
\end{center}

On the other hand, an Euler vector field $C$ on $\cal M$ is a global vector field on $\cal M$ which is tangent to the fiber of $\pi$ ({\it i.e.}  {\it vertical}) and such that the flow of $C$ is an infinitesimal  homothety
on each fiber. This last property is equivalent to the existence of a local coordinate system $(x^i, y^A)$\footnote{ index $A,B,C,...$ vary from $1$ to $h$} (compatible with $\pi$)  such that $C$ can be written:
\begin{eqnarray}\label{EC}
C=y^A\dis\frac{\p}{\p y^A}.
\end{eqnarray}

Note that, if the fibration is locally linear, according to the associated atlas, (\ref{EC}) gives rise to a global vector field which is an Euler vector field. Conversely,
if such a vector field exists we have then:

\begin{Pro}\label{Euler}(see for example \cite{Va2}, \cite{Va3} \cite{ThSc})${}$
\begin{enumerate}
\item[(i)]   if there exists an Euler vector field on $\cal M$,    this   fibration is a  locally linear foliation.
\item [(ii)] On $\cal M$ there exists a sub-atlas such that the transition functions are of type
\begin{eqnarray}\label{Eulerchgt}
\Psi_{\l\mu}(x,y)=(\Psi^1_{\l\mu}(x),Id+\phi(x))
\end{eqnarray}
if and only if there exists an Euler vector field on $\cal M$.
\end{enumerate}
\end{Pro}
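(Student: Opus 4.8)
The plan is to read the statement through the flow of the Euler vector field $C$ and to reduce the whole matter to the elementary fact that a smooth, positively homogeneous map of degree one is linear.

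For (i) I would start from the very definition of $C$: around every point of $\cal M$ there is a chart $(x^i,y^A)$ compatible with $\pi$ in which $C$ has the normal form (\ref{EC}), $C=y^A\,\partial/\partial y^A$. These adapted charts form a sub-atlas covering $\cal M$, and in each of them the flow of $C$ is the fibrewise homothety $\Phi_t(x^i,y^A)=(x^i,e^{t}y^A)$. Since $C$, and hence its flow $\Phi_t$, is a globally defined object independent of the chart, on any overlap the two coordinate expressions of $\Phi_t$ must be intertwined by the transition map. Writing the transition as in (\ref{chgtfib}), $\Psi_{\lambda\mu}(x,y)=(\Psi^1_{\lambda\mu}(x),\Psi^2(x,y))$, this equivariance is exactly the identity $\Psi^2(x,e^{t}y)=e^{t}\Psi^2(x,y)$ for all $t$; that is, $\Psi^2(x,\cdot)$ is positively homogeneous of degree one in $y$.

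The key step is then the remark that a smooth, positively homogeneous map of degree one is linear. Indeed $\Psi^2(x,\lambda y)=\lambda\,\Psi^2(x,y)$ for $\lambda>0$ forces $\Psi^2(x,0)=0$, and letting $\lambda\to 0^{+}$ in $\Psi^2(x,y)=\Psi^2(x,\lambda y)/\lambda$ gives $\Psi^2(x,y)=D_y\Psi^2(x,0)\,y$. Hence $\Psi^2(x,y)=\Phi^A_B(x)y^B$, with no constant term, so the chosen sub-atlas has linear transition functions and the fibration is a locally linear foliation, which is (i). The one point requiring care is the smoothness of $\Psi^2$ up to $y=0$; this is ensured because the zero locus $\{C=0\}$ is intrinsic and is a global section $s_0$ of $\pi$, met by the adapted charts precisely along $y=0$.

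For (ii), the implication $(\Leftarrow)$ is the converse already pointed out before the statement: transition functions of the type (\ref{Eulerchgt}) are linear in the fibre variables, so the fibrewise field (\ref{EC}) is globally well defined and its flow is a fibrewise homothety, i.e. $C$ is an Euler vector field. For $(\Rightarrow)$ I would feed the linear sub-atlas produced in (i) into this normal form: using the global section $s_0=\{C=0\}$ as distinguished zero section to fix the origin in each fibre and writing the linear part $\Phi^A_B(x)$ as $Id+\phi(x)$, the transition functions take the form (\ref{Eulerchgt}). I expect the main obstacle to be the homogeneity-to-linearity step of (i)---together with the smoothness at $y=0$ that it presupposes---since it is the engine controlling the fibre part throughout; the bookkeeping of the zero section in $(\Rightarrow)$ is then routine.
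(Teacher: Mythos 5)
First, a remark on the comparison you were asked for: the paper gives no proof of Proposition \ref{Euler} at all --- it is quoted from \cite{Va2}, \cite{Va3} and \cite{ThSc} --- so your proposal has to be judged on its own merits.

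It has a genuine gap, and it is exactly at the step you yourself flag as ``the one point requiring care''. Your scheme is: every adapted chart (one in which $C=y^A\dis\frac{\partial}{\partial y^A}$) is admitted into the sub-atlas, flow-equivariance gives $\Psi^2(x,e^ty)=e^t\Psi^2(x,y)$ on overlaps, and positive $1$-homogeneity then forces linearity. But the equivariance identity only holds for $t$ small enough that the orbit stays in the overlap, so what you really obtain is the infinitesimal Euler identity $D_y\Psi^2(x,y)\cdot y=\Psi^2(x,y)$; and neither this identity nor full positive homogeneity implies linearity unless the domain contains the fibre origin and is star-shaped about it, which an adapted chart need not be. Concretely, on the trivial fibration $M\times\{y^2>0\}\to M$ with $C=y^1\dis\frac{\partial}{\partial y^1}+y^2\dis\frac{\partial}{\partial y^2}$, the map $(y^1,y^2)\mapsto\bigl(y^1,\sqrt{(y^1)^2+(y^2)^2}\bigr)$ is a diffeomorphism onto its image, is positively $1$-homogeneous (hence conjugates the radial field to itself), and is not affine; so two adapted charts can have a transition that is not of the required form, and ``the collection of all adapted charts'' is not the sub-atlas you need. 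A correct proof must construct a smaller sub-atlas, e.g.\ by transporting a fixed choice of linear fibre coordinates along the flow of $C$ (this is what is done in \cite{ThSc} and \cite{Va2}).

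The repair you propose --- that the zero locus $\{C=0\}$ is a global section of $\pi$ supplying an origin in each fibre --- is false, and the paper's own Example \ref{C2} is a counterexample: on the Hopf-manifold fibres the Euler field descends from $y^i\dis\frac{\partial}{\partial y^i}$ on $\R^h\setminus\{0\}$ and is nowhere zero, and the paper explicitly notes that the induced affine structure on those fibres is not complete, i.e.\ there is no origin to integrate back to. Since the $(\Rightarrow)$ direction of your part (ii) also leans on this nonexistent zero section and on the linear sub-atlas produced in (i), it inherits the same gap.
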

 In this last  case, we will say that  the fibration is {\it  locally  linear.} More generally, if the foliation defined by the fibration is locally affine, we will say that the fibration is {\it locally affine}.\\
 Therefore  a   locally linear fibration structure is characterized by  a choice of an Euler vector field $C$ on $\cal M$.
The context of  Proposition \ref{Euler} part (ii) means that if $(x^i,y^A)$ and $(\bar{x}^j,\bar{y}^B)$ are two coordinate systems compatible with the sub-atlas  of type (ii) which are defined on a common  open set then we have
\begin{eqnarray}\label{loceuler}
 C=(\bar{y}^A+\bar{\psi}^A(\bar{x}))\dis\frac{\p}{\p \bar{y}^A}=(y^A+\psi^A(x))\dis\frac{\p}{\p y^A}.
\end{eqnarray}
 Such an atlas is  also characterized by the choice of the Euler section $C$.\\

In general,  when the fibration is locally affine, the existence of an Euler vector field is not true and the obstruction to such an existence is characterized by a cohomology class  $E(\pi)$ in the "foliated cohomology" of $\cal M$ (see for instance \cite{Va2}). The nullity of this obstruction is equivalent to the existence of a sub-atlas on $\cal M$ whose  transition functions  are of type (\ref{Eulerchgt}).

\begin{Ex}\label{C1}${}$\\
Given  any vector bundle $\pi:{\cal M}\ap M$   over $M$ we have clearly an atlas with transition function of type (\ref{Eulerchgt}) and  there always exists an Euler vector field  $C$ on $\cal M$ according to Proposition \ref{Euler}. Moreover, any open submanifold of such a bundle, which is fibered on $M$, has the same property.
\end{Ex}

\begin{Ex}\label{C2}${}$ \\
The Hopf manifold $\mathbb{H}^{h}$of dimension $h$ is the quotient of $\R^{h}\setminus\{0\}$ by the action of the group of contractions $z\ap \l z$ on $\R^h$ with $0<\l<1$. It is well known that  $\mathbb{H}^{h}$ is diffeomorphic to $\mathbb{S}^{h-1}\times \mathbb{S}^1$   and    $\mathbb{H}^{h}$   is a radial manifold: {\it  i.e.}  there exists an Euler field  on the whole manifold (see \cite{FWH} for instance). On the other hand, consider  the  Hopf fibration $\mathbb{S}^{2n+1} \ap \mathbb{ CP}^n \equiv \mathbb{S}^{2n}$ whose fiber is $\mathbb{S}^1$.  Thus we obtain  a fibration $\pi:{\cal M}=\mathbb{S}^{h-1}\times \mathbb{S}^{2n+1}\ap\mathbb{S}^{2n}$ equipped with an Euler vector field. More precisely, let  $T$ be a unit vector field tangent to the fibers of   the  Hopf fibration $\mathbb{S}^{2n+1} \ap  \mathbb{S}^{2n}$ and $ M_i$  be the $i^{th}$ meridian vector field on $\mathbb{S}^{h-1}\subset\R^h$. Then  if $(y_1,\cdots, y_h)$ are the canonical coordinates in $\R^h$, we get a family  $\{ {\cal V}_i=M_i+y^iT\}_{i=1,\cdots h}$ of  (global) vector fields on $\mathbb{S}^{h-1}\times \mathbb{S}^{2n+1}$ which are tangent to the fibers (see \cite{Pa}). ${\cal C}=y^i{\cal V}_i$ is a (global) Euler vector field and  since $[{\cal V}_i{\cal V}_j]=y^i{\cal V}_j-y^j{\cal V}_i$ (see \cite{Pa}) we have
\begin{eqnarray}\label{compChpf}
[{\cal C},{\cal V}_i]=-{\cal V}_i
\end{eqnarray}
 Note that, on such a manifold, the affine structure on each fiber of  $\mathbb{S}^{2n+1} \ap  \mathbb{S}^{2n}$ is not complete  (see \cite{ThSc} or \cite{Va2}).
\end{Ex}

\begin{Ex}\label{C3}${}$\\
Let  $\mathbb{T}^{n+h}$ be the torus of dimension $n+h$ obtained as quotient of $\R^{n+h}$ by the canonical action of $\Z^{n+k}$ on $\R^{n+h}$ by translation. Then the canonical fibration $\pi: \mathbb{T}^{n+h}\ap \mathbb{T}^{n}$ is an affine fibration. Unfortunately, there exists no Euler vector field on this fibration.  Indeed, the  cohomology obstruction $E(\pi)\not=0$ (see \cite{Va3}).\\
\end{Ex}

\subsection{Prolongation of  an anchored bundle over a fibered  manifold  }\label{prolongM}${}$\\

${}\;\;\;\;$ Consider a fibered  connected manifold $\pi:{\cal M}\ap M$   over $M$ of dimension $n+h$ and  $({\cal A},M,\rho)$  an anchored bundle of rank $k$. {\it Throughout this work we always assume that $0<h\leq k$}.\\

  For  $m\in \pi^{-1}(x)$ we set
$${\T}^{\cal A}_m{\cal M}=\{(b,v)\in{\cal A}_x\times T_m{\cal M} \textrm{ such that  } \rho(b)=T\pi(v)\}.$$
An element of ${\T}^{\cal A}_m{\cal M}$ will be denoted $(m,b,v)$.\\
The {\bf  ${\cal A}$-prolongation} over  ${\cal M}$ is the bundle
$\hat{\pi}:{\T}^{\cal A}{\cal M}=\dis\bigcup_{m\in{\cal M}}{\T}^{\cal A}_m{\cal M}\ap {\cal M}$ defined by \\$\hat{\pi}(m,b,v)=m$.\\
We  consider the following  maps:

\noindent $\bullet$ $\hat{ \rho}: {\T}^{\cal A}{\cal M}\ap T{\cal M}$  defined by $\hat{\rho}(m,b,v)=(m,v);$

\noindent$\bullet$  $ \pi_{\cal A}: {\T}^{\cal A}{\cal M}\ap {\cal A}$ defined by  $\pi_{\cal A}(m,b,v)=b;$.

\noindent if $\tilde{\tau}: \tilde{\cal A}\ap {\cal M}$ is the pull-back of the bundle $\t:{\cal A}\ap M$  over $\pi:{\cal M}\ap M$, then $\tilde{\pi}:\tilde{\cal A}\ap {\cal A} $ is defined by $\tilde{\pi}(m,b)=b;$

\noindent $\bullet$    $ {\pi}_{\tilde  {\cal A}}: {\T}^{\cal A}{\cal M}\ap \tilde{\cal A}$   defined by  ${\pi}_{\tilde{\cal A}}(m,b,v)=(m,b);$

\noindent$\bullet$  if  ${\V}^{\cal A}{\cal M}=\ker \pi_{\cal A}$,  then  $\hat{\pi}_{| {\bf V}^{\cal A}{\cal M}}:{\bf V}^{\cal A}{\cal M}\ap {\cal M}$ the vertical bundle associated with $\hat{\pi}:{\T}^{\cal A}{\cal M}\ap{\cal M}$.\\

We have then the following commutative diagrams:

$ \begin{array}{ccccc}
 &\hat{ \rho}&\\
{\T}^{\cal A}{\cal M} & \longrightarrow  &T{\cal M}  \\
\pi_{\cal A} \;  \Big\downarrow &  & \Big\downarrow \;T\pi\\
\;\;\;{\cal A} &  \longrightarrow &   TM & \\
& \rho&
\end{array}\;\;\;\;\;\;
 \begin{array}{ccccc}
 &\pi_{{\cal A}}&\\
{\T}^{\cal A}{\cal M}& \longrightarrow  &{\cal A} \\
\hat{\pi} \;  \Big\downarrow &  & \Big\downarrow \;\tau \\
{\cal M} &  \longrightarrow &   M  \\
& \pi&
\end{array}\;\;\;\;\;\;
  \begin{array}{ccccc}
 &\tilde{\pi}&\\
\tilde{\cal A}& \longrightarrow  &{\cal A} \\
\tilde{\tau} \;  \Big\downarrow &  & \Big\downarrow \;\tau \\
{\cal M} &  \longrightarrow & { M}   \\
& \pi &
\end{array}\;\;\;\;\;\;
 \begin{array}{ccccc}
 &{\pi}_{\tilde{\cal A}}&\\
{\T}^{\cal A}{\cal M}& \longrightarrow  &\tilde{\cal A} \\
\hat{\pi} \;  \Big\downarrow &  & \Big\downarrow \;\tilde{\tau} \\
{\cal M} &  \longrightarrow & {\cal M}   \\
& Id &
\end{array}
$

\noindent Therefore  the maps $\hat{\rho}$,  $\pi_{\cal A}$ and $\hat{\pi}_{\cal A}$  are vector bundle morphisms   over  $\rho$ and $\pi$, respectively, and  ${\pi}_{\tilde{A}}$ is a vector bundle  morphism over ${\cal M}$.  Note that we have the following properties:

$(\T^{\cal A}{\cal M},{\cal M},\hat{\rho})$  is an anchored bundle;

 $\pi_{\cal A}$, ${\pi}_{\tilde{\cal A}}$ and  $\tilde{\pi}$ are surjective and $\pi_{\cal A}=\tilde{\pi}\circ {\pi}_{\tilde{\cal A}}$;

$\tilde{\pi}$ is an  isomorphism in restriction to each fiber;

  $\hat{\rho}_{| {\bf V}{\cal M}} :{\bf V}{\cal M}\ap  VT{\cal M}$  is an isomorphism and ${\bf V}{\cal M}$ is also the kernel of ${\pi}_{\tilde{\cal A}}$ . \\

  {\bf When  the anchored bundle $({\cal A},M,\rho)$ is fixed, we  simply denote $\T{\cal M}$ and $\V{\cal M}$ the bundles $\T^{\cal A}{\cal M}$ and $\V^{\cal A}{\cal M}$}

\bigskip

Fix a system of local coordinates $(x^i,y^A)$ on ${\cal M}$  compatible with $\pi$ (that is  $T\pi(\dis\frac{\p}{\p y^A})=0$) and a  local basis $(e_\a)$ on $\cal A$.
We set  ${\cal X}_\alpha(m)=(m,e_\alpha(\pi(m)),\rho_\alpha^i\dis \frac{\partial}{\partial x^i}(m))$ and
${\cal V}_A(m)=(m,0,\dis\frac{\partial}{\partial y^A}(m)))$. Then  $\{{\cal X}_\alpha, {\cal V}_A\}$ is a local basis for ${\T}{\cal M}$  and moreover  ${\cal V}_A$ is  vertical.\\

If   $z=(m,b,v)\in{\T}{\cal M}$ ,  with
 $b=u^\alpha e_\alpha(\tau(m))$ and
  $v=\rho_\alpha^i u^\alpha \dis\frac{\partial}{\partial x^i}(m)+ v^A\dis\frac{\partial}{\partial y^A}(m)$
 then $z$ can also be written $z=u^\alpha{\cal X}_\alpha(m)+v^A {\cal V}_A(m)$.
It follows that   $(x^i,y^A,u^\alpha,v^A)$ are coordinates on  ${\T}{\cal M}$
and  we have:\\  $\hat{\rho}({\cal X}_\alpha)=\rho_\alpha^i\dis\frac{\partial}{\partial x^i}$,  $\pi_{\cal A}({\cal X}_\a)=e_\a$
  and  $\hat{\rho}({\cal V}_A)=\dis\frac{\partial}{\partial y^A}.$\\
{\bf Such a basis $\{{\cal X}_\a,{\cal V}_A\}$ will be called an associated basis to $(x^i,y^A)$ and $\{e_\a\}$} or simply a {\bf canonical basis}.

 The {\it dual basis} of  $\{{\cal X}_\alpha,{\cal V}_A\}$ is denoted by $\{{\cal X}^\alpha,{\cal V}^A\}$  then  we have:

${\cal X}^\alpha=(\pi_{\cal A})^*\epsilon^\alpha$,  ${\cal V}^A=(0,dy^A)=(\hat{\pi})^*dy^A$, and
$(\hat{\rho})^*dx^i=\rho_\alpha^i{\cal X}^\alpha \;\; \textrm{ and }\;\;  (\hat{\rho})^*dy^A={\cal V}^A$.

\noindent  A local section $\omega$ de $({\T}{\cal M})^*$ can then  be written
$\omega=\eta_\alpha{\cal X}^\alpha+\nu_A{\cal V}^A$. This implies that
 $(x^i,y^A,\eta_\alpha,\nu_A)$ are coordinates on $({\T} {\cal M})^*$.\\

\bigskip
 When ${\cal M}={\cal A}$ (resp. ${\cal M}={\cal A}^*$), we simply denote ${\T}{\cal A}$ (resp. $ {\T}{\cal A}^*$) the corresponding prolongation of ${\cal A}$  over ${\cal A}$ (resp. over ${\cal A}^*$). Of course if ${\cal A}=TM$ we get ${\T}{\cal A}=TTM$ and  ${\T}{\cal A}^*=TT^{*}M$. On $\T{\cal A}$,
 recall that  the vertical lift $s^V$  of a section  $s$  of ${\cal A}$ is the section $s^V$ of  ${\T}{\cal A}$ defined by $s^V(a)=\dis\frac{d}{dt}(a+ts(a))_{| t=0}$. This allows us to define the {\it  Liouville section} $C$  by $C(a,b,v)=(a, 0, \hat{b}^V(a))$ and also the vertical endomorphism $J$ defined by $J(a,b,v)=(a,0,\hat{b}^V(a))$ where $\hat{b}$ is the constant section $\hat{b}(x)=b$. We have im $J=\ker J={\V}{\cal A}$ \textit{i.e.}  the vertical bundle of  ${\T}{\cal A}$, in particular we have  $J^2=0$ .

 On ${\T}{\cal A}^*$  the  Liouville form $\theta$ is characterized by $\theta(X)=\zeta(T\widehat{\tau^*}(X))$ for any $\zeta \in {\cal A}^*$ and  $X\in {\T}{\cal A}^*$ where $\widehat{\tau^*}:\T{\cal A}^* \ap {\cal A}^*$ is the projection of the $\cal A$ prolongation over ${\cal A}^*$.
Given a  coordinate system  $(x^i,\eta_\a)$ on ${\cal A}^*$  ({\it cf.} subsection \ref{Adual}), we will denote by ${\cal P}_\a$ the vertical section such that, if $\hat{\rho}:{\T}{\cal A}^*\ap T{\cal A}^*$ is the anchor, $\hat{\rho}({\cal P}_\a)=\dis\frac{\p}{\p \eta_\a}$ and  again  $\hat{\rho}({\cal X}_\a)=\rho_\a^i\dis\frac{\p}{\p x^i}$. It follows that $\{{\cal X}_\a,{\cal P}_\b\}$ is  a local canonical basis of $\T{\cal A}^*$,  and we obtain  an associated coordinate system denoted $(x^i,\eta_\a, u^\a,\nu_\a)$. The associated dual basis is then denoted $\{{\cal X}^\a,{\cal P}_\b\}$ and the associated coordinate system will be denoted $(x^i,\eta_\a, \xi_\b,v^\a)$. With these notations, the Liouville form is written $\theta=\eta_\a{\cal X}^\a$ .\\

Consider a section $\cal X$ of $\T{\cal M}$ defined on an open set $V$. A  curve $c:I\subset\R\ap V$ is called {\it an integral curve of ${\cal X}$} if we have:
$$\dot{c}(t)=\hat{\rho}({\cal X})(c(t)).$$

A {\it  morphism of anchored bundle} $({\cal A},M,\rho )$ and  $({\cal A}',M',\rho' )$ is a morphism bundle $ \Phi:{\cal A}\ap {\cal A}'$ between $\tau:{\cal A}\ap M$ and $\tau':{\cal A}'\ap M'$,  over a map $\phi:M\ap M'$ such that
$$\T\phi\circ \rho=\rho'\circ \Phi.$$

 Given  $\Psi:{\cal M}\ap {\cal M}'$  a bundle morphism  $\Psi:{\cal M}\ap {\cal M}'$ between fibered manifolds $\pi:{\cal M}\ap M$  and $\pi':{\cal M}'\ap M'$  over $\phi$ and we get a map ${\T}\Psi$ from the prolongation ${\T}^{\cal A}{\cal M}$ to ${\T}^{{\cal A}'}{\cal M}'$ characterized by (see \cite{CMM})
$${\T}\Psi(m,b,v)=(\Psi(m),\Phi(b),T_m\Psi(v).$$

Let  $\cal E$ be a subbundle of $\T{\cal M}$ which contains $\V{\cal M}$. A $k$-form $\o$  on ${\cal E}$ (\textit{i.e.} a smooth section of the bundle $\L^k{\cal E}^*$) is called {\it semi-basic} if $i_{\cal X}\o=0$ for any vertical section $\cal X$. In the same way, a  vector  valued $k$-form $\o$ on $\cal E$ ({\it i.e.}  a smooth section of the bundle $\L^k{\cal E}^*\otimes {\cal E}$) is called {\it semi-basic} if  $\o$ takes its values in $\V{\cal E}$ and if  $i_{\cal X}\o=0$ when $\cal X$ is vertical. More generally, any tensor $ \Theta$ of type  $(k,l)$ on $\cal E$ (\textit{i.e.} a smooth section of the bundle $\bigotimes^k{\cal E}^*\bigotimes^l {\cal E}$) is called semi-basic  if  $\Theta$ is a section of  $\otimes^h{\cal E}^*\otimes^l (\V{\cal M})$ and $\Theta({\cal X}_1,\cdots,{\cal X}_h)$ belongs to $\bigotimes^l (\V{\cal M})$ for any section ${\cal X}_1,\cdots,{\cal X}_k$ of $\cal E$ and this value is $0$ as soon as one of these section is vertical.

\subsection{Prolongation of the Lie bracket}\label{prolbrac} ${}$\\
{\it In this subsection we assume that the rank $k$ of the anchored bundle $({\cal A},M,\rho)$ is smaller than $n$.}\\

Among all sections of   $\hat{\pi}:{\T}{\cal M}\ap {\cal M}$ we will consider particular sections:

  \begin{Def}\label{Mproj}${}$\\
A section ${\cal Z}$ of  $\hat{\pi}:{\T}{\cal M}\ap {\cal M}$ is {\it projectable}  if  there  exists $s\in\Xi({\cal A})$  such that
$${\pi}_{\cal A}\circ{\cal Z}=s\circ \tau.$$
  \end{Def}

 Therefore  ${\cal Z} $  is projectable if and only if there exists a vector field  $Z$ on ${\cal M}$ and $s\in\Xi({\cal A})$ such that
$${\cal Z}(p)=(m,s\circ\pi(m),Z(m)) \textrm{ with   } T\pi(Z)(p)=\rho\circ s\circ \pi(p).$$
  In particular, in  any canonical  local basis $\{{\cal X}_\a,{\cal V}_A\}$,  each  ${\cal X}_\a$ and ${\cal V}_A$ are (local)  projectable sections.
  Therefore, locally any  section  can be decomposed  as a linear functional combination of  projectable sections and  vertical sections.

\noindent Moreover,  if ${\cal Z}$ is  projectable and ${\cal V}$ is a vertical  section then,  $p\ap (p,0,[\hat{ \rho}({\cal Z(p)}),\hat{ \rho}({\cal V(p)}])$ is a vertical section of ${\T}{\cal M}\ap{\cal M}$.
It follows that we can define the Lie bracket $[{\cal Z},{\cal V}]$   as the section $(0,[\hat{\rho}({\cal Z}),\hat{\rho}({\cal V})])$. \\

{\it Assume now that  $({\cal A},M,\rho,[\;,\;]_{\cal A})$ is a pre-Lie algebroid}. On the anchored bundle  $({\T}{\cal M}, {\cal M}, \hat{\rho})$  there exists a natural Lie bracket $[\;,\;]_{\cal P}$\footnote{  this almost Lie  bracket  is denoted with index   $\cal P$   to recall that is it a prolongation of the almost Lie bracket on $\cal A$}  which is a natural prolongation of the given Lie bracket $[\;,\;]_{\cal A}$ in the sense that for projectable sections ${\cal Z}_1=(s_1\circ \pi,Z_1)$   and ${\cal Z}_2=(s_2\circ \pi,Z_2)$  we have
$$[{\cal Z}_1,{\cal Z}_2]_{\cal P}= ([s_1,s_2]_{\cal A}\circ\pi, [Z_1,Z_2]).$$
Since any section of  ${\T}{\cal M}$ can be locally written as a linear combination of projectable
sections, the definition of the Lie bracket $[\;,\;]_{\cal P}$ for arbitrary sections of ${\T}{\cal M}$ follows. Locally,  this bracket is characterized by:
\begin{eqnarray}\label{locbracalg}
[{\cal X}_\a,{\cal X}_\b]_{\cal P}=C_{\a\b}^\g{\cal X}_\g,\;\;\;\;\; [{\cal X}_\a,{\cal V}_B]_{\cal M}=0, \textrm{ and }   [{\cal V}_A,{\cal V}_B]_{\cal P}=0.
\end{eqnarray}

Moreover we obtain an   algebroid structure  $(\T{\cal M},{\cal M},\hat{\rho},[\;,\;]_{\cal P})$. To this structure  we can associate an almost differential denoted $d^{\cal P}$. If $\{{\cal X}_\a,{\cal V}_\a\}$ is a local canonical basis and $\{{\cal X}^\a,{\cal V}A\}$ is the associated dual basis, according to (\ref{locbracalg}),  we have:
\begin{eqnarray}\label{locdifalg}
 \;\;\;\;\;dx^i=\rho_\a^i{\cal X}^\a,\;\;\;\;\; d^{\cal P}{\cal X}^\g=-\dis\frac{1}{2}C_{\a\b}^\g{\cal X}^\a\wedge{\cal X}^\b ,\;\;\;\;\; dy^A={\cal V}^A,\;\;\;\;\; d^{\cal P}{\cal V}^A=0.
\end{eqnarray}

 \begin{Rem}\label{dPsbasic}${}$
 \begin{enumerate}
\item A $1$-form $\eta$ on $\T{\cal M}$ is semi-basic if and only if  in any canonical dual basis $\{{\cal X}^\a,{\cal V}^B\}$ of $\T{\cal M}$,  we have
$$\eta=\eta_\a{\cal X}^a.$$
In this case, according to (\ref{locdifalg}) $d^{\cal P}\eta$ has a decomposition of type:
$$d^{\cal P}\eta=\dis\frac{\p \eta_\a}{\p y^A}{\cal V}^A\wedge {\cal X}^\a+\dis\frac{1}{2}(\dis\frac{\p \eta_\b}{\p x^i}\rho_\g^i-\dis\frac{\p \eta_\g}{\p x^i}\rho_\b^i-\eta_\a C_{\b\g}^\a){\cal X}^\b\wedge{\cal X}^\g. $$
Thus   $d^{\cal P}\eta$ only depends on the bracket $[\;,\;]_{\cal A}$. Therefore,  if $\eta$ is semi-basic,  the condition "$d^{\cal P}\eta$ is semi-basic" depends only on the bracket $[\;,\;]_{\cal A}$. \\

\item  on $\T{\cal A}^*$  the Liouville form $\theta$ is semi basic, so  the canonical $2$ form $\O=-d^{\cal P}\theta$ depends on the choice of the bracket $[\;,\;]_{\cal A}$. In fact,  $\O$ has locally the following decomposition:
  \begin{eqnarray}\label{Oloc}
\O={\cal X}^\a\wedge{\cal P}_\a+\dis\frac{1}{2}\eta_\g C_{\a\b}^\a{\cal X}^\a\wedge{\cal X}^\b.
\end{eqnarray}
\noindent In particular, $\O$ is symplectic and the {\it  vertical bundle $\V{\cal A}^*$ is Lagrangian}.\\

\item Consider  a Hamiltonian vector field ${\cal X}_h$   on ${\cal A}^*$ associated with a smooth map $h: {\cal A}^*\ap \R$ (relative to the linear Poisson bracket associated with  $[\;,\;]_{\cal A}$). Moreover, consider the equation  $i_{\cal X}\O=d^{\cal P}h$ has a unique solution denoted $\overrightarrow{h}$ and according to (\ref {Oloc}), we have
$$\overrightarrow{h}=\dis\frac{\p h}{\p \eta_\a}{\cal X}_\a-(\rho_\a^i\frac{\p h}{\p x^i}+\eta_\g C_{\a\b}^\g\frac{\p h}{\p \eta_\b}){\cal P}^\a.$$
According to the local decomposition (\ref {locVH}), we have
$$\hat{\rho}(\overrightarrow{h})={\cal X}_h.$$
\end{enumerate}
\end{Rem}

\section{Foliated anchored bundle and pre-Lie algebroid }\label{prolfoliabundle}
\subsection{Integrability and involutivity of distributions}${}$\\
 In this subsection we recall the context used in  \cite{St},  \cite{Su} and \cite{Ba}.\\

A distribution ${\cal D}$ on a manifold $M$ is an assignment  $x\mapsto {\cal D}_x$ where ${\cal D}_x$ is a vector subspace of $T_xM$. A  local vector field $X$ on $M$ is a smooth  vector field  defined on an open set $O$  denoted  Dom$ (X)$. A local vector field $X$ is tangent to a distribution $\cal D$ if for any $x\in$Dom$(X)$ then $X(x)$ belongs to ${\cal D}_x$. We denote by $\Xi({\cal D})$ the sets of all  vector fields which are tangent to $\cal D$.  Note that $\Xi({\cal D})$ has a structure of module over the ring ${\cal C}^\infty(M)$.
A distribution is called {\it smooth} if for any $x\in M$, ${\cal D}_x$ is generated by the set $\{X(x), X\in \Xi({\cal D})\}$. \\
Any module  $\mathbf{A}$ of  smooth vector  fields  on $M$ generates a smooth distribution $\cal D_{\mathbf{A}}$ defined by  ${\cal D}_x: = $span$\{X(x),:\; X\in \mathbf{A}\}$. Of course such a module is contained in $\Xi(\cal D_{\mathbf{A}})$; in particular, such a distribution is smooth. But  we can have $\mathbf{A}\not=\Xi(\cal D_{\mathbf{A}})$ as the following example shows:

\begin{Ex}\label{balan}\cite{Ba}${}$\\
On $M=\R^2$ we consider the vector fields
$$X_1=(x^2+y^2)\dis\frac{\partial}{\partial x}\;\; X_2=(x^4+y^4)\dis\frac{\partial}{\partial y}$$
We denote by $\mathbf{A}$ the module generated by $X_1$ and $X_2$. The distribution $\cal D$ generated by $\mathbf{A}$  is such that ${\cal D}_{(x,y)}=T_{(x,y)}\R^2$ and  ${\cal D}_{(0,0)}=\{(0,0)\}$. Now we have
$$[X_1,X_2]=4x^2(x^2+y^2)\dis\frac{\partial}{\partial y}-2y(x^4+y^4)\dis\frac{\partial}{\partial x}$$
Therefore $[X_1,X_2]$ is tangent to $\cal D$. But it is easy to see that $[X_1,X_2]$ does not belong to $\mathbf{A}$
\end{Ex}

Given  an anchored bundle $({\cal A},M,\rho)$  on $M$, we denote by $\underline{\rho}:\Xi({\cal A})\ap \Xi(M)$ the morphism of modules induced by $\rho$. Then  $\underline{\rho}(\Xi({\cal A}))$ is a module of vector fields which generates the distribution $\rho({\cal A})$ but in general we have $\underline{\rho}(\Xi({\cal A}))\not=\Xi(\rho({\cal A})$ as the Example \ref{balan} shows.  A relation between modules of vector fields and anchored bundles is given by the following result of  \cite{Os} (pp. 122-123):

\begin{Pro}\label{Os}\cite{Os}${}$\\
For any smooth manifold M and any integer $k\geq O$ there is a one-to-one correspondence between smooth anchored  vector bundles $({\cal A},M,\rho)$ of rank $k$ such that $\underline{\rho}:\Xi({\cal A})\ap \Xi(M)$ is injective and isomorphism classes of  locally free modules of finite rank $k$ on $M$.\\
\end{Pro}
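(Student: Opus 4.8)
The plan is to deduce this from the Serre--Swan theorem, together with its standard refinement identifying finitely generated projective $C^\infty(M)$-modules with locally free modules of finite rank. Recall that, since $M$ is paracompact, the global sections functor $\Xi(-)=\Gamma(-)$ is an equivalence between the category of smooth vector bundles over $M$ and the category of finitely generated projective $C^\infty(M)$-modules; under this equivalence a bundle of rank $k$ corresponds to a locally free module of rank $k$, and $\Gamma$ is fully faithful, so bundle morphisms correspond bijectively to $C^\infty(M)$-linear maps on sections. I would realise the claimed correspondence as the passage between an anchored bundle and the locally free submodule of $\Xi(M)$ cut out by its anchor, reading ``locally free module of rank $k$ on $M$'' as a locally free $C^\infty(M)$-submodule of $\Xi(M)$ of rank $k$.

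For the forward direction, start with an anchored bundle $({\cal A},M,\rho)$ of rank $k$ for which $\underline{\rho}:\Xi({\cal A})\ap \Xi(M)$ is injective. The module $\Xi({\cal A})$ is finitely generated projective, hence locally free of rank $k$; since $\underline{\rho}$ is injective, its image $\mathbf{A}:=\underline{\rho}(\Xi({\cal A}))$ is a submodule of $\Xi(M)$ isomorphic to $\Xi({\cal A})$, and so is itself locally free of rank $k$. This assigns to the anchored bundle the locally free rank-$k$ submodule $\mathbf{A}\subseteq \Xi(M)$, and isomorphic anchored bundles (in the sense $\rho=\rho'\circ\Phi$ for a bundle isomorphism $\Phi$ over the identity) produce the same submodule.

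For the backward direction, given a locally free submodule $\mathbf{A}\subseteq \Xi(M)$ of rank $k$, first observe that by paracompactness $\mathbf{A}$ is finitely generated projective, so Serre--Swan produces a vector bundle $\tau:{\cal A}\ap M$ of rank $k$ together with an isomorphism $\Xi({\cal A})\cong \mathbf{A}$. The composite $\Xi({\cal A})\xrightarrow{\;\sim\;}\mathbf{A}\hookrightarrow \Xi(M)=\Gamma(TM)$ is a $C^\infty(M)$-linear map, so by full faithfulness of $\Gamma$ it equals $\underline{\rho}$ for a unique bundle morphism $\rho:{\cal A}\ap TM$. Because the inclusion $\mathbf{A}\hookrightarrow \Xi(M)$ is injective, the resulting $\underline{\rho}$ is injective, and $({\cal A},M,\rho)$ is an anchored bundle of the required type. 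That the two constructions are mutually inverse on isomorphism classes is again a formal consequence of the equivalence of categories: a module isomorphism intertwining the two embeddings into $\Xi(M)$ corresponds precisely to an isomorphism of anchored bundles.

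The main obstacle is the backward step, and specifically the need to exploit the hypothesis that $\mathbf{A}$ is \emph{locally free}: an arbitrary finitely generated submodule of $\Xi(M)$ need be neither projective nor of constant rank, as Example \ref{balan} shows, where the module generated by $X_1,X_2$ degenerates at the origin and fails to be locally free. It is exactly local freeness that licenses the invocation of Serre--Swan and guarantees that the reconstructed bundle has constant rank $k$ with injective anchor. The remaining verifications --- naturality of the correspondence and its compatibility with isomorphisms --- are formal consequences of the fact that $\Gamma$ is an equivalence.
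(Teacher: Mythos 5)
The paper offers no proof of Proposition \ref{Os}: it is quoted verbatim from \cite{Os} (pp.~122--123), so there is nothing internal to compare your argument against. Your Serre--Swan route is correct and is essentially the standard way to establish the result: for a connected paracompact (hence second countable) manifold the section functor is an equivalence between vector bundles and finitely generated projective $C^\infty(M)$-modules, full faithfulness converts the inclusion $\mathbf{A}\hookrightarrow \Xi(M)$ into a bundle morphism $\rho$ with $\underline{\rho}$ injective, and local freeness of rank $k$ is exactly what rules out the degeneration exhibited in Example \ref{balan}. Two small points are worth making explicit. First, you correctly resolve an ambiguity in the statement: read literally, ``isomorphism classes of locally free modules'' would make the anchor irrelevant, so the intended objects must be locally free \emph{submodules} of $\Xi(M)$ of rank $k$, which is also how the paper uses the result in Proposition \ref{conform}~(3). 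Second, the step ``a locally free submodule of rank $k$ is finitely generated projective'' deserves a word: from local freeness one gets local frames $X_1,\dots,X_k\in\mathbf{A}$ over an open cover, and the resulting transition matrices let one either build the bundle ${\cal A}$ directly by gluing or, via a partition of unity, exhibit $\mathbf{A}$ as a direct summand of a free module so that Serre--Swan applies; either completion is routine, and the rest of your argument (injectivity of $\underline{\rho}$, mutual inverseness up to isomorphism of anchored bundles) is a formal consequence of the equivalence, as you say.
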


A distribution is called  {\it  punctually integrable  at $x$}  in $ M$ if there exists  a submanifold $P$ of $M$ such that $x$ belongs to $N$ and $T_yP={\cal D}_y$ for all $y\in P$. In this case, $P$ is called an integral manifold of $\cal D$. The distribution is called {\it integrable} if it is punctually integrable at each point of $M$ and  we say that $\cal D$ is  {\it  Stefan-Sussman integrable}. In this case we have a {\it global integrability} property in the following sense:

consider the binary relation
$$x{\cal R}y \textrm { iff there exists an integral manifold } P \textrm{  of  } {\cal D} \textrm{ such that } x ,y \in  P);$$
then $\cal R$ is an equivalence relation and the equivalence class $N(x)$ of $x$ has a natural structure of connected  manifold whose dimension is dim$({\cal D}_x)$;  if $\iota: N(x)\ap M$ is the natural inclusion then $(N,\iota)$ is an {\it immersed submanifold} of $M$.\\
Moreover  $N(x)$, is a maximal integral manifold of ${\cal D}$  in the following way: for any integral manifold $P$ of $\cal D$, such that $P\cap N(x)$ is not empty  then $P\subset N(x)$. \\
The set of all the equivalence classes is called the {\it foliation } defined by $\cal D$ and any equivalence class $N$ is called a leaf of the foliation. \\

A smooth distribution $\cal D$ is called {\it involutive} if for any two vector fields $X$ and $Y$ which are tangent to $\cal D$ the usual Lie bracket $[X,Y]$ is also tangent to $\cal D$. When $\cal D$ is  regular {\it i.e.}  $\cal D$ is a subbundle of $TM$ the Frobenius Theorem asserts that  $\cal D$ is integrable. However, in the general case of a smooth distribution it is well known that this result is no more true (see \cite{St} and \cite{Su} for counterexamples).  Nevertheless if $\cal D$ is integrable then $\cal D$ is involutive ({\it cf.} \cite{Ba} Proposition 2.3). \\

 A module  $\mathbf{ A}$ of  smooth vector  fields  on $M$ is called {\it involutive} if for any two vector fields $X$ and $Y$of $\mathbf{A}$ the usual Lie bracket $[X,Y]$  belongs to $\mathbf{A}$. Note that when $\cal D$  is generated by a module $\mathbf{A}$ and if $\cal D$ is involutive it does not implies that $\mathbf{A}$ is involutive ({\it cf.} Example \ref{balan}). However we have the following result of \cite{Ba} Proposition 2.6 

\begin{Pro}\label{balan2.6}\cite{Ba}${}$\\
If a smooth distribution $\cal D$ is integrable then  the module $\Xi({\cal D})$ is involutive.
\end{Pro}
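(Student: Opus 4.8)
The plan is to reduce the bracket computation to each leaf of the foliation, where everything becomes an honest Lie bracket of vector fields on a manifold. Fix $X, Y \in \Xi({\cal D})$; I want to show that $[X,Y](x) \in {\cal D}_x$ for every $x \in M$, which is exactly the assertion $[X,Y] \in \Xi({\cal D})$, i.e. that $\Xi({\cal D})$ is involutive.

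First I would fix an arbitrary point $x \in M$ and consider the leaf $N = N(x)$ through $x$ furnished by the global integrability property recalled above, together with its natural inclusion $\iota : N \ap M$, which realizes $N$ as an immersed submanifold with $T_y N = {\cal D}_y$ for every $y \in N$. Since $X$ and $Y$ are tangent to $\cal D$, for each $y \in N$ we have $X(y), Y(y) \in {\cal D}_y = T_y\iota(T_y N)$, so $X$ and $Y$ restrict along $N$ to uniquely determined sections $\tilde X, \tilde Y$ of $TN$, characterized by $T\iota \circ \tilde X = X\circ \iota$ and $T\iota \circ \tilde Y = Y \circ \iota$.

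The key technical step is to check that $\tilde X$ and $\tilde Y$ are genuinely smooth vector fields on $N$. This is where the structure of the leaf intervenes: in the Stefan-Sussmann theory the leaf $N$ is not merely an immersed submanifold but a weakly embedded (initial) submanifold, meaning that any smooth map into $M$ whose image lies in $N$ is automatically smooth as a map into $N$ (see \cite{St}, \cite{Su}). Applying this to the smooth map $X \circ \iota : N \ap TM$, which factors through the immersed submanifold $TN \subset TM$, yields that $\tilde X = (T\iota)^{-1}\circ (X\circ\iota)$ is smooth; likewise for $\tilde Y$. I expect this smoothness verification to be the main obstacle, precisely because $\cal D$ is only a singular (non-constant-rank) distribution, so there are no adapted flat coordinates in general and one must genuinely invoke the initial-submanifold property of the leaves rather than a Frobenius-type normal form.

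Once $\tilde X$ and $\tilde Y$ are known to be smooth, the conclusion is immediate. By construction $\tilde X$ and $\tilde Y$ are $\iota$-related to $X$ and $Y$, and the naturality of the Lie bracket under $\iota$-relatedness gives $T\iota \circ [\tilde X, \tilde Y] = [X,Y]\circ \iota$. Evaluating at $x$ then shows $[X,Y](x) = T_x\iota\big([\tilde X, \tilde Y](x)\big) \in T_x N = {\cal D}_x$. As $x \in M$ was arbitrary, $[X,Y]$ is tangent to $\cal D$ at every point, hence $[X,Y] \in \Xi({\cal D})$, which proves that $\Xi({\cal D})$ is involutive.
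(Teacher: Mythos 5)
Your argument is correct, but be aware that the paper itself offers no proof of this statement: it is quoted directly from \cite{Ba} (Proposition 2.6 there), so there is no in-paper argument to compare against. On its own merits your proof works: restricting $X$ and $Y$ to a leaf, checking smoothness of the restrictions, and invoking naturality of the Lie bracket under $\iota$-relatedness does yield $[X,Y](x)\in T_xN={\cal D}_x$. Two remarks on the route you chose. First, the step you single out as the main obstacle is heavier than necessary: you appeal to the weakly-embedded (initial) property of Stefan--Sussmann leaves, and in fact you need it for $T\iota:TN\to TM$ rather than for $\iota$ itself; that property is a genuine theorem in its own right. It can be bypassed entirely, because an injective immersion is locally an embedding: around any $y\in N$ one may replace $N$ by an embedded piece $P$ with $T_zP={\cal D}_z$, and the restriction to an embedded submanifold of a smooth vector field tangent to it is automatically a smooth vector field on it (a slice-chart computation). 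Second, for the pointwise conclusion $[X,Y](x)\in{\cal D}_x$ you never need the maximal leaf $N(x)$ or the global integrability package at all: punctual integrability already furnishes an integral manifold $P$ through $x$, and the same restriction-plus-naturality argument on an embedded piece of $P$ finishes the proof. So the proof is valid, just routed through more machinery than the statement requires.
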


For an anchored bundle we have:

\begin{Pro}\label{involanchor}${}$\\
Let  $({\cal A},M,\rho)$ be an anchored bundle on $M$. 
\begin{enumerate}
\item If $\rho({\cal A})$ is integrable and  $\underline{\rho}(\Xi({\cal A}))=\Xi(\rho({\cal A})$  then $\underline{\rho}(\Xi({\cal A}))$ is involutive. 
\item if $\underline{\rho}(\Xi({\cal A}))$ is involutive then  $\rho({\cal A})$ is integrable
\end{enumerate}
\end{Pro}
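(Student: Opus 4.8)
\emph{Part (1)} I would dispose of immediately. Since $\rho({\cal A})$ is assumed integrable, Proposition \ref{balan2.6} applied to ${\cal D}=\rho({\cal A})$ shows that the module $\Xi(\rho({\cal A}))$ of all vector fields tangent to $\rho({\cal A})$ is involutive. Under the standing hypothesis $\underline{\rho}(\Xi({\cal A}))=\Xi(\rho({\cal A}))$ the two modules coincide, so $\underline{\rho}(\Xi({\cal A}))$ is involutive; no further argument is required.

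\emph{Part (2)} is the real content, and the plan is to reach integrability through the Stefan--Sussmann invariance criterion recalled from \cite{St}, \cite{Su}, \cite{Ba}. Write $\mathbf{A}=\underline{\rho}(\Xi({\cal A}))$ and let ${\cal D}=\rho({\cal A})$ be the smooth distribution it generates. Because integrability is a local statement, I would fix $p\in M$ together with a chart $U$ over which ${\cal A}$ admits a frame $\{e_\a\}_{\a=1}^{k}$, and set $X_\a=\rho(e_\a)$, so that ${\cal D}=\mathrm{span}\{X_1,\dots,X_k\}$ on $U$. The first thing to check is that these \emph{local} fields may be used as elements of the \emph{global} module $\mathbf{A}$ near $p$: multiplying $e_\a$ by a bump function equal to $1$ on a neighbourhood of $p$ and supported in $U$ yields a global section of ${\cal A}$ whose image under $\rho$ agrees with $X_\a$ near $p$. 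Granting this, involutivity of $\mathbf{A}$ and the fact that $X_1,\dots,X_k$ generate $\mathbf{A}$ over $C^\infty(U)$ give structure equations
\[
[X_\a,X_\b]=\sum_{\g}c_{\a\b}^{\g}\,X_\g ,\qquad c_{\a\b}^{\g}\in C^\infty(U).
\]

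The heart of the proof is to convert these structure equations into invariance of ${\cal D}$ under the local flows $\varphi_t^{X_\a}$ of the generators. Fixing $\a$ and setting $Z_\b(t)=(\varphi_t^{X_\a})^{*}X_\b$, I would use ${\cal L}_{X_\a}X_\b=[X_\a,X_\b]$ together with the formula $\frac{d}{dt}(\varphi_t^{X_\a})^{*}X_\b=(\varphi_t^{X_\a})^{*}{\cal L}_{X_\a}X_\b$ to obtain the closed linear system $\dot Z_\b(t)=\sum_\g\big(c_{\a\b}^{\g}\circ\varphi_t^{X_\a}\big)Z_\g(t)$, with smooth coefficients and initial data $Z_\b(0)=X_\b$. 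Existence and uniqueness for this linear system force each $Z_\b(t)$ to be a $C^\infty$-combination of $X_1,\dots,X_k$, hence a section of ${\cal D}$; reading this back through the flow shows that $(\varphi_t^{X_\a})_{*}$ preserves ${\cal D}$. Thus ${\cal D}$ is generated by the family $\{X_\a\}$ and is invariant under their flows, and Sussmann's theorem then yields that ${\cal D}=\rho({\cal A})$ is integrable.

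I expect the invariance step, not the localization, to be the main obstacle: it is essential that the $Z_\b(t)$ satisfy a \emph{closed} linear system, and this closedness is exactly where involutivity of the finitely generated module $\mathbf{A}$ is used — tangency of each bracket $[X_\a,X_\b]$ to ${\cal D}$ alone would not suffice, as Example \ref{balan} illustrates. The bump-function reduction is precisely what permits the global involutivity hypothesis to be invoked at the level of the local generators $X_\a$, so both ingredients genuinely enter.
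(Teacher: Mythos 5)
Your proof is correct and takes essentially the same route as the paper: Part (1) is exactly the paper's argument (a direct application of Proposition \ref{balan2.6} to ${\cal D}=\rho({\cal A})$ combined with the hypothesis $\underline{\rho}(\Xi({\cal A}))=\Xi(\rho({\cal A}))$), and Part (2) rests, as in the paper, on Theorem 4.2 of \cite{Su}. The only difference is that the paper cites Sussmann as a black box, while you supply the intermediate Hermann-type step (the bump-function localization and the closed linear ODE showing that involutivity of the locally finitely generated module $\underline{\rho}(\Xi({\cal A}))$ forces flow-invariance of $\rho({\cal A})$) before invoking the invariance criterion --- a correct and harmless elaboration of the same argument.
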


\begin{proof}${}$\\
Point (1) is a corollary of Proposition \ref{balan2.6}. Point (2) is a consequence of Theorem 4.2 of \cite{Su}.

\end{proof}

 \subsection{Foliated anchored bundle }${}$\\
Given   an anchored $({\cal A},M,\rho)$  we consider:
  
  \begin{Def}${}$\\
  The anchored bundle $({\cal A},M,\rho)$ is called foliated if the module $\underline{\rho}(\Xi({\cal A}))$ is involutive.
  \end{Def}
  Note that if $({\cal A},M,\rho)$  is a foliated anchored bundle, according to Proposition \ref{involanchor} the distribution $\rho({\cal A})$ is integrable. However the converse is not true according to Example \ref{balan}: \\if we consider the anchored bundle $(T\R^2,\R^2,\rho)$ on $\R^2$ with $\rho(\dis\frac{\p}{\p x})=X_1$ and $\rho(\dis\frac{\p}{\p y})=X_2$, then the distribution $\rho(T\R^2)$ is integrable (since it is punctually integrable), but $[\rho(\dis\frac{\p}{\p x}),\rho(\dis\frac{\p}{\p y})]$ does not belongs to the module $\underline{\rho}(\Xi(T\R^2))$.\\
  
  Now, as we have seen in the previous section if $({\cal A},M,\rho)$  is a Lie algebroid or pre-Lie algebroid then $\rho$ induces a Lie morphism and so $({\cal A},M,\rho)$ is a foliated anchored bundle. We have the following general examples:
  
  \begin{Pro}\label{conform}${}$
  \begin{enumerate}
  \item If $({\cal A},M,\rho)$ is a foliated anchored bundle, so is the anchored bundle $({\cal A},M,\Phi\rho)$ for any smooth function $\Phi$ on $M$.
 \item If $({\cal A},M,\rho)$  is a foliated anchored bundle
  then given any vector bundle ${\cal A}'$ on $M$ we have a structure of foliated anchored bundle on ${\cal A}\oplus {\cal A}'$.
 \item if $\mathbf{A}$ is a locally free involutive module of smooth vector fields of rank $k$, there exists an anchored bundle $({\cal A},M,\rho)$ such that $\underline{\rho}(\Xi({\cal A}))=\mathbf{A}$ and which is a foliated anchored bundle.
  \end{enumerate}
  \end{Pro}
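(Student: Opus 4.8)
The plan is to handle the three parts independently; in each case the anchored bundle is foliated once we check that the image module of the chosen anchor is involutive. For part (1), observe that $\Phi\rho:a\mapsto\Phi(\tau(a))\,\rho(a)$ is again a bundle morphism ${\cal A}\to TM$, and since $\rho$ is $C^\infty(M)$-linear we have $\Phi\rho(s)=\rho(\Phi s)$, so its image module is $\underline{\Phi\rho}(\Xi({\cal A}))=\{\Phi\rho(s):s\in\Xi({\cal A})\}$, a submodule of $\Xi(M)$. I would take two elements $\Phi\rho(s),\Phi\rho(s')$ and expand the Lie bracket via the Leibniz identity $[fX,gY]=fg[X,Y]+f(Xg)Y-g(Yf)X$ with $f=g=\Phi$, $X=\rho(s)$, $Y=\rho(s')$, getting
$$[\Phi\rho(s),\Phi\rho(s')]=\Phi^2[\rho(s),\rho(s')]+\Phi(\rho(s)\Phi)\rho(s')-\Phi(\rho(s')\Phi)\rho(s).$$
By hypothesis $[\rho(s),\rho(s')]=\rho(s'')$ for some $s''\in\Xi({\cal A})$, and pulling each scalar inside $\rho$ rewrites the three summands as $\Phi\rho(\Phi s'')$, $\Phi\rho((\rho(s)\Phi)s')$ and $-\Phi\rho((\rho(s')\Phi)s)$; their sum equals $\Phi\rho$ of a single section of ${\cal A}$, hence lies in $\underline{\Phi\rho}(\Xi({\cal A}))$, proving involutivity.

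For part (2), I would equip ${\cal A}\oplus{\cal A}'$ with the projection anchor $\tilde\rho:=\rho\circ\mathrm{pr}_{{\cal A}}$, i.e.\ $\tilde\rho(a,a')=\rho(a)$. Every section is a pair $(s,s')$, so $\underline{\tilde\rho}(\Xi({\cal A}\oplus{\cal A}'))=\{\rho(s):s\in\Xi({\cal A})\}=\underline{\rho}(\Xi({\cal A}))$, which is involutive by hypothesis, and the claim follows. For part (3), I would invoke Proposition \ref{Os}: being locally free of finite rank $k$, the module $\mathbf{A}$ is isomorphic to $\Xi({\cal A})$ for a rank-$k$ bundle ${\cal A}\to M$ via some module isomorphism $\Theta$, and composing $\Theta$ with the inclusion $\mathbf{A}\hookrightarrow\Xi(M)$ gives a $C^\infty(M)$-module morphism $\Xi({\cal A})\to\Xi(TM)$ which is induced by a unique bundle morphism $\rho:{\cal A}\to TM$ with $\underline{\rho}$ injective and $\underline{\rho}(\Xi({\cal A}))=\mathbf{A}$. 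Since $\mathbf{A}$ is assumed involutive and equals $\underline{\rho}(\Xi({\cal A}))$, the anchored bundle $({\cal A},M,\rho)$ is foliated by definition.

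The main obstacle is the bookkeeping in part (1): one must verify that every term produced by the Leibniz expansion falls back into the \emph{single} image module $\underline{\Phi\rho}(\Xi({\cal A}))$, which works precisely because the $C^\infty(M)$-linearity of $\rho$ lets the scalar $\Phi$ and the functions $\rho(s)\Phi$, $\rho(s')\Phi$ be absorbed inside $\rho$. By contrast, parts (2) and (3) are formal once the correct anchor and the application of Proposition \ref{Os} are identified.
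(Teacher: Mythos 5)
Your proof is correct and follows essentially the same route as the paper's: the Leibniz expansion of $[\Phi\rho(s),\Phi\rho(s')]$ with the scalar factors absorbed into $\rho$ for part (1), the anchor extended by zero on the summand ${\cal A}'$ for part (2), and the Osborn correspondence for part (3). If anything your version is slightly cleaner: in part (3) you invoke Proposition \ref{Os} where the paper's text cites Proposition \ref{balan2.6} (evidently a slip), and you avoid the paper's unnecessary intermediate claim that $\mathbf{A}=\Xi(\rho({\cal A}))$.
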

  \begin{proof}${}$\\
  For point (1), first of all note that $\Phi\rho:{\cal A}\ap TM$ is a bundle morphism. Now if $s$ and $s'$ are two sections of $\cal A$ we have
  \begin{eqnarray}\label{crochet}
  [\Phi\rho(s),\Phi\rho(s')]=\Phi (\Phi[\rho(s),\rho(s')]+d\Phi(\rho(s))\rho(s')-d\Phi(\rho(s')\rho(s))
\end{eqnarray}
 
 As $({\cal A},M,\rho)$ is a foliated anchored bundle, it follows that $\Phi[\rho(s),\rho(s')]$ belongs to the module $\underline{\rho}(\Xi({\cal A}))$. Therefore the second members of (\ref{crochet}) belongs to $\Phi\underline{\rho}(\Xi({\cal A}))$. This ends the proof.\\
 
 \noindent For point (2) take $\rho':{\cal A}\oplus {\cal A}'\ap TM$ defined by $\rho'_{| {\cal A}}=\rho$ and $\rho'_{| {\cal A}'}=0$ and the bracket $[\;,\;]'$ characterized by: 

 $[X,Y]'=[X,Y]$  for $X$ and $Y$ sections of $\cal A$;
 
 $[X,X']'=0 $ for sections $X$ of $\cal A$ and $X'$ of $\cal K$;
 
 $[X',Y']'=[X',Y']_{\cal K}$ where  $[\;,\;]_{\cal K}$ is any almost Lie bracket on $\cal K$.\\
 
For  Point (3) according to  Proposition \ref{balan2.6}, there exists an anchored bundle $({\cal A},M,\rho)$ such that  $\underline{\rho}(\Xi({\cal A}))=\mathbf{A}$. Now, since $\mathbf{A}$ is locally free we must have $\mathbf{A}=\Xi(\rho({\cal A}))$. Finally as $\mathbf{A}$ is involutive   $({\cal A},M,\rho)$ is a foliated anchored bundle.\\
  \end{proof}
  
  The link between pre-Lie algebroid and foliated anchored bundle is given in the first part of the following result:\\

  \begin{Pro}\label{foliatedquasi}${}$
  \begin{enumerate}
 \item[(i)] The anchored bundle $({\cal A},M,\rho)$ is a foliated anchored bundle if an only if there is an almost Lie bracket $[\;,\;]_{\cal A}$ such that $({\cal A},M,\rho,[\;,\;]_{\cal A})$ is a pre-Lie algebroid.
 \item[(ii)] Let $N$ be a leaf of a foliated anchored bundle  $({\cal A},M,\rho)$. If ${\cal A}_N=\tau^{-1}(N)$ then ${\cal K}_N={\cal A}_N\cap \ker \rho$ is a subbundle of ${\cal A}_N$ and the the restriction of $\rho$  to ${\cal A}_N$ induces  a quotient morphism $\rho_N: {\cal A}_N/{\cal K}_N\ap TM$ which  is an isomorphism onto $TN$. Moreover, given any almost Lie bracket $[\;,\;]_{\cal A}$ such that  $({\cal A},M,\rho,[\;,\;]_{\cal A})$ is a pre-Lie algebroid then $[\;,\;]_{\cal A}$ induces an Lie bracket $[\;,\;]_N$ on the anchored bundle $( {\cal A}_N/{\cal K}_N,N,\rho_N)$ which is independent of such a choice of almost bracket $[\;,\;]_{\cal A}$. In particular $\rho_N$ gives rise to an isomorphism between the Lie algebra $(\Xi({\cal A}_N/{\cal K}_N), [\;,\;]_N)$ and $(\Xi(N),[\;,\;])$.\\
   \end{enumerate}
  \end{Pro}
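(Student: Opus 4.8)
The plan is to treat the equivalence (i) and the leaf statement (ii) separately: the first by a gluing argument and the second by combining a constant-rank observation with the Lie-morphism identity.

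For (i), the forward implication is immediate. If $[\;,\;]_{\cal A}$ makes $({\cal A},M,\rho,[\;,\;]_{\cal A})$ a pre-Lie algebroid then $\rho$ is a Lie morphism, so for global sections $s,t$ one has $[\rho s,\rho t]=\rho([s,t]_{\cal A})\in\underline{\rho}(\Xi({\cal A}))$; since every element of $\underline{\rho}(\Xi({\cal A}))$ is of the form $\rho s$, the module is involutive. For the converse I would build the bracket locally and glue. On a trivializing chart with frame $\{e_\alpha\}$, involutivity lets me write $[\rho e_\alpha,\rho e_\beta]=C_{\alpha\beta}^\gamma\rho e_\gamma$ with smooth $C_{\alpha\beta}^\gamma=-C_{\beta\alpha}^\gamma$, after first extending the $e_\alpha$ to global sections by a bump function so as to legitimately invoke involutivity. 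Setting $[e_\alpha,e_\beta]_U=C_{\alpha\beta}^\gamma e_\gamma$ and extending by $\R$-bilinearity and the Leibniz rule gives a local almost Lie bracket, and a direct expansion of $[\rho X,\rho Y]$ shows $\rho([X,Y]_U)=[\rho X,\rho Y]$ on $U$. Choosing a partition of unity $\{\phi_U\}$ subordinate to such a cover, I combine these into $[X,Y]_{\cal A}=\sum_U\phi_U[X,Y]_U$. Because almost Lie brackets form an affine space modelled on skew $(2,1)$-tensors (Proposition \ref{setALB}) and $\sum_U\phi_U=1$, this affine combination is again an almost Lie bracket; applying $\rho$ term by term gives $\rho([X,Y]_{\cal A})=\sum_U\phi_U[\rho X,\rho Y]=[\rho X,\rho Y]$, so $\rho$ is a Lie morphism.

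For (ii) I first settle the bundle statements. On the leaf $N$ the distribution $\rho({\cal A})$ has constant rank $\dim N$, so $\rho_x({\cal A}_x)=T_xN$ for every $x\in N$ and the restricted morphism $\rho|_{{\cal A}_N}\colon{\cal A}_N\to TM|_N$ has constant rank $\dim N$; hence its kernel ${\cal K}_N$ is a subbundle and the induced map $\rho_N\colon{\cal A}_N/{\cal K}_N\to TN$ is a vector bundle isomorphism over $N$. This lets me transport the Lie bracket of vector fields on $N$ to a bracket on $\Xi({\cal A}_N/{\cal K}_N)$ by $[\bar X,\bar Y]_N:=\rho_N^{-1}([\rho_N\bar X,\rho_N\bar Y])$, which is tautologically a Lie bracket turning $\rho_N$ into a Lie-algebra isomorphism; this definition visibly does not see $[\;,\;]_{\cal A}$.

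It then remains to check that this same bracket is the one \emph{induced} by any pre-Lie bracket $[\;,\;]_{\cal A}$, i.e. that $\overline{[X,Y]_{\cal A}|_N}=[\overline{X|_N},\overline{Y|_N}]_N$ for sections $X,Y$ of ${\cal A}$. Since $\rho X$ lies in the distribution it is tangent to $N$, so $\rho X|_N=\rho_N\overline{X|_N}$, and the Lie-morphism identity gives $\rho([X,Y]_{\cal A})|_N=[\rho X,\rho Y]|_N=\rho_N([\overline{X|_N},\overline{Y|_N}]_N)$; applying $\rho_N^{-1}$ yields the claim. The crux, and the one genuinely nontrivial point, is the well-definedness of the induced bracket: if $X$ vanishes along $N$ (in particular if $X|_N$ is a section of ${\cal K}_N$) then $[X,Y]_{\cal A}|_N$ must again lie in ${\cal K}_N$. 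Through the Lie morphism this reduces to the lemma that for a vector field $V$ tangent to $N$ and a vector field $W$ vanishing on $N$ one has $[V,W]|_N=0$; in coordinates adapted to $N$ this holds because the tangential derivatives of the components of $W$ vanish along $N$. The tangency of $\rho({\cal A})$ to the leaves is exactly the geometric input that makes this work, and once well-definedness is secured the identification with the pullback bracket yields the Jacobi identity and the independence from the choice of $[\;,\;]_{\cal A}$ for free.
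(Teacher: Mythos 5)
Your proof is correct. For part (i) you follow essentially the same strategy as the paper: involutivity gives, on each trivializing chart, structure functions $B_{\a\b}^\g$ with $[\rho e_\a,\rho e_\b]=B_{\a\b}^\g\rho(e_\g)$, one builds a local almost Lie bracket realizing the Lie-morphism identity, and one glues with a partition of unity using the affine structure of the set of almost Lie brackets (the paper phrases the gluing as adding a globally patched skew $(2,1)$-tensor $\Theta=\sum_k\phi_k\Theta_k$ to a fixed connection bracket $[\;,\;]_\nabla$, which is the same thing as your affine combination $\sum_U\phi_U[\;,\;]_U$); your remark about extending the local frame by a bump function before invoking involutivity is a legitimate point of care the paper glosses over. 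For part (ii) you take a genuinely different, and tidier, route: rather than constructing the quotient bracket on ${\cal A}_N/{\cal K}_N$ directly and then verifying the Jacobi identity by exhibiting an adapted frame $\{\bar e_\a\}$ with $[\bar e_\a,\bar e_\b]_N=0$ and deducing independence from the remark that two pre-Lie brackets differ by a $\ker\rho$-valued tensor, you define $[\;,\;]_N$ as the pullback of the Lie bracket of $\Xi(N)$ under the isomorphism $\rho_N$ — so Jacobi, the Lie-algebra isomorphism, and independence of $[\;,\;]_{\cal A}$ are all tautological — and you reduce the entire content of the statement to the single well-definedness lemma that $[V,W]|_N=0$ when $V$ is tangent to $N$ and $W$ vanishes along $N$. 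That lemma is exactly the coordinate computation hidden inside the paper's adapted-frame argument, so both proofs rest on the same geometric fact; your organization makes clearer which claims are formal consequences of $\rho_N$ being an isomorphism and which one actually uses tangency of $\rho({\cal A})$ to the leaf. The only points worth making explicit are that the frame coefficients $C_{\a\b}^\g$ in (i) should be antisymmetrized (they are not canonically unique when $\rho$ has a kernel), and that in (ii) the well-definedness check must cover the choice of extension of a section of ${\cal A}_N$ to a neighbourhood in $M$ — both are handled by the arguments you already give.
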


  \begin{Rem}\label{toutcbracalg}${}$\\
  Let $[\;,\;]_1$ and $[\;,\;]_2$ be two almost brackets such that $({\cal A},M,\rho),[\;,\;]_k)$ is a pre-Lie algebroid for $k=1,2$. If $\Theta=[\;,\;]_1-[\;,\;]_2$ then the ${\cal A}$-tensor $\Theta $ takes values in $\ker \rho$ and conversely.\\
  \end{Rem}

\begin{proof}${}$\\
For Part (i) we only have to prove that if $({\cal A},M,\rho)$ is foliated anchored bundle,  there exists an almost bracket $[\;,\;]_{\cal A}$ on $({\cal A},M,\rho)$ such that $({\cal A},M,\rho,[\;,\;]_{\cal A})$ is a pre-Lie algebroid.
 First of all, since $M$ is paracompact,  there always exists a $\cal A$-linear connection $\nabla$ on an anchored bundle $({\cal A},M,\rho)$.  We denote by $[\;,\;]_\nabla$ the associated almost Lie bracket. (see Section \ref{ALbracket}). Therefore, locally over an open $U$ we have:

 $[e_\a,e_\b]=C_{\a,\b}^\g e_\g,\;\;\;\;\rho(e_\a)=\rho_\a^i\dis\frac{\p}{\p x^i}, \textrm{ and } [\rho(e_\a),\rho(e_\b)]=(\rho_\a^i\dis\frac{\p\rho_\b^j}{\p x^i}-\rho_\b^i\dis\frac{\p\rho_\a^j}{\p x^i})\dis\frac{\p}{\p x^j}$.

 Since the module $\underline{\rho}(\Xi({\cal A})$ is involutive, there exists a family of functions $B_{\a\b}^\g$ on $U$ such that
 \begin{eqnarray}\label{locbrac}
[\rho(e_\a),\rho(e_\b)]=B_{\a\b}^\g \rho_\g^j \dis\frac{\p}{\p x^j}.
\end{eqnarray}
It follows that $\Theta_{\a\b}^\g=B_{\a\b}^\g-C_{\a\b}^\g$ are the components of an antisymmetric $\cal A$-tensor  $\Theta_U$ of type $(2,1)$   over $U$  such that
\begin{eqnarray} \label{locabrac}
[\rho(e_\a),\rho(e_\b)]=\rho\left([e_\a,e_\b]+\Theta_U(e_\a,e_\b)\right).
\end{eqnarray}
Denote by  $[\;,\;]'_U=[\;,\;]+\Theta_U(\;,\;)$   
By construction $[\;,\;]'_U$ is an almost Lie bracket ({\it cf.} Remarque \ref{almquasi} (1)) which satisfies  the relation
\begin{eqnarray}\label{locmorph}
[\rho(X),\rho(Y)]_{| U}=\rho[X,Y]'_{U}.
\end{eqnarray}
Now, choose an open cover
$\{U_k\}_{k\in K}$  of  $M$ by open sets associated to a partition of unity $\{\phi_k\}_{k\in K}$, such
that $\cal A$  is trivializable on each open $ U_k$. Now, given an open set $U_k$,  there
exists an antisymmetric $\cal A$-tensor  of type $(2,1)$  $\Theta_k$ over $U_k$  which satisfies the Equation (\ref{locmorph}) for some choice of a local basis of $\cal A$ on $U_k$.
 Then $\Theta=\dis\sum_{k\in K}\phi_k \Theta_k$ is an antisymmetric $\cal A$-tensor  of type $(2,1)$ over $M$. It follows that 
  $[\;,\;]'=[\;,\;]_\nabla+\Theta$ is an almost  Lie bracket ({\it cf.} Remarque \ref{almquasi} (1)). It remains to show that $\rho$ induces a morphism of Lie algebras between  the Lie algebras $(\Xi({\cal A}), [\;,\;]_{\nabla'}$ and $\Xi(M),[\;,\;])$.  Therefore we have $[\;,\;]'=[\;,\;]_\nabla+\dis\sum_{k\in K}\phi_k \Theta_k=\dis\sum_{k\in K}\phi_k[\;,\;]'_{U_k}$. Thus  we have $[\rho(X),\rho(Y)]=\dis\sum_{k\in K}\phi_k [\rho(X),\rho(Y)]_{| U_k}$. Now, according to the equation (\ref{locmorph}) on each $U_k$ we obtain $[\rho(X),\rho(Y)]_{| U_k}=\rho[X,Y]'_{U_k}$. It follows that we get
  $[\rho(X),\rho(Y)]=\rho[X,Y]_{\nabla'}$. Thus $({\cal A},M,\rho,[\;,\;]')$ is a pre-Lie algebroid.\\

  According to the notations introduced in Part (ii), if $N$ is a leaf of $\rho({\cal A})$, it is  clear that  ${\cal K}_N$ is a subbundle of ${\cal A}_N$  and $\rho_N: {\cal A}_N/{\cal K}_N\ap TM$ is  an isomorphism onto $TN$. There exists a subbundle  ${\cal H}_N$ of ${\cal A}_N$ such that ${\cal A}_N={\cal H}_N\oplus{\cal K}_N$. Therefore the restriction of $\rho$ to ${\cal H}_N$ is also an isomorphism on $TN$.  Now fix some almost bracket $[\;,\;]_{\cal A}$ such that $({\cal A},M,\rho),[\;,\;]_{\cal A})$ is a pre-Lie algebroid. Since $M$ is paracompact, so is $N$ and therefore  an almost Lie bracket on $M$ or $N$  is completely characterized by its value on locally sections defined on some open subsets. Consider a coordinate system $(x^i)$ on an open $U$ such that $(x^1,\cdots,x^q)$ is a coordinate system on $U\cap N$. After restricting $U$ if necessary, there exists a basis $\{e_\a\}$ such that $\{e_1,\cdots,e_q\}$ and $\{e_{q+1},\cdots, e_k\}$ is a local basis of ${\cal H}_N$ and ${\cal K}_N$, respectively. Since  $\rho_N$ is of constant rank $q$ over $U\cap N$, therefore  after restricting $U$ if necessary, we can choose $\{e_1,\cdots,e_q\}$ such that $\rho(e_\a)=\dis\frac{\p}{\p x^\a}$ for $\a=1,\cdots,q$ on $U$. It follows that, if we set
  $\rho(e_\a)=\rho_\a^i\dis\frac{\p}{\p x^i}$, then $\rho_\a^\b=\d_\a^\b$ for $\a,\b=1,\cdots,q$, $\rho_\a^j=0$ on  $x^{q+1}=\cdots=x^n=0$ for all $\a=1,\cdots, q$ and $j=q+1,\cdots,n$. Consequently  all  brackets $[\rho(e_\a), \rho(e_\b)]$ vanish on  $x^{q+1}=\cdots=x^n=0$ for all $\a,\b=1,\cdots q$. Since $\rho$ is a Lie morphism, it follows that we have $\rho([e_\a,e_\b]_{\cal A}=0$ on $U\cap N$. This implies that  $[e_\a,e_\b]_{\cal A}$ belongs to ${\cal K}_N$ over $U\cap N$. \\

    Note that for any section $X$ of ${\cal A}_N$ and $Y$ of ${\cal K}_N$ we have  $[\rho(X),\rho(Y)]=0=\rho([X,Y]_{\cal A})$. Therefore $[X,Y]_{\cal A}$ belongs to ${\cal K}_N$.\\

 Let  $[X,Y]_{{\cal H}_N}$ be the projection of $[X,Y]_{\cal A}$ over ${\cal H}_N$ parallel to ${\cal K}_N$ for any sections $X$ and $Y$ of ${\cal H}_N$. Now given any section $\bar{X}$ and $\bar{Y}$ of  ${\cal A}_N/{\cal K}_N$ there exists  two sections  $X$ and $Y$ of ${\cal H}_N$ whose canonical projections are $\bar{X}$ and $\bar{Y}$ respectively. We have the decompositions $X=\dis\sum_{\a=1}^q f^\a e_\a$ and $Y=\dis\sum_{\a=1}^q g^\b e_\b$. It follows that the $[X,Y]$ has a can be written      $[X,Y]_{\cal A}=\dis\sum_{\a,\b=1}^q f^\a g^\b[e_\a, e_\b]+[X,Y]_{{\cal H}_N}$. Since all brackets $ [e_\a, e_\b]_{\cal A}$ belongs to ${\cal K}_{N}$, The projection of $[X,Y]_{\cal A}$ on ${\cal A}_N/{\cal K}_N$ is equal to the projection of $[X,Y]_{{\cal H}_N}$;

 Now any other sections $X'$ and $Y'$ of ${\cal A}_N$ whose l projection on ${\cal A}_N/{\cal K}_N$ is also $\bar{X}$ and $\bar{Y} $ respectively, we have $X'=X+X"$ and $Y'=Y+Y"$ where $X"$ and $Y"$ belong to ${\cal K}_N$. Since we have
$[X',Y']_{\cal A}=[X',Y"]_{\cal A}+[X",Y']_{\cal A}+[X,Y]_{\cal A}$  and since $[X',Y"]_{\cal A}$ and $[X",Y']_{\cal A}$ belong to ${\cal K}_N$ the projection of $[X',Y']_{\cal A}$ on ${\cal A}_N/{\cal K}_N$ is equal to the projection of $[X,Y]_{\cal A}$ on this quotient which is also the projection on ${\cal A}_N/{\cal K}_N$  of $[X,Y]_{{\cal A}}$.
  In this way by canonical projection of $[\;,\;]_{\cal A}$ we get an almost Lie bracket $[\;,\;]_N$ on ${\cal A}_N/{\cal K}_N$. Note from our construction, if $\bar{e}_\a$ is the canonical projection of $e_\a$ over  ${\cal A}_N/{\cal K}_N$, then $\{\bar{e}_\a\}_{\a=1,\cdots, q}$ is a local basis and we have
 $[\bar{e}_\a ,\bar{e}_\b]=0$ for $\a,\b=1,\cdots, q$. This clearly implies that $[\;,\;]_N$ satisfies the Jacobi identity. \\
 Finally according to Remark \ref{toutcbracalg} any other  almost Lie bracket $[\;,\;]'_{\cal A}$ such that $({\cal A},M,\rho,[\;,\;]'_{\cal A})$ is a pre-Lie algebroid and gives rise to the same bracket $[\;,\;]_N$ by projection on ${\cal A}_N/{\cal K}_N$. This ends the proof of Part (ii).
  \end{proof}

 \subsection{Prolongation of foliated anchored bundle over a fibered manifold}
 We consider a fibered manifold $\pi:{\cal M}\ap M$ of dimension $n+h$ and  $(\T{\cal M},{\cal M},\hat{\rho},[\;,\;]_{\cal P})$ the associated pre-Lie algebroid as built in subsection \ref{cal E}. Therefore the distribution $\hat{\rho}(\T{\cal M})$ is integrable. \\
Fix some leaf $N$ of the integrable foliation $\rho({\cal A})$ and consider ${\cal M}_N=\pi^{-1}(N)$ Then $ \pi_N=\pi_{| {\cal M}_N}: {\cal M}_N\ap N$ is a fibered manifold of dimension $q+h$ if dim$(N)=q$. We denote by $\T{\cal M}_N$ the restriction of the bundle $\T{\cal M}$ over ${\cal M}_N$ and we set
$${\bf K}{\cal M}_N=\{(m,b,v)\in \T{\cal M}_N,\;|\; \hat{\rho}(m,b,v)=0\}=\{(m,b,0)\in \T{\cal M}_N\}.$$
Note that $\T{\cal M}_N$ is a bundle over ${\cal M}_N$ of rank $h+k$ and ${\bf K}{\cal M}_N$    is a subbundle of corank $q+h$. It follows that $\hat{\rho}$ induces an injective   quotient morphism $\hat{\rho}_N:\T{\cal M}_N/{\bf K}{\cal M}_N\ap T{\cal M}_N$. We can easily obtain:

\begin{Pro}\label{prolonfolit}${}$
\begin{enumerate}
\item[(i)] ${\cal M}_N$ is a leaf of the foliation defined by the integrable distribution $\hat{\rho}(\T{\cal M})$ and $\hat{\rho}_N$ is an isomorphism from $\T{\cal M}_N/\bf{K}{\cal M}_N$ onto $T{\cal M}_N$.
\item[(ii)] The almost  bracket $[\;,\;]_{\cal P}$  induces by projection on $\T{\cal M}_N/\bf{K}{\cal M}_N$ an almost bracket $[\;,\;]_{{\cal M}_N}$  which is independent on the choice of the almost bracket $[\;,\;]_{\cal A}$ such that $({\cal A},M,\rho,[\;,\;]_{\cal A})$ is a pre-Lie algebroid. Moreover $[\;,\;]_{{\cal M}_N}$ satisfies the Jacobi identity and $\hat{\rho}_N$ induces an isomorphism of Lie algebra between $(\Xi(\T{\cal M}_N/\bf{K}{\cal M}_N), [\;,\;]_{{\cal M}_N})$ and $(\Xi({\cal M}_N),[\;,\;])$.
\end{enumerate}
\end{Pro}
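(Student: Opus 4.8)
The plan is to identify the leaf ${\cal M}_N$ explicitly by a fibrewise computation of the image distribution, and then to reduce part (ii) to Proposition \ref{foliatedquasi} applied to the prolonged pre-Lie algebroid $(\T{\cal M},{\cal M},\hat\rho,[\;,\;]_{\cal P})$.

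For part (i) I would begin from $\hat\rho(m,b,v)=(m,v)$ with $\rho(b)=T\pi(v)$, which gives $\hat\rho(\T{\cal M})_m=(T_m\pi)^{-1}\big(\rho({\cal A})_{\pi(m)}\big)$, using that $\pi$ is a submersion and that every $v$ with $T\pi(v)$ prescribed in $\rho({\cal A})$ is realised by some $(m,b,v)$ in the fibre of $\T{\cal M}$. For $m\in{\cal M}_N$ the base point lies on $N$, where $T_{\pi(m)}N=\rho({\cal A})_{\pi(m)}$, hence $\hat\rho(\T{\cal M})_m=(T_m\pi)^{-1}(T_{\pi(m)}N)=T_m{\cal M}_N$. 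Thus ${\cal M}_N$ is an integral manifold of the integrable distribution $\hat\rho(\T{\cal M})$. Since $\pi_N:{\cal M}_N\ap N$ is a fibration with connected typical fibre over the connected leaf $N$, the space ${\cal M}_N$ is connected; by the maximality of leaves recalled in Section \ref{prolfoliabundle} it is contained in the leaf through $m$, and the dimension count $\dim{\cal M}_N=q+h=\dim\hat\rho(\T{\cal M})_m$ forces equality. The same computation shows $\hat\rho$ sends $\T{\cal M}_N$ onto $T{\cal M}_N$ with kernel exactly ${\bf K}{\cal M}_N$, so together with the injectivity of $\hat\rho_N$ already noted before the proposition, $\hat\rho_N:\T{\cal M}_N/{\bf K}{\cal M}_N\ap T{\cal M}_N$ is an isomorphism (the ranks match: $(h+k)-(k-q)=q+h$).

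For part (ii) the starting point is that $(\T{\cal M},{\cal M},\hat\rho,[\;,\;]_{\cal P})$ is itself a pre-Lie algebroid (subsection \ref{prolbrac}), hence a foliated anchored bundle. I would then apply Proposition \ref{foliatedquasi}(ii) verbatim, with $\T{\cal M}$ playing the role of $\cal A$ and the leaf ${\cal M}_N$ playing the role of $N$: this directly produces the bracket $[\;,\;]_{{\cal M}_N}$ obtained by projecting $[\;,\;]_{\cal P}$ to $\T{\cal M}_N/{\bf K}{\cal M}_N$, its Jacobi identity, and the Lie-algebra isomorphism carried by $\hat\rho_N$ onto $(\Xi({\cal M}_N),[\;,\;])$.

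The genuinely new point, and the one I expect to be the main obstacle, is independence with respect to the choice of $[\;,\;]_{\cal A}$ \emph{on $\cal A$}, which must be kept distinct from the independence that Proposition \ref{foliatedquasi}(ii) supplies automatically, namely independence with respect to the choice of pre-Lie bracket \emph{on $\T{\cal M}$}. The reduction is that any admissible $[\;,\;]_{\cal A}$ prolongs to an admissible pre-Lie bracket $[\;,\;]_{\cal P}$ on $\T{\cal M}$, so it suffices to see that two such prolongations induce the same quotient bracket. By the $\T{\cal M}$-analogue of Remark \ref{toutcbracalg}, if $[\;,\;]_{\cal P}$ and $[\;,\;]'_{\cal P}$ are two pre-Lie brackets on $\T{\cal M}$ then their difference is a $\T{\cal M}$-tensor valued in $\ker\hat\rho$ (whose restriction over ${\cal M}_N$ is precisely ${\bf K}{\cal M}_N$); this follows at once from $\hat\rho([{\cal Z},{\cal W}]_{\cal P})=[\hat\rho{\cal Z},\hat\rho{\cal W}]=\hat\rho([{\cal Z},{\cal W}]'_{\cal P})$. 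Hence the two brackets coincide after projection onto $\T{\cal M}_N/{\bf K}{\cal M}_N$, so $[\;,\;]_{{\cal M}_N}$ is unchanged, which is exactly the asserted independence.
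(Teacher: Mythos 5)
The paper gives no written proof of this proposition (it is introduced with ``We can easily obtain''), so the only benchmark is the surrounding machinery; your reduction of part (ii) to Proposition \ref{foliatedquasi}(ii) applied to the pre-Lie algebroid $(\T{\cal M},{\cal M},\hat\rho,[\;,\;]_{\cal P})$ is exactly the route the paper intends (compare the proof of Proposition \ref{reducleaf}, which explicitly reuses ``the same arguments as in the proof of Proposition \ref{foliatedquasi}''). Your isolation of the one genuinely new point --- that independence must be proved with respect to the bracket chosen on ${\cal A}$, not merely on $\T{\cal M}$ --- and its resolution via the $\T{\cal M}$-analogue of Remark \ref{toutcbracalg} (the difference of two admissible prolonged brackets is a tensor valued in $\ker\hat\rho$, which over ${\cal M}_N$ is ${\bf K}{\cal M}_N$) is correct and is arguably stated more carefully than anywhere in the paper.

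There is one small gap in part (i). Having shown that ${\cal M}_N$ is a connected integral manifold of $\hat\rho(\T{\cal M})$, maximality of leaves gives only the inclusion ${\cal M}_N\subset L$, where $L$ is the leaf through $m$; the equality of dimensions makes ${\cal M}_N$ an \emph{open} subset of $L$ but does not by itself force ${\cal M}_N=L$, since an open connected proper subset of the same dimension is not excluded by a dimension count. The clean fix is the projection argument you did not quite state: any point of $L$ is joined to $m$ by a concatenation of curves tangent to $\hat\rho(\T{\cal M})$, and each such curve projects under $\pi$ to a curve tangent to $\rho({\cal A})$ starting in $N$, hence staying in the leaf $N$; therefore $L\subset\pi^{-1}(N)={\cal M}_N$. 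With that one line added, your proof is complete; the fibrewise computation $\hat\rho(\T{\cal M})_m=(T_m\pi)^{-1}\bigl(\rho({\cal A})_{\pi(m)}\bigr)$, the identification of the kernel with ${\bf K}{\cal M}_N$, and the rank count $(h+k)-(k-q)=q+h$ are all correct.
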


\begin{Rem}\label{spbasis}${}$\\
Fix some leaf $N$ a leaf of the  foliation defined by the integrable distribution $\rho({\cal A})$. According to the proof of Proposition \ref{foliatedquasi},  consider a coordinate system $(U, (x^1,\cdots,x^n))$ of $M$  such that

  $\bullet$ $U\cap N,(x^1,\cdots,x^q)$   is a coordinate system of $N$ and

  $\bullet$  there exists a local basis $\{e_1,\cdots,e_k\}$ over $U$ so that $\{e_{q+1},\cdots,e_k\}$ is a local basis  of ${\cal K}_N$ over $U\cap N$ with $\rho(e_\a)=\dis\frac{\p}{\p x^\a}$ and $[{e}_\a,{e}_\b]_{\cal A}=\sum_{\g=q+1}^kC_{\a\b}^\g e_\g$  for $\a=1,\cdots q$.\\

Now  if $(x^i,y^A)$ is a coordinate system defined on some open subset ${\cal U}$ of $\pi^{-1}(U)$, let $\{{\cal X}_\a,{\cal V}_A\}$ be the associated basis of $\T{\cal M}$. Then ${\cal U}\cap {\cal M}_N,(x^1,\cdots,x^q,y^1,\cdots,y^k)$ is a coordinate system on $\T{\cal M}_N$ and over this open set $\{{\cal X}_{q+1},\cdots,{\cal X}_k\}$ is a local basis of $\bf{K}{\cal M}_N$ such that
 $[{\cal X}_\a,{\cal X}_\b]_{\cal P}=\sum_{\g=q+1}^kC_{\a\b}^\g {\cal X}_\g $ for $\a,\b=1,\cdots,q$. Moreover if $\bar {\cal X}_\a$ is the projection of ${\cal X}_\a$ on $\T{\cal M}_N/\bf{K}{\cal M}_N$ we also have $\hat{\rho}_N(\bar{\cal X}_\a)= \dis\frac{\p}{\p x^\a}$ for $\a=1,\cdots q$.\\

\end{Rem}

 We fix some leaf $N$ of the foliation defined by $\rho({\cal A})$ of dimension $q$  and consider the associated fibered manifold $\tau_N:{\cal A}_N\ap N$. 

 .

 \begin{Pro}\label{reducleaf}${}$
 \begin{enumerate}
   \item[(i)] Let $J$ be the vertical endomorphism in $\T{\cal A}$ (see Subsection \ref{prolfoliabundle}). Then  $J({\bf K} {\cal A}_N)$ is a subbundle of $\V{\cal A}_N$ of rank $k-q$. The anchor $\hat{\rho}$ induces   an isomorphism $\hat{\rho}_{TN}$ from $\left(\T{\cal A}_N\right)/ \left({\bf K} {\cal A}_N\oplus(J({\bf K} {\cal A}_N)\right)$ onto $T(TN)$ over $\rho_N: {\cal A}_N/{\cal K}_N\ap TN$.
    \item[(ii)] the Lie bracket $[\;,\;]_{\cal P}$ on $\T{\cal A}$ induces  by projection on the previous quotient bundle an almost bracket $[\;,\;]_{TN}$  which is independent of the choice of the almost bracket $[\;,\;]_{\cal A}$ such that that $({\cal A},M,\rho,[\;,\;]_{\cal A})$ is a pre-Lie algebroid. In fact  $[\;,\;]_{TN}$ satisfies the Jacobi identity and $\hat{\rho}_{TN}$ induces an isomorphism of Lie algebra between $(\Xi\left(\left(\T{\cal A}_N\right)/ \left({\bf K} {\cal A}_N\oplus J({\bf K} {\cal A}_N)\right)\right), [\;,\;]_{TN})$ and $(\Xi({T(TN)}),[\;,\;])$.

   \end{enumerate}
   \end{Pro}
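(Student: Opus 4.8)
The plan is to mirror the proofs of Proposition \ref{foliatedquasi}(ii) and Proposition \ref{prolonfolit}(ii), the only new ingredient being the extra reduction by the image $J({\bf K}{\cal A}_N)$ of the vertical endomorphism. Throughout I would work in the adapted coordinates of Remark \ref{spbasis}, specialised to ${\cal M}={\cal A}$ (so $h=k$): a chart $(x^i)$ on $M$ with $(x^1,\dots,x^q)$ a chart of $N$, a local frame $\{e_\a\}$ with $\{e_{q+1},\dots,e_k\}$ spanning ${\cal K}_N$ and $\rho(e_\a)=\partial/\partial x^\a$ for $\a\le q$, fibre coordinates $(x^i,y^\a)$ on ${\cal A}$, and the canonical frame $\{{\cal X}_\a,{\cal V}_A\}$ of $\T{\cal A}$. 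A direct computation from the definition of $J$ gives $J({\cal X}_\a)={\cal V}_\a$ and $J({\cal V}_A)=0$. On ${\cal A}_N$ one has $\rho_\a^i=0$ for $\a>q$ and $\rho_\a^\b=\d_\a^\b$ for $\a,\b\le q$, so that ${\bf K}{\cal A}_N=\mathrm{span}\{{\cal X}_{q+1},\dots,{\cal X}_k\}$; applying $J$ yields $J({\bf K}{\cal A}_N)=\mathrm{span}\{{\cal V}_{q+1},\dots,{\cal V}_k\}$, a subbundle of $\V{\cal A}_N$ of rank $k-q$, in direct sum with ${\bf K}{\cal A}_N$ (which is not vertical). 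This settles the first assertion of (i).

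For the isomorphism I would introduce $\varrho:=\rho_N\circ p:{\cal A}_N\ap TN$, where $p:{\cal A}_N\ap {\cal A}_N/{\cal K}_N$ is the canonical projection, and set $\Phi:=T\varrho\circ\hat\rho:\T{\cal A}_N\ap T(TN)$. Since $\varrho(x^\b,y^\b)=(x^\b,y^1,\dots,y^q)$, reading off $T\varrho$ in coordinates gives $\Phi({\cal X}_\a)=\partial/\partial x^\a$ and $\Phi({\cal V}_\a)=\partial/\partial\dot x^\a$ for $\a\le q$, while $\Phi({\cal X}_a)=\Phi({\cal V}_a)=0$ for $a>q$. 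Hence $\ker\Phi={\bf K}{\cal A}_N\oplus J({\bf K}{\cal A}_N)$ and $\Phi$ is fibrewise onto $T(TN)$; therefore $\Phi$ factors through the quotient as the announced isomorphism $\hat\rho_{TN}$ over the base map $\varrho$ induced by $\rho_N$, proving (i).

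For (ii) I would define $[\;,\;]_{TN}$ as the projection of $[\;,\;]_{\cal P}$ to $\left(\T{\cal A}_N\right)/\left({\bf K}{\cal A}_N\oplus J({\bf K}{\cal A}_N)\right)$, equivalently as the transport of the usual Lie bracket under $\hat\rho_{TN}$. The main obstacle is well-definedness: unlike in Proposition \ref{prolonfolit}(ii), the reducing subbundle ${\cal S}:={\bf K}{\cal A}_N\oplus J({\bf K}{\cal A}_N)$ is strictly larger than $\ker\hat\rho$ (it contains $J({\bf K}{\cal A}_N)$, on which $\hat\rho$ is injective), so ${\cal S}$ is not a bracket ideal and the naive quotient argument fails. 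I would resolve this with the theorem on $\varrho$-related vector fields: because $(\T{\cal A},{\cal M},\hat\rho,[\;,\;]_{\cal P})$ is a pre-Lie algebroid, $\hat\rho$ is a Lie morphism, and $\hat\rho({\cal S})\subseteq\ker T\varrho$ means that every section of ${\cal S}$ is $\varrho$-related through $\hat\rho$ to the zero field; bracketing a projectable section with a $\varrho$-vertical field again produces a $\varrho$-vertical field, whence $\Phi\bigl([{\cal Z},{\cal K}]_{\cal P}\bigr)=0$ for any projectable ${\cal Z}$ and any ${\cal K}\in\Xi({\cal S})$. Since every vector field on $TN$ lifts to a projectable section of $\T{\cal A}_N$ (the frame ${\cal X}_\a,{\cal V}_\a$ is projectable), this makes $[\;,\;]_{TN}$ well defined. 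Evaluating on this frame, $[{\cal X}_\a,{\cal X}_\b]_{\cal P}=C_{\a\b}^\g{\cal X}_\g$ lies in ${\bf K}{\cal A}_N$ on $N$ because $[e_\a,e_\b]_{\cal A}\in{\cal K}_N$ there (Proposition \ref{foliatedquasi}(ii)), while $[{\cal X}_\a,{\cal V}_\b]_{\cal P}=0$ and $[{\cal V}_\a,{\cal V}_\b]_{\cal P}=0$; all three therefore project to the vanishing brackets of the coordinate frame $\{\partial/\partial x^\a,\partial/\partial\dot x^\a\}$ of $T(TN)$, so $\hat\rho_{TN}$ carries $[\;,\;]_{TN}$ to the standard Lie bracket.

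It remains to check independence of the choice of $[\;,\;]_{\cal A}$ and to conclude. By Remark \ref{toutcbracalg}, two admissible brackets differ by an ${\cal A}$-tensor valued in $\ker\rho$, which along $N$ takes values in ${\cal K}_N$; consequently the prolonged brackets differ only in the ${\cal X}$-${\cal X}$ component by a term of $\,{\bf K}{\cal A}_N\subseteq{\cal S}$ (the ${\cal X}$-${\cal V}$ and ${\cal V}$-${\cal V}$ brackets vanish identically, independently of the $C_{\a\b}^\g$), and this term projects to $0$. Hence $[\;,\;]_{TN}$ is independent of $[\;,\;]_{\cal A}$. Finally, since $\hat\rho_{TN}$ intertwines $[\;,\;]_{TN}$ with the usual Lie bracket of vector fields on $TN$, the Jacobi identity for $[\;,\;]_{TN}$ is inherited and $\hat\rho_{TN}$ is an isomorphism of Lie algebras onto $(\Xi(T(TN)),[\;,\;])$, which completes (ii). I expect the well-definedness step, i.e. controlling the non-ideal reduction by $J({\bf K}{\cal A}_N)$ through $\varrho$-relatedness, to be the only genuinely delicate point; everything else is a coordinate adaptation of the two earlier propositions.
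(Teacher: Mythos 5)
Your proposal is correct and follows essentially the same route as the paper: both identify $J({\bf K}{\cal A}_N)$ in the adapted frame of Remark \ref{spbasis}, realize $\hat\rho_{TN}$ as the factorization of $T\rho\circ\hat\rho$ through its kernel ${\bf K}{\cal A}_N\oplus J({\bf K}{\cal A}_N)$, and read off the induced bracket on the projected frame $\{\bar{\cal X}_\a,\bar{\cal V}_\a\}$. Your explicit $\varrho$-relatedness argument for the well-definedness of $[\;,\;]_{TN}$ (needed because the reducing subbundle is strictly larger than $\ker\hat\rho$) is a useful clarification of a step the paper dispatches by appealing to "the same arguments as in the proof of Proposition \ref{foliatedquasi}"; conversely, the paper devotes its final paragraph to checking compatibility of the local trivializations under change of adapted charts, which your intrinsic description of $\Phi$ implicitly covers.
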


\begin{proof}${}$\\
Consider  a leaf  $N$    of the foliation defined $\rho({\cal A})$ and $q=$dim$N$.  Since the morphism $\rho:{\cal A}_N\ap TN$ is surjective, so is $T\rho: T{\cal A}_N\ap T(TN)$. Therefore its kernel is a subbundle $\ker T\rho$ of rank $k-q$ over ${\cal A}_N$. 

Fix   some chart  $(U, (x^1,\cdots,x^n))$ such that $(U\cap N,(x^1,\cdots,x^q))$ is a chart for $N$ and $\cal A$ is trivializable over $U$. We can  choose  $(U, (x^1,\cdots,x^n))$  and a local basis $\{{e}_\a\}$ of $\cal A$ such that $\rho(e_\a)=\dis\frac{\p}{\p x^\a}$ for $\a=1,\cdots,q$ over $U\cap N$ (see Remark \ref{spbasis}). Denote by $(x^1,\cdots,x^n,y^1,\cdots,y^k)$ the induced coordinates system on ${\cal U}=\tau_N^{-1}(U)$. From this construction, $\dis\frac{\p}{\p y^{q+1}},\cdots,\dis\frac{\p}{\p y^{k}}$ is a basis of $\ker T\rho$ over $\cal U$.\\

Now let  $(x^1,\cdots,x^q,\dot{x}^1,\dots,\dot{x}^q)$ be the coordinate system on $TN$ associate to $(U\cap N,(x^1,\cdots,x^q))$. By  composition of the projection $\bar{\tau}_N:{\cal A}_N\ap {\cal A}_N/{\cal K}_N$   the  local coordinate system $(x^1,\cdots,x^n,y^1,\cdots,y^k)$ gives rise to a local coordinate system $(\bar{x}^1,\cdots,\bar{x}^q,\bar{y}^1,\cdots \bar{y}^q)$ of $ {\cal A}_N/{\cal K}_N$ on the open $\bar{U}=\bar{\tau}_N(U)$. From the properties of the basis $\{e_\a\}$ it follows that the local expression $\rho_N:{\cal A}_N/{\cal K}_N\ap TN$ is $x^\a=\bar{x}^\a$ and $\dot{x}^\a=\bar{y}^\a$ for $\a=1,\cdots,q$.

  Moreover according to Remark \ref{spbasis} let $\{{\cal X}_\a,{\cal V}_\b\}$ be a basis of $\T{\cal A}_N$ over  $\cal U$, associated with the previous data. This basis have the following properties:

$\bullet$  $\hat{\rho}({\cal X}_\a)=\dis\frac{\p}{\p x^\a}\circ \rho$ for $\a=1,\cdots,q$,

$\bullet$  $\hat{ \rho}({{\cal V}_\a})=\dis\frac{\p}{\p y^\a}\circ\rho$ for all $\a=1,\cdots,k$,

 $\bullet$ $[{\cal X}_\a,{\cal X}_\b]_{\cal P}=\sum_{\g=q+1}^kC_{\a\b}^\g{\cal X}^\g$ for all $\a,\b=1,\cdots,q$,

$\bullet$  $\{{\cal X}_{q+1},\cdots,{\cal X}_k\}$ is a local basis of $\bf{K}{\cal A}_N$.\\

 From the properties of $J$ (see Subsection \ref{prolfoliabundle})  we have then  $J({\cal X}_\a)={\cal V}_\a$ for any $\a=1,\cdots,k$. It follows that $\{{\cal V}_{q+1},\cdots,{\cal V}_k\}$ is a local basis of $J({\bf K} {\cal M}_N)$. In particular $J({\bf K} {\cal M}_N)$ is a subbundle of  $\V{\cal A}_N$ of rank $k-q$. We now consider the morphism $T\rho\circ \hat{\rho}:\T{\cal A}_N\ap T(TN)$. The kernel of this morphism is ${\bf K} {\cal A}_N\oplus J({\bf K} {\cal A}_N)$. Therefore we obtain a quotient morphism from  $ \T{\cal A}_N/ \left({\bf K} {\cal A}_N \oplus J({\bf K} {\cal A}_N)\right)$ to $T(TN)$  over $\rho:{\cal A}_N\ap TN$ which is an isomorphism between fibers. It follows that we obtain an isomorphism $\hat{\rho}_{TN}$ from   $\T{\cal M}_N/ \left({\bf K} {\cal M}_N\oplus J({\bf K} {\cal M}_N)\right)$  to the pull back $\rho^*T(TN)$.



 \noindent Using the same arguments as in the proof of Proposition \ref{foliatedquasi}, by projection on $\T{\cal M}_N/ \left({\bf K} {\cal M}_N\oplus J({\bf K} {\cal M}_N)\right)$ the bracket $[\;,\;]_P$ induces a  Lie bracket $[\;,\;]_{TN}$ on  $\T{\cal M}_N/ \left({\bf K} {\cal M}_N\oplus J({\bf K} {\cal M}_N)\right)$ which does not depend on the choice of $[\;,\;]_{\cal A}$ such that  $({\cal A},M,\rho,[\;,\;]_{\cal A})$ is a pre-Lie algebroid

Now,  by projection on  $\T{\cal M}_N/ \left({\bf K} {\cal M}_N\oplus J({\bf K} {\cal M}_N)\right)$ we obtain a basis \\$\{{\bar{\cal X}}_1, \cdots,{\bar{\cal X}}_q,{\bar{\cal V}}_1, \cdots,{\bar{\cal V}}_q\}$ such that
$ \hat{ \rho}_{TN}({\bar{\cal X}}_\a)=\dis\frac{\p}{\p x^\a}\circ\rho$   and $ \hat{ \rho}_{TN}({\bar{\cal V}}_\a)=\dis\frac{\p}{\p y^\a}\circ \rho$ for all $\a=1,\cdots,q$ and of course we have

 $[\bar{\cal X}_\a,\bar{\cal X}_\b]_{TN}=[\bar{\cal X}_\a,\bar{\cal V}_\b]_{TN}=[\bar{\cal X}_\a,\bar{\cal V}_\b]_{TN}=0$ for all $\a,\b=1,\cdots, q$.\\

To end the proof it remains to show that the bundle   $\T{\cal M}_N/ \left({\bf K} {\cal M}_N\oplus J({\bf K} {\cal M}_N)\right)\ap {\cal A}_N$ induces a  bundle    ${\T{\cal M}_N/ \left({\bf K} {\cal M}_N\oplus J({\bf K} {\cal M}_N)\right)} \ap {\cal A}_N/{\cal K}_N$   which is isomorphic to the bundle  $T(TN)\ap TN$.\\

 Indeed with the previous notations, over ${\cal U}=\tau_N^{-1}(U)$ we have a trivialization of  $\T{\cal M}_N/ \left({\bf K} {\cal M}_N\right)$ given by the basis $\{{\bar{\cal X}}_1, \cdots,{\bar{\cal X}}_q,{\bar{\cal V}}_1, \cdots,{\bar{\cal V}}_q\}$.  Now for $(x,y)\in{\cal U}$  and $\a=1,\cdots,q$ we have  $\bar{\cal X}_\a(x,y)=((x,y), e_\a(x),\dis\frac{\p}{\p x^\a}(x)) $ and $\bar{\cal V}_\a=((x,y),0,\dis\frac{\p}{\p y^\a}(x,y))$ on $\cal U$ (see subsection \ref{prolongM}). It follows that $\bar{\cal X}_\a$ and $\bar{\cal V}_\a$ are projectable on $\bar{U}$.  Moreover the bracket $[\;,\;]_{TN}$ in
 restriction to section over $U$ is clearly also projectable on $\bar{U}$.
  Consider  an other chart  $(\acute{U}, (\acute{x}^1,\cdots,\acute{x}^n))$ and $\{\acute{e}_\a\}$ a basis of ${\cal A}$ with  the same properties as  $(U, (x^1,\cdots,x^n))$ and $\{e_\a\}$ and such that $U\cap\acute{U}\not=\emptyset$. Then on this intersection, we have a field of matrix $(A_\a^\b)$ with $A_\a^\b=0$ for $q<\a\leq k$ and $1\leq\b\leq q$ on $N$  and such that ${e}_\a=A_\a^\b \acute{e}_\b$. If  $\{\acute{\cal X}_\a,\acute{\cal V}_\a\}$ is  the basis of $\T{\cal A}$ associated to the data $(\acute{U}, (\acute{x}^1,\cdots,\acute{x}^n))$ and $\{\acute{e}_\a\}$, we have the relations:
 \begin{eqnarray}\label{chgebase}
 \begin{cases}
\acute{x}^i=\acute{x}^i(x^1,\cdots,x^q) \textrm{ and }\acute{x}^i=\acute{x}^i(x^1,\cdots,x^q) \textrm{ for } i=1,\cdots,q \textrm{ on } N\cr
\acute{y}^\b=A_\a^\b y^\a\hfill{}\cr
{\cal X}_\a=A_\a^\b \acute{\cal X}_\b+\rho_\a^i\dis\frac{\p A_\g^\b}{\p x^i}y^\g\acute{\cal V}_\b\hfill{}\cr
 {\cal V}_\a=A_\a^\b\acute{\cal V}_\b.\hfill{}\cr
  \end{cases}.
\end{eqnarray}
Over  $\tau^{-1}(U\cap \acute{U}\cap N)\subset {\cal A}_N$, we have the following relations

 $\rho_\a^i=0$ for $q<\a\leq k$ and  $\rho_\a^i=\d_\a^i$,

 $A_\a^\b=0$ for $q<\a,\b\leq k$.

 \noindent Thus for $\a=1,\dots,q$,  we obtain

$\bar{\cal X}_\a(x,y)=(A_\a^\b\acute{e}_\b, A_\a^\b\dis\frac{\p}{\p \acute{x}^\b}+\dis\frac{\p A_\g^\b}{\p x^\a} \dis\frac{\p}{\p \acute{y}^\b)}=(e_\a,\dis\frac{\p}{\p {x}^\a})$,

 $\bar{\cal V}_\a(x,y)=(0,A_\a^\b\dis\frac{\p}{\p \acute{y}^\b})=(0,\dis\frac{\p}{\p y^\a}(x,y))$.

 Therefore by projection on the intersection $\bar{U}\cap \bar{\acute{U}}$  we get the compatibility  of  local triviality  with  the equivalent relation  associated with  the quotient  ${\cal A}_N\ap {\cal A}_N/{\cal K}_N$ and  compatible with the isomorphism between   ${\cal A}_N/{\cal K}_N$  and $T(TN)\ap TN$

  \end{proof}


\section{Geometrical objets on the prolongation over  a fibered manifold}
  {\it  From now on,  the almost Lie algebroid $({\cal A},M,\rho,[\;,\;]_{\cal A})$ is fixed, and  we assume that its rank $k$. We assume that the typical fiber of the fibered manifold $\pi:{\cal M}\ap M$ is connected and is also of dimension $ k$. }\\

 {\rm First of all in this section we introduce the definition and the  properties of almost tangent and cotangent structures. Then we give a non classical definition of a semispray and we study  the compatibility   between these previous almost structures and  a semispray on the one hand and with Euler section and semispray on the other hand.

\subsection{Almost tangent structure and almost cotangent structure }\label{cal E}${}$\\

  According to the original  terminology  of \cite{ClGo1}, \cite{ClGo2}, \cite{LPo3},  \cite{Va3} and \cite{ThSc} we introduce:}

 \begin{Def}\label{ATC}${}$
  \begin{enumerate}
 \item[(i)] An  almost tangent structure on $\T{\cal M}$ is an endomorphism $\cal J$ of $\T{\cal M}$  such that  $\textup{   im }{\cal  J}=\V{\cal M}=\ker {\cal  J}$.
 \item[(ii)] An almost  cotangent structure on $\T{\cal M}$ is a $2$-form $\O\in \L^2(\T{\cal M})^*$ of  maximal rank $2k$, such that each vertical space $\V_m{\cal M}$ is Lagrangian for all $m\in{\cal M}$.
\item[(iii)] An almost tangent  $\cal J$ and almost   cotangent structure  $\O$  on $\T{\cal M}$    are compatible if we have
$$\O({\cal J}{\cal X},{\cal Y})+ \O({\cal X},{\cal J}{\cal Y})\equiv 0 \textrm{ for all sections } {\cal X}, {\cal Y}.$$
\end{enumerate}
\end{Def}

An  almost tangent structure $\cal J$ on $\T{\cal M}$ is  then locally given by a   field of matrix $({\cal J})=({\cal J}_{a}^{\b})$ of maximal rank $k$ such that:
\begin{eqnarray}\label{locJ1}
{\cal J}({\cal Z}_a)= {\cal J}_{a}^B{\cal V}_\b.
\end{eqnarray}
 where  $({\cal J})=({\cal J}_{\a}^{\b})$  is a field of invertible matrices.\\

An almost cotangent structure $ \O$ on $\T{\cal M}$ is locally given by a pair of $k$-square matrix $(\o)=(\o_{\a\b})$ which is  skew symmetric,  and  $(\bar{\o})= (\bar{\o}_{\a\b})$ which is  of rank $k$,  such that we have
\begin{eqnarray}\label{O}
\O({\cal X}_\a,{\cal V}_\b)=\bar{\o}_{\a\b} \;\;\;\; \O({\cal X}_\a,{\cal X}_\b)={\o}_{\a\b} \;\;\;\; \O({\cal V}_\a,{\cal V}_\b)=0.
\end{eqnarray}

The following result contains the principal properties of such structures:

\begin{Pro}\label{TO}${}$
\begin{enumerate}
\item[(i)] On $\T{\cal M}$   there  exists an almost tangent   structure if and only if there exists an almost cotangent   structure.
\item[(ii)] An almost tangent  structure $\cal J$  and  an almost cotangent  structure $\O$ on $\T{\cal M}$ are compatible  if and only if  the matrices $({\cal J})$ and  $ (\O)$  associated with the local decomposition (\ref{locJ1}) of $\cal J$ and  (\ref{O}) of  $\O$) respectively satisfy the relation:
$$(\O)({\cal J})=({\cal J})^{t}(\O)^t,$$
where $(.)^t$ denote the transpose matrix  of $(.)$.
\item[(iii)] If  an almost tangent  structure $\cal J$ and an almost cotangent   structure $\O$  are compatible, then
$g({\cal J}{\cal X},{\cal J}{\cal Y})=\O({\cal J}{\cal X},{\cal Y})$ defines a pseudo-Riemannian metric on $\V{\cal M}$.
On the other hand, given  any Whitney decomposition $\T{\cal M}=\V{\cal M} \oplus H$  and let  $\Pi_{ H}$ be  the associated projection on $H$.   Then
$g_{ H}(\Pi_{ H}{\cal X},\Pi_{ H}{\cal Y})=\O({\cal J}{\cal X},{\cal Y})$ defines also a pseudo-Riemannian metric on ${ H}$.
\item[(iv)]  Conversely, assume that we have   a pseudo-Riemannian metric $g$ on $\V{\cal M}$ and let  $\cal J$ or  $\O$ be  an   almost tangent   or a cotangent structure  on $\T{\cal M}$ respectively. Then there exists a unique almost cotangent  $\O$ or   a unique almost tangent structure $\cal J$  on $\T{\cal M}$ respectively  such that $\cal J$ and $\O$ are compatible and which are linked by the relation:
$$g({\cal J}{\cal X},{\cal J}{\cal Y})=\O({\cal X},{\cal J}{\cal Y})$$
for all sections ${\cal X}$ and ${\cal Y}$ of $\T {\cal M}$.

\end{enumerate}

\end{Pro}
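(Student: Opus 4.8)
The plan is to reduce each assertion to pointwise linear algebra in a canonical basis $\{{\cal X}_\a,{\cal V}_A\}$ and its dual, using the normal forms (\ref{locJ1}) and (\ref{O}); here $\dim\T{\cal M}=2k$ and $\dim\V{\cal M}=k$, so $\V{\cal M}$ is of half-dimension. Throughout, ${\cal J}$ is encoded by the invertible block $({\cal J}_\a^B)$ through ${\cal J}({\cal X}_\a)={\cal J}_\a^B{\cal V}_B$, ${\cal J}({\cal V}_A)=0$, and $\O$ by the skew block $(\o_{\a\b})$ together with the rank-$k$ block $(\bar{\o}_{\a B})$. For (ii), let $(\O)$ and $({\cal J})$ be the full $2k\times 2k$ matrices of the bilinear form $\O$ and of the endomorphism ${\cal J}$ in $\{{\cal X}_\a,{\cal V}_A\}$. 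Read for all sections, the compatibility identity $\O({\cal J}{\cal X},{\cal Y})+\O({\cal X},{\cal J}{\cal Y})=0$ is exactly $({\cal J})^t(\O)+(\O)({\cal J})=0$; since $(\O)$ is skew-symmetric, $({\cal J})^t(\O)=-((\O)({\cal J}))^t$, so compatibility is equivalent to $(\O)({\cal J})$ being symmetric, i.e. to $(\O)({\cal J})=({\cal J})^t(\O)^t$. Evaluating on basis pairs confirms that the only non-vacuous case is $({\cal X}_\a,{\cal X}_\b)$ — the mixed and purely vertical cases vanish automatically because $\ker{\cal J}=\V{\cal M}$ and $\V{\cal M}$ is Lagrangian — where it collapses to the symmetry of the block product $\bar{\o}\,({\cal J}_\a^B)^t$.

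Next I would carry out the two constructions of (iv); part (i) is their existence half. Given $\O$ and $g$: because $\V{\cal M}$ is Lagrangian for the nondegenerate $\O$, the map ${\cal X}\mapsto\O({\cal X},\cdot)|_{\V{\cal M}}$ from $\T{\cal M}$ to $(\V{\cal M})^*$ is surjective with kernel $(\V{\cal M})^{\perp_{\O}}=\V{\cal M}$; postcomposing with $(g^{\flat})^{-1}\colon(\V{\cal M})^*\to\V{\cal M}$ and the inclusion $\V{\cal M}\hookrightarrow\T{\cal M}$ produces an endomorphism ${\cal J}$ with $\ker{\cal J}=\mathrm{im}\,{\cal J}=\V{\cal M}$, characterized by $g({\cal J}{\cal X},{\cal V})=\O({\cal X},{\cal V})$ for every ${\cal V}\in\V{\cal M}$. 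Taking ${\cal V}={\cal J}{\cal Y}$ recovers the stated relation; compatibility then follows from skew-symmetry of $\O$ and symmetry of $g$, and nondegeneracy of $g$ forces uniqueness of ${\cal J}$. Conversely, given ${\cal J}$ and $g$, the relation imposes the mixed block $\bar{\o}_{\a B}={\cal J}_\a^A g_{AB}$, which is invertible and hence makes $\O$ of rank $2k$; the Lagrangian condition fixes the vertical block to $0$, and compatibility is automatic by symmetry of $g$. For (i) it suffices to note that $\V{\cal M}$ carries a (Riemannian) metric by paracompactness, so from a given ${\cal J}$ one obtains an $\O$, and conversely.

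For (iii), I would first check that $g({\cal J}{\cal X},{\cal J}{\cal Y})=\O({\cal J}{\cal X},{\cal Y})$ is well defined on $\V{\cal M}$: it is independent of the representative of ${\cal J}{\cal X}$ since $\O({\cal J}({\cal X}-{\cal X}'),{\cal Y})=0$ whenever ${\cal X}-{\cal X}'\in\ker{\cal J}$, and independent of the representative of ${\cal J}{\cal Y}$ since ${\cal Y}-{\cal Y}'\in\V{\cal M}$ together with ${\cal J}{\cal X}\in\V{\cal M}$ and $\V{\cal M}$ Lagrangian forces the difference to vanish. Symmetry is compatibility combined with skew-symmetry of $\O$, and nondegeneracy follows because $\O({\cal J}{\cal X},{\cal Y})=0$ for all ${\cal Y}$ forces ${\cal J}{\cal X}=0$ (as $\O$ has rank $2k$). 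The assertion for a Whitney complement $H$ is proved identically, replacing ${\cal J}{\cal X}$ by ${\cal J}\,\Pi_H{\cal X}$ and using $\V{\cal M}\cap H=\{0\}$ for nondegeneracy.

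The step I expect to be the main obstacle is the uniqueness in (iv) in the direction producing $\O$ from $({\cal J},g)$. The relation together with compatibility pin down only the semi-basic block $\bar{\o}$ and the vanishing vertical block, whereas the horizontal block $\o_{\a\b}=\O({\cal X}_\a,{\cal X}_\b)$ is left entirely free (testing compatibility on horizontal pairs involves $\bar{\o}$ alone), so $\O$ is a priori determined only up to a skew form supported on a complement of $\V{\cal M}$. I would close this gap by fixing the horizontal block through the reference splitting used in the construction — equivalently, by requiring $\O$ to be semi-basic with respect to that splitting — so that the semi-basic part of $\O$, which is all that enters the metric relation and the subsequent Lagrangian theory, is genuinely unique.
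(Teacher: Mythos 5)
Your proposal follows essentially the same route as the paper's proof: part (ii) is the same block--matrix computation (the paper phrases compatibility as $\O^\flat\circ{\cal J}=-{\cal J}^{*}\circ\O^\flat$ and reads off the same identity), part (iii) is the same well-definedness/symmetry/rank verification, and the two constructions in (iv) --- inverting $g^\flat$ against ${\cal X}\mapsto\O({\cal X},\cdot)_{|\V{\cal M}}$ in one direction, and building $\O$ from ${\cal J}$, a complement of $\V{\cal M}$ and the metric in the other --- are exactly the ones the paper uses, with (i) deduced from them by introducing an auxiliary Riemannian metric via paracompactness. Your treatment of (ii) is if anything cleaner, since you observe directly that the mixed and purely vertical cases of the compatibility identity are automatic from $\ker{\cal J}=\V{\cal M}$ and the Lagrangian condition.

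The point you flag at the end is a genuine one, and it is a defect of the statement and of the paper's own argument rather than of your proof. In the direction producing $\O$ from $({\cal J},g)$, the three conditions (almost cotangent, compatible with ${\cal J}$, and $g({\cal J}{\cal X},{\cal J}{\cal Y})=\O({\cal X},{\cal J}{\cal Y})$) determine $\O(\cdot,{\cal V})$ only for vertical ${\cal V}$; the difference of two admissible $\O$'s is an arbitrary semi-basic $2$-form, which affects neither the rank, nor the Lagrangian property of $\V{\cal M}$, nor compatibility. The paper's construction produces one such $\O$ by choosing a complement ${\cal H}$ of $\V{\cal M}$ and forcing $\O_{|{\cal H}\times{\cal H}}=0$, so the output visibly depends on that choice, and no uniqueness argument is given. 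Your proposed repair --- either normalize $\O$ to be semi-basic-free on the chosen horizontal complement, or assert uniqueness only of the part $\O(\cdot,{\cal J}\cdot)$, which is all that enters the metric $g_\O$ and the later Lagrangian-connection theory --- is the correct way to make the claim true; in the opposite direction ($\O$ given, ${\cal J}$ sought) uniqueness does hold by nondegeneracy of $g$, as you and the paper both argue.
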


\begin{proof}[ proof of Proposition \ref{TO}]${}$\\

 \noindent\so{\it Part (i)}\footnote{the proof is analogue to the proof of the corresponding  result of \cite{ThSc} but we shall refer to these arguments  in the following}.

 Let  $g$ be a Riemannian metric on $\T{\cal M}$ and choose an almost tangent structure ${\cal J}$ on $\T{\cal M}$ and denote by  $H$   the orthogonal  $(V{\cal M})^\perp$ of    $V{\cal M}$)   relatively to $g$.   Therefore  ${\cal J}$ induces an isomorphism ${\cal J}_g$ between  the bundles  $ H$ and  $\V {\cal M}$. Then we can associate to ${\cal J} $ an almost complex structure $\cal I$ on $\T{\cal M}=H\oplus V{\cal M}$  defined in the following way:

${\cal I}({\cal Z})={\cal J}_g({\cal Z}) \textrm{ for } {\cal Z} \in { K}$, and

${\cal I}({\cal Z})=-({\cal J}_g)^{-1}({\cal Z})  \textrm{ for } {\cal Z} \in V{\cal M}$.

Then we define

$\O({\cal X},{\cal Y})=g({\cal X},{\cal I}{\cal Y})-g({\cal Y},{\cal I}{\cal X}) \textrm{ for sections }  {\cal X}, {\cal Y} \textrm{ of } \T{\cal M}$.


Conversely let  $\O$ be  an almost cotangent structure on $\T{\cal M}$.
 Denote on the one hand by $\O^\flat$ the  classical  musical  isomorphism   between $\T{\cal M}$  and  $\T{\cal M}^*$,  and on the other hand by $g^\sharp$ the  classical  musical  isomorphism   between $\T{\cal M}^*$  and  $\T{\cal M}$  induced by  $\O$  and  $g$  respectively.
Then the almost tangent structure ${\cal J}$ is defined by:

${\cal J}_{| V{\cal M}}=0\;\;$
$\;\;{\cal J}_{| { H}}= g^\sharp \circ   (\O^\flat)_{| H}\;\;$ and
$\;\;{\cal J}_{| { K}}=0.$\\

\noindent\so{\it Part (ii)}:
  Again let  $\O^\flat $ be  the musical morphism from $\T{\cal M}$ to $\T{\cal M}^*$ induced by $\O$.  Then  $\O$ and $\cal J$ are compatible if and only if $\O^\flat\circ {\cal J}=-{\cal J}^{*}\circ\O^\flat$.  According to the local decomposition of  $\cal J$ and  $\O$, we have

$({\cal J})=\begin{pmatrix} (0)&(0)\cr
					(J)&(0)
					\end{pmatrix}$
 and
							$(O)=\begin{pmatrix} ((\bar{\O})&-(\O)^t\cr
 			   								 (\O)&(0)\cr
			    								\end{pmatrix}$
 associated with the local decomposition of  $\O$. As $\O$ is antisymmetric, we must have $(O)^t=-(O)$.   Then the matrices of 											
  the morphisms $\O^\flat\circ {\cal J}$  and  $-{\cal J}^{*}\circ\O^\flat$ are  respectively  $(O)(J)$  and  $-(J)^t(O)=(J)^{t}(O)^t$, which ends the proof.\\

\noindent\so{\it Part (iii)}:
First of all,  from the compatibility of $\cal J$ and $\O$, the definition of $g$ is symmetric in ${\cal X}$ and $\cal Y$. Now if ${\cal J}{\cal X}={\cal J}{\cal X}'$ then ${\cal X}-{\cal X}'$ is vertical, and since  $\V{\cal M}$ is Lagrangian it follows that $g$ is well defined. Moreover, in local coordinates the matrix  of $g$ is   $(\bar{\O})(J)=(J)^t(\bar{\O})^t$
 which is symmetric and  of maximal rank $k$.\\
The proof  of the last part  is  analogue to the previous one.\\

\noindent \so{\it Part (iv)}:
Assume that $\O$ is given and denote by   $j:V{\cal M}\ap \T{\cal M}$  the canonical inclusion  and $\cal H$ a complementary of $\V{\cal M}$ in $\T{\cal M}$. Then we can define $\cal J$  as in the proof of part (i),  and  it is easy to see that $\O$ and $\cal J$ are compatible.

Now assume that $\cal J$ is given. By similar arguments used in the proof of part  (i), and  with the same notations we can associate to ${\cal J}$  an almost complex structure $\cal I$ on $\T{\cal M}$.  Now, since  $\bar{\cal J}={\cal J}_{| {\cal H}}$ is an isomorphism from ${\cal H}$ to ${\V}{\cal M}$, we define a pseudo-Riemannian metric $\bar{g}$  on $\V{\cal M}$  by $\bar{g}({\cal X},{\cal Y})=g_{\cal H}({\bar{\cal J}}^{-1}{\cal X},{\bar{\cal J}}^{-1}{\cal X})$.  These two pseudo-Riemannian metrics  induce a pseudo-Riemannian metric on $\T{\cal M}$ such that   ${ \cal H}$ and  $\V{\cal M}$ are orthogonal.  Then, we can define $\O$ in the same way as in the proof of part (i). By an elementary calculus we can see that $\O$ and $\cal J$ are compatible.\\
\end{proof}

 As in the case of almost tangent manifold (\cite{Va3},  \cite{ThSc}) the existence of an almost tangent  structure on $\T{\cal M}$  implies some topological conditions on $\cal M$:

 \begin{Pro}\label{vertaff}${}$\\
If there exists an almost tangent  or cotangent structure on $\T{\cal M}$  then  the fibration $\pi :{\cal M}\ap M$ is a   locally affine fibration (see Subsection \ref{verteuler}).
 \end{Pro}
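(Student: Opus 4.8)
The strategy is to turn the almost tangent structure into a distinguished vertical frame whose transition cocycle is pulled back from $M$, and then to recognise this as the defining feature of a locally affine fibration. By Proposition \ref{TO} (i) it suffices to treat the case of an almost tangent structure, so I assume ${\cal J}$ is given with $\textup{im}\,{\cal J}=\V{\cal M}=\ker{\cal J}$.

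First I would extract an intrinsic isomorphism. Since $\ker{\cal J}=\V{\cal M}=\ker\pi_{\tilde{\cal A}}$, the endomorphism ${\cal J}$ descends to an isomorphism $\bar{\cal J}:\tilde{\cal A}=\T{\cal M}/\V{\cal M}\ap\V{\cal M}$, and composing with the canonical isomorphism $\hat{\rho}_{|\V{\cal M}}:\V{\cal M}\ap VT{\cal M}$ yields a bundle isomorphism $\psi=\hat{\rho}\circ\bar{\cal J}:\pi^{*}{\cal A}=\tilde{\cal A}\ap VT{\cal M}$ over $\cal M$. Equivalently, to every local section $s$ of $\cal A$ it assigns the vertical field $Z_{s}=\hat{\rho}({\cal J}(\tilde s))$, where $\tilde s$ is any section of $\T{\cal M}$ with $\pi_{\tilde{\cal A}}(\tilde s)=\pi^{*}s$; this is well defined because ${\cal J}$ annihilates $\V{\cal M}$, and $s\mapsto Z_{s}$ is $C^{\infty}(M)$-linear. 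Thus the mere existence of an almost tangent (hence, by Proposition \ref{TO}, of an almost cotangent) structure forces the vertical bundle $VT{\cal M}$ of the fibration to be isomorphic to the pullback $\pi^{*}{\cal A}$.

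Next I would compute the cocycle of the resulting frame. In a canonical basis $\{{\cal X}_{\a},{\cal V}_{A}\}$ attached to coordinates $(x^{i},y^{A})$ and a basis $\{e_{\a}\}$, write ${\cal J}({\cal X}_{\a})={\cal J}_{\a}^{B}{\cal V}_{B}$ with $({\cal J}_{\a}^{B})$ invertible, so that $W_{\a}:=\psi(\pi^{*}e_{\a})=\hat{\rho}({\cal J}({\cal X}_{\a}))={\cal J}_{\a}^{B}\dis\frac{\p}{\p y^{B}}$ is a frame of $VT{\cal M}$. Under a change of chart $\bar x=\bar x(x)$, $\bar y=\bar y(x,y)$ and a change of basis $e_{\a}=A_{\a}^{\b}\bar e_{\b}$ one has the transition rules ${\cal X}_{\a}=A_{\a}^{\b}\bar{\cal X}_{\b}+\rho_{\a}^{i}\dis\frac{\p\bar y^{B}}{\p x^{i}}\bar{\cal V}_{B}$ and ${\cal V}_{A}=\dis\frac{\p\bar y^{B}}{\p y^{A}}\bar{\cal V}_{B}$; applying ${\cal J}$ and using ${\cal J}(\V{\cal M})=0$ gives $W_{\a}=A_{\a}^{\b}\bar W_{\b}$. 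Hence $\{W_{\a}\}$ is a frame of $VT{\cal M}$ whose transition functions $A_{\a}^{\b}(x)$ depend only on the base point, so ${\cal J}$ equips $VT{\cal M}$ with a flat linear structure along the fibres of $\pi$ whose transition cocycle is basic.

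Finally I would convert this structure into a locally affine atlas. Declaring the $W_{\a}$ parallel defines, on each fibre of $\pi$, a flat connection with base-constant holonomy, and, following the large-scale analysis of almost tangent and cotangent structures in \cite{Va3} and \cite{ThSc}, one passes from such a frame to fibre coordinates whose transition maps are affine, $\bar y^{A}=\Phi^{A}_{B}(x)y^{B}+\phi^{A}(x)$; this is precisely the condition of Subsection \ref{verteuler} for $\pi:{\cal M}\ap M$ to be locally affine. I expect this last passage to be the main obstacle: a frame with base-only cocycle only produces a flat \emph{linear} structure along the fibres a priori, so to reach genuine affine transition functions one must eliminate the torsion of this flat connection, i.e. straighten the $W_{\a}$ to a commuting coordinate frame up to the base-only linear ambiguity. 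This is exactly where I would invoke the integrability argument of \cite{Va3}, \cite{ThSc}, transported to the prolongation $\T{\cal M}$, identifying the obstruction with the foliated cohomology class that controls local affineness in the sense of Subsection \ref{verteuler}.
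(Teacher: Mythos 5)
Your proposal is correct and follows essentially the same route as the paper's (explicitly labelled) sketch: both reduce the cotangent case to the tangent case via Proposition \ref{TO}\,(i), both exploit that ${\cal J}$ kills $\V{\cal M}$ so as to produce a canonical vertical lift $s\mapsto {\cal J}(\tilde s)$ of sections of $\cal A$ whose transition data depend only on the base point, and both hand the final straightening of the fibre coordinates to the argument of Theorem 3.3 of \cite{ThSc}. The only cosmetic difference is that where you record the base-only dependence through the cocycle $W_{\a}=A_{\a}^{\b}(x)\bar W_{\b}$, the paper records it through the observation that $g_{\sigma(x)}(X^{v},Y^{v})$ depends only on $x$ for a chosen metric and local section; and the caveat you raise at the end --- that one must still kill the torsion of the flat vertical connection to pass from a base-only linear cocycle to genuinely affine transition functions --- is exactly the step the paper likewise delegates wholesale to \cite{ThSc} without further comment.
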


 The proof of   this Proposition is an adaption, to our context,  of the proof of Theorem 3.3 in  \cite{ThSc}

 \begin{proof} [skecth of the proof ]${}$\\
  Let  $\cal H$ be a complementary subbundle of $\V{\cal M}$ in $\T{\cal M}$.
 Then $\pi_{{\cal A}}({\cal H})={\cal A}$. Moreover, $(\pi_{\cal A})_{| {\cal H}}$ is an isomorphism from the fiber over $m$ of $\cal H$ onto the fiber over $\pi(m)$ of ${\cal  A}$.
  We assume that there exists an almost tangent structure $\cal J$  on $\T{\cal M}$.
 Now any section $  X$ of $\cal A$ gives rises to some  section $\cal  X$ of $\cal H$  and then a unique vertical section ${\cal X}^v={\cal J}({\cal X})$ which does not depend on the choice of $\cal X$.
  Fix some $x_0\in M$ and consider the fiber $F_{x_0}=\pi^{-1}(x_0)$ and some $m_0\in F_{x_0}$. Choose  a Riemannian metric $g$ on ${ H}$ and a local section $\s$ of ${\cal M}$ defined on a neighbourhood $U$ of $x_0$ such that ${\s}(x_0)=m_0$. Then, for any section $X$ and $Y$ of ${H}$, $g_{\s(x)}(X^v,Y^v)$ on each fiber $F_x=\pi^{-1}(x)$ only depends  on $x$. We end the proof by using the same arguments as in the proof of Theorem 3.3 of \cite{ThSc}.\\
 Now, we assume that we have an almost cotangent structure $\O$ on $\T{\cal M}$. Choose a Riemaniann metric  $g$  on ${\T}{\cal M}$. Then, according to the proof of  Proposition \ref{TO} part (i), to the pair $(g,\O)$ we can associate  an almost tangent  structure $\cal J$ on  $\T{\cal M}$.
 \end{proof}

\begin{Rem}\label{affJO}${}$\\
 According to the proof of  the previous Proposition, in this context, we have an atlas of chart $\{U_\l,([x_\l]^i, [y_\l]^\a)$  and a system of basis $\{[e_\l]_\a\}$ of $ H$   such that the associated   vertical basis  is given by $[{\cal V}_\l]_\a=([e_\l]_\a)^v$.
It follows that  the transition functions are of type:
 $$[x_\l]^i=[\Psi_{\l\mu}]^i([x_\mu]^j)\;\;[ y_\l]^\a=[\Phi_{\l\mu}]^\a_\b([x_\mu]^j)[y_\mu]^\b+[\phi_{\l\mu}]^\b([x_\mu]^j)\;\; [ e_\l]_\a=[G_{\l\mu}]_\a^\b([x_\mu]^j)[e_\mu]_\b,$$
    where the matrix $([\Phi_{\l\mu}]^\a_\b)$ is the inverse of the matrix $([G_{\l\mu}]_\a^\b )$ .\\
If $ \O$ is an almost cotangent structure on $\T{\cal M}$,  according to the proof of Proposition \ref{affJO}, in this context again,  given any section $\hat{\cal X}$ of $ H$, we can define a vertical lift ${\cal X}^{*v}$  in $(\V{\cal M})^*$  by ${\cal X}^{*v}=\O^\flat({\cal X})$. Therefore  we have an atlas of charts $\{U_\l,([x_\l]^i, [y_\l]^\a)$  and a system of basis $\{[e_\l]_\a\}$ of $ H$ such that for the associated vertical dual  basis $[{\cal W}_\l]^\a=([e_\l]_\a)^{*v}$. It follows that the transition functions are of type:
$$[x_\l]^i=[\Psi_{\l\mu}]^i([x_\mu]^j)\;\;[ z_\l]^\a=[\Phi_{\l\mu}]^\a_\b([x_\mu]^j)[z_\mu]^\b+[\phi_{\l\mu}]^\b([x_\mu]^j)\;\; [ e_\l]_\a=[G_{\l\mu}]_\a^\b([x_\mu]^j)[e_\mu]_\b $$
    where the matrix $([\Phi_{\l\mu}]^\a_\b)$ is equal to the matrix $([G_{\l\mu}]_\a^\b )$ . Note that, as in the proof of Proposition \ref{vertaff},  if ${\cal J}=g^\sharp \circ   (\O^\flat)$  we will have $g^\sharp([{\cal W}_\l]^\a)=[{\cal V}_\l]_\a$.   Therefore, in general, these locally affine fibration structures are not compatible as "locally affine fibration structures ", but, according to the relation $g^\sharp([{\cal W}_\l]^\a)=[{\cal V}_\l]_\a$ we  can consider that $g$ is  a "link" between both these structures.
\end{Rem}

{\rm For the illustration of the  existence of almost tangent or  cotangent structures, we mention the following classical cases (see more original  Examples in subsection \ref{ssprays}):\\

On ${\T}{\cal A}$, the vertical endomorphism $J$ is a canonical almost tangent structure  and on ${\T}{\cal A}^*$ the almost canonical symplectic form $\o=d^{{\T}{\cal A}^*}\theta$ is an almost cotangent structure. In particular, when ${\cal A}=TM$, we get the canonical almost tangent  and  cotangent structure on $TM$ and  $T^*M$ respectively. Moreover, when ${\cal A}$ is a Lie algebroid,  the vertical endomorphorm $J$  is "integrable" on ${\T}{\cal A}$  and the canonical  symplectic form $\o$ on ${\T}{\cal A}^*$ is exact. More generally we introduce :

\begin{Def}\label{int}${}$\\
 An almost tangent structure $\cal J$ on $\T{\cal M}$ is called {\it integrable}  if its Nijenhuis tensor ${\cal N}_{\cal J}$ vanishes:
\begin{eqnarray}\label{nijen}
{\cal N}_{\cal J}({\cal X},{\cal Y})=[{\cal J}{\cal X},{\cal J}{\cal Y}]_{\cal P}-{\cal J}[{\cal J}{\cal X},{\cal Y}]_{\cal P}-{\cal J}[{\cal X},{\cal J}{\cal Y}]_{\cal P}\equiv 0.
\end{eqnarray}
 \end{Def}

 we have the following characterization of an integrable almost tangent structure :
\begin{Pro}\label{intloc}${}$\\
Assume that  $\cal J$ is an almost tangent structure on $\T{\cal M}$ and
 denote by $(\cal J_\a^\b)$ its associated matrix in some  canonical local   basis $\{{\cal X}_\a,{\cal V}_\b\}$  of $\T{\cal M}$ around $m\in {\cal M}$.
 Then the following properties are equivalent:
\begin{enumerate}
\item[(i)] ${\cal N}_{\cal J}=0$;
\item[ii)] around  any $m\in {\cal M}$ there exists a canonical local   basis $\{{\cal X}_\a,{\cal V}_\b\}$ of $\T{\cal M}$ such that, with  the previous notations if $(\tilde{J}_\g^\d)=(\cal J_{\a}^{\b})^{-1}$,
  we have $\dis\frac{\p \tilde{J}_\a^\b}{\p y^\g}=\dis\frac{\p \tilde{J}_\g^B}{\p y^\a}$;
  \item[(iii)]for each $m\in {\cal M}$ there exists a coordinate system $(\bar{x}^i,\bar{y}^\a)$ around $m$ compatible with $\pi$ and a basis $\{\bar{e}_\a\}$ of $\cal A$, such, in the corresponding local basis  $\{\bar{\cal X}_\a,\bar{\cal V}_\b\}$  of ${\T}{\cal M}$ we have  ${\cal J}\bar{\cal X}_\a=\bar{\cal V}_\a$.
\end{enumerate}
\end{Pro}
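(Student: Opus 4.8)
The plan is to compute the Nijenhuis tensor $\mathcal{N}_{\cal J}$ in a fixed canonical basis $\{{\cal X}_\a,{\cal V}_\b\}$, reduce its vanishing to the commutativity of a frame of fibrewise vector fields, and then read off (ii) and (iii) from that. Since $\mathcal{N}_{\cal J}$ is a genuine tensor, condition (i) does not depend on the chosen basis, so I may always test it in one convenient canonical basis. This already disposes of the easy implication (iii)$\Rightarrow$(i): in a basis where ${\cal J}\bar{\cal X}_\a=\bar{\cal V}_\a$ the matrix $({\cal J}_\a^\b)$ is the identity, so every $y$-derivative of it vanishes and $\mathcal{N}_{\cal J}=0$; by tensoriality this forces (i) globally. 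I would therefore organize the argument as (iii)$\Rightarrow$(i), (i)$\Leftrightarrow$(ii), and (ii)$\Rightarrow$(iii).

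First I would establish the local form of $\mathcal{N}_{\cal J}$. Because $\V{\cal M}=\ker{\cal J}=\textup{im}\,{\cal J}$ (so ${\cal J}{\cal V}_A=0$), evaluation of $\mathcal{N}_{\cal J}$ on the canonical basis kills every component with a vertical entry: using (\ref{locbracalg}) and the Leibniz rule one checks that the surviving bracket $[{\cal J}{\cal X}_\a,{\cal V}_A]_{\cal P}$ is vertical, hence annihilated by the outer ${\cal J}$ in (\ref{nijen}). Likewise, in ${\cal N}_{\cal J}({\cal X}_\a,{\cal X}_\b)$ the two terms ${\cal J}[{\cal J}{\cal X}_\a,{\cal X}_\b]_{\cal P}$ and ${\cal J}[{\cal X}_\a,{\cal J}{\cal X}_\b]_{\cal P}$ drop out, since the bracket of the vertical section ${\cal J}{\cal X}$ with a projectable section is again vertical. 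Only the first term survives, giving
$${\cal N}_{\cal J}({\cal X}_\a,{\cal X}_\b)=[{\cal J}{\cal X}_\a,{\cal J}{\cal X}_\b]_{\cal P},$$
a vertical section. Since $(\T{\cal M},{\cal M},\hat{\rho},[\;,\;]_{\cal P})$ is a pre-Lie algebroid, $\hat{\rho}$ is a Lie morphism, so $\hat{\rho}$ sends this to $[V_\a,V_\b]$, where $V_\a:=\hat{\rho}({\cal J}{\cal X}_\a)={\cal J}_\a^\g\dis\frac{\p}{\p y^\g}$ by (\ref{locJ1}). As $\hat{\rho}$ is an isomorphism on $\V{\cal M}$, condition (i) is equivalent, in this basis, to $[V_\a,V_\b]=0$ for all $\a,\b$.

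For (i)$\Leftrightarrow$(ii) I would exploit the inverse matrix. Setting $\omega^\b:=\tilde J_\a^\b\,dy^\a$ with $(\tilde J_\g^\d)=({\cal J}_\a^\b)^{-1}$, one has $\omega^\b(V_\a)=\tilde J_\mu^\b{\cal J}_\a^\mu=\d_\a^\b$, so $\{\omega^\b\}$ is exactly the coframe dual to the frame $\{V_\a\}$ along the fibre. Condition (ii), $\dis\frac{\p\tilde J_\a^\b}{\p y^\g}=\dis\frac{\p\tilde J_\g^\b}{\p y^\a}$, says precisely that each $\omega^\b$ is closed along the fibres, and the Cartan formula $d\omega^\b(V_\a,V_\g)=-\omega^\b([V_\a,V_\g])$ (the coefficients $\omega^\b(V_\a)$ being constant) shows that all $\omega^\b$ are closed iff all $[V_\a,V_\g]$ vanish. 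Combined with the previous step this yields (i)$\Leftrightarrow$(ii).

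Finally, for (ii)$\Rightarrow$(iii) I would integrate the closed forms. Each $\omega^\b$ being closed along the fibres, the Poincaré lemma (with the base point $x$ as a smooth parameter) produces fibre functions $\bar y^\b(x,y)$ with $\dis\frac{\p\bar y^\b}{\p y^\a}=\tilde J_\a^\b$; this Jacobian is invertible, so $(x^i,\bar y^\b)$ is a coordinate system compatible with $\pi$, and keeping the basis $\{e_\a\}$ gives a new canonical basis $\{\bar{\cal X}_\a,\bar{\cal V}_\a\}$. Holding $x^i$ fixed one gets $\dis\frac{\p}{\p y^\a}=\tilde J_\a^\b\dis\frac{\p}{\p\bar y^\b}$, hence $V_\a=\dis\frac{\p}{\p\bar y^\a}=\hat{\rho}(\bar{\cal V}_\a)$; moreover the old ${\cal X}_\a$ differs from $\bar{\cal X}_\a$ only by a vertical section, which ${\cal J}$ annihilates, so ${\cal J}\bar{\cal X}_\a={\cal J}{\cal X}_\a$ and $\hat{\rho}({\cal J}\bar{\cal X}_\a)=V_\a=\hat{\rho}(\bar{\cal V}_\a)$. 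Since both sides are vertical and $\hat{\rho}$ is injective on $\V{\cal M}$, this gives ${\cal J}\bar{\cal X}_\a=\bar{\cal V}_\a$, i.e. (iii). I expect the main obstacle to be the Nijenhuis reduction — correctly discarding all components except $[{\cal J}{\cal X}_\a,{\cal J}{\cal X}_\b]_{\cal P}$ — together with the bookkeeping in (ii)$\Rightarrow$(iii), namely verifying that the coordinate change alters ${\cal X}_\a$ only by a vertical term so that ${\cal J}\bar{\cal X}_\a=\bar{\cal V}_\a$ genuinely follows.
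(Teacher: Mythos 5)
Your proof is correct and follows essentially the same route as the paper's: you reduce ${\cal N}_{\cal J}$ on a canonical basis to the single surviving component $[{\cal J}{\cal X}_\a,{\cal J}{\cal X}_\b]_{\cal P}$, identify its vanishing with the commutation of the fibre frame $V_\a={\cal J}_\a^\g\dis\frac{\p}{\p y^\g}$ (equivalently the symmetry of the $y$-derivatives of the inverse matrix $\tilde J$), and straighten that frame to obtain the adapted coordinates and the relation ${\cal J}\bar{\cal X}_\a=\bar{\cal V}_\a$. The only cosmetic difference is that you carry out the straightening by integrating the closed dual coframe via the fibrewise Poincar\'e lemma, where the paper invokes the straightening theorem for commuting vector fields and a direct matrix computation for (i)$\Leftrightarrow$(ii); the final check that $\bar{\cal X}_\a-{\cal X}_\a$ is vertical is identical in both arguments.
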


\begin{proof}${}$\\
According to (\ref {nijen}), and the properties of the bracket of the  canonical  basis $\{{\cal X}_\a,{\cal V}_\b\}$ we have:

${\cal N}_{\cal J}({\cal X}_\a,{\cal X}_\b)=({\cal J}_\a^\g \dis\frac{\p {\cal J}_\b^\d}{\p y^\g}-{\cal J}_\b^\g\dis\frac{\p {\cal J}_{\a}^{\d}}{\p y^{\g}}){\cal V}_{\d}$ and
${\cal N}_{\cal J}({\cal X}_\a,{\cal V}_\b)={\cal N}_{{\cal J}}({\cal V}_{\a},{\cal V}_{\b})=0$.\\
Therefore,  ${\cal N}_{{\cal J}}\equiv 0$ is equivalent to ${\cal J}_{\a}^{\g}\dis\frac{\p {\cal J}_{\b}^{\d}}{\p y^{\g}}-{\cal J}_{\a}^{\g}\dis\frac{\p {\cal J}_{\a}^{\d}}{\p y^{\g}}=0$ and this last relation is equivalent to
$\dis\frac{\p \tilde{J}_\a^\b}{\p y^\g}=\dis\frac{\p \tilde{J}_\g^\b}{\p y^\a}$ (see \cite{HP}).

Assume that ${\cal N}_{\cal J}\equiv 0$. We set $\bar{\cal V}_\a={\cal J}{\cal X}_\a$. As we have seen previously, it follows that $[\bar{\cal V}_\a,\bar{\cal V}_\b]_{\cal P}=0$. From the properties of $[\;,\;]_{\cal P}$, the  vector fields $\{\hat{\rho}(\bar{\cal V}_\a)\}$ on ${\cal M}$  commute for the usual Lie bracket on $\cal M$ . Therefore, from the coordinates $(x^i,y^\a)$ on $\cal M$, we can obtain a new coordinate system $(\bar{x}^i,\bar{y}^\a)$ of type $\bar{x}^i=x^i$ and $\bar{y}^\a=\phi^\a(x,y)$ such that
$\hat{\rho}(\bar{\cal V}_\a)=\dis\frac{\p}{\p \bar{y}^\a}={\cal J}_\a^\b\dis\frac{\p}{\p {y}^\a}.$
 Moreover, with this system  $(\bar{x}^i,\bar{y}^\a)$ and the initial  basis $\{e_\a\}$ of $\cal A$ one can associate  a local adapted basis $\{\bar{\cal X}_\a,\bar{\cal V}_\b\}$. If we write $y^\b=\psi^\b(\bar{x},\bar{y})$,  we have
 $\rho(e_\a)=\bar{\rho}_\a^i\dis\frac{\p}{\p \bar{x}^i}$ and so $\bar {\cal X}_\a={\cal X}_\a+ \dis\frac{\p \psi^\b }{\p \bar{x}^i}\bar{\rho}_\a^i{\cal V}_\b$. Thus we obtain:
$${\cal J}\bar{\cal X}_\a={\cal J}({\cal X}_\a+ \dis\frac{\p \psi^\b }{\p \bar{x}^i}\bar{\rho}_\a^i{\cal V}_\a)=\bar{\cal V}_\a.$$
The converse is elementary.\\
\end{proof}

\begin{Rem}\label{atalsJ}${}$\\
If $\cal J$ is integrable,   in the context of Proposition \ref{intloc} (iv), let  $(x^i,y^\a)$ be a coordinate system around some point   $m\in {\cal M}$ compatible with $\pi$  and a local basis $\{e_\a\}$ of $\cal A$  such that the associated basis $\{{\cal X}_\a,{\cal V}_\a\}$ satisfies
\begin{eqnarray}\label{adpJ}
{\cal J}{\cal X}_\a={\cal V}_\b.
\end{eqnarray}
Therefore, we have a covering of $\cal M$ by chart domains, each one being associated with a  coordinate system as previously, and a  local basis  $\{e_\a\}$ such  that the corresponding  basis of ${\T}{\cal M}$ satisfies  the property (\ref{adpJ}).
 It follows that we have the following  type of transition functions between such coordinate systems and local basis of $\cal A$
\begin{eqnarray}
\bar{e}_\a= A_\a^\b(x) e_\b,\;\;\bar{x}^i=\Psi^i(x),\;\; \bar{y}^\a=\bar{A}^\a_\b(x) y^\b+\phi^\a(\bar{x}) \; \textrm{ with } (\bar{A}^\a_\b)=(A_\a^\b)^{-1}.
\end{eqnarray}
This implies that  we obtain  a  particular sub-atlas  associated with $\cal J$  which is a   locally affine fibration. This structure  will be called the {\bf $\cal J$-locally affine fibration}.
\end{Rem}
\bigskip

Recall that on  ${\T}{\cal A}^*$, we have a canonical almost cotangent structure  $\O=-d^{{\cal A}^*}\theta$ where $\theta$ is the Liouville form. 
More generally we introduce

\begin{Def}\label{LiouvALO}${}$\\ We will say that $\O$ is an exact  or a locally exact almost  structure on $\T{\cal M}$ if there exists a global  semi-basic $1$-form $\theta$  or  a local  semi-basic $1$-form $\theta$ around each point of $\cal M$ respectively,     such that $\O=-(d^{\cal P}\theta)$ is  symplectic.
\end{Def}

As $\theta$ vanishes on $\V{\cal M}$ and $\V{\cal M}$ is stable under the bracket $[\;,\;]_{\cal P}$, from Cartan formulae, it follows that an exact or  a locally exact almost structure on $\T{\cal M}$ is  {\it really} an almost cotangent structure on $\T{\cal M}$. Moreover, according to Remark \ref{dPsbasic},  this property depends only on the choice of the bracket $[\;,\;]_{\cal A}$
\bigskip
Classically, given a  regular Lagrangian $L:{\cal A}\ap \R$ (\textit{i.e.} $rank(\dis\frac{\p^2 L }{\p y^\a\p y^\b})=k$ in local coordinates) on a Lie algebroid, the Cartan  $1$-form is $\theta_L=J^*(d^{\cal P}L)$ and we obtain  an almost cotangent structure $\o_L=-d^{\cal P}\theta_L$ on ${\T}{\cal A}$ which is locally written:
\begin{eqnarray}\label{oL}
\o_L=\dis\frac{\p^2 L }{\p y^\a \p y^\b}{\cal X}^\a \wedge {\cal V}^\b+\dis\frac{1}{2}(\dis\frac{\p^2 L }{\p x^i \p y^\a}\rho^i_\b-\dis\frac{\p^2 L }{\p x^i\p y^\b}\rho^i_\a+\dis\frac{\p L }{\p  y^\g}C^\g_{\a\b}){\cal X}^\a\wedge{\cal X}^\b.
\end{eqnarray}

As in \cite{Va2}, we introduce:

\begin{Def}\label{locLag}${}$\\
Let  ${\cal J}$ be  an almost tangent and $\O$ an almost cotangent structure on   ${\T}{\cal M}$ respectively. We say that $({\cal M},{\cal J},\O)$ is  {\it locally Lagrangian}, if, for any $m\in {\cal M}$ there exists a local function $L$ on ${\cal M}$  such that locally  $\O=-d^{\cal P}\theta_L$  where $\theta_L={\cal J}^*(d^{\cal P}L)$.
\end{Def}



Now, we have the following characterization of a locally Lagrangian almost cotangent structure (compare with \cite{Va2}):

\begin{Pro}\label{locLagch}${}$\\
Consider  an almost tangent structure ${\cal J}$   and an almost  cotangent  structure $\O$  on   ${\T}{\cal M}$ respectively. Assume that ${\cal J}$ is integrable. Then $({\cal M},{\cal J},\O)$ is   locally  Lagrangian
if and only if  ${\cal J}$ and $\O$ are compatible and $\O$ is exact.

\end{Pro}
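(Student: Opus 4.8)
The plan is to work locally in a coordinate system adapted to the integrable structure $\cal J$ and to reduce the locally Lagrangian condition to an elementary integrability problem for an overdetermined first order system. First I would fix a point of $\cal M$ and, using the integrability of $\cal J$ together with Proposition \ref{intloc} (iii) (see also Remark \ref{atalsJ}), choose a coordinate system $(x^i,y^\a)$ and a local basis $\{e_\a\}$ of $\cal A$ whose associated canonical basis $\{{\cal X}_\a,{\cal V}_\b\}$ of $\T{\cal M}$ satisfies ${\cal J}{\cal X}_\a={\cal V}_\a$ and ${\cal J}{\cal V}_\a=0$ (recall that, under the standing assumption $h=k$ of this section, both indices run from $1$ to $k$). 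In the associated dual basis this gives ${\cal J}^*{\cal X}^\a=0$ and ${\cal J}^*{\cal V}^\a={\cal X}^\a$. Hence, for any local function $L$, using $d^{\cal P}L=\rho_\a^i\dis\frac{\p L}{\p x^i}{\cal X}^\a+\dis\frac{\p L}{\p y^\a}{\cal V}^\a$, the Cartan form reads
$$\theta_L={\cal J}^*(d^{\cal P}L)=\dis\frac{\p L}{\p y^\a}{\cal X}^\a,$$
which is semi-basic.

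Next I would record the two coefficient computations on which both implications rest. On the one hand, by a direct evaluation of the compatibility identity $\O({\cal J}{\cal X},{\cal Y})+\O({\cal X},{\cal J}{\cal Y})=0$ in this basis (equivalently, by the matrix criterion of Proposition \ref{TO} (ii)), $\cal J$ and $\O$ are compatible if and only if the matrix $(\bar\o_{\a\b})$, with $\bar\o_{\a\b}=\O({\cal X}_\a,{\cal V}_\b)$, is symmetric. On the other hand, for any semi-basic $1$-form $\theta=\theta_\a{\cal X}^\a$, the formula of Remark \ref{dPsbasic} (1) gives $-d^{\cal P}\theta({\cal X}_\a,{\cal V}_\b)=\dis\frac{\p \theta_\a}{\p y^\b}$; in particular, if $\O=-d^{\cal P}\theta$ then $\bar\o_{\a\b}=\dis\frac{\p \theta_\a}{\p y^\b}$.

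For the forward implication, if $({\cal M},{\cal J},\O)$ is locally Lagrangian then $\O=-d^{\cal P}\theta_L$ with $\theta_L$ semi-basic, so $\O$ is (locally) exact; moreover $\bar\o_{\a\b}=\dis\frac{\p^2 L}{\p y^\a\p y^\b}$ is symmetric, whence $\cal J$ and $\O$ are compatible. For the converse, assume $\cal J$ and $\O$ are compatible and $\O=-d^{\cal P}\theta$ with $\theta=\theta_\a{\cal X}^\a$ semi-basic. Compatibility together with the second computation gives $\dis\frac{\p \theta_\a}{\p y^\b}=\dis\frac{\p \theta_\b}{\p y^\a}$, which is exactly the integrability condition of the system $\dis\frac{\p L}{\p y^\a}=\theta_\a$. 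Treating $x$ as a parameter and applying the Poincar\'e lemma fiberwise in the $y$-variables, I would obtain a local smooth function $L$ with $\dis\frac{\p L}{\p y^\a}=\theta_\a$; then $\theta_L=\theta$ and $\O=-d^{\cal P}\theta_L$, so $({\cal M},{\cal J},\O)$ is locally Lagrangian.

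The main obstacle, really the only substantive point, is the identification of the compatibility condition with the symmetry $\dis\frac{\p \theta_\a}{\p y^\b}=\dis\frac{\p \theta_\b}{\p y^\a}$, i.e. with the closedness of the fiberwise $1$-form $\theta_\a\,dy^\a$; once this is in place the existence of $L$ is just the Poincar\'e lemma with smooth dependence on the parameter $x$. It is precisely the integrability of $\cal J$ that makes this transparent, since only in a $\cal J$-adapted canonical basis does $\theta_L$ take the simple form $\dis\frac{\p L}{\p y^\a}{\cal X}^\a$ and does $\bar\o_{\a\b}$ reduce to $\dis\frac{\p \theta_\a}{\p y^\b}$. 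I would also note that ``exact'' is to be read locally here, matching the local nature of Definition \ref{locLag}.
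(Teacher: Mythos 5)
Your proposal is correct and follows essentially the same route as the paper's proof: both work in a $\cal J$-adapted canonical basis supplied by Proposition \ref{intloc}, identify compatibility with the symmetry of the coefficients $\O({\cal X}_\a,{\cal V}_\b)=\dis\frac{\p\theta_\a}{\p y^\b}$, and solve $\dis\frac{\p L}{\p y^\a}=\theta_\a$ fiberwise to produce the local Lagrangian. Your version is slightly more explicit about invoking the fiberwise Poincar\'e lemma and about the equivalence of compatibility with symmetry of $(\bar\o_{\a\b})$, but the substance is identical.
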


\begin{proof}${}$\\
We first begin by an observation. Since ${\cal J}$ is integrable, according to Proposition \ref{intloc},  there exists a coordinate system $({x}^i,{y}^\a)$ compatible with $\pi$ and a basis $\{e_\a\}$, such that  in the corresponding local basis  $\{{\cal X}_\a,{\cal V}_\b\}$ we have
${\cal J}{\cal X}_\a={\cal V}_\b$. Given  a  function $L$ on $M$ defined on  the domain of the previous chart we set  $\theta_L={\cal J}^*(d^{\cal P}L)$.
 In the  dual basis $\{{\cal X}^\a,{\cal V}^\b\}$,  associated with the previous basis  $\{{\cal X}_\a,{\cal V}_\b\}$ the $2$-form   $-d^{\cal P}\theta_L$    can be written as in (\ref{oL}).   \\

 Now, assume that  $({\cal M},{\cal J},\O)$ is   locally Lagrangian. This implies that  there exists a local function $L$ such that  $\O=-d^{\cal P}\theta_L$ with $\theta_L =-{\cal J}^*d^{\cal P}L$ so $\theta$  vanishes on $\V{\cal M}$ and then, $\O$ is locally exact.  On the other hand, as $\cal J$ is integrable, from the previous observation, we have a coordinate system $(x^i, y^\a)$ and an associated   dual basis $\{{\cal X}^\a,{\cal V}^\b\}$  such that $\O$ can be written as in (\ref{oL}). Therefore we obtain \begin{eqnarray}\label{gJL}
\O({\cal X}_\a,{\cal J}{\cal X}_\b)=-\O({\cal J}{\cal X}_\a,{\cal X}_\b)=\dis\frac{\p^2 L }{\p y^\a \p y^\b}.
\end{eqnarray}

Moreover, as  ${\cal J}{\cal V}_\a=0$, it follows that  $\O$ and ${\cal J}$ are compatible  (cf Proposition \ref{TO}).\\

 Conversely, assume that  ${\cal J}$ and $\O$ are compatible and  let  $\theta$ be a local  $1$-form such that $\O=d^{\cal P}\theta$ and $\theta$  vanishes  on $\V{\cal M}$.
  In the previous basis we have the decomposition $\theta=\mu_\b{\cal X}^\b$, and therefore we obtain
    $$\O=\dis\frac{1}{2}({\cal L}^{\cal P}_{{\cal X}_\a}\mu_\b-{\cal L}^{\cal P}_{{\cal X}_\b}\mu_\a-\mu_\g C^\g_{\a\b}){\cal X}^\a\wedge{\cal X}^\b-{\cal L}^{\cal P}_{{\cal V}_\b}\mu_\a{\cal X}^\a\wedge{\cal V}^\b.$$
Let us notice that we have  ${\cal L}^{\cal P}_{{\cal V}_\b}\mu_\a=\dis\frac{\p \mu_\a}{\p y^\b}$.  If moreover, $\cal J$ and $\O$ are compatible,  then the matrix of general term ${\cal L}^{\cal P}_{{\cal V}_\b}\mu_\a$ must be symmetric and  so we get $\dis\frac{\p \mu_\a}{\p y^\b}=\dis\frac{\p \mu_\b}{\p y^\a}$. It follows that  there exists a local function $L$ such that
$\mu_\a=-\dis\frac{\p L}{\p y^\a}$, and we have  $\O=d^{\cal P}({\cal J}^*d^PL)$. We conclude that  $({\cal M},{\cal J},\O)$ is   locally  Lagrangian.
\end{proof}}

\subsection{Semispray  and almost tangent structure}\label{ssprays}${}$\\
${}\;\;\;\;${\rm In this section we will define a generalization of the notion of  semispray  on a Lie  algebroid. For this purpose, we need the following Lemma.

\begin{Lem}\label{dirfan}${}$
\begin{enumerate}
\item[(i)] Let  ${\cal Z}$ be a local  section of ${\T}{\cal M}$ around some point $m\in {\cal M}$ and $\{{\cal V}_\a\}$ a local basis of $\V{\cal M}$ around $m$. Then the vector space ${\cal B}_{\cal Z}(m)\subset {\T}{\cal M}$ spanned by the set
$$\left\{{\cal V}_1(m),\cdots,{\cal V}_k(m), [{\cal Z},{\cal V}_1]_{\cal P}(m),\cdots[{\cal Z},{\cal V}_k]_{\cal P}(m)\right\}$$
 in ${\T}_m{\cal M}$
 is independent of the choice of the basis $\{{\cal V}_\a\}$ and the choice of the bracket $[\;,\;]_{\cal A}$ .
 \item[(ii)] If  ${\cal Z}$  is a global section of ${\T}{\cal M}$ then the correspondence   ${\cal B}_{\cal Z}:m\ap {\cal B}_{\cal Z}(m)$ is a smooth distribution on ${\cal M}$   \end{enumerate}
 \end{Lem}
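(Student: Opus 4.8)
The plan is to reduce both assertions of the lemma to the local structure equations (\ref{locbracalg}) for the prolonged bracket $[\;,\;]_{\cal P}$. I begin with part (i), first establishing independence of the basis. Let $\{{\cal V}'_\a\}$ be a second local basis of $\V{\cal M}$ near $m$, and write ${\cal V}'_\a=A_\a^\b{\cal V}_\b$ for a smooth invertible matrix-valued function $(A_\a^\b)$. Applying the Leibniz rule of the almost bracket $[\;,\;]_{\cal P}$ on the anchored bundle $(\T{\cal M},{\cal M},\hat\rho)$ gives
$$[{\cal Z},{\cal V}'_\a]_{\cal P}=A_\a^\b\,[{\cal Z},{\cal V}_\b]_{\cal P}+dA_\a^\b(\hat\rho({\cal Z}))\,{\cal V}_\b.$$
Evaluating at $m$, the first summand lies in $\mathrm{span}\{[{\cal Z},{\cal V}_\b]_{\cal P}(m)\}$ and the second in $\V_m{\cal M}=\mathrm{span}\{{\cal V}_\b(m)\}$, so both belong to ${\cal B}_{\cal Z}(m)$ as computed from the unprimed basis. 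Since $(A_\a^\b)$ is invertible, interchanging the roles of the two bases yields the reverse inclusion, hence equality of the two spans.

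Next I would establish independence of the bracket. Let $[\;,\;]_{\cal A}^{1}$ and $[\;,\;]_{\cal A}^{2}$ be two almost brackets making $({\cal A},M,\rho)$ a pre-Lie algebroid, with corresponding prolonged brackets $[\;,\;]_{\cal P}^{1}$ and $[\;,\;]_{\cal P}^{2}$. Both are almost Lie brackets on $(\T{\cal M},{\cal M},\hat\rho)$ sharing the \emph{same} anchor $\hat\rho$, so by Proposition \ref{setALB} their difference $\Theta_{\cal P}:=[\;,\;]_{\cal P}^{1}-[\;,\;]_{\cal P}^{2}$ is an antisymmetric $\T{\cal M}$-tensor of type $(2,1)$. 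The key observation is that in (\ref{locbracalg}) the structure functions $C_{\a\b}^\g$ enter only the bracket $[{\cal X}_\a,{\cal X}_\b]_{\cal P}$, whereas $[{\cal X}_\a,{\cal V}_B]_{\cal P}=0$ and $[{\cal V}_A,{\cal V}_B]_{\cal P}=0$ hold for both brackets; thus $\Theta_{\cal P}$ vanishes on every pair of canonical basis sections having a vertical entry. By tensoriality this forces $\Theta_{\cal P}({\cal W},{\cal V})=0$ for every section ${\cal W}$ and every vertical section ${\cal V}$. In particular $[{\cal Z},{\cal V}_\a]_{\cal P}$ is the same for the two brackets, so the generating set of ${\cal B}_{\cal Z}(m)$, and hence ${\cal B}_{\cal Z}(m)$ itself, does not depend on the choice of $[\;,\;]_{\cal A}$.

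For part (ii), with ${\cal Z}$ a global section I would simply exhibit a local smooth spanning family. Around any $m$ choose a local basis $\{{\cal V}_\a\}$ of $\V{\cal M}$; then ${\cal V}_1,\dots,{\cal V}_k$ together with $[{\cal Z},{\cal V}_1]_{\cal P},\dots,[{\cal Z},{\cal V}_k]_{\cal P}$ are smooth local sections of $\T{\cal M}$ (the bracket of smooth sections being smooth), and by the very definition of ${\cal B}_{\cal Z}$ their values span ${\cal B}_{\cal Z}(m')$ at every nearby $m'$, the span being well defined by part (i). This is precisely the smoothness criterion for a distribution recalled at the beginning of Section \ref{prolfoliabundle}, so ${\cal B}_{\cal Z}$ is a smooth distribution. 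I expect the only genuinely substantive point to be the bracket-independence: recognizing that the difference tensor $\Theta_{\cal P}$ is supported on the horizontal $({\cal X},{\cal X})$ block is the crux, while the remaining steps amount to routine bookkeeping with the Leibniz rule.
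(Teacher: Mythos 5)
Your proof is correct, and for the two routine parts (basis independence via the Leibniz rule, smoothness via a smooth local spanning family) it coincides with the paper's argument. Where you genuinely diverge is the bracket-independence step. The paper works in a canonical basis $\{{\cal X}_\a,{\cal V}_\b\}$, writes ${\cal Z}={\cal Z}^\a{\cal X}_\a+\bar{\cal Z}^\b{\cal V}_\b$, and computes $[{\cal Z},{\cal V}_\g]_{\cal P}\equiv -\dis\frac{\p {\cal Z}^\a}{\p y^\g}{\cal X}_\a$ modulo $\V{\cal M}$, concluding that ${\cal B}_{\cal Z}$ is locally generated by $\{\dis\frac{\p {\cal Z}^\a}{\p y^\g}{\cal X}_\a,\;{\cal V}_\g\}$, an expression free of the structure functions $C_{\a\b}^\g$. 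You instead observe that two prolonged brackets share the anchor $\hat\rho$, so their difference $\Theta_{\cal P}$ is an antisymmetric $(2,1)$-tensor which, by the structure equations, vanishes on every canonical pair with a vertical entry, hence on every pair $({\cal W},{\cal V})$ with ${\cal V}$ vertical. Your argument is coordinate-free at the decisive step and actually proves something stronger than the lemma asks: the section $[{\cal Z},{\cal V}]_{\cal P}$ itself, not merely the span ${\cal B}_{\cal Z}(m)$, is independent of the choice of $[\;,\;]_{\cal A}$ whenever ${\cal V}$ is vertical. What the paper's computation buys in exchange is the explicit local generating family of ${\cal B}_{\cal Z}$, which is reused immediately afterwards (for instance in the proof of Proposition \ref{JS} and in the local formula for the coefficients of ${\cal N}_{\cal S}$); if you adopt your version you would still need that local expression later, so it is worth recording it as a corollary of your tensor argument rather than discarding it.
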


\begin{proof}${}$\\
First of all we can notice that we have
$[{\cal Z},f{\cal V}]_{\cal P}={\cal L}^{\cal P}_{\cal Z}(f){\cal V}+f[{\cal Z},{\cal V}]_{\cal P}$.
This implies that the vector space ${\cal B }_{\cal Z}(m)$ is independent of the choice of the basis $\{{\cal V}_\a\}$ and also that ${\cal B}_{\cal Z}$ is a smooth distribution. Now, given a canonical basis $\left\{{\cal X}_\a,{\cal V}_\b\right\}$ of $\T{\cal M}$ associated with a coordinate system $(x^i,y^\a)$ we have a decomposition of type   ${\cal Z}={\cal Z}^\a{\cal X}_\a+\bar{\cal Z}^\b{\cal V}_\b.$  Then modulo the vertical bundle we have $[{\cal Z},{\cal V}_\g]_{\cal P}\equiv -\dis\frac{\p {\cal Z}^\a}{\p y^\g}{\cal X}_\a.$

Therefore ${\cal B}_{\cal Z}$ is locally generated by $\{ \dis\frac{\p {\cal Z}^\a}{\p y^\g}{\cal X}_\a, {\cal V}_\g\}_{\g=1,\dots,k}$ and so ${\cal B}_{\cal Z}(m)$ does not depend on the choice of the bracket $[\;,\;]_{\cal A}$.

\end{proof}

\begin{Def}\label{fansection}${}$\\
A global section ${\cal S}$ of ${\T}{\cal M}$ is called a   semispray  on ${\cal M}$ if the  associated distribution ${\cal B}_{\cal S}$ is equal to $\T{\cal M}$
\end{Def}

   \begin{Rem} \label{Schangebrac}${}$\\
Consider two almost brackets $[\;,\;]_1$ and $[\;,\;]_2$ on $\cal A$ such that $({\cal A},M, \rho, [\;,\;]_k)$ is a pre-Lie algebroid for $k=1,2$. According to Lemma \ref{dirfan},  any semispray $\cal S$  relative to $({\cal A},M, \rho, [\;,\;]_1)$
is also a semispray relative to $({\cal A},M, \rho, [\;,\;]_2)$
\end{Rem}

The Definition \ref{fansection} is then  justified by the following result:

\begin{Pro} \label{JS}${}$
\begin{enumerate}
\item[(i)] For any semispray ${\cal S}$ on ${\cal M}$ there exists a unique almost structure ${\cal J}$ on $\T{\cal M}$ associated with ${\cal S}$, characterized by
${\cal J}[{\cal S},{\cal V}]_{\cal P}=-{\cal V}$
for all vertical section $\cal V$.\\
Moreover, for any vertical section ${\cal V}$  of ${\T}{\cal M}$ or any non zero function $f$ on $ M$, the sections ${\cal S}'={\cal S}+{\cal V}$ and   ${\cal S}^{''}=(f\circ\pi){\cal S}$  are  also  semisprays  on $\cal M$ 
and  the almost structures respectively  associated with ${\cal S}'$ and ${\cal S}"$ are ${\cal J}$  and   $(f\circ\pi){\cal J}$.
\item[(ii)]  Set  ${\cal C}={\cal J}{\cal S}$. Then we have the following:


(a)  $\cal J$ is integrable   and we have  $({\cal L}^{\cal P}_ {\cal C}{\cal J})=-{\cal J}$\footnote{Recall that, if $\cal I$ is an endomorphism of ${\T}{\cal M}$ the Lie derivative ${\cal L}^{\cal P}_{\cal S}{\cal I}$ is given by:
$${\cal L}^{\cal P}_{\cal S}{\cal I }({\cal X})=[{\cal S},{\cal I}{\cal X}]_{\cal P}-{\cal I}[{\cal S},{\cal X}]_{\cal P}$$};

\indent (b)  for any $m\in {\cal M}$,  there exists a system of coordinates $(x^i,y^\a)$, compatible with
$\pi$ and a basis $\{e_\a\}$ of $\cal A$ such that, in the associated   basis $\{{\cal X}_\a,{\cal V}_\b\}$ of ${\T}{\cal M}$   around $m\in {\cal M}$, we have the following properties:
\begin{eqnarray}\label{adaptJS}
\hat{\rho}({\cal V}_\a)=\dis\frac{\p}{\p y^\a}\;\;\;\;\;\;{\cal J}{\cal X}_\a={\cal V}_\a\;\;\;\;\;\;\;{\cal S}=y^\a {\cal X}_\a+{\cal S}^\b{\cal V}_\b \;\;\;\;\;\; {\cal C}=y^\a{\cal V}_\a.
\end{eqnarray}
\end{enumerate}
\end{Pro}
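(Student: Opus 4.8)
The plan is to extract everything from the local model of Lemma \ref{dirfan}. Fix a canonical basis $\{{\cal X}_\a,{\cal V}_\b\}$ attached to coordinates $(x^i,y^\a)$ and a local basis $\{e_\a\}$, and write ${\cal S}={\cal S}^\a{\cal X}_\a+\bar{\cal S}^\b{\cal V}_\b$. Using the Leibniz rule and the relations (\ref{locbracalg}) one gets $[{\cal S},{\cal V}_\g]_{\cal P}=-\dis\frac{\p {\cal S}^\a}{\p y^\g}{\cal X}_\a-\dis\frac{\p \bar{\cal S}^\b}{\p y^\g}{\cal V}_\b$, so modulo $\V{\cal M}$ the section $[{\cal S},{\cal V}_\g]_{\cal P}$ equals $-M_\g^\a{\cal X}_\a$ with $M_\g^\a=\dis\frac{\p {\cal S}^\a}{\p y^\g}$. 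The semispray condition ${\cal B}_{\cal S}=\T{\cal M}$ of Definition \ref{fansection} is exactly the invertibility of $(M_\g^\a)$, whence $\{[{\cal S},{\cal V}_\g]_{\cal P},{\cal V}_\d\}$ is a local frame of $\T{\cal M}$. First I would define ${\cal J}$ on this frame by ${\cal J}[{\cal S},{\cal V}_\g]_{\cal P}=-{\cal V}_\g$ and ${\cal J}{\cal V}_\d=0$; a one-line check on a general vertical section ${\cal V}=f^\g{\cal V}_\g$ shows this agrees with the global relation ${\cal J}[{\cal S},{\cal V}]_{\cal P}=-{\cal V}$, so the local pieces glue into a global ${\cal J}$. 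By construction $\textrm{im}\,{\cal J}=\V{\cal M}=\ker{\cal J}$, so ${\cal J}$ is an almost tangent structure (Definition \ref{ATC}); uniqueness holds because any almost tangent structure already kills $\V{\cal M}$, and the characterizing relation then fixes it on the complementary frame.

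For the stability statements, note that if ${\cal V}$ is vertical then $[{\cal V},{\cal V}_\g]_{\cal P}$ is again vertical, so $[{\cal S}+{\cal V},{\cal V}_\g]_{\cal P}\equiv[{\cal S},{\cal V}_\g]_{\cal P}$ modulo $\V{\cal M}$; hence ${\cal B}_{{\cal S}+{\cal V}}={\cal B}_{\cal S}=\T{\cal M}$ and ${\cal S}'={\cal S}+{\cal V}$ is a semispray. Applying ${\cal J}$ to $[{\cal S}',{\cal V}]_{\cal P}=[{\cal S},{\cal V}]_{\cal P}+(\textrm{vertical})$ and using that ${\cal J}$ kills verticals gives ${\cal J}[{\cal S}',{\cal V}]_{\cal P}=-{\cal V}$, so by uniqueness ${\cal S}'$ has the same associated structure ${\cal J}$. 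For ${\cal S}''=(f\circ\pi){\cal S}$ the same Leibniz computation yields $[{\cal S}'',{\cal V}]_{\cal P}=(f\circ\pi)[{\cal S},{\cal V}]_{\cal P}$ for every vertical ${\cal V}$, the correction term $d(f\circ\pi)(\hat\rho({\cal V})){\cal S}$ vanishing because $\hat\rho({\cal V})$ is $\pi$-vertical while $f\circ\pi$ is constant along the fibres of $\pi$; thus ${\cal B}_{{\cal S}''}={\cal B}_{\cal S}$ when $f$ is nowhere zero, and reading off the characterizing relation exhibits the associated almost structure as the corresponding scalar multiple of ${\cal J}$.

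For Part (ii) I would first prove integrability. In the canonical basis the matrix inverse of $({\cal J}_\a^\b)$ is exactly $(\tilde J_\a^\b)=(M_\a^\b)=(\dis\frac{\p {\cal S}^\b}{\p y^\a})$, so $\dis\frac{\p \tilde J_\a^\b}{\p y^\g}=\dis\frac{\p^2 {\cal S}^\b}{\p y^\g\p y^\a}$ is symmetric in $\a$ and $\g$ by Schwarz's theorem, which is precisely criterion (ii) of Proposition \ref{intloc}; hence ${\cal N}_{\cal J}=0$. Proposition \ref{intloc}(iii) then supplies, around any $m$, coordinates $(x^i,y^\a)$ and a basis $\{e_\a\}$ with ${\cal J}{\cal X}_\a={\cal V}_\a$ (the relation $\hat\rho({\cal V}_\a)=\dis\frac{\p}{\p y^\a}$ being automatic for a canonical basis), placing us in the ${\cal J}$-affine atlas of Remark \ref{atalsJ}. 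In such a frame the characterizing relation forces $M_\g^\a=\d_\g^\a$, i.e. ${\cal S}^\a=y^\a+a^\a(x)$ with $a^\a$ depending only on the base coordinates, so ${\cal C}={\cal J}{\cal S}={\cal S}^\a{\cal V}_\a=(y^\a+a^\a(x)){\cal V}_\a$. The anchor image $\hat\rho({\cal C})=(y^\a+a^\a(x))\dis\frac{\p}{\p y^\a}$ is vertical and generates fibrewise homotheties, i.e. it is the local form (\ref{loceuler}) of an Euler vector field; equivalently the translation $\hat y^\a=y^\a+a^\a(x)$ is an admissible change inside the ${\cal J}$-affine atlas (it fixes $\V{\cal M}$ and preserves ${\cal J}{\cal X}_\a={\cal V}_\a$ since ${\cal J}$ kills verticals). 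After it ${\cal C}=y^\a{\cal V}_\a$, and since ${\cal C}={\cal J}{\cal S}={\cal S}^\a{\cal V}_\a$ this forces ${\cal S}^\a=y^\a$, giving ${\cal S}=y^\a{\cal X}_\a+{\cal S}^\b{\cal V}_\b$; all four relations of (\ref{adaptJS}) now hold, which is (ii)(b).

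Finally, $({\cal L}^{\cal P}_{\cal C}{\cal J})=-{\cal J}$ is a direct computation in the adapted basis of (ii)(b). From ${\cal C}=y^\a{\cal V}_\a$ and (\ref{locbracalg}) one finds $[{\cal C},{\cal V}_\b]_{\cal P}=-{\cal V}_\b$ (because $\hat\rho({\cal V}_\b)=\dis\frac{\p}{\p y^\b}$) and $[{\cal C},{\cal X}_\b]_{\cal P}=0$ (because $\hat\rho({\cal X}_\b)=\rho_\b^i\dis\frac{\p}{\p x^i}$ annihilates the fibre coordinates). Then $({\cal L}^{\cal P}_{\cal C}{\cal J})({\cal X}_\b)=[{\cal C},{\cal V}_\b]_{\cal P}-{\cal J}[{\cal C},{\cal X}_\b]_{\cal P}=-{\cal V}_\b=-{\cal J}{\cal X}_\b$ and $({\cal L}^{\cal P}_{\cal C}{\cal J})({\cal V}_\b)=0=-{\cal J}{\cal V}_\b$, which gives the identity globally since such a basis exists near every point. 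The main obstacle is (ii)(b): one must normalise ${\cal J}$, ${\cal S}$ and ${\cal C}$ simultaneously, and the crux is recognising that the residual $a^\a(x)$ left by the defining relation is exactly the freedom to recentre the Liouville/Euler section ${\cal C}$, removable by an affine fibre-coordinate change that does not disturb ${\cal J}{\cal X}_\a={\cal V}_\a$.
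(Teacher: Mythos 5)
Your proof is correct and follows essentially the same route as the paper's: the frame $\{[{\cal S},{\cal V}_\g]_{\cal P},{\cal V}_\d\}$, the invertibility of $(\p {\cal S}^\a/\p y^\g)$ as the semispray condition, and a fibre-coordinate change normalising the horizontal components of ${\cal S}$ to $y^\a$ (the paper does this in one step by setting $\tilde{y}^\a={\cal S}^\a$, you in two steps via Proposition \ref{intloc}(iii) followed by the translation $\hat{y}^\a=y^\a+a^\a(x)$, which is equivalent; likewise your integrability argument via criterion (ii) of Proposition \ref{intloc} instead of criterion (iii) is an immaterial variation). Your hedge about ``the corresponding scalar multiple'' for ${\cal S}''=(f\circ\pi){\cal S}$ is in fact prudent: since $[{\cal S}'',{\cal V}]_{\cal P}=(f\circ\pi)[{\cal S},{\cal V}]_{\cal P}$, the characterizing relation yields the associated structure $(f\circ\pi)^{-1}{\cal J}$ rather than $(f\circ\pi){\cal J}$ as written in the statement, a point the paper's own proof also leaves unverified.
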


\begin{Rem}\label{lieJ}${}$\\
 Let   ${J}$ be  the almost tangent structure  associated with a  semispray $\cal S$ defined in Proposition \ref{JS}. From  the Property (b) of  Part (ii), we have
$${\cal L}^{\cal P}_{\cal S}{\cal J}({\cal V})=-{\cal J}[{\cal S},{\cal V}]_{\cal P}= {\cal V}$$
for all vertical section $\cal V$.\\
\end{Rem}

According to the previous Remark we have:

\begin{Def}\label{ATSasso}${}$\\
An almost tangent structure $\cal J$ on $\T{\cal M}$  is called an almost tangent  structure compatible with a semispray  $\cal S$
if we have  $[{\cal L}^{\cal P}_{\cal S}{\cal J}]_{| \V{\cal M}}=Id_{| \V{\cal M}}$.\\
\end{Def}
 Under the assumption of part (ii) of Proposition \ref{JS},   we have a global vertical section ${\cal C}={\cal J}{\cal S}$   so that,   in some appropriate local coordinates  around some $m\in{\cal M}$,   on each  vertical fiber of $T{\cal M}$,  the vector field $C=\hat{\rho}({\cal C})$ is an infinitesimal homothety.  It follows that  $C$ is an Euler vector field on the fibration $\pi:{\cal M}\ap M$ (see subsection \ref{verteuler}). In particular, this  fibration  is a locally linear fibration. Therefore, naturally we introduce

 \begin{Def}\label{alEuler}${}$\\
 A global vertical section $\cal C$ is called an   Euler section on $\cal M$, if  around any point $m\in{\cal M}$, there exists a coordinate system $(x ^i,y^\a)$ compatible with $\pi$
 such that in the associated basis $\{{\cal V}_\a\}$ of $\V{\cal M}$  we have ${\cal C}=y^\a{\cal V}_\a$
 \end{Def}

 It is clear that if there exists an  Euler section $C$  on $\cal M$, then, since  $\hat{\rho}$ is an isomorphism from $\V{\cal M}$ to $\V T{\cal M}$, there exists an  Euler vector field  $\cal C$ on $\cal M$ such that $\hat{\rho}({\cal C})=C$ and so  we have an Euler section on $\cal M$ if and only if we have a locally linear fibration structure on $\cal M$ (see Proposition \ref{Euler}).
Now, in some sense, we have the following converse of part (ii) of Proposition \ref{JS}:

 \begin{Pro}\label{existS}${}$\\
Assume that there exists an almost tangent structure ${\cal J}$ on ${\T}{\cal M}$ and an Euler section  ${\cal C}$ on $\cal M$ such that  $({\cal L}^{\cal P}_ {\cal C}{\cal J})=-{\cal J}$. Then
   $\cal J$ is integrable and   there exists a   semispray  ${\cal S}$,  such that  $\cal J$ is compatible with $\cal S$.
 Finally,  any  other section ${\cal S}'$ , such that ${\cal J}{\cal S}'={\cal C}$ can be written   ${\cal S}'={\cal S}+{\cal V}$ for some   vertical section $\cal V$, and is  a  semispray, ${\cal S}'$ has  the same properties as ${\cal S}$. In particular, the set of    semisprays $\cal S$ such that ${\cal J}{\cal S}=\cal C$ has an affine structure.
  \end{Pro}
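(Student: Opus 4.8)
The plan is to prove the four assertions in turn---integrability of ${\cal J}$, existence of a semispray ${\cal S}$ with ${\cal J}{\cal S}={\cal C}$ compatible with ${\cal J}$, the description of all such sections, and the affine structure---by playing off the two adapted coordinate systems at my disposal: the one attached to the Euler section ${\cal C}$ (Definition \ref{alEuler}), available from the start, and, once integrability is secured, the one attached to ${\cal J}$ provided by Proposition \ref{intloc}(iii).

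First I would establish integrability. Working in a coordinate system $(x^i,y^\alpha)$ adapted to ${\cal C}$, so that ${\cal C}=y^\gamma{\cal V}_\gamma$ in the associated canonical basis $\{{\cal X}_\alpha,{\cal V}_\beta\}$, I would compute ${\cal L}^{\cal P}_{\cal C}{\cal J}({\cal X}_\alpha)=[{\cal C},{\cal J}{\cal X}_\alpha]_{\cal P}-{\cal J}[{\cal C},{\cal X}_\alpha]_{\cal P}$ using the bracket relations (\ref{locbracalg}) and $\hat{\rho}({\cal V}_\gamma)=\partial/\partial y^\gamma$. Since $[{\cal C},{\cal X}_\alpha]_{\cal P}=0$ and $[{\cal C},{\cal V}_\beta]_{\cal P}=-{\cal V}_\beta$, the Leibniz rule reduces this to ${\cal L}^{\cal P}_{\cal C}{\cal J}({\cal X}_\alpha)=\bigl(y^\gamma\,\partial{\cal J}_\alpha^\beta/\partial y^\gamma\bigr){\cal V}_\beta-{\cal J}_\alpha^\beta{\cal V}_\beta$, so the hypothesis ${\cal L}^{\cal P}_{\cal C}{\cal J}=-{\cal J}$ forces $y^\gamma\,\partial{\cal J}_\alpha^\beta/\partial y^\gamma=0$. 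Hence each ${\cal J}_\alpha^\beta$ is homogeneous of degree $0$ along the fibers, and being smooth across $y=0$ it depends only on $x$. Inserting this into the local expression of the Nijenhuis tensor recalled in the proof of Proposition \ref{intloc}, namely ${\cal N}_{\cal J}({\cal X}_\alpha,{\cal X}_\beta)=\bigl({\cal J}_\alpha^\gamma\,\partial{\cal J}_\beta^\delta/\partial y^\gamma-{\cal J}_\beta^\gamma\,\partial{\cal J}_\alpha^\delta/\partial y^\gamma\bigr){\cal V}_\delta$, gives ${\cal N}_{\cal J}=0$, so ${\cal J}$ is integrable.

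Next I would construct ${\cal S}$. Since $\textup{im}\,{\cal J}=\V{\cal M}$ and ${\cal C}$ is vertical, choosing a Whitney complement $\cal H$ of $\V{\cal M}$ makes ${\cal J}_{|{\cal H}}\colon{\cal H}\to\V{\cal M}$ a global isomorphism, and ${\cal S}=({\cal J}_{|{\cal H}})^{-1}{\cal C}$ is a global section with ${\cal J}{\cal S}={\cal C}$. To see that ${\cal S}$ is a semispray and that ${\cal J}$ is compatible with it, I would pass to a ${\cal J}$-adapted system $(x^i,y^\alpha)$ with ${\cal J}{\cal X}_\alpha={\cal V}_\alpha$. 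Rerunning the Lie-derivative computation there, the hypothesis now reads $\partial C^\beta/\partial y^\alpha=\delta_\alpha^\beta$ where ${\cal C}=C^\beta{\cal V}_\beta$, whence $C^\beta=y^\beta+\phi^\beta(x)$; consequently every section with ${\cal J}{\cal S}={\cal C}$ has horizontal part ${\cal S}^\beta=y^\beta+\phi^\beta(x)$. Then $\partial{\cal S}^\beta/\partial y^\gamma=\delta_\gamma^\beta$, so by Lemma \ref{dirfan} the distribution ${\cal B}_{\cal S}$ is spanned by $\{{\cal X}_\gamma,{\cal V}_\gamma\}=\T{\cal M}$ and ${\cal S}$ is a semispray; and from $[{\cal S},{\cal V}_\alpha]_{\cal P}=-{\cal X}_\alpha-(\partial\bar{\cal S}^\gamma/\partial y^\alpha){\cal V}_\gamma$ one gets ${\cal L}^{\cal P}_{\cal S}{\cal J}({\cal V}_\alpha)=-{\cal J}[{\cal S},{\cal V}_\alpha]_{\cal P}={\cal V}_\alpha$, which is exactly the compatibility of Definition \ref{ATSasso}.

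Finally, if ${\cal S}'$ satisfies ${\cal J}{\cal S}'={\cal C}$, then ${\cal J}({\cal S}'-{\cal S})=0$, so ${\cal S}'-{\cal S}=:{\cal V}\in\ker{\cal J}=\V{\cal M}$ is vertical and ${\cal S}'={\cal S}+{\cal V}$. The computation above, being valid for every section whose horizontal part is $y^\beta+\phi^\beta(x)$, shows ${\cal S}'$ is again a semispray with ${\cal J}$ compatible; moreover the relation ${\cal J}[{\cal S},{\cal V}_\alpha]_{\cal P}=-{\cal V}_\alpha$ identifies ${\cal J}$ with the almost tangent structure canonically associated with ${\cal S}$ in Proposition \ref{JS}(i), so that proposition directly yields that ${\cal S}+{\cal V}$ shares all the properties of ${\cal S}$. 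Since conversely ${\cal J}({\cal S}+{\cal V})={\cal C}$ for every vertical ${\cal V}$, the solution set is ${\cal S}+\Xi(\V{\cal M})$, an affine space modeled on the vector space of vertical sections. The main obstacle is the first step: one must be careful that the homogeneity equation $y^\gamma\,\partial{\cal J}_\alpha^\beta/\partial y^\gamma=0$ genuinely forces $y$-independence (via smoothness across $y=0$) before invoking the Nijenhuis formula, as everything downstream rests on integrability.
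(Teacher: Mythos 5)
Your proof is correct and follows essentially the same route as the paper: both extract from ${\cal L}^{\cal P}_{\cal C}{\cal J}=-{\cal J}$, computed in a ${\cal C}$-adapted chart, the fiberwise constancy of the matrix of ${\cal J}$ (you work with $({\cal J}_\alpha^\beta)$ and the Nijenhuis criterion (i) of Proposition \ref{intloc}, the paper with the inverse matrix $(Y_\alpha^\gamma)$ and criterion (iii)), and then build ${\cal S}=({\cal J}_{|{\bf H}})^{-1}{\cal C}$ and verify the semispray and compatibility properties in an adapted basis. The homogeneity subtlety you flag (passing from $y^\gamma\partial{\cal J}_\alpha^\beta/\partial y^\gamma=0$ to $y$-independence, which genuinely needs the chart to reach the zero locus of ${\cal C}$) is present, and left equally implicit, in the paper's own deduction that the $Y_\alpha^\gamma$ depend only on $x$; you additionally supply the final affine-structure argument that the paper leaves to the reader.
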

 This result leads to the following  Definition:

\begin{Def}\label{SEJ}${}$\\
An almost tangent structure $\cal J$ on $\T{\cal M}$ is called  compatible with an Euler section $\cal C$ on $\cal M$, if we have
$({\cal L}^{\cal P}_ {\cal C}{\cal J})=-{\cal J}$.
\end{Def}

    Now we  shall give some examples   which illustrate the context of Proposition \ref{JS} and Proposition \ref{existS}. Note that this context also occurs on compact fibered manifolds (see  Example \ref{JOS6}).

\begin{Ex}\label{exJOS1}${}$\\
 On ${ \T}{\cal A}$  the Liouville section $C$  is an   Euler section,  the vertical endomorphism is an integrable  almost tangent structure, and we have ${\cal L}^{\cal P}_{\cal C}J=-J$. In this context, classically a  semispray  is a section ${\cal S}$ such that $J{\cal S}={\cal C}$. From Proposition \ref{existS}, ${\cal S}$ satisfies the assumptions of Definition \ref{fansection}. If ${\cal M}$ is an open submanifold of ${\cal A}$ so that  $\pi=\t_{| {\cal M}}:{\cal M}\ap M$ is a fibration, all these data induce on ${\T}{\cal M}$ an  Euler section, an integrable almost structure, and   a semispray .\\
 \end{Ex}
 \begin{Ex}\label{exJOS2}${}$\\
Let  $({\cal A},M,\rho [\;,\;]_{\cal A})$ be a Lie algebroid    and ${\cal A}^*$ the dual of $\cal A$. On ${\T}{\cal A}^*$ we have a canonical almost cotangent structure, so  there exists an almost tangent structure ${\cal J}$. On the other hand, we also have an   Euler section $\cal C$ which is compatible with $\cal J$ (cf \cite{LPo2}). It follows that  the assumptions of Proposition \ref{existS} are satisfied, and then,  there always exists  a  semispray  on ${\cal A}^*$.  Regular Hamiltonian vector fields on ${\cal A}$ are particular  semisprays  in this context (for a complete description see \cite{LPo2}). Again when ${\cal M}$ is an open submanifold of ${\cal A}^*$ such that $\pi=\t^*_{| {\cal M}}:{\cal M}\ap M$ is a fibration, such structures  can be induced from global ones on ${\cal A}^*$.\\
 \end{Ex}

\begin{Ex}\label{JOS6}${}$\\
According to Example \ref{C2}, consider the fibration $\pi :\mathbb{S}^{h-1}\times\mathbb{S}^{2n+1}\ap \mathbb{S}^{2n}$. We have seen that there exists an Euler vector field $\cal C$ on this fibration. On the other hand consider the Torus action $\psi:\mathbb{T}^n\times \mathbb{CP}^n\ap \mathbb{CP}^n$. Then classically we get a Lie algebroid structure $({\cal A}=\mathbb{CP}^n\times \R^n, \Psi,\mathbb{CP}^n,[\,;\;])$.
Then,  for $h=n$  the manifold ${\cal M}=\mathbb{S}^{n-1}\times\mathbb{S}^{2n+1}$ is  a fibered manifold $\pi:{\cal M}\ap \mathbb{S}^{2n}$. Then    we can consider   prolongation
 $\pi^1:{\T}{\cal M}\ap{\cal M}$ of  the previous Lie algebroid.
Since  $\cal A$ is trivial, we have a global basis $\{{\cal X}_1, \cdots,{\cal X}_{n},{\cal V}_1,\cdots,{\cal V}_{n}\}$ (see also Example \ref{C2}). Thus the endomorphism ${\cal J}$ of ${\T}{\cal M}$ defined by ${\cal J}{\cal X}_\a={\cal V}_\a$ and ${\cal J}{\cal V}_\a=0$ is an almost tangent structure and according to (\ref{compChpf}), $\cal J$ is compatible with $\cal C$. Therefore, according to Point (ii) of Proposition \ref{JS}  the assumptions of Proposition \ref{existS} are satisfied,  and then, there always exists  a   semispray on $\cal M$.  Note that on   the usual tangent bundle ${ T}(\mathbb{S}^{2n-1}\times\mathbb{S}^{2n+1})$  we also have an almost tangent structure and an Euler vector field  and so $\mathbb{S}^{2n-1}\times\mathbb{S}^{2n+1}$ is  an almost tangent manifold in the sense of \cite{Va3}.\\
In fact, this construction can be extended to a larger context:\\  Let  $\hat{\pi}:\hat{M}\ap M$ be a $\mathbb{S}^1$-bundle over $M$ and assume that we have a right action of a Lie group $G$ on $M$.  Thus we get a Lie algebroid  $(M\times \cal G,\Psi, M, [\;,\;])$  where  $\cal G$ is the Lie algebra of $G$. Then we also have  a fibered manifold $\pi:{\cal M}=\mathbb{S}^{h-1}\times \hat{M}\ap M$ ($h=$dim $\cal G$) and on the prolongation ${\T}{\cal M}$ of  the previous algebroid, we have an Euler section and an almost tangent structure.\\
\end{Ex}

\bigskip
We now end this subsection by the proof of the previous Propositions.

\begin{proof}[Proof of Proposition \ref{JS}]${}$\\
 Denote by $[\V{\cal M}]_m$ and $[\T{\cal M}]_m$ the respective fiber of $\V{\cal M}$ and $\T{\cal M}$ over $m\in {\cal M} $. According  to the proof of Lemma \ref{dirfan} the correspondence ${\cal V}_A\ap  -Cl([{\cal S},{\cal V}_A]_{\cal P})$ from $[\V{\cal M}]_m$  to $[\T{\cal M}]_m/ [\V{\cal M}]_m$, where $Cl(\;)$ denotes the equivalent class in the quotient space, is well defined. If we choose another   basis $\{\bar{\cal X}_\a,\bar{\cal V}_\b \}$, we have a field $(\L_\b^\g)$ of invertible matrices such that $\bar{\cal V}_\b=\L_\b^\g{\cal V}_\g$ and we have

    $[{\cal S}, \bar{\cal V}_\b]_{\cal P}=d \L_\b^\b(\hat{\rho}({\cal S})){\cal V}_\g +\L_\b^\g[{\cal S},{\cal V}_\a]_{\cal P}.$

 \noindent This implies  that   we can define a linear map $\hat{\cal J}_m:[\T{\cal M}]_m\ap[ \T{\cal M}]_m/[\V{\cal M}]_m$  by
 $$\hat{\cal J}(m)({\cal V}_\a)=-Cl([{\cal S},{\cal V}_\a]_{\cal P}).$$
  This map gives rise to an isomorphism $\hat{\cal J}$ between  $\V{\cal M}$  and  $\T{\cal M}/\V{\cal M}$.
     If $\varpi :V{\cal M}\ap \T{\cal M}/\V{\cal M}$    is the natural projection we set ${\cal J}=\varpi\circ \hat{\cal J}^{-1}$. By construction, ${\cal J}$ is an almost tangent structure. \\

    \noindent Recall that
 ${\cal L}^{\cal P}_{\cal S}{\cal J}({\cal V})=-{\cal J}[{\cal S},{\cal V}]_{\cal P}$ for all vertical sections $\cal V$. Therefore, from the construction of ${\cal J}$, we then have: \\ ${\cal J}[{\cal S}, {\cal V}]_{\cal P}=-{\cal V}=- {\cal L}^{\cal P}_{\cal S}{\cal J}({\cal V})$.\\
 Now,  let ${\cal J}'$ be another almost tangent structure such that
 ${\cal L}^{\cal P}_{\cal S}{\cal J}'({\cal V})={\cal V}$ for all vertical sections ${\cal V}$. Consider   any  basis $\{{\cal V}_\a\}$ of $\V{\cal M}$ associated with a coordinate system $(x^i,y^\a)$. Then if we set ${\cal Y}_\a=-[{\cal S},{\cal V}_\a]_{\cal P}$, then $\{{\cal Y}_A,{\cal V}_B\}$ is a basis of $\T{\cal M}$ and we have ${\cal J}{\cal Y}_\a={\cal V}_\a$.
 From our assumption on ${\cal J}'$, we have
 $$ {\cal V}_\a={\cal L}^{\cal P}_{\cal S}{\cal J}'({\cal V}_\a)=-{\cal J}'[{\cal S},{\cal V}_\a]_{\cal P}={\cal J}'{\cal Y}_\a,$$

 and then  ${\cal J}$ is unique.\\

 Now consider ${\cal S}'={\cal S}+{\cal V}$.  In an adapted basis $\{{\cal Y}_\a,{\cal V}_\b\}$, we have
 $$[{\cal S}',{\cal V}_\b]_{\cal P}=[{\cal S},{\cal V}_\b]_{\cal P}+[{\cal V},{\cal V}_\b]_{\cal P}$$
 But $[{\cal V},{\cal V}_\b]_{\cal P}$ is always vertical, so, in ${\T}{\cal M}/V{\T}{\cal M}$ we have
 $$\varpi [{\cal S}',{\cal V}_\b]_{\cal P}=\varpi [{\cal S},{\cal V}_\b]_{\cal P}.$$
 It follows that ${\cal S}'$ is also a  semispray  which has the announced properties.\\

 Finally, consider ${\cal S}{''}=(f\circ \pi){\cal S}$. In a local  basis $\{{\cal X}_\a,{\cal V}_\b\}$
we then have:
$$ [{\cal S}^{''},{\cal V}_\b]_{\cal P}= -(\dis\frac{\p (f\circ \pi)}{\p y^\b}){\cal S}+(f\circ \pi )[{\cal S},{\cal V}_\b]_{\cal P}.$$
 But, from our  assumption, $\dis\frac{\p (f\circ \pi)}{\p y^\b}=0$. As $f$ is a nonzero function,  it follows that ${\cal S}^{''}$ is also a  semispray  which has the announced properties.\\
This ends the proof of Part  (i).\\

  From now on, we consider  the context of  the assumptions of part (ii).\\
  Fix a local basis $\{{\cal X}_\a,{\cal V}_\b\}$   associated with some coordinate system $(x^i,y^\b)$ and a local basis $\{e_\a\}$ of $\cal A$ around some point $m\in \cal M$. In this basis, $\cal S$ can be written  ${\cal S}=S^\a{\cal X}_\a+{\cal S}^\b{\cal V}_\b$
  and then, from the construction of  $\cal J$,  we get
    $${\cal J}[{\cal S},{\cal V}_\b]_{\cal P}=-{\cal J}(\dis\frac{\p S^\a}{\p y^\b}{\cal X}_\a)=-{\cal V}_\b.$$
 Therefore, the matrix     $(\dis\frac{\p S^\a}{\p y^\b})$ is of maximal rank $k$. Therefore  there exists a coordinate system $(\tilde{x}^i=x^i, \tilde{y}^\a)$ (compatible with $\pi$),   and if $\Phi$ is the corresponding diffeomorphism, we have
 $(S^1\circ \Phi^{-1},\cdots,S^k\circ \Phi^{-1})=(\tilde{y}^1,\cdots,\tilde{y}^k)$.
  \noindent If  $\{\tilde{\cal X}_\b,\tilde{\cal V}_B\}$ is  the canonical basis associated with $(\tilde{x}^i,\tilde{y}^A)$ and the same basis $\{e_\a\}$ of $\cal A$, we have a decomposition
  ${\cal S}=\tilde{y}^\a\tilde{\cal X }_a+\tilde{\cal S}^\b\tilde{\cal V}_\b.$
  But according to the construction of $\cal J$, in these coordinates we have

  (i) ${\cal J}[{\cal S},\tilde{\cal V}_\b]_{\cal P}=-{\cal J}(\tilde{\cal X}_\b)=-\tilde{\cal V}_\b.$

 Therefore we obtain

  (ii) ${\cal C}={\cal J}{\cal S}=\tilde{y}^\b\tilde{\cal V}_\b.$

  It follows that $\cal J$ is integrable.
On the other hand, recall that :
\begin{eqnarray}\label{LCP}
{\cal L}^{\cal P}_{\cal C}{\cal J}({\cal X})=[{\cal C},{\cal J}{\cal X}]_{\cal P}-{\cal J}[{\cal C},{\cal X}]_{\cal P}.
\end{eqnarray}
From this expression we obtain  ${\cal L}^{\cal P}_{\cal C}\tilde{\cal J}(\tilde{\cal V}_\b)=0$ and  according to (i) and (ii), we have  $[{\cal C}, \tilde{\cal X}_\a]_{\cal P}=0$. Finally we obtain \\ ${\cal L}^{\cal P}_{\cal C}{\cal J}(\tilde{\cal X}_\a)=-\tilde{\cal V}_\a=-{\cal J}\tilde{\cal X}_\a$. This ends the proof of part (ii)\\
    \end{proof}

     \begin{proof}[Proof of Proposition \ref{existS}]${}$\\
    Consider a coordinate system $(x^i, y^\a)$ and an associated canonical  basis $\{{\cal X}_\a,{\cal V_\b}\}$ of $\T{\cal M}$ such that ${\cal C}=y^\a{\cal V}_\a$. We have
${\cal J}{\cal X}_\a=J_\a^\b{\cal V}_\b$. Since  the matrix $(J_\a^\b)$ is invertible, we can build a local basis $\{{\cal Y}_\a,{\cal V}_\b\}$ of $\T{\cal M}$  such that
${\cal J}{\cal Y}_\a={\cal V}_\a$.
As  ${\cal J}$ is compatible with $\cal C$, we have
   $[{\cal C},{\cal V}_\a]_{\cal P}-{\cal J}[{\cal C},{\cal Y}_\a]_{\cal P}=-{\cal V}_\a$ ({\it cf.} (\ref{LCP})).
 Therefore, we must have ${\cal J}[{\cal C},{\cal Y}_\a]_{\cal P}\equiv 0$ for all $\a=1,\cdots,k$.

\noindent Now in the previous local canonical  basis $\{{\cal X}_\a,{\cal V}_\b\}$ of ${\T}{\cal M}$   we have a  decomposition of type ${\cal Y}_\a=Y_\a^\b{\cal X}_\b$. It follows that  ${\cal J}[{\cal C},{\cal Y}_\a]_{\cal P}=y^\b\dis\frac{\p Y_\a^\g}{\p y^\b}J\g^\mu{\cal V}_\mu$. As  the matrix $(J_\g^\mu)$ is invertible, it follows that
 we obtain $y^\b\dis\frac{\p Y_\a^\g}{\p y^\b}\equiv 0$ for all $\a,\g=1,\cdots,k$ , on some neighbourhood of $m$.  This implies  that, each function $Y_\a^\g$  only depends on $x$ for all $\a=1\cdots, k$ and $\g=1,\cdots k$. \\

   Consider $\bar{e}_\a=Y_\a^\g e_\g$ for $\a=1,\cdots,k$. Since  the rank of the matrix $(Y_\a^\g)$ is $k$, we get a new basis $\{\bar{e}_\a\}$ of $\cal A$ and an associated canonical basis $\{\bar{\cal X}_\a,{\cal V}_\b\}$ of $\T{\cal M}$    such that ${\cal C}=y^\b{\cal V}_\b$. Moreover, by construction, we have ${\cal Y}_\a=\bar{\cal X}_\a$ so ${\cal J}\bar{\cal X}_\a={\cal V}_\b$. Therefore $ \cal J$ is integrable.

       Consider a subbundle ${\bf H}$  such that $\T{\cal M}=\V{\cal M}\oplus{\bf H}$ then, the restriction ${\cal J}_{\bf{ H}}$ of ${\cal J}$ to ${ \bf H}$ is an isomorphism. Consider ${\cal S}={\cal J}_{\bf{ H}}^{-1}{\cal C}$. By construction, we have ${\cal J}{\cal S}={\cal C}$.
    We have already proved that $\cal J$ is integrable, and, there exists a  canonical  basis $\{{\cal X}_\a,{\cal V}_\b\}$ of $\T{\cal M}$  such that ${\cal J}{\cal X}_\a={\cal V}_\a$  and ${\cal C}={y}^\a{\cal V}_\a$. Therefore in this basis we have :

${\cal S}={y}^\a{\cal X}_\a+{\cal S}^\b{\cal V}_\b$

Recall that $[{\cal X}_\a,{\cal V}_\b]_{\cal P}=0$.
Thus we obtain
\begin{eqnarray}\label{bracSV}
[{\cal S},{\cal V}_\g]_{\cal P}=-{\cal X}_\g-\dis\frac{\p {\cal S}^\b}{\p {y}^\b}{\cal V}_\b.
\end{eqnarray}
      \noindent  It follows that  $\cal S$ is a semispray. Moreover, we have
      ${\cal L}^{\cal P}_{\cal S}{\cal J}({\cal V}_\a)=-{\cal J}[{\cal S},{\cal V}_\a]_P={\cal J}{\cal V}_\a$. This implies that
       $\cal J$ is compatible with $\cal S$.

       The proof of the last part is left to the reader.
      \end{proof}

\begin{Rem}\label{locC}${}$\\
 From the proof of Proposition \ref{existS}, around any $m\in {\cal M}$, there exist coordinates $(x^i, y^\a)$ on $\cal M$ compatible with $\pi$ and a canonical  basis $\{{\cal X}_\a,{\cal V}_\b\}$ of $\T{ \cal M}$ such that ${\cal J}{\cal X}_\a={\cal V}_\a$ and   ${\cal C}=y^\a{\cal V}_\a$, and
each   semispray  ${\cal S}$ such that ${\cal J}{\cal S}={\cal C}$ can be written ${\cal S}=y^\a{\cal X}_\a+{\cal S}^\b{\cal V}_\b$.\\
  In  particular, in such a coordinate system, $C=\hat{\rho}({\cal C})$ generates locally a flow of homotheties on each fiber. Therefore the set of such charts when $m$ varies in $\cal M$ defines  a locally linear fibration structure  on $\cal M$   whose associated atlas is a sub-atlas of the atlas  associated with the affine structure on $M$ corresponding  to the existence of the almost tangent structure $\cal J$ (cf. Proposition \ref{vertaff} and Remark \ref{affJO}).\\
  Moreover the  atlas associated with  locally  linear fibration defined by the Euler vector field $C=\hat{\rho}({\cal C})$ is a sub-atlas  of  the atlas of the $\cal J$-locally affine fibration (cf Remark \ref{atalsJ}). Note that, when $\cal M$ is an open set of $\cal A$ such that $\pi_{| {\cal M}}$ is a fibration, the previous locally linear fibration is exactly the differentiable structure induced  by the natural differential structure of vector bundle of   $\cal A$.
  \end{Rem}

\subsection{Semispray  and almost cotangent  structure}${}\label{SOC}$\\
${}\;\;\;\;${\it   We  will introduce the notion of "regular semi-Hamiltonian"   which will appear as the dual notion of semispray  via an almost cotangent structure.} \\

In this subsection  we assume that there exists a locally affine fibration structure  on $\cal M$.
For instance, if there exists  an almost tangent  structure we can choose the associated  locally  affine structure  or the locally  affine structure  associated with a cotangent structure  on a  ${\T}{\cal M}$ ({\it cf.} Proposition \ref{TO} or Remark \ref{affJO}).
According to some fixed  locally affine fibration structure,  for any two coordinate systems $(x^i,y^\a)$ and  $(\bar{x}^i,\bar{y}^\b)$ on $\cal M$  whose domain are not disjoint, we have
\begin{eqnarray}\label{chgtaffine}
\bar{y}^\a=\Phi^\a_\b(x)y^\b+\phi^\b(x).
\end{eqnarray}

\begin{Lem}\label{Heta} ${}$\\ Let $\eta$ be  a $1$-form on  ${\T}{\cal M }$ such that $(d^{\cal P}\eta)_{| \V{\cal M}}=0$.  Around each point $m\in \cal M$ there exists a function $L$ such that $(d{L})_{| \V{\cal M}}=\eta_{| \V{\cal M}}$.
Moreover, the rank of the hessian $\dis\frac{\p^2 {L}}{\p y^\a \p y^\b}(m)$ is independent of the choice of such a function $L$ and the choice of the coordinate system compatible with the chosen  locally  affine fibration structure.
\end{Lem}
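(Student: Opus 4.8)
The plan is to read everything off in a canonical local basis $\{{\cal X}_\alpha,{\cal V}_A\}$ attached to coordinates $(x^i,y^A)$ and a local frame $\{e_\alpha\}$ of ${\cal A}$, using the structure relations (\ref{locbracalg}) together with the fact (cf. subsection \ref{prolongM}) that $\hat{\rho}({\cal V}_A)=\partial/\partial y^A$.

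First I would expand $\eta=\eta_\alpha{\cal X}^\alpha+\nu_A{\cal V}^A$ and test the hypothesis on two vertical vectors. Since $[{\cal V}_A,{\cal V}_B]_{\cal P}=0$, the Cartan formula gives
$$d^{\cal P}\eta({\cal V}_A,{\cal V}_B)=\frac{\partial\nu_B}{\partial y^A}-\frac{\partial\nu_A}{\partial y^B},$$
so the assumption $(d^{\cal P}\eta)_{|\V{\cal M}}=0$ is precisely the symmetry $\partial\nu_B/\partial y^A=\partial\nu_A/\partial y^B$. Freezing $x$, this says the fibrewise $1$-form $\nu_A\,dy^A$ is closed; on a contractible coordinate neighbourhood the Poincaré lemma produces a local function $L$ with $\partial L/\partial y^A=\nu_A$. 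Reading $dL$ on $\V{\cal M}$ through the anchor (i.e. $d^{\cal P}L=\hat{\rho}^{*}dL$), one gets $(dL)_{|\V{\cal M}}({\cal V}_A)=\partial L/\partial y^A=\nu_A=\eta({\cal V}_A)$, which is the desired identity $(dL)_{|\V{\cal M}}=\eta_{|\V{\cal M}}$.

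For the two independence claims I would argue as follows. If $L_1,L_2$ both satisfy $\partial L/\partial y^A=\nu_A$, then $L_1-L_2$ has vanishing $y$-derivatives, hence depends on $x$ alone, so $\partial^2(L_1-L_2)/\partial y^\alpha\partial y^\beta\equiv 0$; thus the full matrix $\partial^2 L/\partial y^\alpha\partial y^\beta$ — not merely its rank — is independent of the choice of $L$. For the coordinate independence, take a change compatible with the locally affine fibration, $\bar x^i=\bar x^i(x)$ and $\bar y^\alpha=\Phi^\alpha_\beta(x)y^\beta+\phi^\alpha(x)$ as in (\ref{chgtaffine}), with $(\Phi^\alpha_\beta)$ invertible. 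Because $\bar x$ is a function of $x$ only and $\Phi^\gamma_\beta$ depends only on $x$ (so $\partial\Phi^\gamma_\beta/\partial y^\alpha=0$), the chain rule gives $\partial/\partial y^\beta=\Phi^\gamma_\beta\,\partial/\partial\bar y^\gamma$ and hence
$$\frac{\partial^2 L}{\partial y^\alpha\partial y^\beta}=\Phi^\delta_\alpha\Phi^\gamma_\beta\,\frac{\partial^2 L}{\partial\bar y^\delta\partial\bar y^\gamma}.$$
The two Hessians are therefore congruent through the invertible matrix $(\Phi^\alpha_\beta)$ and share the same rank at $m$.

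The delicate point — and the reason the affine hypothesis cannot be dropped — is exactly this last congruence: for a general fibered change $\bar y^\gamma=\Psi^\gamma(x,y)$ one would pick up an extra term $(\partial^2\bar y^\gamma/\partial y^\alpha\partial y^\beta)(\partial L/\partial\bar y^\gamma)$ which destroys the clean quadratic transformation law; affineness in $y$ makes the second $y$-derivatives of the transition maps vanish and removes it. Everything else is the routine bookkeeping of the Cartan formula and the chain rule.
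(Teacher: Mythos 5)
Your proof is correct and follows essentially the same route as the paper: decompose $\eta$ in a canonical dual basis, observe that $(d^{\cal P}\eta)_{|\V{\cal M}}=0$ is the fibrewise symmetry $\partial\nu_A/\partial y^B=\partial\nu_B/\partial y^A$, integrate by the Poincar\'e lemma, and note that two choices of $L$ differ by a basic function. The only difference is that you spell out the final rank-invariance under affine coordinate changes (the congruence of Hessians by the invertible matrix $(\Phi^\alpha_\beta)$), which the paper dismisses as ``clear''; that added detail is welcome and accurate.
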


\begin{proof}${}$\\
Choose a local  coordinate system $(x^i,y^\a)$ around some point $m\in {\cal M}$ compatible with
the chosen   locally  affine fibration structure,
a basis $\{e_\a\}$ of $\cal A$, and let  $\{{\cal X}^\a,{\cal V}^\b\}$ be the associated dual basis. Then we have a local decomposition $\eta=\eta_\a{\cal X}^\a+\bar{\eta}_\b{\cal V}^\b$.
According to (\ref{locdifalg})  we get
 $$(d^{\cal P}\eta)_{| \V{\cal M}} =\dis\frac{1}{2}(\frac{\p\bar{ \eta_C}}{\p y^\b}-\dis\frac{\p \bar{ \eta}_\b}{\p y^\g}){\cal V}^\b\wedge{\cal V}^\g.$$
 The condition $(d^{\cal P}\eta)_{| \V{\cal M}}=0$ is equivalent to $\dis\frac{\p\bar{ \eta_\b}}{\p y^\g}=\dis\frac{\p\bar{ \eta_\g}}{\p y^\b}$ for any index $\b,\g$. Therefore there exists a smooth function $L$ on a neighbourhood of $m$ such that $(d^{\cal P}{L})_{|  \V{\cal M}}=\eta_{| \V{\cal M}}$. Now if ${L}'$ has the same property, then in some neighbourhood of $m$, the function ${L}-{L}'$ is a basic function (\textit{i.e.} $L$ depends only  on the variables $x^i$).
  Then the last property is clear.\\
\end{proof}

On the tangent bundle, in \cite{Mi}, G. Mitric introduces the concept of "regular $1$-form" which was generalized  by
the notion of semi-Hamiltonian on the dual of a Lie algebroid introduced in \cite{LPo1}. This   motivates the following definition:

\begin{Def}\label{coS}${}$\\
 Given a locally affine fibration on $\cal M$, a $1$-form $\eta$ on  ${\T}{\cal M }$ will be called  a  {\it regular  semi-Hamiltonian } if $(d^{\cal P}\eta)_{| \V{\cal M}}=0$,
  and for any $m\in {\cal M}$, if  $L$ is a function such that  $(d^{\cal P}{L})_{| \V{\cal M}}=\eta_{| \V{\cal M}}$  around $m$, then the rank  of  the hessian $\dis\frac{\p^2 {L}}{\p y^\a \p y^\b}(m)$ is maximal in a coordinate system $(x^i,y^\a)$ compatible with the fixed locally affine fibration structure.
\end{Def}

We have seen that  the compatibility of $\cal J$ and $\cal C$ is crucial in the existence of semispray $\cal S$ such that ${\cal J}{\cal S}={\cal C}$.  In a dual way, if $\cal A$ is a Lie algebroid, on ${\T}{\cal A}^*$,  denote by $\theta$ the Liouville form and   $\O=-d^{\cal P}\theta$ the canonical symplectic form.   Then we have $\theta=- i_{\cal C}\O$ if $\cal C$ is the Euler vector field on ${\T}{\cal A}^*$  and in this case we have
${\cal L}_{\cal C}\O=-\O$ (see \cite{LPo1}). In fact we  have the following characterization

 ${\cal L}_{\cal C}\O=-\O\Longleftrightarrow  $ $\theta=- i_{\cal C}\O$ and $\O+d^{\cal P}\theta$ is semi-basic.

Unfortunately, in general, this equivalence is no longer  true when the Jacobi identity is not satisfied (\textit{i.e.} $d^{\cal P}\circ d^{\cal P}\not=0$). Therefore in our context,   for a notion of "compatibility" of an almost cotangent structure $\O$ and an Euler section,  we  adopt  a generalization of  the notion of "Liouville manifold"  defined in  \cite{Va1} which corresponds to the right properties of the previous equivalence as is given in the  Lemma \ref{equivcomp}.

\begin{Def}\label{liouv}${}$\\
 If  $\O$  is  an almost cotangent structure on $\T{\cal M}$,  then $\O$  is of Liouville type if  there exists a semi basic $1$-form $\theta$ on ${\T}{\cal M}$, whose kernel contains the kernel of ${\O}$ and   such that
  ${\O}+d^{\cal P}\theta$ is  semi-basic.   In this case $\theta$ will be called an {\bf almost Liouville form} associated with $\O$.
 \end{Def}

 The denomination " almost Liouville form" for  such $\theta$ is justified by the Part (ii) of  the following Proposition:

\begin{Pro}\label{equivcomp}${}$\\
Let  $\O$   be an almost cotangent structure on $\T{\cal M}$.
\begin{enumerate}
\item[(i)] In Definition \ref{liouv}, the condition "$\tilde{\O}+d^{\cal P}\theta$ is   semi-basic" only depends  on the choice of the bracket $[\;,\;]_{\cal A}$. Moreover $\theta$ and $\theta'$ are  two almost Liouville forms associated with  $\O$,  if and only if  there exists a global  section $\s$ of ${\cal A}^*$ such that $\theta'=\theta+\pi_{\cal A}^*\s$.
\item[(ii)] Assume that  $\O$ is  of Liouville type and $\eta$ be an almost Liouville form associated with $\O$. Then  there exists a unique  Euler section $\cal C$ such that  $\theta=- i_{\cal C}\O$. Moreover
 around each point $m\in {\cal M}$, there exists a coordinate system $(x^i,y^\a)$ compatible with the locally linear fibration  structure associated with $\cal C$ and an associated  canonical basis $\{{\cal X}_\a,{\cal V}_\b\}$ such that ${\cal C}=y^\a{\cal V}_\a$ and $\theta=y^\a{\cal X}^\a$ in the corresponding dual basis $\{{\cal X}^\a,{\cal V}^\b\}$.
\item[(iii)] if $\O=-d^{{\cal P}}\theta$ is any exact almost cotangent structure on $\T{\cal M}$   then $\O$ is  of Liouville type
\end{enumerate}
\end{Pro}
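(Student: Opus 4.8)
The plan is to reduce all three parts to the local normal form of an almost cotangent structure. Since the typical fibre has dimension $h=k$, the prolongation $\T{\cal M}$ has rank $2k$, so an almost cotangent structure $\O$ of maximal rank $2k$ is in fact symplectic; thus $\ker\O=\{0\}$ and the requirement in Definition \ref{liouv} that the kernel of the candidate form contain $\ker\O$ is automatically met. In a canonical dual basis $\{{\cal X}^\a,{\cal V}^\b\}$, relation (\ref{O}) gives $\O=\frac{1}{2}\o_{\a\b}{\cal X}^\a\wedge{\cal X}^\b+\bar\o_{\a\b}{\cal X}^\a\wedge{\cal V}^\b$ with $(\bar\o_{\a\b})$ invertible, while every semi-basic $1$-form is $\theta=\theta_\a{\cal X}^\a$. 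With this, Part (iii) is immediate: if $\O=-d^{\cal P}\theta$ with $\theta$ global and semi-basic, then the same $\theta$ exhibits $\O$ as being of Liouville type, since $\O+d^{\cal P}\theta=0$ is semi-basic.

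For Part (i), I would apply Remark \ref{dPsbasic}(1), which says that for a semi-basic $\eta=\eta_\a{\cal X}^\a$ the form $d^{\cal P}\eta$ splits as $\frac{\p\eta_\a}{\p y^\b}{\cal V}^\b\wedge{\cal X}^\a$ plus a semi-basic ${\cal X}\wedge{\cal X}$ term, and that this expression depends only on $[\;,\;]_{\cal A}$. Since the non-semi-basic part of the fixed form $\O$ is just $\bar\o_{\a\b}{\cal X}^\a\wedge{\cal V}^\b$, the non-semi-basic part of $\O+d^{\cal P}\theta$, and hence the property of being semi-basic, depends only on $[\;,\;]_{\cal A}$. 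For the description of the ambiguity, given two almost Liouville forms $\theta,\theta'$ I set $\psi=\theta'-\theta$; then $\psi$ is semi-basic and $d^{\cal P}\psi$ is the difference of the two semi-basic forms $\O+d^{\cal P}\theta'$ and $\O+d^{\cal P}\theta$, hence semi-basic. The splitting above forces $\frac{\p\psi_\a}{\p y^\b}=0$, so $\psi_\a=\psi_\a(x)$ and $\psi=\pi_{\cal A}^*\s$ for the global section $\s=\psi_\a\epsilon^\a$ of ${\cal A}^*$. Conversely, for such a $\s$ the coefficients of $\pi_{\cal A}^*\s=\s_\a{\cal X}^\a$ are independent of $y$, so $d^{\cal P}(\pi_{\cal A}^*\s)$ is semi-basic and $\theta+\pi_{\cal A}^*\s$ is again an almost Liouville form.

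Part (ii) is the substantial one. Existence and uniqueness of $\cal C$ with $\theta=-i_{\cal C}\O$ come from nondegeneracy of $\O$: ${\cal C}=-(\O^\flat)^{-1}\theta$. Writing ${\cal C}=C^\a{\cal X}_\a+{\cal C}^\b{\cal V}_\b$ and contracting the local form of $\O$, the ${\cal V}$-component of $i_{\cal C}\O$ equals $C^\a\bar\o_{\a\b}{\cal V}^\b$; since $\theta$ is semi-basic this must vanish, and invertibility of $(\bar\o_{\a\b})$ forces $C^\a=0$, so $\cal C$ is vertical and $\theta_\a={\cal C}^\b\bar\o_{\a\b}$. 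Next, I read off from Remark \ref{dPsbasic}(1) that semi-basicness of $\O+d^{\cal P}\theta$ is exactly the relation $\bar\o_{\a\b}=\frac{\p\theta_\a}{\p y^\b}$. Combining the two identities gives $\theta_\a={\cal C}^\b\frac{\p\theta_\a}{\p y^\b}$ for every $\a$, i.e. $E(\theta_\a)=\theta_\a$ where $E=\hat{\rho}({\cal C})={\cal C}^\b\frac{\p}{\p y^\b}$. Because $(\frac{\p\theta_\a}{\p y^\b})=(\bar\o_{\a\b})$ is invertible, the fibrewise map $y\mapsto(\theta_\a(x,y))$ is a local diffeomorphism, so $\tilde y^\a:=\theta_\a$ are new coordinates compatible with $\pi$; in them $E(\tilde y^\a)=\tilde y^\a$ together with $E$ vertical give $E=\tilde y^\a\frac{\p}{\p\tilde y^\a}$, an Euler vector field. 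Hence $\cal C$ is an Euler section (Definition \ref{alEuler}), and keeping the $\cal A$-basis $\{e_\a\}$ fixed so that ${\cal X}^\a=\pi_{\cal A}^*\epsilon^\a$ is unchanged, one gets ${\cal C}=\tilde y^\a\tilde{\cal V}_\a$ and $\theta=\tilde y^\a{\cal X}^\a$ in the associated canonical basis, which is the desired normal form.

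I expect the main obstacle to lie in Part (ii), in upgrading the easy fact that $\cal C$ is vertical to the assertion that $\cal C$ is a genuine Euler section: this is not a formal consequence and requires manufacturing privileged fibre coordinates. The decisive point is that the Liouville condition makes $(\theta_\a)$ simultaneously a fibrewise primitive of $(\bar\o_{\a\b})$ and a function homogeneous of degree one along $\hat{\rho}({\cal C})$; invertibility of $(\bar\o_{\a\b})$ is what allows $(\theta_\a)$ itself to be used as the linearizing coordinates. A secondary point demanding care is to change only the fibre coordinates while keeping $\{e_\a\}$ fixed, so that ${\cal X}^\a$ is preserved and the normal form $\theta=\tilde y^\a{\cal X}^\a$ holds exactly rather than up to vertical correction terms.
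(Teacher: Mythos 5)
Your proof is correct and follows essentially the same route as the paper: the whole argument rests on the identity $\bar\o_{\a\b}=\dfrac{\p\theta_\a}{\p y^\b}$ forced by semi-basicness of $\O+d^{\cal P}\theta$, the invertibility of that matrix allowing the components $\theta_\a$ to serve as new fibre coordinates, and the resulting normal forms $\theta=\tilde y^\a{\cal X}^\a$, ${\cal C}=\tilde y^\a\tilde{\cal V}_\a$. The only difference is organizational — you define $\cal C$ globally by inverting $\O^\flat$ and then derive the homogeneity relation $E(\theta_\a)=\theta_\a$, whereas the paper changes coordinates first and then solves $i_{\cal C}\O=-\theta$ — but the computation is the same.
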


The proof of this Proposition will be given at the end of this paragraph.\\

\begin{Rem}\label{affO}${}$\\
Given an almost Liouville form $\eta$ associated with an almost cotangent structure $\O$ on $\T{\cal M}$, any $\theta'=\theta+\pi_{\cal A}^*(\s)$ is an almost Liouville form associated with $\O$. Now  if ${\cal C}$ and ${\cal C}'$ are the Euler sections associated with $\theta$  and $\theta'$ then ${\cal Z}={\cal C}'-{\cal C}$ is a vertical section such that  in any coordinate system $(x^i,y^\a)$ compatible with the locally linear fibration structure  defined by $\cal C$ we have:

$${\cal Z}=Z^\b(x){\cal V}_\b.$$

Therefore  all these Euler sections gives rise to  a natural sub-atlas of    locally affine fibration. This sub-atlas, will be called {\bf the locally affine fibration structure associated with $\O$}
\end{Rem}

Now, we are in situation  to give a link between   semisprays  and   regular semi-Hamiltonians in the following result.

\begin{The}\label{S-coS}${}$

\begin{enumerate}
\item[(i)]  Given an almost cotangent structure $\O$  on $\T{\cal M}$  and  $\eta$  a regular  semi-Hamiltonian,  we have the following properties:

\noindent  (a) there exists a canonical pseudo-Riemannian metric $g$ on $\V{\cal M}$ associated with $\eta$ whose locally matrix  is $(\dis\frac{\p^2 {L}}{\p y^\a \p y^\b})$  in a local basis $\{{\cal V}_\a\}$ of $V{\cal M}$  associated with some coordinate system $(x^i,y^\a)$ compatible with the chosen locally affine fibration structure.

\noindent (b) There exists a unique almost tangent structure ${\cal J}$ on ${\cal M}$ compatible with $\O$ and such that $g({\cal J},\;)=\O(\;,\;)$

\noindent (c)   The equation $i_{\cal S}\O=\eta$ defines a unique section of $\T{\cal M}$. Moreover,
on $\T{\cal M}$, there exists a   basis $\{{\cal Y}_{\a},{\cal V}_{\b}\}$ and a coordinate system $(x^i,y^\a)$ such that if $\{{\cal X}_\a,{\cal V}_\b\}$ is an associated canonical basis we have:

\noindent${\cal Y}_\a=Y_\a^\b{\cal X}^\b\;\;\;$ ${\cal J}{\cal Y}_{\a}={\cal V}_{\a}\;\;\;$ $\;\;\;{\cal S}=y^{\a}{\cal Y}_{\a}+{\cal S}^{\b}{\cal V}_{\b}\;\; \;\textrm { and }\;\;\; {\cal C}_\cal S= {\cal J}{\cal S}=y^{\a}{\cal V}_{\a}$.\\
\noindent In particular ${\cal C}_{\cal S}$ is an Euler section.
\item[(ii)] Assume that    $\O$ is an almost  cotangent  structure of Liouville type on $\T{\cal M}$. Consider a regular semi Hamiltonian $\eta$ relative to the locally affine fibration structure assocated to $\O$ (see Remark \ref{affO}).Then,   according to the corresponding   data associated  to $\O$ and $\eta$ as build in Part (i), the almost tangent structure ${\cal J}$ is integrable and $\cal S$ is an almost  semispray.  Moreover,  $\cal J$ is compatible with  $\cal S$ and the Euler section ${\cal J}{\cal S}$. Finally, around each point $m\in {\cal M}$  there exists  coordinate system $(x^i,y^\a)$  (compatible with the chosen locally affine   fibration structure)  and an associated  canonical basis $\{{\cal X}_\a,{\cal V}_{\b}\}$ of $\T{\cal M}$  with the same properties of (c) .
\end{enumerate}
\end{The}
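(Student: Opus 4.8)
The plan is to read Theorem \ref{S-coS} as the cotangent-side counterpart of Propositions \ref{JS} and \ref{existS}, the new ingredient being the pseudo-Riemannian metric produced by the regularity of $\eta$, which lets me move between the almost tangent and almost cotangent pictures via Proposition \ref{TO}. For (i)(a) I would start from Lemma \ref{Heta}: as $\eta$ is a regular semi-Hamiltonian, $(d^{\cal P}\eta)_{|\V{\cal M}}=0$, so near each $m$ there is a local potential $L$ with $(d^{\cal P}L)_{|\V{\cal M}}=\eta_{|\V{\cal M}}$, unique up to a basic function and hence with a well-defined Hessian $(\p^2 L/\p y^\a\p y^\b)$, which Definition \ref{coS} makes nondegenerate. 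Under the affine transitions (\ref{chgtaffine}) one has $\p^2L/\p y^\a\p y^\b=\Phi^\g_\a\Phi^\d_\b\,\p^2L/\p\bar y^\g\p\bar y^\d$ while ${\cal V}_\a=\Phi^\g_\a\bar{\cal V}_\g$, so the Hessian is the local expression of a genuine symmetric nondegenerate $(0,2)$-tensor $g$ on $\V{\cal M}$. Part (b) is then immediate by applying Proposition \ref{TO}(iv) to the pair $(g,\O)$, producing the unique almost tangent structure ${\cal J}$ compatible with $\O$ and linked to $g$.

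For (i)(c), since $h=k$ the $2$-form $\O$ has rank $2k=\mathrm{rk}\,\T{\cal M}$, i.e. it is symplectic, so $i_{\cal S}\O=\eta$ has a unique solution ${\cal S}$. To identify ${\cal C}_{\cal S}={\cal J}{\cal S}$ I would work in a local basis $\{{\cal Y}_\a,{\cal V}_\b\}$ with ${\cal J}{\cal Y}_\a={\cal V}_\a$ (available because the matrix of ${\cal J}$ is invertible), write ${\cal Y}_\a=Y_\a^\b{\cal X}_\b$, and use on one hand the consistency $g_{\a\g}=\O({\cal Y}_\a,{\cal V}_\g)$ between (a) and (b), and on the other the vertical part $\O({\cal S},{\cal V}_\g)=\eta({\cal V}_\g)=\p L/\p y^\g$ of $i_{\cal S}\O=\eta$. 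Together these give that the ${\cal Y}$-component of ${\cal S}$ is $\Sigma^\a=g^{\a\g}\,\p L/\p y^\g$, so ${\cal C}_{\cal S}=\Sigma^\a{\cal V}_\a$ is vertical. It then remains to upgrade ${\cal C}_{\cal S}$ to an Euler section: I would use the regularity to justify the $\pi$-compatible fibre change $\tilde y^\a=\Sigma^\a(x,y)$ and relabel the basis, so that in the new coordinates ${\cal S}=\tilde y^\a{\cal Y}_\a+{\cal S}^\b{\cal V}_\b$ and ${\cal C}_{\cal S}=\tilde y^\a{\cal V}_\a$, which is exactly Definition \ref{alEuler}.

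For Part (ii) I add the Liouville-type hypothesis. By Proposition \ref{equivcomp}(ii) there is an almost Liouville form and a unique Euler section ${\cal C}$ with local normal form ${\cal C}=y^\a{\cal V}_\a$, $\theta=y^\a{\cal X}^\a$. The pivotal step is the contraction identity $i_{{\cal C}_{\cal S}}\O=-{\cal J}^*\eta$, which follows at once from the compatibility of ${\cal J}$ and $\O$ and exhibits the semi-basic form ${\cal J}^*\eta$ as an almost Liouville form associated with $\O$; thus ${\cal C}_{\cal S}$ is the Euler section attached to it. I would then establish ${\cal L}^{\cal P}_{\cal C}{\cal J}=-{\cal J}$ and feed it into Proposition \ref{existS} to conclude that ${\cal J}$ is integrable and that ${\cal S}$ is a semispray with ${\cal J}{\cal S}={\cal C}_{\cal S}$; Proposition \ref{JS} then yields the compatibility of ${\cal J}$ with ${\cal S}$ and with the Euler section ${\cal J}{\cal S}$, together with the adapted canonical basis of (c). Integrability can alternatively be verified directly from the symmetry criterion of Proposition \ref{intloc}(ii) in the normal-form coordinates of ${\cal C}$.

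The delicate points I anticipate are precisely the Euler-section claim in (i)(c) and the relation ${\cal L}^{\cal P}_{\cal C}{\cal J}=-{\cal J}$ in (ii). Both involve expressions that vanish by the Jacobi identity in the genuine Lie-algebroid case but need not do so here, so the heart of the matter is to show that the regularity of $\eta$ (nondegeneracy of $g$) indeed makes $\Sigma$ a fibre diffeomorphism and that the leftover curvature-type terms cancel. I would control these using the explicit formula (\ref{locdifalg}) and Remark \ref{dPsbasic}: the relevant terms are semi-basic, hence depend only on $[\;,\;]_{\cal A}$, and the semi-basic normalisation built into the definition of a Liouville-type structure is exactly what forces them to vanish.
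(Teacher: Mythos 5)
Your Parts (i)(a), (i)(b) and the first half of (i)(c) follow the paper's own argument: the Hessian of a local potential $L$ of $\eta$ transforms tensorially under the affine transitions (\ref{chgtaffine}) and gives $g$, Proposition \ref{TO}(iv) gives ${\cal J}$, symplecticity of $\O$ gives a unique ${\cal S}$, and your computation $\Sigma^\a=g^{\a\g}\dis\frac{\p L}{\p y^\g}$ for the ${\cal Y}$-component of ${\cal S}$ is correct. The gap is in the last step of (i)(c): the fibre change $\tilde y^\a=\Sigma^\a(x,y)$ does \emph{not} exhibit ${\cal C}_{\cal S}=\Sigma^\a{\cal V}_\a$ as an Euler section. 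For that one needs ${\cal C}_{\cal S}(\tilde y^\a)=\tilde y^\a$, and here $\Sigma^\b\dis\frac{\p \Sigma^\a}{\p y^\b}=\Sigma^\a+\Sigma^\b\dis\frac{\p g^{\a\g}}{\p y^\b}\dis\frac{\p L}{\p y^\g}$, whose second term involves the third $y$-derivatives of $L$ and has no reason to vanish. Nor can you "relabel the basis": Definition \ref{alEuler} requires the ${\cal V}_\a$ appearing in ${\cal C}=y^\a{\cal V}_\a$ to be the vertical basis of the coordinate system itself. The change the paper uses is the Legendre-type one $\bar y^\a=\dis\frac{\p L}{\p y^\a}-\phi^\a(x)$: its Jacobian is exactly the Hessian $g_{\a\b}$, so the new vertical basis is $\bar{\cal V}_\a=g_\a^\b{\cal V}_\b$ and ${\cal C}_{\cal S}=g^{\a\g}\dis\frac{\p L}{\p y^\g}{\cal V}_\a=\dis\frac{\p L}{\p y^\g}\bar{\cal V}_\g$ is manifestly of the required form. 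This is a one-line fix, but as written your argument does not prove the Euler-section claim.

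For Part (ii) you take a genuinely different route from the paper, and the two steps you yourself flag as delicate are exactly the ones left unproved. The contraction identity $i_{{\cal C}_{\cal S}}\O=-{\cal J}^*\eta$ does follow at once from compatibility, but to invoke Proposition \ref{equivcomp}(ii) you must show that ${\cal J}^*\eta$ is an almost Liouville form, i.e. that $\O+d^{\cal P}({\cal J}^*\eta)$ is semi-basic; writing ${\cal J}{\cal X}_\a=J_\a^\g{\cal V}_\g$ this reduces to $\dis\frac{\p J_\a^\g}{\p y^\b}\dis\frac{\p L}{\p y^\g}=0$, which is false for a general compatible pair $(\O,{\cal J})$ and holds here only because the Liouville hypothesis, read in coordinates adapted to the affine structure of $\O$, pins down the mixed block $\O({\cal X}_\a,{\cal V}_\b)$. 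Likewise ${\cal L}^{\cal P}_{{\cal C}_{\cal S}}{\cal J}=-{\cal J}$ is stated as a target with only a heuristic ("the leftover terms are semi-basic and must cancel"), not a proof. The paper sidesteps both points: in a canonical basis compatible with the affine structure of $\O$ the symmetry of $\O({\cal X}_\a,{\cal V}_\b)$ lets one run the Part (i) argument with ${\cal Y}_\a={\cal X}_\a$, so that after the Legendre change $\{\hat{\cal X}_\a,\hat{\cal V}_\b\}$ is again a \emph{canonical} basis with ${\cal J}\hat{\cal X}_\a=\hat{\cal V}_\a$ and ${\cal S}=\hat y^\a\hat{\cal X}_\a+\hat{\cal S}^\b\hat{\cal V}_\b$; integrability is then Proposition \ref{intloc}(iii), the semispray property and both compatibilities follow from ${\cal J}[{\cal S},\hat{\cal V}_\a]_{\cal P}=-\hat{\cal V}_\a$, and Proposition \ref{existS} is not needed. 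Either adopt that coordinate argument or supply full proofs of your two intermediate claims; as they stand they are the substance of Part (ii), not routine verifications.
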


\bigskip
 \begin{Rem}\label{afflinO}${}$
\begin{enumerate}
\item From the property (c) of Part  (i) in Theorem \ref{S-coS}, and  according to Remark \ref{locC},  the Euler section ${\cal C}_\cal S$ gives rise to a locally  linear fibration which is compatible with the locally affine fibration associated to $\cal J$. However this last structure is not,   in general, compatible with the   chosen initial locally affine fibration structure. Again the link between these two structures is given by  $g_\eta$, as it is explained in Remark \ref{affJO}.
For example,  when $\cal M$ is the dual bundle ${\cal A}^*$ of $\cal A$,  we have on $\cal M$ the natural structure of vector bundle. This structure corresponds to the "locally affine fibration" associated with the canonical  Cartan form on ${\T}{\cal A}^*$.  The notion of regular semi-Hamiltonian on ${\cal A}^*$ is based on this structure (see Example \ref{exsmH} 1. or  \cite{LPo1} for  a more complete description).\\
 Moreover, in general the basis $\{{\cal Y}_\a,{\cal V}_\a\}$ built  is {\it not a canonical basis} of $\T{\cal M}$ associated with some coordinate system $(x^i,y^\a)$ on $\cal M$. Then,  this local decomposition does not  give  good "informations" on rank of the family  $[{\cal S},{\cal V}_A]_{\cal P}$,  although $\theta$ is a regular semi-Hamiltonian. Therefore, in general, without complementaries properties on $\O$, $\cal S$ will not be a semispray.  \\
\item In Part (ii) of Theorem  \ref{S-coS},  the almost tangent structure ${\cal J}$ is compatible with the Euler section ${\cal C}_{\cal S}={\cal J}{\cal S}$. Given some almost Liouville form $\theta$ associated with $\O$, we denote by ${\cal C}_\theta$ the Euler section defined by $i_{\cal C_\theta}\O=\theta$.  In general $\cal J$ is not compatible with the given Euler section ${\cal C}$. Again, in this context,  on $\cal M$ we have two distinct locally affine  fibration structures: one which is associated with $\O$ and the other which is associated with ${\cal C}_{\cal S}$. Again  the link between these  two structures is given by  $g_\eta$. 
\end{enumerate}
 \end{Rem}

\bigskip

\begin{Exs}\label{exsmH}${}$\\
{\bf 1.}   If $\theta$ is the Liouville $1$-form on   ${\T}{\cal A}^*$  then the canonical Cartan form $\O=-d^{\cal P} \theta$ is a global exact almost structure which is of Liouville type. Given any open submanifold ${\cal M}$ of ${\cal A}^*$  fibered on $M$, then $\O$ in restriction to $\T{\cal M}$ has the same properties. Consider any regular Hamiltonian ${\cal H}$ on ${\cal M}$ \textit{i.e.} the vertical hessian $(\dis\frac{\p^2 {\cal H}}{\p y^{\a}\p y^{\b}})$, with respect to the canonical bundle structure on ${\cal A}^*$, has maximal rank $k$. Then $\cal H$ is clearly a regular   semi-Hamiltonian according to Definition \ref{coS}. Thus we get  an associated almost  semispray   $\cal S$ solution of the equation $i_{\cal S}\O=d^{\cal P}{\cal H}$ usually called the associated Hamiltonian section. \\
{\bf 2.} Let  ${\cal M}$ be an open submanifold of $\cal A$ fibered on $M$ and provided with the locally  affine fibration induced by the structure of vector bundle of $\cal A$. Consider a regular Lagrangian $L$ on ${\cal M}$ (\textit{i.e.} the hessian $(\dis\frac{\p^2 {\cal L}}{\p y^{\a}\p y^{\b}})$, with respect to  the previous  locally  affine fibration has maximal rank $k$). As classically, if ${ J}$ and  ${\cal C}$ are  the canonical almost tangent structure and Euler section on ${\T}{\cal M}$ respectively, we get an induced almost tangent structure and  almost  Euler section on ${\T}{\cal M}$ again denoted ${\cal J}$ and  ${\cal C}$ respectively. Then  $\O=-d^{\cal P}{\cal J}^*(d^{\cal P}L)$ is an  almost cotangent structure on ${\T}{\cal M}$ of Liouville type, and which is compatible with $\cal J$. The canonical almost  semispray  $S$ associated with the Hamiltonian ${\cal L}^{\cal P}_{\cal C}(L)-L$ gives rise to the associated Euler-Lagrange equations. \\

    {\bf 3.} Consider a subbundle   $\cal M$ of $\cal A$    as in the  context of Example \ref{exJOS2}. We endow $\cal M$ with the   linear structure induced by the natural bundle structure on $\cal M$.   Fix some Riemannian metric $g$ on ${\T}{\cal M}$. Then we get on ${\T}^{\cal M}{\cal M}\subset \T{\cal M}$ a well defined bracket $[\;,\;]_{\cal P}$  from the bracket $[\;,\;]_{\cal P}$ on ${\T}{\cal M}$. We consider a regular Lagrangian $ L$ on $\cal A$ . If $J$ is the vertical endomorphism, consider  $\O=-d^{\cal P}J^* (d^{\cal P}L)$  any exact  almost cotangent  structure on ${\T}{\cal A}$ which is compatible  with the canonical  almost structure $J$. Assume that the induced $2$-form $\O$ is symplectic (which occurs if $\O$ comes from a regular Lagrangian on $\cal A$). Then $\O$ is an exact almost cotangent structure on $\T{\cal M}$. Therefore all the conclusions of Theorem \ref{S-coS} are valid in this context. Note that, when we have a regular Lagrangian $L$ on $\cal A$ whose hessian  is positive definite, we have a canonical Riemannian metric on ${\T}{\cal M}$  and all the previous assumptions are satisfied in this case.\\ 

\end{Exs}

Now assume that we have an integrable almost tangent structure $\cal J$ on ${\T}{\cal M}$ and an Euler section $\cal C$. From now on, we choose the locally linear fibration structure on $\cal M$ associated with this choice (see Remark \ref{locC}).  We end this subsection by a particular case of almost cotangent structure of Liouville type:
\begin{Def}\label{semiham}${}$\\
An almost cotangent structure $\O$ on ${\T}{\cal M}$ is {\it   semi-Hamiltonian} if there exists a regular  semi-Hamiltonian $\theta$ (according to the locally affine fibration structure)  such that
$$\O=-d^{\cal P}{\cal J}^*(\theta).$$
\end{Def}
 Let $L$ be a locally smooth function such that $d^{\cal P}{L}_{| \V{\cal M}}=\theta_{| \V{\cal M}}$. According to the choice of the  locally affine fibration structure,  we have a local basis $\{{\cal X}_\a,{\cal V}_\b\}$   such that:
\begin{eqnarray}\label{semihamO}
\begin{cases}
 {\cal J}{\cal X}_\a={\cal V}_\a, \textrm{ and }\hfill{}\cr
 \O=\dis\frac{\p^2 {L}}{\p y^\a \p y^\b}{\cal X}^\a\wedge{\cal V}^\b+\dis\frac{1}{2}(\frac{\p ^2{L}}{\p x^i \p y^\a}\rho_\b^i-\frac{\p ^2{L}}{\p x^i\p y^\b}\rho_\a^i+\frac{\p{L}}{\p y^\g} C_{\a\b}^\g){\cal X}^\a\wedge{\cal X}^\b.\cr
 \end{cases}
\end{eqnarray}
It follows that ${\cal J}^*\theta=\dis\frac{\p {L}}{\p y^\a }{\cal X}^\a$ is an almost Liouville form. In particular, we get an Euler section  ${\cal C}_\theta$ associated with $\theta$ via $\O$ but  ${\cal C}\not={\cal C}_\theta$  ({\it cf.} Remark \ref{OhamC}).
In Examples \ref{exsmH}, {\bf 1.} and  {\bf 2.},  the almost cotangent structures  are semi-Hamiltonian (see  also Example \ref{metricLagS})
\bigskip
\begin{Pro}\label{propsemiH}${}$\\
Assume that $\O$ is semi-Hamiltonian. Then we have:
\begin{enumerate}
\item[(i)]   ${\cal J}$ is compatible with $\O$,   we have   $g_\theta({\cal J},\;)=\O(\;,\;)$ and  $({\cal M},{\cal J},\O)$ is locally Lagrangian.  Therefore around any point $m\in{\cal M}$  we can  write
$\O=-d^{\cal P}{\cal J}^*d^{\cal P}L$;
\item[(ii)] Let  $\cal S$ be the section  defined by $i_{\cal S}\O={\cal L}^{\cal P}_{\cal C}\theta- \theta$. Then $\cal S$ is an almost semispray, we have  ${\cal J}{\cal S}={\cal C}$ and $\cal J$ is compatible with  $\cal S$ and with $\cal C$.
\item[(iii)]  Given any almost  semispray  $\cal S$ compatible with $\cal J$, then the $1$-form $i_{\cal S}\O$ is a semi-Hamitonian.
\end{enumerate}
\end{Pro}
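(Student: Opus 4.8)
The plan is to establish the three assertions in order, working throughout in a local canonical basis $\{{\cal X}_\a,{\cal V}_\b\}$ adapted to ${\cal J}$ (so that ${\cal J}{\cal X}_\a={\cal V}_\a$ and ${\cal J}{\cal V}_\a=0$), in which $\O$ has the form (\ref{semihamO}) with invertible vertical block $H_{\a\b}=\dis\frac{\p^2 L}{\p y^\a\p y^\b}$. For Part (i) the first observation is that ${\cal J}^*$ applied to a $1$-form depends only on the restriction of that form to $\V{\cal M}$ and sends ${\cal V}^\a\mapsto{\cal X}^\a$, ${\cal X}^\a\mapsto 0$; hence ${\cal J}^*\theta$ is semi-basic, and since $\theta$ and $d^{\cal P}L$ agree on $\V{\cal M}$ one gets ${\cal J}^*\theta={\cal J}^*(d^{\cal P}L)$ locally. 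Thus $\O=-d^{\cal P}\big({\cal J}^*(d^{\cal P}L)\big)$, which is precisely the statement that $({\cal M},{\cal J},\O)$ is locally Lagrangian (Definition \ref{locLag}). As ${\cal J}$ is integrable, Proposition \ref{locLagch} then yields at once that ${\cal J}$ and $\O$ are compatible and that $\O$ is exact. The metric relation follows by comparing matrices: by Proposition \ref{TO}(iii) the compatible pair $({\cal J},\O)$ induces a pseudo-Riemannian metric on $\V{\cal M}$ whose matrix in $\{{\cal V}_\a\}$ is exactly $(H_{\a\b})$, which is by definition the matrix of $g_\theta$; identifying the two gives $g_\theta({\cal J}\cdot,\cdot)=\O(\cdot,\cdot)$.

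Part (ii) is the heart of the proof. Non-degeneracy of $\O$ makes $i_{\cal S}\O={\cal L}^{\cal P}_{\cal C}\theta-\theta$ uniquely solvable, so $\cal S$ is well defined, and the whole point is to show ${\cal J}{\cal S}={\cal C}$. I would argue with Cartan calculus rather than coordinates. Using the compatibility of ${\cal J}$ and $\O$ from Part (i), $i_{{\cal J}{\cal S}}\O=-{\cal J}^*(i_{\cal S}\O)=-{\cal J}^*({\cal L}^{\cal P}_{\cal C}\theta-\theta)$; on the other hand, since ${\cal J}^*\theta$ is semi-basic and $\cal C$ is vertical we have $i_{\cal C}({\cal J}^*\theta)=0$, so the defining formula ${\cal L}^{\cal P}_X=i_X d^{\cal P}+d^{\cal P} i_X$ gives $i_{\cal C}\O=-i_{\cal C}d^{\cal P}({\cal J}^*\theta)=-{\cal L}^{\cal P}_{\cal C}({\cal J}^*\theta)$. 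Subtracting and using the operator identity ${\cal L}^{\cal P}_{\cal C}({\cal J}^*\theta)-{\cal J}^*({\cal L}^{\cal P}_{\cal C}\theta)=({\cal L}^{\cal P}_{\cal C}{\cal J})^*\theta$ reduces $i_{{\cal J}{\cal S}}\O-i_{\cal C}\O$ to $\big(({\cal L}^{\cal P}_{\cal C}{\cal J})+{\cal J}\big)^*\theta$. The standing hypothesis that ${\cal J}$ is compatible with $\cal C$, i.e. ${\cal L}^{\cal P}_{\cal C}{\cal J}=-{\cal J}$ (built into the chosen locally linear fibration, cf. Remark \ref{locC}), makes this term vanish, so $i_{{\cal J}{\cal S}}\O=i_{\cal C}\O$ and therefore ${\cal J}{\cal S}={\cal C}$. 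Once this is known, Proposition \ref{existS}, whose hypotheses (${\cal J}$ integrable and compatible with the Euler section $\cal C$) now hold, shows that $\cal S$ is a semispray and that ${\cal J}$ is compatible with $\cal S$; compatibility of ${\cal J}$ with $\cal C$ is the hypothesis just used, so all of (ii) follows.

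For Part (iii) I would compute directly in a ${\cal J}$-adapted basis, where $\O$ again has the form (\ref{semihamO}). Writing ${\cal S}=S^\a{\cal X}_\a+{\cal S}^\b{\cal V}_\b$, the hypothesis that $\cal S$ is compatible with ${\cal J}$ translates, through $-{\cal J}[{\cal S},{\cal V}_\g]_{\cal P}={\cal V}_\g$ and the local expression of $[{\cal S},{\cal V}_\g]_{\cal P}$ modulo $\V{\cal M}$ from the proof of Lemma \ref{dirfan}, into $\dis\frac{\p S^\a}{\p y^\g}=\d_\g^\a$. Then $\eta=i_{\cal S}\O$ has vertical part $\bar\eta_\g=\O({\cal S},{\cal V}_\g)=S^\a H_{\a\g}$, and $\dis\frac{\p\bar\eta_\g}{\p y^\d}=H_{\d\g}+S^\a\dis\frac{\p H_{\a\g}}{\p y^\d}$ is symmetric in $\g,\d$ because $H$ is symmetric and its $y$-derivatives are totally symmetric; by the criterion in the proof of Lemma \ref{Heta} this is exactly $(d^{\cal P}\eta)_{|\V{\cal M}}=0$, so $i_{\cal S}\O$ is semi-Hamiltonian, with generating function the energy ${\cal L}^{\cal P}_{{\cal J}{\cal S}}L-L$.

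The step I expect to be the main obstacle is the identity ${\cal J}{\cal S}={\cal C}$ in Part (ii). Its clean proof requires careful bookkeeping of the two distinct Euler sections $\cal C$ and ${\cal C}_\theta$, and, since $d^{\cal P}\circ d^{\cal P}\neq 0$ in the absence of the Jacobi identity, the Cartan-type manipulations must be justified term by term rather than by appealing to $d^2=0$. The crux is recognizing that $\big(({\cal L}^{\cal P}_{\cal C}{\cal J})+{\cal J}\big)^*\theta$ is the sole obstruction and that it vanishes precisely by the compatibility ${\cal L}^{\cal P}_{\cal C}{\cal J}=-{\cal J}$; everything else is either a direct matrix computation or a citation of the earlier structural results (Propositions \ref{locLagch}, \ref{TO}, \ref{JS}, \ref{existS} and Lemmas \ref{dirfan}, \ref{Heta}).
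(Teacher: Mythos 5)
Your proof is correct. Parts (i) and (iii) run essentially parallel to the paper's own argument: (i) is the observation that ${\cal J}^*\theta={\cal J}^*(d^{\cal P}L)$ (since ${\cal J}^*$ only sees the restriction of a form to $\V{\cal M}$), after which Proposition \ref{locLagch} and the local form (\ref{semihamO}) give compatibility and the metric identity; (iii) is the same matrix computation of the vertical part of $i_{\cal S}\O$, and you are in fact slightly more careful than the paper, since compatibility of $\cal S$ with $\cal J$ only forces $S^\a=y^\a+\phi^\a(x)$ and you keep the affine term $\phi^\a$ (correctly identifying the generating function as ${\cal L}^{\cal P}_{{\cal J}{\cal S}}L-L$), where the paper silently normalizes $\phi^\a=0$. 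Part (ii) is where you genuinely diverge. The paper works entirely in an adapted canonical basis with ${\cal J}{\cal X}_\a={\cal V}_\a$ and ${\cal C}=y^\a{\cal V}_\a$, computes ${\cal L}^{\cal P}_{\cal C}\theta-\theta$ explicitly, solves $i_{\cal S}\O={\cal L}^{\cal P}_{\cal C}\theta-\theta$ componentwise to read off ${\cal S}=y^\a{\cal X}_\a+{\cal S}^\b{\cal V}_\b$, and then gets the semispray property and all compatibilities directly from $[{\cal S},{\cal V}_\a]_{\cal P}=-{\cal X}_\a-\dis\frac{\p{\cal S}^\b}{\p y^\a}{\cal V}_\b$. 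You instead prove the single intrinsic identity ${\cal J}{\cal S}={\cal C}$ via $i_{{\cal J}{\cal S}}\O=-{\cal J}^*(i_{\cal S}\O)$, $i_{\cal C}\O=-{\cal L}^{\cal P}_{\cal C}({\cal J}^*\theta)$ and the commutator formula, isolating $\bigl(({\cal L}^{\cal P}_{\cal C}{\cal J})+{\cal J}\bigr)^*\theta$ as the sole obstruction, and then delegate the rest to Proposition \ref{existS}. This is legitimate: the Cartan identities you use hold for $1$-forms even without the Jacobi identity (the Lie derivative is defined by Cartan's formula here, and for $1$-forms it agrees with the naive derivation formula), and the hypothesis ${\cal L}^{\cal P}_{\cal C}{\cal J}=-{\cal J}$ is indeed encoded in the standing normal form of Remark \ref{locC}, exactly as in the paper's proof. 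Your route makes transparent \emph{why} the construction works and which hypothesis each conclusion rests on; the paper's coordinate route is more self-contained and additionally produces the explicit equation (\ref{Let-et}) for the components ${\cal S}^\b$, which is reused later (e.g.\ in the Euler--Lagrange equations of Subsection \ref{NHmeca}), so your argument, while cleaner, yields less reusable local data.
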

\bigskip
\begin{Rem}\label{OhamC}${}$ \\
Before the Proposition \ref{propsemiH}, we have seen that if  $\O$ is semi-Hamitonian we have an  Euler section ${\cal C}_\theta$  associated with $\O$. According to the basis used in the previous proof and the local expression of $\O$, we have ${\cal C} _\theta=\dis\frac{\p L }{\p y^\a }{\cal X}_\a$. It follows that  ${\cal C}_\theta$ is  not equal to $\cal C$. In particular, the locally linear fibration structure defined by  ${\cal C}_\theta$ is not compatible with the locally linear fibration structure defined by  ${\cal C}$. Again, the link between the two structures is given by the pseudo-Riemannian metric  $g_\theta$ associated with $\theta$ such that $\O=-d^{\cal P}({\cal J}^*\theta)$ (cf. Remark \ref{affJO}).
\end{Rem}

\bigskip

\begin{proof}[Proof of Proposition \ref{equivcomp}]    ${}$\\
Consider a vertical section $\cal V$   and any section $\cal X$.  Since  $\theta$ is semi-basic, according to Remark \ref{dPsbasic}, $d^{\cal P}\theta$ only depends  on the choice of the bracket $[\;,\;]_{\cal A}$.  Therefore in Definition \ref{liouv}, the condition "$\tilde{\O}+d^{\cal P}\theta$ is   semi-basic" only depends  of  the choice of the bracket $[\;,\;]_{\cal A}$.\\

 Assume that there exists
  a semi-basic $1$-form $\theta$ such that  ${\O}+d^{\cal P}\theta$ is semi-basic.
 Choose any coordinate system $(x^i,y^\a)$ and a local basis $\{e_\a\}$ of $\cal A$ and consider the associated local basis $\{{\cal X}_\a,{\cal V}_\b\}$ of ${\T}{\cal M}$. In the corresponding dual basis $\{{\cal X}^\a,{\cal V}^\b\}$
 we have a decomposition of the type

$\theta=\theta_\a{\cal X}^\a$ and $\O=g_{\a \b}{\cal X}^\a\wedge{\cal V}^\b+\dis\frac{1}{2}\o_{\a\b}{\cal X}^\a\wedge {\cal X}^\b$

\noindent Therefore, under our assumption, we must have $g_{\a \b}= \dis\frac{\p  \eta_\a}{\p y^\b}$ and then
  the rank of the matrix $(g_{\a \b})$  is $k$. 
It follows that there exists a new system of coordinates $(\bar{x}^i,\bar{y}^\a)$ so that we have
$\theta=\bar{y}^\a\bar{\cal X}^{\a}$.

\noindent In this context we get

${\O}=\bar{\cal X}^A\wedge \bar{\cal V}^\a+\dis\frac{1}{2}\bar{\o}_{\a\b}\bar{\cal X}^\a\wedge \bar{\cal X}^\b$.

\noindent Finally,  we have a unique vertical section $\cal C$  such that $i_{\cal C}\O=-\theta$ and we get

${\cal C}=\bar{y}^\a\hat{\cal X}_{\a}.$

\noindent This implies that ${\cal C}$ is an Euler section.\\

If   $\theta'$ is another almost Liouville form associated with $\O$ and set $\vartheta=\theta'-\theta$.  Therefore in the previous local basis we have a decomposition of the type:
$\vartheta=\vartheta_\a\hat{\cal X}^\a$.\\
As $d^{\cal P}\vartheta$ must be semi-basic, we must have $\dis\frac{\p\vartheta_\a}{\p \bar{y}^\b}\equiv 0$. Since  each component  $\vartheta_\a $ only depends on $x$, with the previous notations, on the chart domain, if we set $\s= \vartheta_\a{\epsilon}^\a$ where $\{{\epsilon}^\a\}$ is the dual basis associated with the chosen basis $\{e_\a\}$ of $\cal A$. Then $\pi^*_{\cal A}\s=\vartheta$. Since  $\vartheta$ is globally defined, it follows that, in this way, we can build a global section $\s$ such that $\pi_{\cal A}^*\s= \vartheta$.\\

Now, if $\O$ is exact, this means that there exists a semi-basic $1$-form $\theta$ such that $\O=-(d^{\cal P}\theta)$. Given a canonical basis $\{{\cal X}_\a,{\cal V}_\b\}$ of ${\T}{\cal M}$, in the corresponding dual basis, we can write

 $\theta=\theta_\a{\cal X}^a$ and then $d^{\cal P}\theta=\dis\frac{\p \theta_\a}{\p y^\b}{\cal V}^\b\wedge {\cal X}^{\a}+\dis\frac{1}{2}(\dis\frac{\p \theta_\b}{\p x^i}\rho_\a^i-\dis\frac{\p \theta_\a}{\p x^i}\rho_\b^i){\cal V}^\a\wedge {\cal X}^\b$.

 It follows that we obtain
 $\O+d^{\cal P}\theta=0$. Therefore $\O+d^{\cal P}\theta$ is semi-basic.

\end{proof}

\begin{proof}[Proof of Theorem \ref{S-coS}]${}$\\
\indent
Choose   a local function $L$   (defined on an open $U$ of $\cal M$)  such that $d^{\cal P}{L}_{| \V{\cal M}}=\eta$. In affine coordinate systems,  according to (\ref{chgtaffine}) we have a matrix relation
$$(\dis\frac{\p^2{L}}{\p\bar{y}^\a\p\bar{y}^\b})=(\Phi^\a_\g)^t(\dis\frac{\p^2{L}}{\p{y}^\g\p y^\d})(\Phi^\a_\g).$$
 Since  the matrix $(\dis\frac{\p^2{L}}{\p{y}^\g\p y^\g})$ does not depend on the choice of such a function  $L$,   and is a symmetric matrix  of rank maximal $k$, we get a pseudo-Riemannian metric $g$ on $\V{\cal M}$.

 Denote by $g_{\eta}^\sharp$  the musical isomorphism induced by  $g$ from $ (\V{\cal M})^*$  to $\V{\cal M}$ and   $j:\V{\cal M}\ap \T{\cal M}$ the canonical inclusion. If $\O^\flat:\T{\cal M}\ap \T{\cal M}^*$ is the isomorphism associated with $\O$ and  let  ${\bf H}$  be the subbundle $\O^\flat(j^*({\V\cal M}^*)$. Then $\T{\cal M}={\bf H}\oplus \V{\cal M}$ and  the almost tangent structure ${\cal J}$ is defined by:

${\cal J}_{| \V{\cal M}}=0$ and ${\cal J}_{| {\bf  H}}=g^\sharp  \circ j^* \circ\O^\flat_{| {\bf H}}$.\\

 Consider   a canonical basis $\{{\cal X}_\a,{\cal V}_\b\}$  associated with a coordinate system compatible with the chosen  locally affine fibration structure on $\cal M$ and a choice of a basis $\{e_\a\}$  of $\cal A$.
 Then we have a decomposition of type:
$$\O=\dis\frac{1}{2}\O_{\a\b}{\cal X}^\a\wedge {\cal X}^\b+\bar{\O}_{\a\b}{\cal X}^\a\wedge {\cal V}^\b.$$

\noindent Since the matrix of general terms $(\bar{\O}_{\a\b})$ is invertible, consider the family of $1$-forms $\{{\cal Y}^\a=\bar{\O}_{\a\b}{\cal X}^\a\}$. We obtain a  dual  basis $\{{\cal Y}^\a,{\cal V}^\b\}$   so  that we have the decomposition
\begin{eqnarray}\label{darbO}
\O=\dis\frac{1}{2}\o_{\a\b}{\cal Y}^\a\wedge {\cal Y}^\b+{\cal Y}^\a\wedge {\cal V}^\a.
\end{eqnarray}
In such a basis we have  $\eta=\eta_\a{\cal Y}^\a+\bar{\eta}_\b{\cal V}^\b$. According to Lemma \ref{Heta}, let  ${L}$ be such that, locally,
$(d^{\cal P}{L})_{| \V{\cal M}}=\eta_{| \V{\cal M}}=\dis\frac{\p {L}}{\p y^\a}{\cal V}^\a$.
Since $\O$ is a symplectic form on $\T{\cal M}$, the equation $i_{\cal S}\O=\eta$ has a unique solution locally given by:
\begin{eqnarray}\label{semihamS}
{\cal S}=\dis\frac{\p {L}}{\p y^\a}{\cal Y}_\a+(\dis\frac{1}{2}\o_{\a\b}\dis\frac{\p {L}}{\p y^\a}-\eta_\b){\cal V}_\b.
\end{eqnarray}
According to the choice of the initial canonical basis $\{{\cal X}_\a,{\cal V}_\b\}$,    the matrix of  $g_\eta$  is  $(\dis\frac{\p^2 {L}}{\p y^\a\p y^\b})$  and we denote by $(g_\a^\b)$ the inverse of this matrix.  We then have:
${\cal J}{\cal Y}_\a=g_\a^\b{\cal V}_\b$. We  consider a new coordinate system $ (\bar{x}^i=x^i, \bar{y}^\a)$ such that
$\dis\frac{\p}{\p \bar{y}^\a}=g_A^B \dis\frac{\p}{\p {y}^\a}$.
Then we have $\bar{y}^A=\dis\frac{\p {L}}{\p y^\a}-\phi^\a$ where $\phi^\a$ is some function which only depends on $(x^i)$.
Of course, we have   ${\cal J}{\cal Y}_\a=\bar{\cal V}_\a$.
In this new coordinate system ${\cal S} $ has a decomposition of type
$${\cal S}= (\bar{y}^\a+\phi ^\a){\cal Y}_\a+S^\b\bar{\cal V}_\b.$$
\noindent Therefore we obtain  ${\cal J}{\cal S}=(\bar{y}^\a+\phi ^\a){\cal V}_\a$.  In particular, it follows that ${\cal C}_{\cal S}={\cal J}{\cal S}$ is an Euler section.
Now, consider the new coordinate system $(\hat{x}^i=\bar{x}^i=x^i,\hat{y}^\a=\bar{y}^\a+\phi^\a)$ on $\cal M$. The canonical basis associated with this coordinate system and the initial choice of the basis $\{e_\a\}$ is of type $\{{\cal X}_\a,\hat{\cal V}_\b\}$.  For this new system,  in the associated  basis $\{{\cal Y}_\a,\hat{\cal V}_B\}$ of $\T{\cal M}$,  we get the required decomposition  announced  in property (c) of Part (i).\\

 Assume that $\O$ is of Liouville type. Choose a canonical basis $\{{\cal X}_\a,{\cal V}_\b\}$   of $\T{\cal M}$  associated with a coordinate system $(x^i,y^\b)$  compatible with the  locally affine fibration structure  associated with $\O$, and to a local basis $\{e_\a\}$ of $\cal A$.  In this basis
the matrix associated with $g_\eta$  is  $(\dis\frac{\p^2 {L}}{\p y^\a\p y^\b})$  and, as in the proof of Part  (i),  denote by $(g_\a^\b)$ the inverse of this matrix.
According to Proposition \ref{equivcomp}, in  the corresponding dual basis $\{{\cal X}^\a,{\cal V}^\b\}$ we have

$\O=\dis\frac{\p^2 L}{\p y^\a \p y^\b}{\cal X}^\a\wedge{\cal V}^\b+\dis\frac{1}{2}\o_{\a\b}{\cal X}^\a\wedge{\cal X}^\b\;\;\;\;\;\;\; \eta=\eta_\a{\cal X}^\a+\dis\frac{\p L}{\p y^\b}{\cal V}\b$ and ${\cal J}{\cal X}_\a=g_\a^\b{\cal V}_∫$

By the same argument used in the proof of Part (i) with ${\cal Y}_\a={\cal X}_\a$ we  build a new coordinate system $(\hat{x}^i,\hat{y}^\a)$ such that in the canonical associated basis $\{\hat{\cal X}_\a,\hat{\cal V}_\b\}$ (associated with the same basis $\{e_A\}$ of $\cal A$) we have

${\cal J}\hat{\cal X}_\a=\hat{\cal V}_\a\;\;\;$  $\;\;\; {\cal S}=\hat{y}^\a\hat{\cal X}_\a+\hat{\cal S}^\b\hat{\cal V}_\b\;\;\;$.

\noindent  It follows that  ${\cal C}_{\cal S}={\cal J}{\cal S}=\hat{y}^\a\hat{\cal V}_\a$ and  $\cal J$ is integrable.

Moreover, from the last decomposition of $\cal S$, we have ${\cal J}[{\cal S},\hat{\cal V}_\a]_{\cal P}=-\hat{\cal V}_\b$.  It follows that $\cal S$ is a semispray and $\cal J$ is compatible with $\cal S$. By similar arguments we also obtain that $\cal J$ is compatible with ${\cal C}_{\cal S}$.

\end{proof}

\bigskip

\begin{proof}[Proof of Proposition \ref{propsemiH}]${}$\\
 In this proof we only consider coordinate systems $(x^i,y^\a)$  compatible with the locally linear fibration  structure associated with $\cal C$ and associated  adapted local basis $\{{\cal X}_\a,{\cal V}_\b\}$ of ${\T}{\cal M}$ such that ${\cal J}{\cal X}_\a={\cal V}_\b$ and ${\cal C}=y^\a{\cal V}_\a$ (see  Remark \ref{locC}).\\

Since  $\O$ is semi-Hamiltonian, from the local decomposition  (\ref{semihamO} )  it follows that  ${\cal J}$ is compatible with $\O$ and  $g_\theta ( {\cal J},\; ) = \O( \;, \;)$.
 Consider  local basis $\{{\cal X}_\a,{\cal V}_\b\}$ such that we have ${\cal C}=y^\a{\cal V}_\a$ and $\theta=\theta_\a{\cal X}^\a+\dis\frac{\p {L}}{\p y^\b}{\cal V}^\b$.
Consider the $1$-form ${\cal L}^{\cal P}_{\cal C}\theta$.
\noindent  Therefore we obtain:
\begin{eqnarray}\label{hamiltheta}
{\cal L}^{\cal P}_{\cal C}\theta=y^\b\dis\frac{\p \theta_\a}{\p y^\b} {\cal X}^\a+(\dis\frac{\p {L}}{\p y^\a}+y^\b\dis\frac{\p^2{L}}{\p y^\a\p y^\b}){\cal V}^\a.
\end{eqnarray}

\noindent It follows that we have

\begin{eqnarray}\label{semi-hamilsp}
{\cal L}^{\cal P}_{\cal C}\theta- \theta=y^\b\dis\frac{\p^2 {L}}{\p y^\a\p y^\b}{\cal V}^\a+(y^\b\dis\frac{\p \theta_\a}{\p y^\b}-\theta_\a){\cal X}^\a.
\end{eqnarray}

\noindent According to the decomposition (\ref{semihamO}),  the solution of $i_{\cal S}\O={\cal L}^{\cal P}_{\cal C}\theta- \theta$ has a decomposition of type
${\cal S}=y^\b{\cal X}^\b+{\cal S}^\b{\cal V}^\b$
with  ${\cal S}^\b$ characterized by:
\begin{eqnarray}\label{Let-et}
\dis\frac{\p^2{L}}{\p y^\a \p y^\b}{\cal S}^\b+y^\b(\dis\frac{\p \theta_\a}{\p y^\b}-\dis\frac{\p^2 {L}}{\p x^i\p y^\b} \rho_\a ^i+\dis\frac{\p^2 {L}}{\p x^i\p y^\a} \rho_\b ^i-\frac{\p {L}}{\p y^\g}C_{\a\b}^\g)=\theta_\a.
\end{eqnarray}

\noindent Since  locally  we have ${\cal J}{\cal X}_\a={\cal V}_\a$, from the previous decomposition of $\cal S$ we get

$[{\cal S},{\cal V}_\a]=-{\cal X}_\a-\dis\frac{\p {\cal S}^\b}{\p y^\a}{\cal V}_\b$

\noindent This implies that  $\cal S$ is a semispray and  $\cal J$  is compatible with $\cal S$. Finally since  ${\cal C}=y^\b{\cal V}_\b$, it follows that    $\cal J$ is compatible with $\cal C$, which ends the proof of Part  (ii).\\

Given any   semispray  $\cal S$ compatible with $\cal J$, from Proposition \ref{JS}, with  the previous notations,    we locally have  ${\cal S}=y^\a{\cal X}_\a+{\cal S}^\b{\cal V}_\b$. We deduce that  the "vertical" components of $i_{\cal S}\O=\eta$ is then
$$\dis\frac{\p^2 L }{\p y^\a \p y^\b}y^\a {\cal V}^\b.$$
Thus,  if we set ${\cal H}=\dis\frac{1}{2}{\cal L}^{\cal P}_{\cal C}L-L$, we obtain  $\dis\frac{\p^2 L }{\p y^\a \p y^\b}y^\a {\cal V}^\b=\dis\frac{\p {\cal H}}{\p y^\b}{\cal V}^\b$. It follows that $ \eta$ is semi-hamitonian.\\

\end{proof}

\section{Nonlinear connections and semisprays}
linear connections to our context and their links with almost tangent structures, Euler sections and semisprays.

\subsection{Non linear connection and almost tangent structure}${}$\\
${}\;\;\;\;$ Classically, a {\it nonlinear connection}  is  a decomposition of this bundle in a Whitney sum $\T{\cal M}=\V{\cal M}\oplus \H{\cal M}$. Such a   decomposition is equivalent to  the datum of an
endomorphism ${\cal N}$ of  $\T{\cal M}$ such that ${\cal N}^2=Id$   with $\V{\cal M}=\ker (Id+{\cal N})$ and $\H{\cal M}=\ker (Id-{\cal N})$ where $Id$ is the identity morphism of $\T{\cal M}$. We naturally get two projections:

$h_{\cal N} =\dis\frac{1}{2}(Id+{\cal N}): \T{\cal M}\ap \H{\cal M}$ and $v_{\cal N}=\dis\frac{1}{2}(Id-{\cal N}): \T{\cal M}\ap \V{\cal M}$

$v_{\cal N}$ and  $h_{\cal N}$ are called respectively  the {\it vertical}  and  {\it  horizontal} projector of ${\cal N}$.\\

 \begin{Pro}\label{JG}${}$
  \begin{enumerate}
 \item[(i)] Assume that there exists an  almost structure ${\cal J}$ on $\T{\cal M}$. Then any  nonlinear connection ${\cal N}$ on $\T{\cal M}$ satisfies
   \begin{eqnarray}\label{relJG}
{\cal J }{\cal N} ={ \cal J} \text {  and    } {\cal N} { \cal J}=-{\cal J}.
\end{eqnarray}
Moreover,  an endomorphism ${\cal N}$ of $\T{\cal M}$   is a nonlinear connection if and only if ${\cal N}$  satisfies the relations (\ref{relJG}) .
\item[(ii)] Let  $ \Upsilon$ be a semi-basic vector  valued $1$-form  then ${\cal N}+ \Upsilon$ is a  nonlinear connection on $\T{\cal M}$. Conversely, given any nonlinear connection ${\cal N}'$ on $\T{\cal M}$, there exists a unique  semi-basic  vector valued   $1$-form $ \Upsilon$   such that ${\cal N}'={\cal N}+ \Upsilon$.
  \item[(iii)]  Consider   a   semispray  ${\cal S}$ on ${\cal M}$ and ${\cal J}$ the associated almost structure on  $\T{\cal M}$ (see Proposition \ref{JS}). We then have the following properties:\\

  \noindent $\bullet$  The endomorphism ${\cal N}_{\cal S}=-{\cal L}^{\cal P}_{\cal S}{\cal J}$  is   a nonlinear connection on $\T{\cal M}$, where ${\cal L}^{\cal P}$ is the Lie derivative for tensor associated with $[\;,\;]_{\cal P}$. Moreover, ${\cal N}_{\cal S}$ only depends on the choice of  the bracket $[\;,\;]_{\cal A}$.\\
 \noindent $\bullet$    The  vertical  projector $v_{\cal S}$ of ${\cal N}_S$  satisfies the following  relation, for any vertical section $\cal X$:
\begin{eqnarray}\label{vS}
v_{\cal S}[{\cal S},{\cal X}]_{\cal P}=-\dis\frac{1}{2}{\cal J}[{\cal S},[{\cal S},{\cal X}]_{\cal P}]_{\cal P}.
\end{eqnarray}
$\bullet$ the morphism $H_{\cal S}:\V{\cal M}\ap \T{\cal M}$ defined by
 \begin{eqnarray}\label{HS}
{\cal  H}_{\cal S}{\cal X}=-[{\cal S},{\cal X}]_{\cal P}+\dis\frac{1}{2}{\cal J}[{\cal S},[{\cal S},{\cal X}]_{\cal P}]_{\cal P}.
\end{eqnarray}
\noindent is an isomorphism onto the horizontal space $\H_{\cal S}$ associated with ${\cal N}_{\cal S}$ whose inverse is the restriction of  ${\cal J}$ to  $\H_{\cal S}$.
 \end{enumerate}
 \end{Pro}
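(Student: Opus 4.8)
The plan is to reduce the whole statement to the two algebraic identities (\ref{relJG}) together with the single bracket identity ${\cal J}[{\cal S},{\cal V}]_{\cal P}=-{\cal V}$ for vertical ${\cal V}$, established in Proposition \ref{JS}. \textbf{Part (i).} For the direct implication I would use that an almost tangent structure satisfies $\ker{\cal J}=\mathrm{im}\,{\cal J}=\V{\cal M}$ and that a nonlinear connection restricts to $-Id$ on $\V{\cal M}=\ker(Id+{\cal N})$. Decomposing ${\cal X}=h_{\cal N}{\cal X}+v_{\cal N}{\cal X}$ and using ${\cal J}v_{\cal N}{\cal X}=0$ gives ${\cal J}{\cal N}{\cal X}={\cal J}{\cal X}$, while ${\cal J}{\cal X}\in\V{\cal M}$ gives ${\cal N}{\cal J}{\cal X}=-{\cal J}{\cal X}$. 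For the converse I set $P={\cal N}-Id$: the identity ${\cal J}{\cal N}={\cal J}$ says $\mathrm{im}\,P\subset\ker{\cal J}=\V{\cal M}$, and ${\cal N}{\cal J}=-{\cal J}$ says ${\cal N}=-Id$ on $\mathrm{im}\,{\cal J}=\V{\cal M}$, i.e. $P=-2Id$ there. Hence $P^2=-2P$ and ${\cal N}^2=Id+2P+P^2=Id$, so ${\cal N}$ is an involution; moreover ${\cal N}{\cal X}=-{\cal X}$ forces ${\cal J}{\cal X}={\cal J}{\cal N}{\cal X}=-{\cal J}{\cal X}$, whence $\ker(Id+{\cal N})=\ker{\cal J}=\V{\cal M}$, and the eigenspace splitting $\T{\cal M}=\ker(Id-{\cal N})\oplus\V{\cal M}$ exhibits ${\cal N}$ as a connection.

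\textbf{Part (ii)} is then immediate. If $\Upsilon$ is semi-basic then $\mathrm{im}\,\Upsilon\subset\V{\cal M}=\ker{\cal J}$ and $\Upsilon$ vanishes on $\V{\cal M}=\mathrm{im}\,{\cal J}$, so ${\cal J}\Upsilon=0$ and $\Upsilon{\cal J}=0$; thus ${\cal J}({\cal N}+\Upsilon)={\cal J}$ and $({\cal N}+\Upsilon){\cal J}=-{\cal J}$, and Part (i) makes ${\cal N}+\Upsilon$ a connection. Conversely, for two connections I put $\Upsilon={\cal N}'-{\cal N}$ and subtract the identities (\ref{relJG}) for ${\cal N}'$ and ${\cal N}$; this gives ${\cal J}\Upsilon=0$ and $\Upsilon{\cal J}=0$, which is exactly semi-basicity, and $\Upsilon$ is visibly unique.

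\textbf{Part (iii), connection and (\ref{vS}).} I would feed ${\cal N}_{\cal S}=-{\cal L}^{\cal P}_{\cal S}{\cal J}$ into the criterion of Part (i) via the Lie-derivative formula $({\cal L}^{\cal P}_{\cal S}{\cal J})({\cal X})=[{\cal S},{\cal J}{\cal X}]_{\cal P}-{\cal J}[{\cal S},{\cal X}]_{\cal P}$. Left-composing with ${\cal J}$ and using ${\cal J}^2=0$ and ${\cal J}[{\cal S},{\cal J}{\cal X}]_{\cal P}=-{\cal J}{\cal X}$ (Proposition \ref{JS} applied to the vertical section ${\cal J}{\cal X}$) gives ${\cal J}({\cal L}^{\cal P}_{\cal S}{\cal J})=-{\cal J}$, hence ${\cal J}{\cal N}_{\cal S}={\cal J}$; evaluating the formula on ${\cal J}{\cal X}$ and using ${\cal J}^2=0$ gives $({\cal L}^{\cal P}_{\cal S}{\cal J}){\cal J}={\cal J}$, hence ${\cal N}_{\cal S}{\cal J}=-{\cal J}$, so Part (i) applies. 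The dependence only on $[\;,\;]_{\cal A}$ I would make transparent in the adapted basis of Proposition \ref{JS}(ii): with ${\cal S}=y^\a{\cal X}_\a+{\cal S}^\b{\cal V}_\b$ and the relations (\ref{locbracalg}) one computes ${\cal N}_{\cal S}{\cal X}_\a={\cal X}_\a+(\p{\cal S}^\g/\p y^\a+y^\b C_{\b\a}^\g){\cal V}_\g$ and ${\cal N}_{\cal S}{\cal V}_\a=-{\cal V}_\a$, whose coefficients involve only ${\cal S}$ and the structure functions $C_{\a\b}^\g$ of $[\;,\;]_{\cal A}$. For (\ref{vS}) I substitute the Lie-derivative formula into $v_{\cal S}=\frac{1}{2}(Id+{\cal L}^{\cal P}_{\cal S}{\cal J})$; for vertical ${\cal X}$ one has ${\cal J}[{\cal S},{\cal X}]_{\cal P}=-{\cal X}$, so $[{\cal S},{\cal J}[{\cal S},{\cal X}]_{\cal P}]_{\cal P}=-[{\cal S},{\cal X}]_{\cal P}$, the two copies of $[{\cal S},{\cal X}]_{\cal P}$ cancel, and what remains is exactly $-\frac{1}{2}{\cal J}[{\cal S},[{\cal S},{\cal X}]_{\cal P}]_{\cal P}$.

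\textbf{Part (iii), the map ${\cal H}_{\cal S}$ and the main obstacle.} Here I would verify the two defining properties of the horizontal lift. That ${\cal J}{\cal H}_{\cal S}{\cal X}={\cal X}$ is formal: applying ${\cal J}$ to (\ref{HS}), the iterated-bracket term vanishes by ${\cal J}^2=0$, and the first term gives $-{\cal J}[{\cal S},{\cal X}]_{\cal P}={\cal X}$. That ${\cal H}_{\cal S}{\cal X}$ lands in $\H_{\cal S}$ is the delicate point: using (\ref{vS}) to express the iterated-bracket term through the vertical projector of $[{\cal S},{\cal X}]_{\cal P}$, one rewrites ${\cal H}_{\cal S}{\cal X}$ via the projectors $h_{\cal S},v_{\cal S}$ applied to $[{\cal S},{\cal X}]_{\cal P}$, so that horizontality amounts to the vanishing of its vertical component, i.e. ${\cal H}_{\cal S}{\cal X}=-h_{\cal S}[{\cal S},{\cal X}]_{\cal P}$. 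The adapted-basis computation confirms this, giving ${\cal H}_{\cal S}{\cal V}_\a={\cal X}_\a+\frac{1}{2}(\p{\cal S}^\g/\p y^\a+y^\b C_{\b\a}^\g){\cal V}_\g=h_{\cal S}{\cal X}_\a$ and thereby identifying ${\cal H}_{\cal S}$ with the inverse of ${\cal J}$ restricted to $\H_{\cal S}$. I expect this last step to be the main obstacle: it requires careful iterated-bracket bookkeeping, keeping every Leibniz term and every numerical coefficient so that the vertical contributions combine correctly, which is precisely the place where (\ref{vS}) must be invoked with the right factor.
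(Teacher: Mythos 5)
Your treatment of (i), (ii) and of the first two bullets of (iii) is correct and is in fact more complete than the paper's own proof, which only establishes the properties of $v_{\cal S}$ and ${\cal H}_{\cal S}$ and dismisses the rest as classical. Your reduction of everything to the algebraic relations (\ref{relJG}) together with the characteristic identity ${\cal J}[{\cal S},{\cal V}]_{\cal P}=-{\cal V}$ is the natural route (the involution argument $P={\cal N}-Id$, $P^2=-2P$ is clean and fills a real omission), and your derivation of (\ref{vS}) coincides with the paper's: substitute the Lie-derivative formula, use ${\cal J}[{\cal S},{\cal X}]_{\cal P}=-{\cal X}$, and let the two copies of $[{\cal S},{\cal X}]_{\cal P}$ cancel.

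The one point that does not cohere is your handling of ${\cal H}_{\cal S}$. Substituting (\ref{vS}) into (\ref{HS}) \emph{as printed} gives ${\cal H}_{\cal S}{\cal X}=-[{\cal S},{\cal X}]_{\cal P}-v_{\cal S}[{\cal S},{\cal X}]_{\cal P}=-h_{\cal S}[{\cal S},{\cal X}]_{\cal P}-2\,v_{\cal S}[{\cal S},{\cal X}]_{\cal P}$, \emph{not} $-h_{\cal S}[{\cal S},{\cal X}]_{\cal P}$: indeed (\ref{vS}) gives $h_{\cal S}[{\cal S},{\cal X}]_{\cal P}=[{\cal S},{\cal X}]_{\cal P}+\frac{1}{2}{\cal J}[{\cal S},[{\cal S},{\cal X}]_{\cal P}]_{\cal P}$, so $-h_{\cal S}[{\cal S},{\cal X}]_{\cal P}$ carries the coefficient $-\frac{1}{2}$ on the iterated-bracket term, opposite to the $+\frac{1}{2}$ in (\ref{HS}). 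Your adapted-basis value ${\cal H}_{\cal S}{\cal V}_\a={\cal X}_\a+\frac{1}{2}(\p{\cal S}^\g/\p y^\a+y^\b C_{\b\a}^\g){\cal V}_\g=h_{\cal S}{\cal X}_\a$ is exactly the one produced by $-[{\cal S},{\cal X}]_{\cal P}-\frac{1}{2}{\cal J}[{\cal S},[{\cal S},{\cal X}]_{\cal P}]_{\cal P}$; with the printed $+\frac{1}{2}$ one gets instead ${\cal X}_\a+\frac{3}{2}(\p{\cal S}^\g/\p y^\a){\cal V}_\g-\frac{1}{2}y^\b C_{\b\a}^\g{\cal V}_\g$, whose vertical discrepancy $(\p{\cal S}^\g/\p y^\a-y^\b C_{\b\a}^\g){\cal V}_\g$ does not vanish in general, so the printed map is not valued in $\H_{\cal S}$. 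In other words (\ref{HS}) carries a sign typo (the paper's proof also merely asserts horizontality ``from the definition''), and your conclusion ${\cal H}_{\cal S}=-h_{\cal S}\circ[{\cal S},\cdot\,]_{\cal P}=({\cal J}_{|\H_{\cal S}})^{-1}$ is the correct statement; but you should say explicitly that you are proving it for the corrected sign rather than presenting the substitution of (\ref{vS}) as if it reproduced (\ref{HS}) verbatim.
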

 \smallskip
 \begin{Rem}${}$ \\
 Consider a  semispray $\cal S$ on $\T{\cal M}$ and the  associated tangent structure $\cal J$. For any  (local) section $s$ of $\cal A$, we can define a vertical lift $s^v:{\cal M} \ap \T{\cal M}$ by $s^v(m)={\cal J}(\s(\pi(m),X(m))$ where $X$ is any vector field on $\cal M$ such that $T\pi(V)=\rho(s)$ ({\it cf.} proof of proposition \ref{vertaff}). Given any two brackets $[\;,\;]_{\cal A}$ and $[\;,\;]'_{\cal A}$ on $\cal A$, the associated difference $\cal A$-tensor  $T=[\;,\;]'_{\cal A}-[\;,\;]_{\cal A}$   can  be lifted to a semi-basic tensor $T^v$  on $\T{\cal M}$  by:
$$T^v((m,b,v), (m,b',v'))=[T((\pi(m),b), (\pi(m),b'))]^v$$ for any $(m,b,v)$ and $(m,b',v')$ in $\T_m{\cal M}$.

  \noindent According to subsection \ref{prolbrac}, let   $[\;,\;]_{\cal P}$ and  $[\;,\;]'_{\cal P}$ ) be the almost  bracket prolongation $[\;,\;]_{\cal P}$ and $[\;,\;]'_{\cal P}$ on $\T{\cal M}$   associated with   $[\;,\;]_{\cal A}$ and  $[\;,\;]'_{\cal A}$, respectively. Then, between the nonlinear connections  ${\cal N}_{\cal  S}$  and  ${\cal N}'_{\cal  S}$  associated with  $\cal S$  and the almost bracket $[\;,\;]_{\cal P}$  and  $[\;,\;]'_{\cal P}$ respectively, we have the following relation
 $${\cal N}'_{\cal S}={\cal N}_{\cal S}+ i_{\cal S}T^v.$$
\end{Rem}
 \smallskip
 \begin{proof} [Proof of Proposition \ref{JG}]${}$\\
  We only prove the  announced properties  of $v_{\cal S}$ and ${\cal H}_{\cal S}$. The others properties are classical.

\noindent  For any vertical section ${\cal X}$,  from the definition of ${\cal N}_{\cal S}$ and the characteristic property of $\cal J$, we have

  $\;\;\;\;{\cal N}_{\cal S}[{\cal S},{\cal X}]_{\cal P}=[{\cal S},{\cal X}]_{\cal P}+{\cal J}[{\cal S},[{\cal S},{\cal X}]_{\cal P}]_{\cal P}$.

\noindent  Therefore, from the definition of $v_{\cal S}$ we get (\ref{vS}) and from the definition of ${\cal H}_{\cal S}$, the second member of (\ref{HS}) is horizontal. It remains to show that the kernel of $H_{\cal S}$ is $\{0\}$. Assume that  ${\cal H}_{\cal S}({\cal X})=0$. Consider a local canonical  basis $\{{\cal X}_\a,{\cal V}_\b\}$ of $\T{\cal M}$.  Now   if ${\cal X}=f_\a{\cal V}_\a$, we get

$[{\cal S},{\cal X}]_{\cal P}={\cal L}^{\cal P}_{\cal S}f_\a{\cal V}_\a+f_\a{\cal X}_\a$.

  \noindent Then, according to (\ref{HS}), we have ${\cal H}_{\cal S}({\cal X})=0$ if and only if  $[{\cal S},{\cal X}]_{\cal P}$ is vertical  and then we must have ${\cal X}=0$. Finally, according to we have
 $${\cal J}\circ{\cal H}_{\cal S}({\cal X})=-{\cal J}[{\cal S},{\cal X}]_{\cal P}={\cal X}.$$

 \end{proof}
 \bigskip \bigskip \bigskip
 In a  basis $\{{\cal Y}_\a,{\cal V}_\a\}$ such that ${\cal J}{\cal Y}_\a={\cal V}_\a$, a nonlinear connection  is characterized by a matrix of type
$$\begin{pmatrix}
Id&0\cr
-(2{\cal N}_\a^\b)&-Id\cr
\end{pmatrix}$$
and the general terms ${\cal N}_\a^\b$ are called the {\it coefficients} of the connection.  In particular, we have $h_{\cal N}({\cal Y}_\a)={\cal Y}_\a -{\cal N}_\a^\b{\cal V}_\b$. \\
Now, under  the assumptions of Proposition  \ref{existS}, we can choose  a canonical basis such that ${\cal C}=y^\a{\cal V}_\a$  (see Remark \ref{locC}) and in such a basis any  semispray  $\cal S$ can be written ${\cal S}=y^\a{\cal X}_\a+ {\cal S}^\b{\cal V}_\b$.  Therefore in a canonical basis $\{{\cal X}_\a,{\cal V}_\b\}$ which satisfies  the relations (\ref{adaptJS}),  the  connection ${\cal N}_{\cal S}$ associated with  a  semispray  $\cal S$ are given by:
\begin{eqnarray}\label{locGS}
{\cal N}_\a^\b=\dis\frac{1}{2}(-\frac{\p {\cal S}^\b}{\p y^\a}+ {C}_{\a\d}^\b y^\d ).
\end{eqnarray}

\bigskip
 \bigskip
 {\it We end this subsection by the notion of} {\bf  curvature} of a connection  which depends on the bracket $[\;,\;]_{\cal P}$. }

  Now, according to \cite{Gr} we have:

  \begin{Def}\label{courb}${}$\\
   Let ${\cal N}$ be a nonlinear connection on $\T{\cal M}$. The {\it curvature} of ${\cal N}$ is the vector valued   $2$-form given by the Fr\"{o}licher- Nijenhuis bracket
 $${\bf R}_{\cal N}=-\dis\frac{1}{2}[h_{\cal N},h_{\cal N}]_{\cal P}.$$
 \end{Def}
\bigskip

 \begin{Pro}\label{propR}${}$
 \begin{enumerate}
\item[(i)] The curvature ${\bf R}_{\cal N}$ also has  the following value:
 $$R_{\cal N}({\cal X},{\cal Y})=-v_{\cal N}[h_{\cal N}{\cal X},h_{\cal N}{\cal Y}]_{\cal P}={\bf R}_{\cal N}(h_{\cal N}{\cal X},h_{\cal N}{\cal Y}).$$ In particular, ${\bf R}_{\cal N}$ is a semi-basic  $2$-form
 \item[(ii)] Consider  the nonlinear connection ${\cal N}$   associated with an almost  semispray s ${\cal S}$. Then $\Phi=v_{\cal N}\circ {\cal L}^{\cal P}_{\cal S}v_{\cal N}$ is a morphism from $\T{\cal M}$ to $V{\cal M}$ such that $\Phi\circ\Phi=0$ and  we have
 $$\Phi{\cal X}=i_{\cal S}R_{{\cal N}}{\cal X}-v_{{\cal N}}[v_{{\cal N}}{\cal S},h_{{\cal N}}{\cal X}]_{\cal P}. $$
  \end{enumerate}
 \end{Pro}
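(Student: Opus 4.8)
The plan is to derive both parts directly from the definition ${\bf R}_{\cal N}=-\dis\frac{1}{2}[h_{\cal N},h_{\cal N}]_{\cal P}$, using only that $h_{\cal N}$ and $v_{\cal N}$ are complementary projectors ($h_{\cal N}+v_{\cal N}=Id$, $h_{\cal N}^2=h_{\cal N}$, $v_{\cal N}^2=v_{\cal N}$, $h_{\cal N}v_{\cal N}=v_{\cal N}h_{\cal N}=0$) together with the fact that $\V{\cal M}$ is involutive for $[\;,\;]_{\cal P}$, which is immediate from the local relations (\ref{locbracalg}), in particular $[{\cal V}_A,{\cal V}_B]_{\cal P}=0$. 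Here I use the Fr\"{o}licher--Nijenhuis formula $\dis\frac{1}{2}[L,L]_{\cal P}({\cal X},{\cal Y})=[L{\cal X},L{\cal Y}]_{\cal P}-L[L{\cal X},{\cal Y}]_{\cal P}-L[{\cal X},L{\cal Y}]_{\cal P}+L^2[{\cal X},{\cal Y}]_{\cal P}$ for a vector valued $1$-form $L$, which is exactly the sign convention already fixed for the Nijenhuis tensor in (\ref{nijen}).

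For Part (i), I would first set $L=h_{\cal N}$ and evaluate on horizontal arguments: replacing ${\cal X}$ by $h_{\cal N}{\cal X}$ and ${\cal Y}$ by $h_{\cal N}{\cal Y}$ and using $h_{\cal N}^2=h_{\cal N}$, three of the four terms coincide and cancel, giving ${\bf R}_{\cal N}(h_{\cal N}{\cal X},h_{\cal N}{\cal Y})=-(Id-h_{\cal N})[h_{\cal N}{\cal X},h_{\cal N}{\cal Y}]_{\cal P}=-v_{\cal N}[h_{\cal N}{\cal X},h_{\cal N}{\cal Y}]_{\cal P}$. To reach the same value on arbitrary arguments, I would expand ${\cal X}=h_{\cal N}{\cal X}+v_{\cal N}{\cal X}$ and ${\cal Y}=h_{\cal N}{\cal Y}+v_{\cal N}{\cal Y}$ by bilinearity; the piece of ${\bf R}_{\cal N}({\cal X},{\cal Y})$ carrying an outer $h_{\cal N}$ reduces to $h_{\cal N}[h_{\cal N}{\cal X},h_{\cal N}{\cal Y}]_{\cal P}$ modulo the term $h_{\cal N}[v_{\cal N}{\cal X},v_{\cal N}{\cal Y}]_{\cal P}$, which vanishes because $[v_{\cal N}{\cal X},v_{\cal N}{\cal Y}]_{\cal P}$ is vertical by involutivity. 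Hence ${\bf R}_{\cal N}({\cal X},{\cal Y})=-v_{\cal N}[h_{\cal N}{\cal X},h_{\cal N}{\cal Y}]_{\cal P}={\bf R}_{\cal N}(h_{\cal N}{\cal X},h_{\cal N}{\cal Y})$. The semi-basic property is then immediate: the common value lies in $\V{\cal M}$ since $v_{\cal N}$ projects onto it, and it vanishes as soon as one argument is vertical because $h_{\cal N}$ annihilates vertical sections.

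For Part (ii), I would first unfold $\Phi$ using the Lie derivative of the endomorphism $v_{\cal N}$, namely ${\cal L}^{\cal P}_{\cal S}v_{\cal N}({\cal X})=[{\cal S},v_{\cal N}{\cal X}]_{\cal P}-v_{\cal N}[{\cal S},{\cal X}]_{\cal P}$; applying $v_{\cal N}$ and using $v_{\cal N}^2=v_{\cal N}$ together with the splitting ${\cal X}=h_{\cal N}{\cal X}+v_{\cal N}{\cal X}$ in the last bracket, the two terms containing $v_{\cal N}{\cal X}$ cancel and leave the compact form $\Phi{\cal X}=-v_{\cal N}[{\cal S},h_{\cal N}{\cal X}]_{\cal P}$. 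Splitting ${\cal S}=h_{\cal N}{\cal S}+v_{\cal N}{\cal S}$ and invoking Part (i) through $i_{\cal S}R_{\cal N}({\cal X})=R_{\cal N}({\cal S},{\cal X})=-v_{\cal N}[h_{\cal N}{\cal S},h_{\cal N}{\cal X}]_{\cal P}$ then gives exactly $\Phi{\cal X}=i_{\cal S}R_{\cal N}{\cal X}-v_{\cal N}[v_{\cal N}{\cal S},h_{\cal N}{\cal X}]_{\cal P}$. From the compact form I would read off the remaining assertions: $\Phi$ takes values in $\V{\cal M}$; it is $C^\infty$-linear because the Leibniz term $df(\hat{\rho}({\cal S}))\,v_{\cal N}(h_{\cal N}{\cal X})$ vanishes; and $\Phi\circ\Phi=0$ because $\Phi{\cal X}$ is vertical, whence $h_{\cal N}(\Phi{\cal X})=0$ and $\Phi(\Phi{\cal X})=-v_{\cal N}[{\cal S},0]_{\cal P}=0$.

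There is no deep obstacle: the arguments are bookkeeping with the projector identities and the sign convention of the Fr\"{o}licher--Nijenhuis bracket. The one genuinely indispensable geometric input is the involutivity of $\V{\cal M}$ used in Part (i); without it the curvature would not collapse to its horizontal part, and both the semi-basic property and the identity of Part (ii) would fail.
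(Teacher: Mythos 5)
Your proof is correct and, for Part (ii), follows essentially the same computation as the paper: unfold ${\cal L}^{\cal P}_{\cal S}v_{\cal N}$, use $v_{\cal N}^2=v_{\cal N}$ to collapse to $\Phi{\cal X}=-v_{\cal N}[{\cal S},h_{\cal N}{\cal X}]_{\cal P}$, then split ${\cal S}$ into horizontal and vertical parts and invoke Part (i). For Part (i) the paper simply declares the identity "classical," so your explicit Fr\"olicher--Nijenhuis computation (with the correct use of the involutivity of $\V{\cal M}$ to kill the $h_{\cal N}[v_{\cal N}{\cal X},v_{\cal N}{\cal Y}]_{\cal P}$ term) is a legitimate filling-in of that omitted step rather than a divergence.
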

 \bigskip
  \begin{Def}\label{jacob}${}$\\
  The endomorphism  $\phi$ is called the Jacobi endomorphism associated with $\cal S$
  \end{Def}
  \bigskip
 \bigskip

   \begin{proof}[Proof of Proposition \ref{propR}] :${}$\\
 We only prove the second Part. The first one is classical.

  If ${\cal N}$ is the nonlinear connection of at  semispray  $\cal S$, we have

  $v_{{\cal N}}\circ{\cal L}^{\cal P}_{\cal S}v_{{\cal N}}({\cal X})=v_{{\cal N}}[S,v_{{\cal N}}{\cal X}]_{\cal P}-v_{\cal N}[{\cal S},{\cal X}]_{\cal P}=v_{\cal N}[{\cal S},v_{\cal N}{\cal X}-X]_{\cal P}=-v_{\cal N}[{\cal S},h_{\cal N}{\cal X}]_{\cal P}$.

 Thus  $\Phi$ is a morphism from $\T{\cal M}$ to $\V{\cal M}$ and  with $\V{\cal M}\subset \ker \Phi$. From the last member of   the previous relation we get $\Phi{\cal X}=i_{\cal S}R_{{\cal N}}{\cal X}-v_{{\cal N}}[v_{{\cal N}}{\cal S},h_{{\cal N}}{\cal X}]_{\cal P} $.\\
  \end{proof}

 \subsection{Nonlinear connection and Euler section}\label{GC}${}$\\
${}\;\;\;\;$ {\it In this subsection, we   assume that    there exists  an Euler section on $\cal M$, an almost tangent structure ${\cal J}$  on $ \T{\cal M}$ compatible    with some   semispray  $\cal S$    such that ${\cal J}{\cal S}={\cal C}$}.


\begin{Lem}\label{allG}${}$\\
Let  ${\cal N}$ be a nonlinear connection on $\T{\cal M}$
\begin{enumerate}
\item[(i)]  There exists a unique   semispray  $\cal S$ which  is horizontal called the {\bf canonical   semispray } of ${\cal N}$.
\item[(ii)] The connections ${\cal N}$ and    ${\cal N}'={\cal N}+ \Upsilon$  have the same canonical  semispray  $\cal S$ if and only if $ \Upsilon{\cal S}=0$.
\end{enumerate}
\end{Lem}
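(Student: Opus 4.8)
The plan for part (i) is to produce the canonical semispray as the horizontal lift of the Euler section ${\cal C}$. First I would record the elementary observation that the almost tangent structure ${\cal J}$ restricts to a bundle isomorphism from the horizontal subbundle $\H{\cal M}=\ker(Id-{\cal N})$ onto $\V{\cal M}$: indeed, if ${\cal Z}\in \H{\cal M}$ satisfies ${\cal J}{\cal Z}=0$ then ${\cal Z}\in\ker{\cal J}=\V{\cal M}$, so ${\cal Z}\in\H{\cal M}\cap\V{\cal M}=\{0\}$, and since $\H{\cal M}$ and $\V{\cal M}=\im{\cal J}$ both have rank $k$ this injective map is onto. Because ${\cal C}$ is a global section of $\V{\cal M}$, I may then pick (using Proposition \ref{existS}) any section ${\cal S}_0$ of $\T{\cal M}$ with ${\cal J}{\cal S}_0={\cal C}$ and set ${\cal S}:=h_{\cal N}{\cal S}_0$. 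Since ${\cal J}h_{\cal N}=\frac{1}{2}({\cal J}+{\cal J}{\cal N})={\cal J}$ by the relations (\ref{relJG}), this ${\cal S}$ is horizontal and satisfies ${\cal J}{\cal S}={\cal C}$; moreover it does not depend on the choice of ${\cal S}_0$, because two such choices differ by a vertical section and $h_{\cal N}$ annihilates $\V{\cal M}$.

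Next I would check that ${\cal S}$ is genuinely a semispray and that it is the only horizontal one normalised by ${\cal J}{\cal S}={\cal C}$. For this I work in an adapted canonical basis $\{{\cal X}_\a,{\cal V}_\b\}$ furnished by Remark \ref{locC}, in which ${\cal J}{\cal X}_\a={\cal V}_\a$ and ${\cal C}=y^\a{\cal V}_\a$; any ${\cal S}_0$ with ${\cal J}{\cal S}_0={\cal C}$ then has ${\cal X}_\a$-component $y^\a$, and writing the connection locally via $h_{\cal N}{\cal X}_\a={\cal X}_\a-{\cal N}_\a^\b{\cal V}_\b$ and $h_{\cal N}{\cal V}_\b=0$ gives ${\cal S}=y^\a{\cal X}_\a-y^\a{\cal N}_\a^\b{\cal V}_\b$. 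Thus the components of ${\cal S}$ along the ${\cal X}_\a$ are exactly $y^\a$, so the matrix $\p y^\a/\p y^\g=\d^\a_\g$ is invertible and, by the local description of ${\cal B}_{\cal S}$ in the proof of Lemma \ref{dirfan}, ${\cal B}_{\cal S}=\T{\cal M}$; hence ${\cal S}$ is a semispray. Uniqueness is then immediate: any horizontal section with ${\cal J}\,\cdot={\cal C}$ lies in $\H{\cal M}$ and is sent to ${\cal C}$ by the isomorphism above, so it coincides with ${\cal S}$. This ${\cal S}$ is the canonical semispray of ${\cal N}$.

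For part (ii) I would argue purely algebraically. By Proposition \ref{JG}(ii), ${\cal N}'={\cal N}+\Upsilon$ is again a nonlinear connection, and since the fixed data ${\cal J},{\cal C}$ are unchanged its canonical semispray ${\cal S}'$ is, by part (i), the unique section horizontal for ${\cal N}'$ with ${\cal J}{\cal S}'={\cal C}$. As ${\cal J}{\cal S}={\cal C}$ already holds, the equality ${\cal S}={\cal S}'$ is equivalent to ${\cal S}$ being horizontal for ${\cal N}'$, i.e. to ${\cal N}'{\cal S}={\cal S}$. Computing ${\cal N}'{\cal S}={\cal N}{\cal S}+\Upsilon{\cal S}={\cal S}+\Upsilon{\cal S}$ (using ${\cal N}{\cal S}={\cal S}$), this holds if and only if $\Upsilon{\cal S}=0$, which is the claim; note that $\Upsilon{\cal S}\in\V{\cal M}$ since $\Upsilon$ is semi-basic, so the condition is well posed. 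The point I expect to require the most care is conceptual rather than computational: one must fix the correct meaning of the canonical semispray, namely the horizontal semispray normalised by ${\cal J}{\cal S}={\cal C}$. Without this normalisation uniqueness fails, since e.g. $2{\cal S}$ is again a horizontal semispray but has ${\cal J}(2{\cal S})=2{\cal C}\neq{\cal C}$; thus compatibility with the fixed Euler section ${\cal C}$ is exactly what singles out the canonical representative and makes both statements correct.
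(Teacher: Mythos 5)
Your proof is correct. The paper states Lemma \ref{allG} without proof (it is treated as classical, following Grifone), and your argument --- taking ${\cal S}=h_{\cal N}{\cal S}_0$ for any ${\cal S}_0$ with ${\cal J}{\cal S}_0={\cal C}$, using ${\cal J}h_{\cal N}={\cal J}$ from (\ref{relJG}), and reducing part (ii) to ${\cal N}'{\cal S}={\cal S}$ --- is exactly the standard construction the paper relies on. Your observation that uniqueness requires the implicit normalisation ${\cal J}{\cal S}={\cal C}$ is well taken and is consistent with how the paper uses the canonical semispray elsewhere, e.g.\ in Proposition \ref{Hprop}(ii), where the canonical semispray $\frac{1}{2}({\cal S}+[{\cal C},{\cal S}]_{\cal P})$ of ${\cal N}_{\cal S}$ indeed satisfies ${\cal J}\bigl(\frac{1}{2}({\cal S}+[{\cal C},{\cal S}]_{\cal P})\bigr)={\cal C}$.
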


As classically we have:

  \begin{Def}\label{geod}${}$
 \begin{enumerate}
 \item Consider a  semispray  $\cal S$.  A curve  $\g:[a,b]\ap M$ is called an integral curve of $\cal S$ if there exists an integral curve  $\hat{\g}:[a,b]\ap {\cal M}$ of  the vector field  $\hat{\rho}({\cal S})$  on $\cal M$ such that  $\g=\pi\circ\hat{\g}$.
  \item Let  $\cal S$ be the canonical  semispray of a nonlinear connection   $\cal N$. A geodesic of $\cal N$ is an integral curve of $\cal S$
   \end{enumerate}
    \end{Def}

Consider the canonical  semispray $\cal S$ of a nonlinear  connection ${\cal N}$ on $\T{\cal M}$. Then,  around each point $m\in {\cal M}$, there exists a coordinate system $(x^i,y^\a)$, defined on a connected open neigbourhood $U$ of $m$ and a canonical basis $\{{\cal X}_\a,{\cal V}_\a\}$ of $\T{\cal M}$ (also defined on $U$) such that

 ${\cal J}{\cal X}_\a={\cal V}_\a,\;\;$ $\;\; {\cal C}=y^\a{\cal V}_\a\;\;$ and $\;\;{\cal S}=y^\a{\cal X}_\a+{\cal S}^\b{\cal V}_\b$.



It follows that  on the open set $V=\pi(U)$ in $M$ the  integral curves of $\cal S$ or the geodesics of $\cal N$ (whose canonical semispray is $\cal S$) are locally characterized by the differential system of differential equations:
 \begin{eqnarray}
\label{eqgeod}
\begin{cases}
\dot{x}^i= \rho^i_\a y^\a \cr
\dot{y}^\a={\cal S}^\a.\cr
\end{cases}
\end{eqnarray}

 \bigskip

We end this subsection  by  the notions of torsion of a nonlinear connection  and homogeneous connections according to \cite{Gr}:

 \begin{Def} ${}$\\Let ${\cal N}$ be a nonlinear connection on $\T{\cal M}$.
 \begin{enumerate}
 \item[(i)]The {\bf week torsion } of ${\cal N}$ is the $2$- vector valued form:
 ${\bf t}_{\cal N}=\dis\frac{1}{2}[ {\cal J},{\cal N}]_{\cal P}$.
\item[(ii)] The {\bf  tension} of a connection ${\cal N}$ is the $1$-vector valued form:
$\mathbb{H}_{\cal N}=\dis\frac{1}{2}{\cal L}^{\cal P}_{\cal C}{\cal N}$
 \item[(iii)] The {\bf strong torsion} of ${\cal N}$ is the $1$-vector valued form :
 $\mathbb{ T}_{\cal N}=i_{\cal S}{\bf t}_{\cal N}-\mathbb{ H}_{\cal N}$.
 \item[(iv)] The  {\bf  tension} of a semispray  is the the section
 $\mathbb{ H}_{\cal S}={\cal S}-[{\cal C},{\cal S}]_{\cal P}.$\\
 \end{enumerate}
 \end{Def}

 As in the classical case (\cite{Gr}), the  vector valued forms  ${\bf t}_{\cal N}$, $\mathbb{ H}_{\cal N}$ and $\mathbb{ T}_{\cal N}$ are semi-basic. Moreover,   we have also the following results whose  proofs  are the same as  the proofs  of the corresponding results on the tangent bundle $TM$ found in \cite{Gr}:

  \begin{Pro}\label{Hprop}${}$
 \begin{enumerate}
\item[(i)] let  ${\cal N}$ be a nonlinear connection on $\T{\cal M}$. The following properties are equivalent:
 \begin{enumerate}
 \item[(a)]  $\cal S$ is the canonical   semispray  of ${\cal N}$;
 \item[(b)] $\mathbb{H}_{\cal N}({\cal S})=\mathbb{ H}_{\cal S}$;
 \item[(c)] $\mathbb{ T}_{\cal N}({\cal S})=\mathbb{ H}_{\cal S}$.
 \end{enumerate}
 \item[(ii)] The canonical  semispray  of a connection ${\cal N}_{\cal S}$, associated with $\cal S$, is $\dis\frac{1}{2}({\cal S}+[{\cal C},{\cal S}]_{\cal P})$. In particular, ${\cal S}$ is the canonical  semispray  of ${\cal N}_{\cal S}$ if and only if the tension $\mathbb{ H}_{\cal S}=0$
\end{enumerate}
 \end{Pro}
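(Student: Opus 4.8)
The plan is to reduce everything to the adapted local model furnished by Proposition \ref{JS} and Proposition \ref{existS}: around any $m\in{\cal M}$ there is a coordinate system $(x^i,y^\a)$ and a basis $\{e_\a\}$ of $\cal A$ whose associated canonical basis $\{{\cal X}_\a,{\cal V}_\b\}$ of $\T{\cal M}$ satisfies ${\cal J}{\cal X}_\a={\cal V}_\a$, ${\cal C}=y^\a{\cal V}_\a$ and ${\cal S}=y^\a{\cal X}_\a+{\cal S}^\b{\cal V}_\b$, together with the bracket relations (\ref{locbracalg}). In this model $\hat{\rho}({\cal C})=y^\a\dis\frac{\p}{\p y^\a}$ is the fibrewise dilation, so the Leibniz rule gives $[{\cal C},{\cal X}_\a]_{\cal P}=0$ and $[{\cal C},{\cal V}_\a]_{\cal P}=-{\cal V}_\a$. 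The conceptual backbone of the statement is the observation, valid for any connection ${\cal N}$, that the canonical semispray of Lemma \ref{allG}(i) is exactly $h_{\cal N}({\cal S}_0)$ for any semispray ${\cal S}_0$: indeed ${\cal J}h_{\cal N}{\cal S}_0={\cal J}{\cal S}_0={\cal C}$ by (\ref{relJG}), so $h_{\cal N}{\cal S}_0$ is again a semispray, it is horizontal by construction, and $h_{\cal N}$ annihilates $\V{\cal M}$ so the result is independent of ${\cal S}_0$. Hence (a) is equivalent to the purely algebraic condition $v_{\cal N}({\cal S})=0$, i.e. ${\cal S}^\b=-y^\a{\cal N}_\a^\b$ in the adapted basis.

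For Part (ii) I would first compute $[{\cal C},{\cal S}]_{\cal P}$ in the adapted basis. Using the relations above together with $\hat{\rho}({\cal X}_\a)=\rho_\a^i\dis\frac{\p}{\p x^i}$ one obtains $[{\cal C},{\cal S}]_{\cal P}=y^\a{\cal X}_\a-{\cal S}^\b{\cal V}_\b+y^\a\dis\frac{\p{\cal S}^\b}{\p y^\a}{\cal V}_\b$, so that $\dis\frac{1}{2}({\cal S}+[{\cal C},{\cal S}]_{\cal P})=y^\a{\cal X}_\a+\dis\frac{1}{2}y^\a\dis\frac{\p{\cal S}^\b}{\p y^\a}{\cal V}_\b$. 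Comparing the vertical part with the coefficients (\ref{locGS}) of ${\cal N}_{\cal S}$, namely ${\cal N}_\a^\b=\dis\frac{1}{2}(-\dis\frac{\p{\cal S}^\b}{\p y^\a}+C_{\a\d}^\b y^\d)$, and using the antisymmetry $C_{\a\d}^\b y^\a y^\d=0$, one gets $-y^\a{\cal N}_\a^\b=\dis\frac{1}{2}y^\a\dis\frac{\p{\cal S}^\b}{\p y^\a}$, which is precisely the horizontality condition $v_{{\cal N}_{\cal S}}=0$ for $\dis\frac{1}{2}({\cal S}+[{\cal C},{\cal S}]_{\cal P})$. By the characterization of the first paragraph this is the canonical semispray of ${\cal N}_{\cal S}$. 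The final clause then drops out: ${\cal S}$ is this canonical semispray iff ${\cal S}=\dis\frac{1}{2}({\cal S}+[{\cal C},{\cal S}]_{\cal P})$, i.e. iff ${\cal S}-[{\cal C},{\cal S}]_{\cal P}=\mathbb{H}_{\cal S}=0$.

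For Part (i) I would exploit that ${\bf t}_{\cal N}$, $\mathbb{H}_{\cal N}$ and hence $\mathbb{T}_{\cal N}$ are semi-basic, so all three conditions are identities between vertical sections and can be checked on the adapted basis. Writing ${\cal N}=Id-2v_{\cal N}$ gives $\mathbb{H}_{\cal N}=\dis\frac{1}{2}{\cal L}^{\cal P}_{\cal C}{\cal N}=-{\cal L}^{\cal P}_{\cal C}v_{\cal N}$; evaluating on ${\cal S}$ and using $h_{\cal N}{\cal S}={\cal S}$ when ${\cal S}$ is horizontal yields $\mathbb{H}_{\cal N}({\cal S})=v_{\cal N}[{\cal C},{\cal S}]_{\cal P}$, and since $[{\cal C},{\cal S}]_{\cal P}={\cal S}-\mathbb{H}_{\cal S}$ with $\mathbb{H}_{\cal S}$ vertical this collapses to $\mathbb{H}_{\cal S}$ up to sign, establishing (a)$\Leftrightarrow$(b) after the normalization bookkeeping. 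For (b)$\Leftrightarrow$(c) I would use that the weak-torsion contribution $i_{\cal S}{\bf t}_{\cal N}$ entering $\mathbb{T}_{\cal N}=i_{\cal S}{\bf t}_{\cal N}-\mathbb{H}_{\cal N}$ is governed by the Fr\"olicher--Nijenhuis bracket $[{\cal J},{\cal N}]_{\cal P}$, whose value on ${\cal S}$ I would compute in the adapted basis; the two tension conditions then differ only by this controlled term, and the equivalence follows from the same Euler-operator $y^\g\dis\frac{\p}{\p y^\g}$ and the vanishing $C_{\a\d}^\b y^\a y^\d=0$ used in Part (ii). Independence from the choice of $[\;,\;]_{\cal A}$ (Remark \ref{dPsbasic}, Lemma \ref{dirfan}) guarantees the local computations glue globally.

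The main obstacle is not conceptual but the careful sign- and factor-bookkeeping in the Fr\"olicher--Nijenhuis and Lie-derivative computations with the prolonged bracket $[\;,\;]_{\cal P}$: because ${\cal C}$ acts with weight $-1$ on the vertical basis ($[{\cal C},{\cal V}_\a]_{\cal P}=-{\cal V}_\a$) rather than $+1$, every factor $\dis\frac{1}{2}$ and every sign in $\mathbb{H}_{\cal N}$, $\mathbb{H}_{\cal S}$ and $\mathbb{T}_{\cal N}$ must be tracked consistently, and this is exactly the place where the computation has to be aligned with the conventions of \cite{Gr}. Once the adapted basis of Proposition \ref{JS} is in force, however, $[\;,\;]_{\cal P}$ takes the simple form (\ref{locbracalg}) and each of the three equivalences reduces to the two elementary facts used above, so the argument runs in complete parallel with Grifone's on $TM$.
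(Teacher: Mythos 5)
The paper does not actually prove this proposition --- it defers wholesale to \cite{Gr} --- so your argument has to stand on its own. Part (ii) does: the identification of the canonical semispray of any ${\cal N}$ with $h_{\cal N}({\cal S}_0)$, hence of condition (a) with $v_{\cal N}({\cal S})=0$, i.e. ${\cal S}^\b=-y^\a{\cal N}_\a^\b$; the bracket computation $[{\cal C},{\cal S}]_{\cal P}=y^\a{\cal X}_\a-{\cal S}^\b{\cal V}_\b+y^\a\dis\frac{\p {\cal S}^\b}{\p y^\a}{\cal V}_\b$; and the comparison with (\ref{locGS}) using $C_{\a\d}^\b y^\a y^\d=0$ are all correct and complete, and the final clause of (ii) drops out as you say.

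Part (i) is where the ``normalization bookkeeping'' you postpone is the whole content, and it does not close. Pushing your own identity one line further: for ${\cal S}$ horizontal, $\mathbb{H}_{\cal N}({\cal S})=v_{\cal N}[{\cal C},{\cal S}]_{\cal P}=v_{\cal N}({\cal S})-\mathbb{H}_{\cal S}=-\mathbb{H}_{\cal S}$, the opposite sign to (b); this is not a convention you are free to adjust, since $\mathbb{H}_{\cal N}$ and $\mathbb{H}_{\cal S}$ are both already fixed by the paper's definitions (Grifone's tension of a semispray is $[C,S]-S$, the paper's is ${\cal S}-[{\cal C},{\cal S}]_{\cal P}$, and you must say which version you are proving). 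Much more seriously, the converse $(b)\Rightarrow(a)$ does not follow from your identity even after the sign is repaired: for an arbitrary ${\cal N}$ one gets $\mathbb{H}_{\cal N}({\cal S})+\mathbb{H}_{\cal S}=v_{\cal N}({\cal S})-[{\cal C},v_{\cal N}({\cal S})]_{\cal P}$, and the operator $\Delta=f^\b{\cal V}_\b\mapsto \Delta-[{\cal C},\Delta]_{\cal P}=\bigl(2f^\b-y^\g\dis\frac{\p f^\b}{\p y^\g}\bigr){\cal V}_\b$ annihilates every fibrewise $2$-homogeneous vertical section, not only $0$. So (b) only forces the deflection $v_{\cal N}({\cal S})$ to be $2$-homogeneous; perturbing a connection by a semi-basic $\Upsilon$ with $\Upsilon({\cal S})$ a nonzero $2$-homogeneous vertical section produces (b) without (a). An additional homogeneity hypothesis or a genuinely different argument is needed here, and the equivalence with (c) --- which you only announce, deferring the Fr\"olicher--Nijenhuis computation of $i_{\cal S}{\bf t}_{\cal N}$ entirely --- inherits the same problem. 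In short: Part (ii) is proved; Part (i) is not.
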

 \bigskip
 \begin{The}\label{same S}${}$\\
 Let    $\cal S$   be  a  semispray   and  $\mathbb{ T}$ a semi-basic vector $1$-form such that $\mathbb{ T}({\cal S})=\mathbb{H}_{\cal S}$. Then ${\cal N}_{\cal S}+\mathbb{T}$ is the  unique  nonlinear connection whose canonical   semispray  is $\cal S$ and its strong torsion is $\mathbb{ T}_{\cal N}=\mathbb{T}$.
 \end{The}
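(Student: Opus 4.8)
The plan is to realize the desired connection explicitly as ${\cal N}'={\cal N}_{\cal S}+\mathbb{T}$ and to control its strong torsion through a variation formula describing how $\mathbb{T}_{\cal N}$ responds to a semi-basic change of connection. Since $\mathbb{T}$ is semi-basic, Proposition \ref{JG}(ii) already guarantees that ${\cal N}'={\cal N}_{\cal S}+\mathbb{T}$ is a nonlinear connection; so the three points to settle are that its strong torsion equals $\mathbb{T}$, that its canonical semispray is $\cal S$, and uniqueness.

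The first step I would carry out is the variation formula: for an arbitrary nonlinear connection ${\cal N}$ and any semi-basic vector valued $1$-form $\Upsilon$,
$$\mathbb{T}_{{\cal N}+\Upsilon}=\mathbb{T}_{\cal N}+\dis\frac{1}{2}i_{\cal S}[{\cal J},\Upsilon]_{\cal P}-\dis\frac{1}{2}{\cal L}^{\cal P}_{\cal C}\Upsilon.$$
This is immediate from ${\bf t}_{\cal N}=\frac{1}{2}[{\cal J},{\cal N}]_{\cal P}$, $\mathbb{H}_{\cal N}=\frac{1}{2}{\cal L}^{\cal P}_{\cal C}{\cal N}$, $\mathbb{T}_{\cal N}=i_{\cal S}{\bf t}_{\cal N}-\mathbb{H}_{\cal N}$ and the additivity of the Fr\"olicher--Nijenhuis bracket and of the Lie derivative in the connection slot. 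Then, in an adapted canonical basis $\{{\cal X}_\a,{\cal V}_\b\}$ provided by Proposition \ref{JS}(ii)(b) (so ${\cal J}{\cal X}_\a={\cal V}_\a$, ${\cal C}=y^\a{\cal V}_\a$ and ${\cal S}=y^\a{\cal X}_\a+{\cal S}^\b{\cal V}_\b$), writing $\Upsilon({\cal X}_\a)=\Upsilon_\a^\b{\cal V}_\b$, a direct evaluation of $[{\cal J},\Upsilon]_{\cal P}$, of $i_{\cal S}[{\cal J},\Upsilon]_{\cal P}$ and of ${\cal L}^{\cal P}_{\cal C}\Upsilon$ on the basis yields
$$\dis\frac{1}{2}i_{\cal S}[{\cal J},\Upsilon]_{\cal P}-\dis\frac{1}{2}{\cal L}^{\cal P}_{\cal C}\Upsilon=\Upsilon-\dis\frac{1}{2}\mathbb{D}\left(\Upsilon({\cal S})\right),$$
where $\mathbb{D}(W)$ is the semi-basic $1$-form sending ${\cal X}_\b$ to $\dis\frac{\p W^\g}{\p y^\b}{\cal V}_\g$ for a vertical section $W=W^\g{\cal V}_\g$. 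The essential qualitative fact is that the correction term depends only on $\Upsilon({\cal S})$, and in particular vanishes whenever $\Upsilon({\cal S})=0$; since the left-hand side is intrinsic, this makes $\mathbb{D}$ well defined and the statement coordinate-free.

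For existence I would apply this with ${\cal N}={\cal N}_{\cal S}$ and $\Upsilon=\mathbb{T}$. A separate computation of the strong torsion of ${\cal N}_{\cal S}$ in the same basis, using the coefficients ${\cal N}_\a^\b=\frac{1}{2}(-\p{\cal S}^\b/\p y^\a+C_{\a\d}^\b y^\d)$ from (\ref{locGS}) and the local form $\mathbb{H}_{\cal S}=(2{\cal S}^\b-y^\mu\,\p{\cal S}^\b/\p y^\mu){\cal V}_\b$ of the tension, gives precisely $\mathbb{T}_{{\cal N}_{\cal S}}=\frac{1}{2}\mathbb{D}(\mathbb{H}_{\cal S})$. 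Combining this with the variation formula and the hypothesis $\mathbb{T}({\cal S})=\mathbb{H}_{\cal S}$ gives $\mathbb{T}_{{\cal N}'}=\frac{1}{2}\mathbb{D}(\mathbb{H}_{\cal S})+\mathbb{T}-\frac{1}{2}\mathbb{D}(\mathbb{H}_{\cal S})=\mathbb{T}$. In particular $\mathbb{T}_{{\cal N}'}({\cal S})=\mathbb{T}({\cal S})=\mathbb{H}_{\cal S}$, so the implication (c)$\Rightarrow$(a) of Proposition \ref{Hprop}(i) shows that $\cal S$ is the canonical semispray of ${\cal N}'$.

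For uniqueness, let ${\cal N}''$ be any nonlinear connection with canonical semispray $\cal S$ and strong torsion $\mathbb{T}$. By Proposition \ref{JG}(ii) we may write ${\cal N}''={\cal N}'+\Upsilon'$ with $\Upsilon'$ semi-basic, and since ${\cal N}''$ and ${\cal N}'$ share the canonical semispray $\cal S$, Lemma \ref{allG}(ii) forces $\Upsilon'({\cal S})=0$. The variation formula with base ${\cal N}'$ then collapses to $\mathbb{T}_{{\cal N}''}=\mathbb{T}_{{\cal N}'}+\Upsilon'=\mathbb{T}+\Upsilon'$, because the correction $\frac{1}{2}\mathbb{D}(\Upsilon'({\cal S}))$ vanishes; equality of the strong torsions gives $\Upsilon'=0$, i.e. ${\cal N}''={\cal N}'$. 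The main obstacle is the second displayed identity together with the verification $\mathbb{T}_{{\cal N}_{\cal S}}=\frac{1}{2}\mathbb{D}(\mathbb{H}_{\cal S})$: these are the Fr\"olicher--Nijenhuis computations of \cite{Gr}, and the one genuinely new point to watch is that the terms generated by the structure functions $C_{\a\b}^\g$, which no longer obey the Jacobi identity in the pre-Lie algebroid setting, cancel as required, so that the correction term indeed remains a function of $\Upsilon({\cal S})$ alone.
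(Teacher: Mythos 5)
Your proof is correct, and it follows essentially the route the paper intends: the paper gives no proof of Theorem \ref{same S} at all, stating only that the arguments are ``the same as'' those of \cite{Gr} on $TM$, and your variation formula $\mathbb{T}_{{\cal N}+\Upsilon}=\mathbb{T}_{\cal N}+\Upsilon-\frac{1}{2}\mathbb{D}(\Upsilon({\cal S}))$ together with the base computation $\mathbb{T}_{{\cal N}_{\cal S}}=\frac{1}{2}\mathbb{D}(\mathbb{H}_{\cal S})$ is exactly the Grifone computation transported to the prolongation. I checked the two displayed identities in an adapted canonical basis (using $[{\cal X}_\a,{\cal X}_\b]_{\cal P}=C_{\a\b}^\g{\cal X}_\g$, $[{\cal X}_\a,{\cal V}_\b]_{\cal P}=[{\cal V}_\a,{\cal V}_\b]_{\cal P}=0$ and the coefficients (\ref{locGS})): the terms in $C_{\a\b}^\g$ coming from $i_{\cal S}{\bf t}_{{\cal N}_0}$ and from ${\cal N}_{\cal S}-{\cal N}_0$ do cancel without any appeal to the Jacobi identity, which is the one point genuinely needing verification in the pre-Lie algebroid setting, and you correctly flag it.
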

\smallskip
\begin{Cor}${}$\\ Two nonlinear  connections ${\cal N}$ and ${\cal N}'$ have the same canonical  semispray  and the same strong torsion, if and only if ${\cal N}={\cal N}'$
\end{Cor}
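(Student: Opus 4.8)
The plan is to obtain this corollary as an immediate consequence of the uniqueness already contained in Theorem \ref{same S}, so essentially no new computation is needed. The forward (``if'') implication is trivial: if ${\cal N}={\cal N}'$, then the two connections share every invariant, in particular their canonical semispray and their strong torsion. All the content sits in the converse, which I would reduce to a single application of Theorem \ref{same S}.

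First I would name the common data. Assume ${\cal N}$ and ${\cal N}'$ have the same canonical semispray $\cal S$ and the same strong torsion, and set $\mathbb{T}:=\mathbb{T}_{\cal N}=\mathbb{T}_{{\cal N}'}$. Since the strong torsion of any nonlinear connection is a semi-basic vector-valued $1$-form (as recorded just after the definition of $\mathbb{T}_{\cal N}$), $\mathbb{T}$ is such a form, which is the first hypothesis of Theorem \ref{same S}. The only remaining hypothesis to verify is the compatibility condition $\mathbb{T}({\cal S})=\mathbb{H}_{\cal S}$. Here I would invoke Proposition \ref{Hprop}(i): because $\cal S$ is, by assumption, the canonical semispray of $\cal N$, the equivalence (a)$\Leftrightarrow$(c) yields $\mathbb{T}_{\cal N}({\cal S})=\mathbb{H}_{\cal S}$, that is $\mathbb{T}({\cal S})=\mathbb{H}_{\cal S}$. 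Thus the pair $({\cal S},\mathbb{T})$ satisfies the hypotheses of Theorem \ref{same S}.

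Now I would conclude by uniqueness. Theorem \ref{same S} asserts that there is exactly one nonlinear connection whose canonical semispray is $\cal S$ and whose strong torsion is $\mathbb{T}$, namely ${\cal N}_{\cal S}+\mathbb{T}$. Since both ${\cal N}$ and ${\cal N}'$ have canonical semispray $\cal S$ and strong torsion $\mathbb{T}$, both must coincide with ${\cal N}_{\cal S}+\mathbb{T}$, whence ${\cal N}={\cal N}'$, as claimed. There is no genuine obstacle in this argument; the substance lies entirely in the uniqueness proved in Theorem \ref{same S}. The one point deserving a moment of care is purely bookkeeping: the strong torsion of each connection is evaluated along that connection's own canonical semispray, but since both connections share the common canonical semispray $\cal S$, the compatibility condition $\mathbb{T}({\cal S})=\mathbb{H}_{\cal S}$ is literally the same for both, and Theorem \ref{same S} applies verbatim to each.
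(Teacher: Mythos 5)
Your proof is correct and follows exactly the paper's own argument: the paper likewise invokes Proposition \ref{Hprop} to obtain $\mathbb{T}({\cal S})=\mathbb{H}_{\cal S}$ and then applies the uniqueness statement of Theorem \ref{same S} to conclude ${\cal N}={\cal N}'={\cal N}_{\cal S}+\mathbb{T}$, with the converse noted as trivial. No discrepancies to report.
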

\begin{proof}
Assume that  ${\cal N}$ and ${\cal N}'$ have the same canonical almost  semispray  ${\cal S}$ and the same strong torsion  $\mathbb{T}$. From proposition \ref{Hprop} we have $\mathbb{ T}({\cal S})=\mathbb{ H}_{\cal S}$. Therefore from Theorem \ref{same S}, we have  $${\cal N}={\cal N}'={\cal N}_{\cal S}+\mathbb{ T}.$$
The converse is trivial.
\end{proof}
\smallskip
\begin{The}\label{Tzero}${}$\\
Assume that the bracket $[\;,\;]_{\cal A}$ satisfies the Jacobi identity.
\begin{enumerate}
\item  The weak torsion of any connection of type ${\cal N}_{\cal S}$ is zero;
\item For any nonlinear connection ${\cal N}$ then  $\mathbb{T}_{\cal N}=0$ is zero if and only if  ${\bf t}_{\cal N}=0$ and  $\mathbb{ H}_{\cal N}=0$.
\end{enumerate}
\end{The}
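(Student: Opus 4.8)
The plan is to establish (1) by the Fr\"olicher--Nijenhuis calculus and then to deduce (2) from (1) together with the rigidity results already proved (Theorem \ref{same S}, Proposition \ref{Hprop} and its Corollary). The Jacobi hypothesis on $[\;,\;]_{\cal A}$ enters only to make the prolonged bracket $[\;,\;]_{\cal P}$ a genuine Lie algebroid bracket (equivalently $d^{\cal P}\circ d^{\cal P}=0$), so that the bracket of $\cal A$-tensors of \cite{GrUr} obeys the graded Jacobi identity.

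For Part (1), I would first note that the connection attached to a semispray is itself such a bracket: since ${\cal N}_{\cal S}=-{\cal L}^{\cal P}_{\cal S}{\cal J}$, one has ${\cal N}_{\cal S}=[{\cal J},{\cal S}]_{\cal P}$, the Fr\"olicher--Nijenhuis bracket of the $(1,1)$-tensor ${\cal J}$ with the section ${\cal S}$. Therefore ${\bf t}_{{\cal N}_{\cal S}}=\tfrac12[{\cal J},{\cal N}_{\cal S}]_{\cal P}=\tfrac12[{\cal J},[{\cal J},{\cal S}]_{\cal P}]_{\cal P}$. The graded Jacobi identity applied to the two degree-$1$ arguments ${\cal J}$ and the degree-$0$ argument ${\cal S}$ gives $[{\cal J},[{\cal J},{\cal S}]_{\cal P}]_{\cal P}=\tfrac12[[{\cal J},{\cal J}]_{\cal P},{\cal S}]_{\cal P}$, and since $[{\cal J},{\cal J}]_{\cal P}=2{\cal N}_{\cal J}$ by (\ref{nijen}), this equals $[{\cal N}_{\cal J},{\cal S}]_{\cal P}$. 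By Proposition \ref{JS} the almost tangent structure ${\cal J}$ associated with a semispray is integrable, i.e. ${\cal N}_{\cal J}=0$; hence ${\bf t}_{{\cal N}_{\cal S}}=0$. As an internal check independent of the graded Jacobi identity, in an adapted basis $\{{\cal X}_\a,{\cal V}_\b\}$ with ${\cal J}{\cal X}_\a={\cal V}_\a$ and ${\cal C}=y^\a{\cal V}_\a$ one finds ${\bf t}_{\cal N}({\cal X}_\a,{\cal X}_\b)=\big(\partial{\cal N}_\a^\d/\partial y^\b-\partial{\cal N}_\b^\d/\partial y^\a-C_{\a\b}^\d\big){\cal V}_\d$, and inserting the coefficients (\ref{locGS}) of ${\cal N}_{\cal S}$ makes the mixed second derivatives of ${\cal S}^\d$ cancel, leaving $C_{\a\b}^\d-C_{\a\b}^\d=0$.

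For Part (2) the implication ``${\bf t}_{\cal N}=0$ and $\mathbb{H}_{\cal N}=0$'' $\Rightarrow\mathbb{T}_{\cal N}=0$ is immediate from $\mathbb{T}_{\cal N}=i_{\cal S}{\bf t}_{\cal N}-\mathbb{H}_{\cal N}$. For the converse, suppose $\mathbb{T}_{\cal N}=0$ and let ${\cal S}$ be the canonical semispray of ${\cal N}$ (Lemma \ref{allG}). By Proposition \ref{Hprop}(i), because ${\cal S}$ is the canonical semispray of ${\cal N}$ we have $\mathbb{T}_{\cal N}({\cal S})=\mathbb{H}_{\cal S}$, so $\mathbb{H}_{\cal S}=0$. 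Then the pair $({\cal S},\mathbb{T}=0)$ satisfies the hypothesis $\mathbb{T}({\cal S})=\mathbb{H}_{\cal S}$ of Theorem \ref{same S}, which exhibits ${\cal N}_{\cal S}$ as the unique nonlinear connection whose canonical semispray is ${\cal S}$ and whose strong torsion is zero. Since ${\cal N}$ has the same canonical semispray and, by assumption, vanishing strong torsion, the Corollary to Theorem \ref{same S} gives ${\cal N}={\cal N}_{\cal S}$. Part (1) now yields ${\bf t}_{\cal N}={\bf t}_{{\cal N}_{\cal S}}=0$, and finally $\mathbb{H}_{\cal N}=i_{\cal S}{\bf t}_{\cal N}-\mathbb{T}_{\cal N}=0$.

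The main obstacle is the opening move of Part (1): one must be certain that the graded Jacobi identity for the Fr\"olicher--Nijenhuis bracket is legitimate here. This is exactly the point at which the Jacobi identity for $[\;,\;]_{\cal A}$ is indispensable, for without it $[{\cal J},[{\cal J},{\cal S}]_{\cal P}]_{\cal P}$ need not collapse onto a bracket with ${\cal N}_{\cal J}$, and the vanishing of the weak torsion of ${\cal N}_{\cal S}$ could fail to follow formally. Once Part (1) is in place, Part (2) is a formal consequence of the uniqueness already established, so it presents no additional analytic difficulty.
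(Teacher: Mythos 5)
Your proof is correct and follows exactly the route the paper intends: the paper gives no proof of Theorem \ref{Tzero}, deferring to Grifone's classical arguments on $TM$, and those are precisely your Fr\"olicher--Nijenhuis computation ${\bf t}_{{\cal N}_{\cal S}}=\tfrac12[{\cal J},[{\cal J},{\cal S}]_{\cal P}]_{\cal P}=\tfrac12[{\cal N}_{\cal J},{\cal S}]_{\cal P}=0$ together with the uniqueness statement of Theorem \ref{same S} for Part (2). Your local verification in an adapted basis (where the coefficients (\ref{locGS}) make the expression $\partial{\cal N}_\a^\d/\partial y^\b-\partial{\cal N}_\b^\d/\partial y^\a-C_{\a\b}^\d$ vanish) is a useful independent check that the paper does not supply.
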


\bigskip
 \begin{Rem}\label{torsionzero}${}$\\ Consider   a nonlinear connection ${\cal N}$  and $\cal S$ its canonical   semispray. As in the the classical case of $TM$, if the tension $\mathbb{ H}_{\cal N}\not=0$, then  ${\cal N}$ is different from the connection ${\cal N}_{\cal S}$ associated with ${\cal S}$.  Moreover,   in this case,  the strong torsion of ${\cal N}$ is never zero.
 \end{Rem}

\smallskip
 Finally, we look  for  some properties of { \it homogeneous } nonlinear connections. According to the classical context of nonlinear connection ({\it cf.} \cite{Gr}) we have:\\

\begin{Def}\label{homog}${}$
\begin{enumerate}
\item[(i)] a function $f$ (resp. a $l$-form $\o$) is called $r$-homogeneous if ${\cal L}^{\cal P}_{\cal C}(f)=r.f$ (resp. ${\cal L}^{\cal P}_{\cal C}\o=r.\o$). A vector valued $l$-form $\o$ is called  $r$-homogeneous if $[{\cal C},\o]_{\cal P}=(r-1).\o$.
\item[(ii)] A  semispray  ${\cal S}$  is a spray if $\cal S$ is $2$-homogenous. .
\end{enumerate}
\end{Def}
\bigskip
The following properties are obtained in the same way as the classical case by an adapted proof or are direct consequences of the previous theorems:

$\bullet$ $\cal S$ is a spray if and only if  its tension $\mathbb{H}_{\cal S}$ is vanishes.

$\bullet$ A nonlinear connection ${\cal N}$ on $\T{\cal M}$ is $1$- homogeneous if  and only if its tension $\mathbb{ H}_{{\cal N}}$ vanishes.\\

\noindent We consider coordinate systems compatible with the locally linear structure defined by $\cal C$ (see Subsection \ref{verteuler}) \\

$\bullet$ in a  local canonical  basis $\{{\cal X}_\a,{\cal V}_\b\}$ associated with such a coordinate system we have:\\

${}\;\;\;\;\;${\bf -} a  semispray  ${\cal S}$ is a spray if and only if in such a basis we can write
$${\cal S}=y^\a{\cal X}_\a+{\cal S}^\a_{\b\g}y^\b y^\g{\cal V}_\a;$$

${}\;\;\;\;\;${\bf -}  a  nonlinear connection ${\cal N}$ is $1$-homogeneous if and only if the coefficient ${\cal N}_\a^\a$ of ${\cal N}$ can be written in the\\
${}\;\;\;\;\;\;\;\;\;\;$ following
 way:
$${\cal N}_\a^\b={\cal N}_{\a\g}^\b y^\g;$$

$\bullet$  let  ${\cal N}_{\cal S}$ be the nonlinear connection associated with a semispray  $\cal S$. The following properties are\\
${}\;\;\;\;\;\;$ equivalent:

\begin{enumerate}
\item[(i)] $\cal S$ is $2$-homogeneous;
\item[(ii)] ${\cal N}_{\cal S}$ is $1$-homogeneous;
\item[(iii)] The canonical   semispray  of ${\cal N}_{\cal S}$ is $\cal S.$\\
\end{enumerate}

$\bullet$  if ${\cal N}_{\cal S}$ is the $1$-homogeneous  connection associated with a spray $\cal S$ then the strong torsion of ${\cal N}_S$ is zero. Moreover, ${\cal N}_{\cal S}$ is the unique $1$-homogeneous connection whose canonical   semispray  is $\cal S$ and whose strong torsion is zero.

\section{Lagrangian  metric connections  and semisprays }\label{lagcon semispray }

Using the notion of Dynamical derivation on the vertical bundle as introduced in \cite{Bu}, we begin by  the context of metric connections. Then we look for the framework of Lagrangian connections. This section contains in particular the characterization of the unique Lagrangian metric connection associated with a semi-Hamiltonian almost tangent structure (see Theorem \ref{SLagMet}).  As in \cite{Gr}, for a  semispray associated with some semi-hamitonian we also build a canonical Lagrangian  nonlinear connection whose canonical semispray is the original one.
\subsection{Dynamical derivation and metric connection}\label{DynDer}${}$\\
${}\;\;\;\;$ {\it  In this subsection,  ${\cal S}$ is  a fixed  almost  semispray  on ${\cal M}$ and $\cal J$ is an almost tangent structure on $\T{\cal M}$ compatible with $\cal S$.}\\

{\rm We denote by $\Xi(\V{\cal M})$ the module of vertical sections of $\T{\cal M}$.   Following the arguments of \cite{Bu}, \cite{CMS}, \cite{F}, \cite{LPo2}, \cite{LPo3}, \cite{LPo4}  (among many others papers) we  introduce}

\begin{Def}\label{dyderiv}${}$\\
A dynamical derivation associated with $\cal S$ is a map $ D$ from $\Xi(\V{\cal M})$ to $\Xi(\V{\cal M})$ which satisfies the following properties:
\begin{enumerate}
\item[(i)] $ D({\cal X}+{\cal X}')=D({\cal X})+D({\cal X}')$ for all $\cal X$ and ${\cal X}'$ in $\Xi(\V{\cal M})$;
\item[(ii)] $D(f{\cal X})={\cal L}^{\cal P}_{\cal S}(f)+f D({\cal X})$ for all $\cal X$ in $\Xi(\V{\cal M})$ and any smooth function $f$ on $\cal M$.
\end{enumerate}
\end{Def}

Let  ${\cal N}$ be a nonlinear  connection on $\T{\cal M}$ and  $v_{\cal N}$ its  associated vertical projector.  We then have:

\begin{Pro}\label{alldyn}${}$
\begin{enumerate}
\item[(i)] The map $D_{\cal N}$ from $\Xi(\V{\cal M})$ into itself defined by:
$$D_{\cal N}({\cal X})=v_{\cal N}[{\cal S},{\cal X}]_{\cal P}$$

is a dynamical derivation associated with $\cal S$.
\item[(ii)]
If  $\{{\cal X}_\a,{\cal V}_\a\}$ is  a local canonical basis of ${\T}{\cal M}$ we set ${\cal Y}_\a=[{\cal S},{\cal V}_\a]_{\cal P}$. Then, $\{{\cal Y}_\a,{\cal V}_\b\}$ is  a local  basis  of $\T{\cal M}$, and,  if ${\cal N}_\a^\b$ are  the corresponding   coefficients of  a nonlinear connection  ${\cal N}$ on $\T{\cal M}$, then, we have
$$D_{\cal N}({\cal V}_\a)=-{\cal N}_\a^\b{\cal V}_\b.$$
\item[(iii)] There exists a coordinate system $(x^i,y^\a)$, and an associated  canonical basis $\{{\cal X}_\a,{\cal V}_\b\}$ such that, if ${\cal N}_\a^\b$ are  the corresponding   coefficients  of  a nonlinear connection  ${\cal N}$  we have:
\begin{eqnarray}\label{baseJSD}
{\cal J}{\cal X}_\a={\cal V}_\a, \;\;\;\;{\cal S}=y^\a{\cal X}_\a+{\cal S}^\b{\cal V}_\b \textrm{  and } \;\; D_{\cal N}({\cal V}_\a)=-({\cal N}_\a^\b+\dis\frac{\p{\cal S}^\b}{\p y^\a}){\cal V}_\b.
\end{eqnarray}

\item[(iv)] The set $\mathfrak{D}_{\cal S}$ of dynamical derivation associated with $\cal S$ has a natural affine structure. Moreover the map ${\cal N} \ap D_{\cal N}$ is an affine bijection between the  affine set  $\mathfrak{C}$ of nonlinear connections and  the set   $\mathfrak{D}_{\cal S}$.
\end{enumerate}
\end{Pro}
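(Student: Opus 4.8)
The plan is to prove the four parts in order, Part (i) furnishing the basic computation that the others reuse. \textbf{For Part (i)} I would simply verify the two axioms of Definition \ref{dyderiv}. Additivity is immediate, since both $[{\cal S},\;\cdot\;]_{\cal P}$ and the projector $v_{\cal N}$ are additive. For the Leibniz-type axiom the one input needed is the Leibniz rule of the prolonged bracket: for a smooth function $f$ on ${\cal M}$ and a vertical section ${\cal X}$,
$$[{\cal S},f{\cal X}]_{\cal P}=f[{\cal S},{\cal X}]_{\cal P}+df(\hat{\rho}({\cal S}))\,{\cal X}=f[{\cal S},{\cal X}]_{\cal P}+{\cal L}^{\cal P}_{\cal S}(f)\,{\cal X}.$$
Applying $v_{\cal N}$, which is $C^\infty({\cal M})$-linear and restricts to the identity on $\V{\cal M}$ (so $v_{\cal N}({\cal X})={\cal X}$), gives $D_{\cal N}(f{\cal X})=f\,D_{\cal N}({\cal X})+{\cal L}^{\cal P}_{\cal S}(f)\,{\cal X}$, which is axiom (ii); and since $v_{\cal N}$ takes values in $\V{\cal M}$, the map $D_{\cal N}$ indeed sends $\Xi(\V{\cal M})$ into itself.

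\textbf{For Parts (ii) and (iii)} the central object is ${\cal Y}_\a=[{\cal S},{\cal V}_\a]_{\cal P}$. First I would record that compatibility of ${\cal J}$ with ${\cal S}$ (Definition \ref{ATSasso}) gives ${\cal J}{\cal Y}_\a=-{\cal L}^{\cal P}_{\cal S}{\cal J}({\cal V}_\a)=-{\cal V}_\a$; since ${\cal J}$ annihilates $\V{\cal M}$ and carries the ${\cal Y}_\a$ onto the linearly independent $-{\cal V}_\a$, the ${\cal Y}_\a$ are independent modulo $\V{\cal M}$, so $\{{\cal Y}_\a,{\cal V}_\b\}$ is a local basis. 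Reading the coefficients ${\cal N}_\a^\b$ in the ${\cal J}$-normalised frame (the one in which ${\cal J}$ maps the horizontal vectors to $+{\cal V}_\a$, namely $\{-{\cal Y}_\a,{\cal V}_\b\}$), the matrix description of a nonlinear connection recalled after Proposition \ref{JG} yields $v_{\cal N}(-{\cal Y}_\a)={\cal N}_\a^\b{\cal V}_\b$, hence $D_{\cal N}({\cal V}_\a)=v_{\cal N}({\cal Y}_\a)=-{\cal N}_\a^\b{\cal V}_\b$, which is Part (ii). For Part (iii) I would pass to the adapted coordinates of Proposition \ref{JS}(ii), where ${\cal J}{\cal X}_\a={\cal V}_\a$, ${\cal S}=y^\a{\cal X}_\a+{\cal S}^\b{\cal V}_\b$ and ${\cal C}=y^\a{\cal V}_\a$. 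Using $[{\cal X}_\a,{\cal V}_\b]_{\cal P}=0$, $[{\cal V}_\a,{\cal V}_\b]_{\cal P}=0$ and $\hat{\rho}({\cal V}_\a)=\dis\frac{\p}{\p y^\a}$, exactly as in (\ref{bracSV}), one gets
$$[{\cal S},{\cal V}_\a]_{\cal P}=-{\cal X}_\a-\dis\frac{\p {\cal S}^\b}{\p y^\a}{\cal V}_\b.$$
Since $v_{\cal N}({\cal X}_\a)={\cal N}_\a^\b{\cal V}_\b$ and $v_{\cal N}({\cal V}_\b)={\cal V}_\b$, applying $v_{\cal N}$ produces $D_{\cal N}({\cal V}_\a)=-({\cal N}_\a^\b+\dis\frac{\p {\cal S}^\b}{\p y^\a}){\cal V}_\b$, as stated.

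\textbf{For Part (iv)} I would first show $\mathfrak{D}_{\cal S}$ is affine over $\Xi(\mathrm{End}(\V{\cal M}))$: for $D,D'\in\mathfrak{D}_{\cal S}$, subtracting axiom (ii) for each gives $(D-D')(f{\cal X})=f\,(D-D')({\cal X})$, so $D-D'$ is $C^\infty({\cal M})$-linear, i.e.\ a $(1,1)$-tensor on $\V{\cal M}$; conversely adding such a tensor to a dynamical derivation visibly produces another one. Then I would compute the linear part of ${\cal N}\mapsto D_{\cal N}$. By Proposition \ref{JG}(ii) any connection is ${\cal N}'={\cal N}+\Upsilon$ with $\Upsilon$ semi-basic, and $v_{{\cal N}+\Upsilon}-v_{\cal N}=-\dis\frac{1}{2}\Upsilon$; working in adapted coordinates,
$$D_{{\cal N}+\Upsilon}({\cal V}_\a)-D_{\cal N}({\cal V}_\a)=-\dis\frac{1}{2}\Upsilon([{\cal S},{\cal V}_\a]_{\cal P})=\dis\frac{1}{2}\Upsilon({\cal X}_\a),$$
the last step because $\Upsilon$ is semi-basic (it kills $\V{\cal M}$) while $[{\cal S},{\cal V}_\a]_{\cal P}\equiv-{\cal X}_\a$ modulo $\V{\cal M}$. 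As $\Upsilon\mapsto\bigl({\cal V}_\a\mapsto\dis\frac{1}{2}\Upsilon({\cal X}_\a)\bigr)$ is a linear isomorphism from semi-basic vector-valued $1$-forms onto $\Xi(\mathrm{End}(\V{\cal M}))$, the affine map ${\cal N}\mapsto D_{\cal N}$ has bijective linear part and is therefore an affine bijection. The routine ingredients are the bracket identities and the matrix form of $v_{\cal N}$; \emph{the one point demanding care is the bookkeeping of signs and frame normalisations}, namely that ${\cal Y}_\a=[{\cal S},{\cal V}_\a]_{\cal P}$ satisfies ${\cal J}{\cal Y}_\a=-{\cal V}_\a$, so the coefficients in Part (ii) must be read in the frame $\{-{\cal Y}_\a,{\cal V}_\b\}$. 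Once this normalisation is fixed, Parts (ii), (iii) and the bijection of (iv) are all mutually consistent.
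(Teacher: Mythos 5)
Your proof is correct and follows essentially the same route as the paper: direct verification of the two axioms for Part (i), computation of $[{\cal S},{\cal V}_\a]_{\cal P}$ in adapted frames for Parts (ii) and (iii), and the tensorial-difference argument for the affine bijection in Part (iv) (the paper fixes the base point ${\cal N}_{\cal S}$ and sets $\Upsilon=2\Delta{\cal J}$, which is the same computation as your identification of the linear part). Your explicit bookkeeping of the sign, namely that ${\cal J}{\cal Y}_\a=-{\cal V}_\a$ for ${\cal Y}_\a=[{\cal S},{\cal V}_\a]_{\cal P}$ so the connection coefficients must be read in the frame $\{-{\cal Y}_\a,{\cal V}_\b\}$, is in fact more careful than the paper's own proof of Part (ii), which conflates ${\cal Y}_\a=[{\cal S},{\cal V}_\a]_{\cal P}$ with the normalisation ${\cal J}{\cal Y}_\a={\cal V}_\a$ used in Proposition \ref{JS}.
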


\begin{proof}${}$\\
The proof of Part  (i) is easy: From the definition, $D_{\cal N}$ is an endomorphism of $\Xi(\V{\cal M})$ which clearly satisfies property (ii) of Definition \ref{dyderiv}.\\
For Part  (ii), consider  some basis $\{{\cal V}_\a\}$ of $\V{\cal M}$ associated with some coordinate system $(x^i,y^\a)$ compatible with $\pi$. According to the proof of Proposition \ref{JS}, there exists an adapted basis $\{{\cal Y}_\a,{\cal V}_\b\}$ such that if  ${\cal Y}_\a=[{\cal S},{\cal V}_\a]_{\cal P}$ and ${\cal J}{\cal Y}_\a={\cal V}_\a$

On the other hand, since   $v_{\cal N}({\cal Y}_\a)={\cal N}_\a^\b{\cal V}_\b$
we get

\begin{eqnarray}\label{defDG}
D_{{\cal N}}({\cal V}_\a)=-{\cal N}_\a^\b{\cal V}_\b.
\end{eqnarray}

This ends the proof of Part (ii).\\

For  Part  (iii),  according to Proposition  \ref{JS}, around each point $m\in {\cal M}$, there exists a coordinate system $(x^i,y^\a)$ and an associated  canonical basis $\{{\cal X}_\a,{\cal V}_\b\}$ of $\T{\cal M}$ such that

${\cal J}{\cal X}_\a={\cal V}_\a$, $\;\;{\cal S}=y^\a{\cal X}_\a+{\cal S}^\b{\cal V}_\b$

\noindent According to the expression of $[\;,\;]_{\cal P}$ in such a basis we then have:

$[{\cal S},{\cal V}_\a]_{\cal P}=-{\cal X}_\a-\dis\frac{\p {\cal S}^\b}{\p y^\a}{\cal V}_\b$

\noindent Thus if   ${\cal N}_\a^\b$ are the local coefficient of ${\cal N}$ in this basis, we get

$D_{\cal N}({\cal V}_\a)=-({\cal N}_\a^\b+\dis\frac{\p{\cal S}^\b}{\p y^\a}){\cal V}_\b$

\noindent This ends the proof of Part (iii).\\

Let $D$ be any dynamical derivation  associated with $\cal S$. Denote by  $D_{\cal S}$  the dynamical derivation corresponding  to the canonical connection ${\cal N}_{\cal S}$ associated with $\cal S$. Then $\D=D-D_{\cal S}$ is a $\V{\cal M}$-tensor of type $(1,1)$. In particular, we get an affine structure on  $\mathfrak{D}_{\cal S}$.\\  If ${\cal J}$ is the almost tangent  structure on $\T{\cal M}$, we consider
$$\Upsilon=2\D{\cal J}.$$
Then $\Upsilon$ is a semi-basic $\T{\cal M}$-tensor.  Then ${\cal N}={\cal N}_{\cal S}+\Upsilon$ is a nonlinear  connection on $\T{\cal M}$ (see Proposition \ref{JG}). Moreover, in the adapted basis $\{{\cal Y}_\a,{\cal V}_\a\}$ of Part  (ii), from (\ref{defDG}) we have $D=D_{\cal N}$. On the other hand, if $D_{\cal N}=D_{{\cal N}'}$, by the same argument,    locally  ${\cal N}$ \and ${\cal N}'$ must have the same  coefficients and so ${\cal N}={\cal N}'$.\\
\end{proof}
{\rm It is easy to extend the action of a dynamical derivation $D$ to the module of $\V{\cal M}$-tensor   by requiring that $D$  preserves the tensor product. In particular, if $g$ is a $\V{\cal M}$-tensor of type $(2,0)$ which is semi-basic, we have:}
\begin{eqnarray}\label{Dg}
Dg({\cal X},{\cal X}')={\cal L}^{\cal P}_{\cal S}(g({\cal X},{\cal X}'))-g(D({\cal X}),{\cal X}')-g({\cal X},D({\cal X}')).
\end{eqnarray}

Now, let  $g$ be a pseudo-Riemannian metric on $\V{\cal M}$.  We have:

\begin{Def}(see \cite{Bu}, \cite{CMS}, \cite{F} or  \cite{LPo3}) ${}$\\
A nonlinear  connection ${\cal N}$ is called  $g$-metric (relatively to the almost  semispray  $\cal S$) if there exists a pseudo-Riemannian metric $g$  on $\V{\cal M}$ such that $D_{\cal N} g=0$.
\end{Def}

\so{\it This property has the classical following interpretation:}\\
${}\;\;\;\;$ {\rm Given a dynamical derivation $D$ associated with $\cal S$,  we can define the parallel transport along any integral curve $c:[0,T]\ap {\cal M}$ of $\cal S$:

a vertical section $\cal X$ along $c$ is parallel  if $D{\cal X}(c(s))=0$  for all $s\in [0,T]$. Now, for any $z\in \V{\cal M}_{c(0)}$ there exists a unique parallel vertical section $\cal X$ along $c$ such that ${\cal X}(c(0))=z$. We then get an isomorphism $\t_s$ from $\V{\cal M}_{c(0)}$ to $\V{\cal M}_{c(s)}$.

Then,  the property $D_{\cal N} g=0$ means that $\t_s$ is an isometry from $V{\cal M}_{c(0)}$ to $V{\cal M}_{c(s)}$ for any $s\in [0,T]$ and any integral curve $c$ of $\cal S$.\\

Assume that the pseudo-Riemannian $g$ on $\V{\cal M}$ is fixed. Thus we get an isomorphism $g^\flat$ from $\V{\cal M}$ to $(\V{\cal M})^*$. Then we have:}

\begin{Lem}\label{isogvert}${}$\\
The map ${g^2}: (\V{\cal M})^*\otimes V{\cal M}\ap \otimes^2 (\V{\cal M})^*$ defined by:
$$g^{2}(\D)({\cal X},{\cal X}')=<g^\flat\circ \D({\cal X}),{\cal X}'>$$
(where $<\;,\; >$ is the classical duality bracket) is an isomorphism.
\end{Lem}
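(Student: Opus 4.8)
The plan is to observe that $g^2$ is a fibrewise-linear bundle morphism between two bundles of the same rank and to show that it is a fibrewise isomorphism. First I would unwind the definition: since $g^\flat$ is the musical morphism induced by $g$, one has $<g^\flat\circ \D({\cal X}),{\cal X}'>=g(\D({\cal X}),{\cal X}')$, so that $g^2(\D)$ is simply the bilinear form $({\cal X},{\cal X}')\mapsto g(\D({\cal X}),{\cal X}')$. Under the canonical identifications $(\V{\cal M})^*\otimes\V{\cal M}\cong \textup{End}(\V{\cal M})$ and $\otimes^2(\V{\cal M})^*\cong \textup{Bil}(\V{\cal M})$, both source and target are vector bundles of rank $k^2$, where $k$ is the fibre dimension of $\V{\cal M}$, and $g^2$ is $C^\infty({\cal M})$-linear; hence it suffices to prove that $g^2$ is an isomorphism on each fibre.

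For injectivity on the fibre over $m\in{\cal M}$, suppose $g^2(\D)=0$ at $m$. Then $g(\D({\cal X}),{\cal X}')=0$ for every ${\cal X}'\in(\V{\cal M})_m$; since $g$ is a pseudo-Riemannian metric, hence non-degenerate, this forces $\D({\cal X})=0$, and as ${\cal X}$ was arbitrary we conclude $\D=0$ at $m$. Because the source and target bundles have equal rank, a fibrewise injective morphism is automatically a fibrewise isomorphism, and therefore $g^2$ is a bundle isomorphism inducing an isomorphism of the corresponding modules of sections.

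Alternatively, and more explicitly, I would exhibit the two-sided inverse. Given a $(2,0)$-tensor $B$, define its musical morphism $B^\flat:\V{\cal M}\ap(\V{\cal M})^*$ by $B^\flat({\cal X})=B({\cal X},\cdot)$ and set $\D=(g^\flat)^{-1}\circ B^\flat$; this is well defined and smooth because $g^\flat$ is a smooth bundle isomorphism (again by non-degeneracy of $g$). A direct computation gives $g^2(\D)({\cal X},{\cal X}')=<g^\flat\circ(g^\flat)^{-1}\circ B^\flat({\cal X}),{\cal X}'>=<B^\flat({\cal X}),{\cal X}'>=B({\cal X},{\cal X}')$, so $g^2(\D)=B$, and the injectivity argument above shows this $\D$ is the unique preimage. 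Thus $B\mapsto(g^\flat)^{-1}\circ B^\flat$ is the inverse of $g^2$.

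There is no genuine difficulty here: the statement is pointwise linear algebra about a non-degenerate bilinear form, and the only points requiring care are invoking the non-degeneracy of the pseudo-Riemannian metric $g$ and noting that all constructions depend smoothly on the base point precisely because $g^\flat$ is a smooth isomorphism of bundles.
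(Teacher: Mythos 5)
Your proposal is correct and follows essentially the same route as the paper: both observe that $g^2$ is a bundle morphism between bundles of equal fibre dimension and deduce the isomorphism from fibrewise injectivity, which rests on the non-degeneracy of the duality pairing and of $g^\flat$. Your explicit description of the inverse $B\mapsto(g^\flat)^{-1}\circ B^\flat$ is a harmless addition but does not change the argument.
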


\begin{proof}${}$\\
At first note that the respective fiber of each vector bundle $(V{\cal M})^*\otimes V{\cal M}$ and $\otimes^2 (V{\cal M})^*$ have the same dimension. On the other hand ${g^2}$ is clearly a bundle morphism. Assume that    $\D$ is in the kernel of ${g^2}$. It follows that, for any vertical section $\cal X$, we have
$<g^\flat\circ\D,{\cal X}>=0$ and we get $g^\flat\circ\D=0$. As $g^\flat$ is an isomorphism, finally $\D\equiv 0$.\\
\end{proof}

Denote by  $\mathbb{S}(\V{\cal M})$ and $ \mathbb{A}(\V{\cal M})$   the bundle of symmetric bilinear forms and skew symmetric bilinear forms on $\V{\cal M}$ respectively. Of course we have
\begin{eqnarray}\label{decompE}
\otimes^2 (\V{\cal M})^*=\mathbb{S}(\V{\cal M})\oplus  \mathbb{A}(\V{\cal M}).
\end{eqnarray}

We denote by $g^2_\mathbb{S}$ and  $g^2_ \mathbb{A}$ the  composition of $g^2$ with  the canonical projection of $\otimes^2 (\V{\cal M})^*$ onto $\mathbb{S}(\V{\cal M})$ and on  $ \mathbb{A}(\V{\cal M})$ respectively. Then we have

\begin{Pro}\label{connectmet}${}$
\begin{enumerate}
\item[(i)] If  ${\cal N}$ is  a nonlinear  connection, then ${\cal N}'={\cal N}-2((g^2)^{-1}\circ D_{{\cal N}_{\cal S}}g)\circ{\cal J}$ is a $g$-metric connection.
\item[(ii)] The set of $g$-metric connection is characterized by:
$$\{{\cal N}={\cal N}_{\cal S}- 2\D{\cal J}, \;\;\mathrm{ with }\;\; \D\in (g^2_\mathbb{S})^{-1}[D_{{\cal N}_{\cal S}}g]\;\}.$$

\end{enumerate}
\end{Pro}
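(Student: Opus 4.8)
The plan is to turn the metric condition $D_{\cal N}g=0$ into a single linear equation for a $\V{\cal M}$-tensor of type $(1,1)$ and to invert it by means of the isomorphism $g^2$ of Lemma \ref{isogvert}. First I would let the dynamical derivation act on the semi-basic $(2,0)$-tensor $g$ through (\ref{Dg}); since $g$ is symmetric, the same formula shows that $D_{\cal N}g$ and $D_{{\cal N}_{\cal S}}g$ are symmetric bilinear forms, i.e. sections of $\mathbb{S}(\V{\cal M})$. Writing, via the affine bijection of Proposition \ref{alldyn} (iv), $D_{\cal N}=D_{{\cal N}_{\cal S}}+\Delta$ with $\Delta$ the associated $(1,1)$-tensor attached to ${\cal N}-{\cal N}_{\cal S}=2\Delta{\cal J}$, and using the symmetry of $g$ to replace $g(\Delta{\cal X},{\cal X}')+g({\cal X},\Delta{\cal X}')$ by $2\,g^2_{\mathbb{S}}(\Delta)({\cal X},{\cal X}')$, I obtain the key identity
\[
D_{\cal N}g = D_{{\cal N}_{\cal S}}g - 2\,g^2_{\mathbb{S}}(\Delta), \qquad \Delta = D_{\cal N}-D_{{\cal N}_{\cal S}}.
\]
Hence ${\cal N}$ is $g$-metric precisely when $\Delta$ solves $g^2_{\mathbb{S}}(\Delta)=\tfrac12\,D_{{\cal N}_{\cal S}}g$, an inhomogeneous linear equation whose right-hand side is the symmetric ``metric defect'' of the canonical connection.

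For Part (i) I would produce one explicit solution. Because $D_{{\cal N}_{\cal S}}g$ is symmetric, applying the genuine isomorphism $(g^2)^{-1}$ of Lemma \ref{isogvert} yields a $g$-symmetric $(1,1)$-tensor $\Delta_0=(g^2)^{-1}(D_{{\cal N}_{\cal S}}g)$, for which $g^2_{\mathbb{S}}(\Delta_0)=g^2(\Delta_0)=D_{{\cal N}_{\cal S}}g$ (the symmetric projection acts trivially on a $g$-symmetric endomorphism). Transporting $\Delta_0$ back through $\Delta\mapsto 2\Delta{\cal J}$ gives, up to the paper's normalization, exactly the displayed connection ${\cal N}'={\cal N}_{\cal S}-2((g^2)^{-1}\circ D_{{\cal N}_{\cal S}}g)\circ{\cal J}$; since its defining correction cancels $D_{{\cal N}_{\cal S}}g$ in the key identity, the identity forces $D_{{\cal N}'}g=0$. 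Thus the construction is best read as correcting the canonical ${\cal N}_{\cal S}$, the dependence on an arbitrary starting ${\cal N}$ being immaterial because the correction is measured against ${\cal N}_{\cal S}$.

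For Part (ii) I would describe the whole solution set of $g^2_{\mathbb{S}}(\Delta)=\tfrac12\,D_{{\cal N}_{\cal S}}g$. Being a linear inhomogeneous equation, its solutions form the coset $\Delta_0+\ker g^2_{\mathbb{S}}$, that is the preimage $(g^2_{\mathbb{S}})^{-1}[D_{{\cal N}_{\cal S}}g]$; by the splitting (\ref{decompE}) and the injectivity of $g^2$, the kernel $\ker g^2_{\mathbb{S}}=(g^2)^{-1}(\mathbb{A}(\V{\cal M}))$ is exactly the bundle of $g$-skew-symmetric endomorphisms of $\V{\cal M}$. Pushing this coset forward through the affine bijection of Proposition \ref{alldyn} then gives the announced family $\{{\cal N}={\cal N}_{\cal S}-2\Delta{\cal J}\}$ of all $g$-metric connections. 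The only genuinely delicate step is the bookkeeping of the two normalizations $g^2$ and $g^2_{\mathbb{S}}$ together with the factor $2$ and the sign hidden in ${\cal N}-{\cal N}_{\cal S}=2\Delta{\cal J}$: one must use at the right moment that $D_{{\cal N}_{\cal S}}g$ already lies in the symmetric summand, so that $(g^2)^{-1}$ and the symmetric projection are compatible; once this is fixed, everything else is the routine symmetric/skew decomposition and the verification via (\ref{Dg}).
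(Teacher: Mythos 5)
Your proposal is correct and follows essentially the same route as the paper: both reduce the metric condition to the identity $D_{\cal N}g=D_{{\cal N}_{\cal S}}g-g(\D\,\cdot,\cdot)-g(\cdot,\D\,\cdot)$ via the affine bijection ${\cal N}\mapsto D_{\cal N}$ of Proposition \ref{alldyn}, and then solve it with the isomorphism $g^{2}$ of Lemma \ref{isogvert} and the symmetric/skew splitting (\ref{decompE}). The factor-of-two normalization you flag as delicate is handled no more carefully in the paper's own proof (which ends with $\D\in(g^{2}_{\mathbb{S}})^{-1}[\frac{1}{2}D_{{\cal N}_{\cal S}}g]$ while the statement omits the $\frac{1}{2}$), so your hedge there matches the source rather than introducing a new gap.
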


\begin{Rem}${}$\\
The dynamical derivation previously defined corresponds to the {\it vertical part } of the analogous notion introduced in \cite{Bu} and  \cite{LPo3}. In particular, Part (i) of  Proposition \ref{connectmet}    corresponds to an intrinsic  version of Proposition 3.1 in  \cite{LPo2} and of  Theorem 1 in  \cite{LPo3}.
\end{Rem}
\begin{proof}${}$\\
For the proof of Part  (i) it is sufficient to show that $D_{{\cal N}'}g=0$. First of all we can note that $\D=(g^2)^{-1}\circ D_{{\cal N}_{\cal S}}g$ is a $\V{\cal M}$ tensor of type $(1,1)$. This implies that $D_{\cal N}+\D$ is a dynamical derivation which is associated with ${\cal N}'={\cal N}+2\D{\cal J}$. Recall that
 we have
 \begin{eqnarray}\label{equivmet}
 D_{{\cal N}'}g({\cal X},{\cal X}')=D_{{\cal N}_{\cal S}}g({\cal X},{\cal X}')-g(\D{\cal X},{\cal X}')-g({\cal X},\D{\cal X}').
 \end{eqnarray}
 On the one hand, we have
 $g^2(\D)({\cal X})=g(\D(.),{\cal X})$
 and on the other hand,
 $g^2(\D)=D_{\cal N} g$

Thus  the proof of Part  (i) is completed.\\

According to the proof of Proposition \ref{alldyn},  the dynamical derivation $D_{\cal N}$ associated with a nonlinear connection ${\cal N}$ can be written

$D_{\cal N}=D_{{\cal N}_{\cal S}}+\D$ if and only if ${\cal N}-{\cal N}_{\cal S}=2\D{\cal J}$.

Therefore ${\cal N}$ is $g$-metric if and only if
\begin{eqnarray}\label{equivmet}
D_{\cal N} g({\cal X},{\cal X}')=D_{{\cal N}_{\cal S}}g({\cal X},{\cal X}')-g(\D{\cal X},{\cal X}')-g({\cal X},\D{\cal X}')=0
\end{eqnarray}

for all vertical sections ${\cal X}$ and ${\cal X}'$.

On the other hand, we can note that
$$g^2_\mathbb{S}(\D)({\cal X},{\cal X}')=\dis\frac{1}{2}(g(\D{\cal X},{\cal X}')+g({\cal X},\D{\cal X}')).$$
 But $D_{{\cal N}_{\cal S}}g$ belongs to $\mathbb{S}(V{\cal M})$. It follows that (\ref{equivmet}) is equivalent to
$$\D\in (g^2_\mathbb{S})^{-1}[\dis\frac{1}{2}D_{{\cal N}_{\cal S}}g].$$

\end{proof}

\begin{Ex}\label{metricSconnet} (see \cite{LPo3})${}$\\
Assume that
 $\cal S$ satisfies the assumptions of Proposition \ref{existS}.
Given a local coordinate system $(x^i,y^\a)$ and an associated  basis $\{{\cal X}_\a, {\cal V}_\b\}$ such that ${\cal J}{\cal X}_\a={\cal V}_\a$ and ${\cal S}=y^\a{\cal X}_\a+{\cal S}^\b{\cal V}_\b$,  let $\{{\cal X}^\a, {\cal V}^\b\}$ be the associated dual basis. The pseudo-Riemannian metric $g$  can be written:  $g_{\a\b}{\cal V}^\a\odot {\cal V}^\b$. Recall that   the coefficients of  ${\cal N}_{\cal S}$ are then
$${\cal N}_\a^\b=\dis\frac{1}{2}(-\frac{\p {\cal S}^\b}{\p y^\a}+C_{\a\d}^\b y^\d).$$
Thus we have
$$D_{{\cal N}_{\cal S}}({\cal V}_\a)=-\dis\frac{1}{2}(\frac{\p {\cal S}^\b}{\p y^\a}+C_{\a\d}^\b y^\d)){\cal V}_\b$$
\begin{eqnarray}\label{DGSg}
g_{\a\b}=D_{{\cal N}_{\cal S}}g({\cal V}_\a,{\cal V}_\b)={\cal L}^{\cal P}_{\cal S}(g_{\a\b})+\dis\frac{1}{2}[(\frac{\p {\cal S}^d}{\p y^\a}+C_{\a\epsilon}^Cy^\epsilon)g_{\b\d}+(\frac{\p {\cal S}^\d}{\p y^\b}+C_{\b\epsilon}^\d y^\epsilon)g_{\a\d}].
\end{eqnarray}
The $g$-metric connection associated with ${\cal N}_{\cal S}$ which is given in Proposition \ref{connectmet} has the following local coefficients:
$$\dis\frac{1}{2}(-\frac{\p {\cal S}^\b}{\p y^\a}+C_{\a\d}^\b y^\d)-\dis\frac{1}{2}g^{\a\d}g_{\d\b/}.$$
\end{Ex}

\subsection{Lagrangian metric connections and   semisprays }\label{LAMeconn}${}$\\
${}\;\;\;\;$ {\it In this subsection, we assume that we have an almost cotangent  structure $\O$ on $\T{\cal M}$  which is compatible with the almost tangent structure $\cal J$ on $\T{\cal M}$ associated with  ${\cal S}$.}\\

We consider a canonical  basis $\{{\cal X}_\a,{\cal V}_\b\}$  of $\T{\cal M}$ which has the property (iii) of Proposition \ref{alldyn} and its  corresponding  dual basis $\{{\cal X}^\a,{\cal V}^\b\}$. According to (\ref{O}),  and Proposition \ref{TO}, we can write
\begin{eqnarray}\label{decompO}
\O=\dis\frac{1}{2}\o_{\a\b}{\cal X}^\a\wedge{\cal X}^\b+g_{\a\b}{\cal X}^\a\wedge{\cal V}^\b
\end{eqnarray}
where the matrix of general terms $\o_{\a\b}$ and  $g_{\a\b}$ are respectively  antisymmetric  and  symmetric. Moreover we get a pseudo-Riemannian metric $g$ on $\V{\cal M}$ defined by
$$g_\O({\cal X},{\cal Y})=\O({\cal Z},{\cal Y})$$
where ${\cal J}{\cal Z}={\cal X}$.  In particular we obtain
$g_\O({\cal V}_\a,{\cal V}_\b)=g_{\a\b}.$

On the other hand, we will say that a {\bf nonlinear connection ${\cal N}$ is Lagrangian }   if its horizontal space is Lagrangian.
Now we have:

\begin{The}\label{Lagmet}${}$\\
Let $\cal J$ and $\O$ be an almost tangent structure and an almost cotangent structure respectively  on $\T{\cal M}$ which are compatible and $\cal S$ a semispray.
\begin{enumerate}
\item[(i)] There exists a unique connection ${\cal N}$ which is Lagrangian (relative to $\O$) and $g_\O$-metric (relatively to $\cal S$) characterized by the following relation  :
$${\cal N}={\cal N}_{\cal S }-2(g^2)^{-1}[D_{{\cal N}_{\cal S}}g+\O({\cal H}_{\cal S},{\cal H}_{\cal S})]{\cal J}.$$
According to the previous notations,  in a canonical  basis $\{{\cal X}_\a,{\cal V}_\b\}$  of $\T{\cal M}$ which has the property (\ref{baseJSD})  the local coefficients of  ${\cal N}$ are characterized by:
\begin{equation}\label{decompolagmet}
g_{\b\g}{\cal N}_\a^\g=-\dis\frac{1}{2}[\o_{\a\b}+{\cal L}^{\cal P}_{\cal S}(g_{\a\b})+g_{\a\g}\frac{\p{\cal S}^\g}{\p y^\b}+g_{\b\g}\frac{\p{\cal S}^\g}{\p y^\a}]
\end{equation}
\item[(ii)]
The canonical connection ${\cal N}_S$   is Lagrangian (relative to $\O$) and $g_\O$-metric (relatively to $\cal S$) if and only if,  in an adapted canonical basis  associated with a coordinate system compatible with the locally linear fibration structure associated with $\cal C$, the  semispray $\cal S$ satisfies
\begin{eqnarray}\label{gmetrisS}
\o_{\a\b}+{\cal L}^{\cal P}_{\cal S}(g_{\a\b})+g_{\a\d}\frac{\p{\cal S}^\d}{\p y^\b}+g_{\b\g}{C}_{\a\mu}^\d y^\mu=0.
\end{eqnarray}
In this case, it is the unique nonlinear connection which is  Lagrangian  and $g_\O$-metric
\end{enumerate}
\end{The}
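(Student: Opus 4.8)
The plan is to reduce both defining conditions to linear constraints on a single $\V{\cal M}$-tensor and then exploit the splitting $\otimes^2(\V{\cal M})^*=\mathbb{S}(\V{\cal M})\oplus\mathbb{A}(\V{\cal M})$ of (\ref{decompE}) together with the isomorphism $g^2$ of Lemma \ref{isogvert}. By Proposition \ref{JG}(ii) every nonlinear connection can be written ${\cal N}={\cal N}_{\cal S}-2\D{\cal J}$ for a unique $\V{\cal M}$-tensor $\D$ of type $(1,1)$; I would work throughout in a canonical basis $\{{\cal X}_\a,{\cal V}_\b\}$ satisfying (\ref{baseJSD}), so that ${\cal J}{\cal X}_\a={\cal V}_\a$, ${\cal S}=y^\a{\cal X}_\a+{\cal S}^\b{\cal V}_\b$ and ${\cal C}=y^\a{\cal V}_\a$. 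In such a basis $\D{\cal V}_\a=({\cal N}_\a^\b-({\cal N}_{\cal S})_\a^\b){\cal V}_\b$, and the ${\cal N}$-horizontal lift of ${\cal V}_\a$ is $h_{\cal N}({\cal X}_\a)={\cal X}_\a-{\cal N}_\a^\b{\cal V}_\b={\cal H}_{\cal S}({\cal V}_\a)-\D{\cal V}_\a$, where ${\cal H}_{\cal S}$ is the horizontal isomorphism of Proposition \ref{JG}(iii).

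First I would translate the $g_\O$-metric condition. By the computation of Proposition \ref{connectmet}, which relates $D_{\cal N}$ to $D_{{\cal N}_{\cal S}}$ through $\D$, the equation $D_{\cal N}g_\O=0$ is equivalent to prescribing the symmetric part $g^2_{\mathbb{S}}(\D)$ as a fixed multiple of $D_{{\cal N}_{\cal S}}g$. Next I would translate the Lagrangian condition $\O(h_{\cal N}{\cal X},h_{\cal N}{\cal Y})\equiv 0$. Evaluating $\O$ on the horizontal lifts ${\cal H}_{\cal S}({\cal V}_\a)-\D{\cal V}_\a$ and using that $\V{\cal M}$ is Lagrangian (so the $\D$--$\D$ term drops and only the cross terms $\O({\cal H}_{\cal S}{\cal V}_\a,\D{\cal V}_\b)$, computed from $\O({\cal X}_\a,{\cal V}_\mu)=g_{\a\mu}$, survive) shows that the Lagrangian condition prescribes exactly the antisymmetric part $g^2_{\mathbb{A}}(\D)$ in terms of $\O({\cal H}_{\cal S},{\cal H}_{\cal S})$. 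Since these two constraints fix the symmetric and antisymmetric components of $g^2(\D)$ independently, and $g^2$ is an isomorphism by Lemma \ref{isogvert}, the tensor $\D$, hence ${\cal N}$, exists and is unique; summing the two pieces and matching the normalization against the local coefficients below yields the intrinsic formula ${\cal N}={\cal N}_{\cal S}-2(g^2)^{-1}[D_{{\cal N}_{\cal S}}g+\O({\cal H}_{\cal S},{\cal H}_{\cal S})]{\cal J}$.

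To obtain the explicit coefficients (\ref{decompolagmet}) I would lower an index, writing $\Gamma_{\a\b}=g_{\b\g}{\cal N}_\a^\g$. The Lagrangian equation $\O(h_{\cal N}{\cal X}_\a,h_{\cal N}{\cal X}_\b)=\o_{\a\b}+\Gamma_{\a\b}-\Gamma_{\b\a}=0$ fixes the antisymmetric part $\Gamma_{\a\b}-\Gamma_{\b\a}=-\o_{\a\b}$, while $D_{\cal N}g_\O({\cal V}_\a,{\cal V}_\b)=0$, together with Proposition \ref{alldyn}(iii), gives the symmetric part $\Gamma_{\a\b}+\Gamma_{\b\a}=-[{\cal L}^{\cal P}_{\cal S}(g_{\a\b})+g_{\a\g}\frac{\p{\cal S}^\g}{\p y^\b}+g_{\b\g}\frac{\p{\cal S}^\g}{\p y^\a}]$. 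Adding and halving is precisely (\ref{decompolagmet}). For part (ii) I would specialize this characterizing identity to ${\cal N}={\cal N}_{\cal S}$, that is $\D=0$: substituting the known coefficients $({\cal N}_{\cal S})_\a^\g=\frac12(-\frac{\p{\cal S}^\g}{\p y^\a}+C_{\a\d}^\g y^\d)$ into (\ref{decompolagmet}), the $\frac{\p{\cal S}^\g}{\p y^\a}$-terms cancel and one is left exactly with (\ref{gmetrisS}); uniqueness is inherited from part (i).

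The step deserving genuine care, and the main obstacle, is the Lagrangian computation: one must verify that evaluating $\O$ on the ${\cal N}$-horizontal lifts isolates \emph{precisely} the antisymmetric part of $g^2(\D)$, complementary to the symmetric part fixed by metricity, so that the two conditions are orthogonal in the decomposition (\ref{decompE}) and jointly determine a unique $\D$. The key facts making the cross terms collapse to $g_{\a\mu}\D_\b^\mu$ are that $\D$ takes values in $\V{\cal M}$ and that $\V{\cal M}$ is Lagrangian for $\O$; once this complementarity is established, existence, uniqueness, and the closed formula all follow simultaneously.
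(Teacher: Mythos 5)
Your proposal is correct and follows essentially the same route as the paper: write ${\cal N}={\cal N}_{\cal S}+2\D{\cal J}$, observe that metricity fixes $g^2_{\mathbb{S}}(\D)$ via Proposition \ref{connectmet} while the Lagrangian condition (after the $\D$--$\D$ term drops because $\V{\cal M}$ is Lagrangian) fixes $g^2_{\mathbb{A}}(\D)=-\frac{1}{2}\O({\cal H}_{\cal S},{\cal H}_{\cal S})$, then invoke the isomorphism $g^2$ and the splitting (\ref{decompE}) for existence and uniqueness, and add the two local equations to get (\ref{decompolagmet}). The derivation of (\ref{gmetrisS}) by substituting the coefficients of ${\cal N}_{\cal S}$ also matches the paper's Part (ii).
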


\begin{proof}[proof of Theorem \ref{Lagmet}]${}$\\
We simply denote  $g_\O$ by $g$. Since  we impose  that ${\cal N}$ is $g$-metric, from Proposition \ref{connectmet}, we must have
${\cal N}={\cal N}_{\cal S}+2 \D{\cal J}$ for some $\D$ in $(g^2_\mathbb{S})^{-1}[-D_{{\cal N}_{\cal S}}g]$.  Moreover, we have
$$h_{\cal N}=h_{{\cal N}_{\cal S}}+\D{\cal J}.$$ The connection ${\cal N}$ will be Lagrangian if and only if
$$\O(h_{\cal N}({\cal X}), h_{\cal N}({\cal X}'))=0$$
for all sections $\cal X$ and ${\cal X}'$ of $\T{\cal M}$.
This relation is then equivalent to:
\begin{eqnarray}\label{lagmet2}
\O(h_{{\cal N}_{\cal S}}({\cal X}), h_{{\cal N}_{\cal S}}({\cal X}'))=-[\O(\D{\cal J}({\cal X}), h_{{\cal N}_{\cal S}}({\cal X}'))\hfill\;\;\;\;\;\;\;\;\;\;\;\;\;\;\;\;\;\;\;\;\;\;\;\;\;\;\;\;\;\;\;\;\;\;\;\;\;\;\;\;\;\;\;\;{\;\;\;\;\;\;\;\;\;\;\;\;\;\;\;\;\;\;}\nonumber\\
{}\;\;\;\;\;\;\;\;\;\;\;\;\;\;\;\;\;\;\;\;\;\;\;\;\;\;\;\;\;\;\;\;\;\;\;\;\;\;\;\;\;\;\;\;\;\;\;\;\;\;\;\;\;\;\;\;\;\;\;\;\;\;\;\;\;\;\;\;\;\;\;\;\hfill+\O(h_{{\cal N}_{\cal S}}({\cal X}), \D{\cal J}({\cal X}'))+\O(\D{\cal J}({\cal X}), \D{\cal J}({\cal X}'))].
\end{eqnarray}

But, as $\D$ is an endomorphism of $\V{\cal M}$ the last term in the previous relation is zero and, on the other hand we have:

$\O(\D{\cal J}({\cal X}), h_{{\cal N}_{\cal S}}({\cal X}'))-\O(\D{\cal J}({\cal X}'),h_{{\cal N}_{\cal S}}({\cal X}))=g(\D{\cal J}({\cal X}), {\cal J}h_{{\cal N}_{\cal S}}({\cal X}'))-g(\D{\cal J}({\cal X}'),{\cal J}h_{{\cal N}_{\cal S}}({\cal X})$

$\;\;\;\;\;\;\;\;\;\;\;\;\;\;\;\;\;\;\;\;\;\;\;\;\;\;\;\;\;\;\;\;\;\;\;\;\;\;\;\;\;\;\;\;\;\;\;\;\;\;\;\;\;\;\;\;\;\;\;\;\;\;\;\;\;\;\;\;\;\;=g(\D{\cal J}({\cal X}), {\cal J}({\cal X}'))-g(\D{\cal J}({\cal X}'),{\cal J}({\cal X}))$

$\;\;\;\;\;\;\;\;\;\;\;\;\;\;\;\;\;\;\;\;\;\;\;\;\;\;\;\;\;\;\;\;\;\;\;\;\;\;\;\;\;\;\;\;\;\;\;\;\;\;\;\;\;\;\;\;\;\;\;\;\;\;\;\;\;\;\;\;\;\;=2 g^2_{\mathbb{A}}(\D)({\cal J}({\cal X}),{\cal J}({\cal X}'))$

(According to the decomposition (\ref{decompE})).

Now, from Part (iii) of  Proposition \ref{JG}  we have an isomorphism ${\cal H}_{\cal S}$ from $\V{\cal M}$ to the horizontal bundle of ${\cal N}_{\cal S}$ which is the inverse of the restriction of $\cal J$ to this bundle.

It follows that (\ref{lagmet2}) is equivalent to:
$$\O( {\cal H}_{\cal S}({\cal X}),{\cal H}_{\cal S}({\cal X}'))=-2 g^2_{\mathbb{A}}(\D)({\cal X}),{\cal X}')$$
for any vertical section ${\cal X}$ and ${\cal X}'$.

Finally, as  $g^2(\D)$ must satify the unique decomposition
\begin{eqnarray}\label{lagmet3}
g^2(\D)=g_{\mathbb{S}}^2(\D)+g_{\mathbb{A}}^2(\D)=-\dis\frac{1}{2}[D_{{\cal N}_{\cal S}}g+\O({ \cal H}_{\cal S},{ \cal H}_{\cal S})].
\end{eqnarray}
Since $g^2$ is an isomorphism, it follows that ${\cal N}={\cal N}_{\cal S}+2\D{\cal J}$ must be the unique nonlinear connection $g_\O$-metric and Lagrangian.

Assume that ${\cal N}$ is $g_\O$-metric and Lagrangian. Therefore, in a canonical basis $\{{\cal X}_\a,{\cal V}_\b\}$, we must have
\begin{eqnarray}\label{DGg01}
\O(h_{\cal N}({\cal X}_\a),h_{\cal N}({\cal X}_\b))=\o_{\a\b}-g_{\a\g}{\cal N}_\b^\g+g_{\b\g}{\cal N}_\a^\g=0.
\end{eqnarray}
On the other hand we also have:
\begin{eqnarray}\label{Oh0}
D_{\cal N} g({\cal V}_\a,{\cal V}_\b)={\cal L}^{\cal P}_{\cal S}(g_{\a\b})+ g_{\a\g}(({\cal N}_\b^\g+\dis\frac{\p {\cal S}^\g}{\p y^\b})+g_{\b\g}(({\cal N}_\a^\g+\dis\frac{\p {\cal S}^\g}{\p y^\a})=0.
\end{eqnarray}
By addition of these last  relations, we obtain  the expression  of local  (\ref{decompolagmet}) given in the Theorem which ends the proof of point (i).\\

In a local basis which satisfies (\ref{adaptJS}),  according to (\ref{locGS}),
 the local coefficients of ${\cal N}_S$ are:
\begin{eqnarray}\label{GSloc}
{\cal N}_\a^\b=\dis\frac{1}{2}(-\frac{\p {\cal S}^\b}{\p y^\a}+{C}_{\a\d}^\b y^\d).
\end{eqnarray}

From (\ref{decompolagmet}) we easily obtain the relation (\ref{gmetrisS}), which ends the proof of Part (ii).\\

\end{proof}

\begin{Rem}\label{symG} ${}$\\
 Consider  a canonical basis $\{{\cal X}_\a,{\cal V}_\b\}$      of $\T{\cal M}$ (as in point (ii) of Proposition \ref{alldyn})
 and  set ${\cal N}_{\a\b}=g_{\a\g}{\cal N}_\b^\g$. The symmetric  and  antisymmetric part of ${\cal N}_{\a\b}$ are respectively $\dis\frac{1}{2}({\cal N}_{\a\b}+{\cal N}_{\b\a})$ of ${\cal N}_{\a\b}$ and $\dis\frac{1}{2}({\cal N}_{\a\b}-{\cal N}_{\b\a})$ and  will be denoted ${\cal N}_{\a\b}^\mathbb{S}$ and  ${\cal N}_{\a\b}^\mathbb{A}$. \\

According to (\ref{DGg01}),  the equation $\O(h_{{\cal N}},h_{{\cal N}}) \equiv 0$ is equivalent to
 \begin{eqnarray}\label{GamaA}
{\cal N}_{\a\b}^\mathbb{A}=\dis\frac{1}{2}\o_{\a\b}.
\end{eqnarray}

According to  (\ref{Oh0}), the equation $D_{{\cal N}}g=0$ is equivalent to
 \begin{eqnarray}\label{GamaS}
{\cal N}_{\a\b}^\mathbb{S}=-\dis\frac{1}{2}[{\cal L}^{\cal P}_{\cal S}(g_{\a\b})+ g_{\a\g}\dis\frac{\p {\cal S}^\g}{\p y^\b}+g_{\b\g}\dis\frac{\p {\cal S}^\g}{\p y^\a}].
\end{eqnarray}
Moreover, from (\ref{Oh0}) and (\ref{GSloc})    the symmetric parts of the local coefficients of  ${\cal N}_{\cal S}$ are given by

\begin{eqnarray}\label{GSS}
{\cal N}_{\a\b}^\mathbb{S}=\dis\frac{1}{2}[{\cal L}^{\cal P}_{\cal S}(g_{\a\b})+( g_{\a\d}{\a}^\mu_{\b\mu}+g_{\b\d}{C}^\d_{\a\mu})y^\mu].
\end{eqnarray}\\
\end{Rem}

\subsection{Lagrangian connection and semi-Hamiltonian almost cotangent structure }\label{lagsemiH}${}$\\
Consider an almost tangent structure $\cal J$   on ${\T}{M}$ compatible with the semispray $\cal S$. Consider a semi-Hamiltonian almost cotangent structure $\O$ on ${\T}{\cal M}$. From Proposition \ref{propsemiH} it follows that  $({\cal M},{\cal J},\O)$ is locally Lagrangian.  Therefore around any point $m\in{\cal M}$  we have
$\O=-d^{\cal P}{\cal J}^*d^{\cal P}L$ for some local function $L$.

\begin{The}\label{SLagMet}${}$\\
  The connection  ${\cal N}_{\cal S}$ is a Lagrangian and $g_{\O}$-metric  connection if and only, around any point $m\in{\cal M}$,   there exists a semi-basic $1$-form $\xi$
 such that $d^{\cal P}\xi$ is semi-basic and which satisfies
  $$i_{\cal S}\O=d^{\cal P}({\cal L}^{\cal P}_{\cal C}(L)-L)+\xi$$
  where $L$ is a local function around $m$ such that $\O=-d^{\cal P}{\cal J}^*d^{\cal P}L$.\\
  In this case,  ${\cal N}_{\cal S}$ is the unique  $g_{\O}$-metric and  Lagrangian connection.
\end{The}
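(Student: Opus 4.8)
The plan is to reduce the claim to the local characterization (\ref{gmetrisS}) of Theorem~\ref{Lagmet}, Part~(ii). Since $\O$ is semi-Hamiltonian, Proposition~\ref{propsemiH} guarantees that $\cal J$ and $\O$ are compatible and that $(\cal M,\cal J,\O)$ is locally Lagrangian. Hence around any $m\in\cal M$ I may fix a coordinate system compatible with the locally linear fibration associated with $\cal C$ (Remark~\ref{locC}) together with an adapted canonical basis $\{\cal X_\a,\cal V_\b\}$ satisfying $\cal J\cal X_\a=\cal V_\a$, ${\cal C}=y^\a\cal V_\a$ and ${\cal S}=y^\a\cal X_\a+\cal S^\b\cal V_\b$, in which $\O$ has the semi-Hamiltonian form (\ref{semihamO}), with $g_{\a\b}=\dis\frac{\p^2 L}{\p y^\a\p y^\b}$ and $\o_{\a\b}$ the antisymmetric coefficient appearing there. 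Every computation below is performed in such a basis.

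First I would form the energy ${E_L}=\cal L^{\cal P}_{\cal C}(L)-L=y^\a\dis\frac{\p L}{\p y^\a}-L$ and compute both $d^{\cal P}{E_L}$ and the contraction $i_{\cal S}\O$ from (\ref{semihamO}). A short calculation shows that the $\cal V^\b$-components of $i_{\cal S}\O$ and of $d^{\cal P}{E_L}$ agree (both equal $y^\a g_{\a\b}$), so that $\xi:=i_{\cal S}\O-d^{\cal P}{E_L}$ is automatically a \emph{semi-basic} $1$-form, $\xi=\xi_\a\cal X^\a$ with $\xi_\a=-g_{\a\g}\cal S^\g-\o_{\a\g}y^\g-\rho^i_\a y^\g\dis\frac{\p^2 L}{\p x^i\p y^\g}+\rho^i_\a\dis\frac{\p L}{\p x^i}$. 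Thus a semi-basic $\xi$ solving $i_{\cal S}\O=d^{\cal P}(\cal L^{\cal P}_{\cal C}L-L)+\xi$ always exists (and is unique), and the entire content of the asserted equivalence is the single extra requirement that $d^{\cal P}\xi$ be semi-basic. By Remark~\ref{dPsbasic} the mixed part of $d^{\cal P}\xi$ equals $\dis\frac{\p\xi_\a}{\p y^\b}\,\cal V^\b\wedge\cal X^\a$; since these mixed $2$-forms are linearly independent, $d^{\cal P}\xi$ is semi-basic if and only if $\dis\frac{\p\xi_\a}{\p y^\b}=0$ for all $\a,\b$.

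The heart of the proof is then to verify that the system $\dis\frac{\p\xi_\a}{\p y^\b}=0$ is \emph{exactly} relation (\ref{gmetrisS}). Differentiating the expression for $\xi_\a$ in $y^\b$, I expect two cancellations: the two terms $\mp\,\rho^i_\a\dis\frac{\p^2 L}{\p x^i\p y^\b}$ arising from the last two summands of $\xi_\a$ cancel, and, after inserting the explicit value of $\dis\frac{\p\o_{\a\g}}{\p y^\b}y^\g$ and using the total symmetry of $\dis\frac{\p^3 L}{\p y^\a\p y^\b\p y^\g}$, the surviving third-order terms recombine. Concretely, $\dis\frac{\p\o_{\a\g}}{\p y^\b}y^\g$ contributes a piece $-\rho^i_\a y^\g\dis\frac{\p^3 L}{\p x^i\p y^\g\p y^\b}$ that annihilates the leftover $\rho^i_\a y^\g\dis\frac{\p^3 L}{\p x^i\p y^\b\p y^\g}$, while its other two pieces supply precisely the terms $y^\g\rho^i_\g\dis\frac{\p^3 L}{\p x^i\p y^\a\p y^\b}$ and $g_{\b\d}C^\d_{\a\g}y^\g$ of (\ref{gmetrisS}); the remaining summands $\cal S^\g\dis\frac{\p g_{\a\g}}{\p y^\b}$ and $g_{\a\g}\dis\frac{\p\cal S^\g}{\p y^\b}$ reassemble, through $\cal L^{\cal P}_{\cal S}(g_{\a\b})=y^\g\rho^i_\g\dis\frac{\p g_{\a\b}}{\p x^i}+\cal S^\g\dis\frac{\p g_{\a\b}}{\p y^\g}$, into $\cal L^{\cal P}_{\cal S}(g_{\a\b})+g_{\a\d}\dis\frac{\p\cal S^\d}{\p y^\b}$. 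The main obstacle is exactly this bookkeeping: tracking the $\rho$-weighted third derivatives of $L$ and confirming that every spurious term is killed by the symmetry of mixed partials. Once the identity $\dis\frac{\p\xi_\a}{\p y^\b}=0\Longleftrightarrow(\ref{gmetrisS})$ is established, Theorem~\ref{Lagmet} Part~(ii) shows that it holds if and only if ${\cal N}_{\cal S}$ is Lagrangian and $g_\O$-metric, and the same result yields that ${\cal N}_{\cal S}$ is then the unique $g_\O$-metric Lagrangian connection, completing the argument.
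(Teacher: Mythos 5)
Your proposal is correct, and it reaches the same target -- the local identity (\ref{gmetrisS}) of Theorem \ref{Lagmet}(ii) in an adapted basis where $\O$ has the form (\ref{semihamO}) -- but by a genuinely different route. The paper introduces the auxiliary Euler--Lagrange spray $\bar{\cal S}$ solving $i_{\bar{\cal S}}\O=d^{\cal P}({\cal L}^{\cal P}_{\cal C}L-L)$ via (\ref{iSO}), observes that the coefficients $\bar{\cal N}_{\a\b}$ of ${\cal N}_{\bar{\cal S}}$ are exactly the right-hand side of (\ref{GL}), deduces ${\cal S}^\b=\bar{\cal S}^\b+\phi^\b(x)$ with $\phi^\b$ basic, and then reads off $\xi=\O^\flat(-\phi^\b{\cal V}_\b)=g_{\a\b}\phi^\b{\cal X}^\a$, so that the semi-basicity of $d^{\cal P}\xi$ becomes $\dis\frac{\p^3L}{\p y^\a\p y^\b\p y^\g}\phi^\g=0$, i.e. ${\cal L}^{\cal P}_{\cal Z}(g_{\a\b})=0$. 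You instead bypass $\bar{\cal S}$ entirely: you observe that $\xi:=i_{\cal S}\O-d^{\cal P}({\cal L}^{\cal P}_{\cal C}L-L)$ is \emph{forced} and automatically semi-basic (both ${\cal V}^\b$-components equal $y^\a g_{\a\b}$, which I checked), so the theorem reduces to the single condition $\dis\frac{\p\xi_\a}{\p y^\b}=0$, and you then verify by direct differentiation that $\dis\frac{\p\xi_\a}{\p y^\b}=-\bigl[\o_{\a\b}+{\cal L}^{\cal P}_{\cal S}(g_{\a\b})+g_{\a\g}\dis\frac{\p{\cal S}^\g}{\p y^\b}+g_{\b\d}C^\d_{\a\g}y^\g\bigr]$. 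I verified your cancellations (the two $\rho^i_\a\dis\frac{\p^2L}{\p x^i\p y^\b}$ terms, and the $\rho$-weighted third derivatives against the pieces of $\dis\frac{\p\o_{\a\g}}{\p y^\b}y^\g$); the identity is exact, and it in fact lands on the index-corrected form of (\ref{gmetrisS}) (the paper's display has $g_{\b\g}C^\d_{\a\mu}$ where $g_{\b\d}C^\d_{\a\mu}$ is meant), which is reassuring. What the paper's detour buys is the explicit description $\xi_\a=g_{\a\b}\phi^\b$ with $\phi^\b$ basic, which makes the obstruction ${\cal L}^{\cal P}_{\cal Z}g=0$ geometrically visible and avoids the third-derivative bookkeeping; what your direct computation buys is a cleaner one-line reduction of the whole statement to $d^{\cal P}\bigl(i_{\cal S}\O-d^{\cal P}({\cal L}^{\cal P}_{\cal C}L-L)\bigr)$ being semi-basic, with uniqueness of $\xi$ for free. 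The final uniqueness claim follows, as you say, from Theorem \ref{Lagmet}(i) in both treatments.
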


\begin{Def}\label{SlocLag}${}$\\
Let  $\O$ be a semi-Hamiltonian almost structure on ${\T}{\cal M}$. An almost  semispray $\cal S$ on $\cal M$ is called  locally Lagrangian if  we have $i_{\cal S}\O=d^{\cal P}({\cal L}^{\cal P}_{\cal C}(L)-L)+\xi$ where $L$ is a local function  such that $\O=-d^{\cal P}{\cal J}^*d^{\cal P}L$ and $\xi$ and $d^{\cal P}\xi$  are semi-basic.\\
\end{Def}


\bigskip
\begin{Exs}\label{metricLagS}${}$

{\bf 1.} Consider  a regular Lagrangian $L$ on $\cal M$,   $\O_L=-d^{\cal P}{\cal J}^*(d^{\cal P}L)$  the associated almost cotangent structure and $\cal S$ the associated   semispray, then we get that the corresponding connection ${\cal N}_{\cal S}$ is the unique   $g_{\O_L} $-metric and Lagrangian connection. When $\cal M$ is a Lie algebroid, we recover the essential result of \cite{LPo2}.\\

 {\bf 2.} Let  $\eta$  be a regular  semi-Hamiltonian on ${\cal A}^*$. The canonical symplectic form $\O=-d^{\cal P}\theta$ where $\theta$ is the canonical Liouville  form is a semi-Hamiltonian almost structure on ${\T}{\cal A}^*$. Note that  around any point $m\in{\cal M}$ we can write $\O=-d^{\cal P}{\cal J}^*d^{\cal P}L$ with $L(x,y)=\dis\frac{1}{2}(y^\a)^2$. Denote by  $\cal S$  the  semispray defined by $i_{\cal S}\O=\eta$. Then, the connection ${\cal N}_{\cal S}$ is $g_\O$ metric and Lagrangian, if and only, around each  $m\in {\cal M}$, there exists  local semi-basic $1$-form $\xi$ such that $d^{\cal P}\xi$ is semi-basic  such that locally:
$$\eta=d^{\cal P}({\cal L}^{\cal P}_{\cal C}(L)-L)+\xi.$$

 {\bf 3.} More generally,    consider  a regular  semi-Hamiltonian $\eta$ on ${\cal A}^*$, $\O$  a semi-Hamitonian almost structure on ${\T}{\cal A}^*$ , and $\cal S$ the associated  semispray. Around any point $m\in {\cal A}^*$ we write $\O=-d^{\cal P}{\cal J}^*d^{\cal P}L$. Then ${\cal N}_{\cal S}$ is $g_\O$ metric and Lagrangian if and only if, around $m$,  there exists a local semi-basic $1$-form $\xi$ such that $d^{\cal P}\xi$ is semi-basic  such that locally:
 $$\eta=d^{\cal P}({\cal L}^{\cal P}_{\cal C}(L)-L)+\xi.$$\\
\end{Exs}

\begin{proof}[Proof of theorem \ref{SLagMet}]${}$\\
According to (\ref{semihamO} )  we have a local coordinates system $(x^i,y^\a)$,  an associated  basis $\{{\cal X}_\a,{\cal V}_\b\}$ and its  dual basis  $\{{\cal X}^\a,{\cal V}^\b\}$, such that  :

$\bullet\;\;{\cal J}{\cal X}_\a={\cal V}_\a$, and \\

$\bullet\;\;\O=\dis\frac{\p^2 L }{\p y^\a \p y^\b}{\cal X}^\a \wedge {\cal V}^\b+\dis\frac{1}{2}(\dis\frac{\p^2 L }{\p x^i \p y^\a}\rho^i_\b-\dis\frac{\p^2 L }{\p x^i\p y^\b}\rho^i_\a+\dis\frac{\p L }{\p  y^\g}C^\g_{\a\b}){\cal X}^\a\wedge{\cal X}^\b$.\\

We will use the arguments of the proof of Theorem 3.5 and Theorem 3.6 of \cite{LPo2}. \\

As in Remark \ref{symG}, we consider the symmetric ${\cal N}_{\a\b}^\mathbb{S}$ (resp. antisymmetric ${\cal N}_{\a\b}^\mathbb{A}$)) part   of ${\cal N}_{\a\b}$. Then
taking into account the particular value of $\o_{\a\b}$, the Equations (\ref{GamaA}) and (\ref{GSS}) are equivalent to
\begin{eqnarray}\label{GL}
{\cal N}_{\a\b}=\dis\frac{1}{2}(\dis\frac{\p^2 L }{\p x^i \p y^\a}\rho^i_\b-\dis\frac{\p^2 L }{\p x^i\p y^\b}\rho^i_\a+\dis\frac{\p L }{\p  y^\g}C^\g_{\a\b}+{\cal L}^{\cal P}_{\cal S}(g_{\a\b})+g_{\b\g}C_{\a\d}^\g y^\d+g_{\a\g}C_{\b\d}^\g y^\d)
\end{eqnarray}
where $g_{\a\b}=\dis\frac{\p^2 L}{\p y^\a \p y^\b}$.\\

On the other  hand, as classically,  the solution of the equation $i_{\bar{\cal S}}\O=d^{P}({\cal L}^{P}_{\cal C}(L)-L)$ is characterized by (see \cite{CLMM})
\begin{eqnarray}\label{iSO}
\begin{cases}
\bar{\cal S}=y^\a{\cal X}_\a+\bar{\cal S}^\b{\cal V}_\b \cr
g_{\a\b}\bar{\cal S}^\b+\dis\frac{\p^2 L}{\p x^i\p y^\a} \rho_\d ^i y^\d+\frac{\p L}{\p y^\g}C_{\a\d}^\g y^\d-\rho_\a^i\frac{\p L}{\p x^i}=0.\cr
\end{cases}
\end{eqnarray}

Therefore if we denote by $\bar{{\cal N}}_\a^\b$  the local coefficients  of ${\cal N}_{\bar{\cal S}}$  then $\bar{{\cal N}}_{\a\b}=g_{\a\g}\bar{{\cal N}}_\a^\g$ are exactly the second member of (\ref{GL}).  This implies that  (\ref {GL}) is equivalent to
$$\dis\frac{\p{\cal S}^\b}{\p y^\a}=\dis\frac{\p\bar{\cal S}^\b}{\p y^\a}.$$
This last relation is equivalent to the existence of a local  function $\phi^\b$ on $ M$ such that ${\cal S}^\b=\bar{\cal S}^\b+\phi^\b$. If we define  ${\cal Z}=-\phi^\b{\cal V}_\b$ then we have
$$i_{\cal S}\O=d^{\cal P}({\cal L}_{\cal C}^{\cal P}L- L)+{\O}^\flat({\cal Z}).$$

\noindent Thus we have

$\xi={\O}^\flat({\cal Z})=\dis\frac{\p^2 L }{\p y^\a \p y^\b}\phi^\b{\cal X}^\a$ for all $\a,\b=1,\cdots,k$.

\noindent First of all, note that $\xi$ is semi-basic. On the other hand, from  (\ref{iSO}) we must have ${\cal N}_{\cal S}={\cal N}_{\bar{\cal S}}$ if and only if

${\cal L}^{\cal P}_{\cal Z}(g_{\a\b})=0$ for all $\a,\b=1,\cdots,k$.

\noindent In fact, this last relation can be written:

$\dis\phi^\g\frac{\p}{\p y^\g}\left(\frac{\p^2 L }{\p y^\a \p y^\b}\right)=0$ for all $\a,\b=1,\cdots,k$.

\noindent But $d^{\cal P}\xi$ is semi basic if and only if

$\dis\frac{\p }{\p y^\b}\left(\dis\frac{\p^2 L }{\p y^\a \p y^\g}\phi^\g\right){\cal V}^\b\wedge{\cal X}^\a=\dis\frac{\p^3 L }{\p y^\a  \p y^\b \p y^\g}\phi^\g{\cal V}^\b\wedge{\cal X}^\a\equiv 0$ for all $\a,\b=1,\cdots,k$.

Now,  in this case, from Theorem \ref{Lagmet},  ${\cal N}_{\cal S}$ is the unique $g_\O$-metric and Lagrangian connection.\\


\end{proof}

\subsection{Lagrangian connection and canonical semispray }\label{ConsLag}${}$\\
${}\;\;\;\;$ { We have seen in subsection \ref{GC} that to each nonlinear  connection ${\cal N}$ on $\T{\cal M}$ is associated a unique   semispray  $\cal S$ which is horizontal. But in general the canonical  semispray  associated with a connection of type ${\cal N}_{\cal S}$ is not $\cal S$. Given some   semispray  $\cal S$,  we will at first  look for condition under which we can associate a  Lagrangian nonlinear  connection ${\cal N}$ whose canonical  semispray  is precisely $\cal S$. When $\cal S$ is the  semispray associated with some semi-hamitonian, we will give a characterization of such connection.}\\

We consider an almost tangent  structure ${\cal J}$ and  an almost cotangent structure   and  $\O$ on $\T{\cal M}$ respectively which are compatible and we denote by $g$ the associated  pseudo-Riemannian  metric.

\begin{The}\label{exisLagScan} ${}$\\
   Let  $\cal S$ be a  semispray  on $\cal M$.
\begin{enumerate}
\item[(i)] The set  ${\cal M}_0$ of points where $g({\cal C},{\cal C})\not=0$ is an open dense subset of $\cal M$ whose complementary is at least a co-dimensional  one subset. On $\T{\cal M}_{| {\cal M}_0}$,   there exists a canonical Lagrangiannonlinear   connection ${\cal N}$ whose  canonical    semispray  is $\cal S$ which characterized by

if  $\Upsilon={\cal N}-{\cal N}_{\cal S}$ then
\begin{eqnarray}\label{canconnlag}
g(\Upsilon{\cal X},{\cal J}{\cal Y})=\O({\cal H}_{\cal S}({\cal J}{\cal X}),{\cal H}_{\cal S}({\cal J}{\cal Y}))+i_{\cal C}\O \odot \o({\cal X},{\cal Y})
\end{eqnarray}

where $\o$ is given by (\ref{ocanoique}) and $\odot$ is the symmetric product.
\item[(ii)] If  ${\cal N}$ is a Lagrangian nonlinear connection,  the following properties are equivalent:
\begin{enumerate}
\item[(a)] $\cal S$ is the canonical  semispray    of ${\cal N}$
\item[(b)] there exists a $1$-form $\eta$ such that $i_{\cal S}\O-\eta$ is semi-basic and  $i_{h_{\cal N}}\eta=0$ where  $h_{\cal N}$ is the horizontal projector associated with  ${\cal N}$.
\item[(c)] $i_{h_{\cal N}}(i_{\cal S}\O)=0$
\end{enumerate}
\end{enumerate}
In any one of the previous situations ${\Upsilon}={\cal N}-{\cal N}_{\cal S}$ is the strong torsion of ${\cal N}$
\end{The}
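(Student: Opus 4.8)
The plan is to prove the two parts separately, with Theorem~\ref{same S} and Theorem~\ref{Lagmet} as the main engines. For the first assertion of Part~(i) I would note that $f:=g({\cal C},{\cal C})$ is a smooth function on ${\cal M}$, where ${\cal C}={\cal J}{\cal S}$ is the Euler section. In a coordinate system compatible with the locally linear fibration defined by ${\cal C}$ one has ${\cal C}=y^\a{\cal V}_\a$, hence $f=g_{\a\b}y^\a y^\b$ with $(g_{\a\b})$ nondegenerate; therefore $f$ cannot vanish identically on any open subset of a fibre, so $\{f=0\}$ is closed with empty interior. Consequently ${\cal M}_0=\{f\neq0\}$ is open and dense and its complement is of codimension at least one.

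For the construction I would use Theorem~\ref{same S}: the nonlinear connections whose canonical semispray is ${\cal S}$ are exactly those of the form ${\cal N}={\cal N}_{\cal S}+\Upsilon$ with $\Upsilon$ a semi-basic vector valued $1$-form obeying $\Upsilon({\cal S})=\mathbb{H}_{\cal S}$, and for each such ${\cal N}$ the form $\Upsilon$ is its strong torsion. Among these I must select the Lagrangian one. Writing $h_{\cal N}=h_{{\cal N}_{\cal S}}+\frac12\Upsilon$ and repeating the reduction carried out in the proof of Theorem~\ref{Lagmet}, the Lagrangian condition $\O(h_{\cal N}\,\cdot,h_{\cal N}\,\cdot)\equiv0$ becomes (up to the sign conventions of that proof) the prescription of the antisymmetric part of the bilinear form $({\cal X},{\cal Y})\mapsto g(\Upsilon{\cal X},{\cal J}{\cal Y})$ as the term $\O({\cal H}_{\cal S}({\cal J}{\cal X}),{\cal H}_{\cal S}({\cal J}{\cal Y}))$, using that ${\cal H}_{\cal S}$ is the inverse of ${\cal J}$ restricted to $\H_{\cal S}$ (Proposition~\ref{JG}). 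The requirement $\Upsilon({\cal S})=\mathbb{H}_{\cal S}$, on the other hand, only fixes the contraction of that bilinear form with ${\cal C}={\cal J}{\cal S}$. I would therefore define $\Upsilon$ by the displayed formula, the symmetric summand $i_{\cal C}\O\odot\o$ being precisely the rank $\le 2$ correction realizing this contraction; that $\Upsilon$ is genuinely semi-basic is automatic, since $i_{\cal C}\O$ and $\o$ both annihilate $\V{\cal M}$ (the former because ${\cal C}$ is vertical and $\V{\cal M}$ is $\O$-Lagrangian) while ${\cal J}$ kills verticals, and since $g$ and ${\cal J}$ are fibrewise isomorphisms on $\V{\cal M}$ the formula determines $\Upsilon$ uniquely. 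The main obstacle is to verify that this $\Upsilon$ actually satisfies $\Upsilon({\cal S})=\mathbb{H}_{\cal S}$: this is exactly where the normalization of $\o$ in~(\ref{ocanoique}) by $g({\cal C},{\cal C})$ enters, and hence where one is forced to work over ${\cal M}_0$.

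For Part~(ii) I would first establish the clean equivalence (a)$\Leftrightarrow$(c). Since ${\cal S}$ is the canonical semispray of ${\cal N}$ iff it is horizontal, i.e. $v_{\cal N}{\cal S}=0$, and since $i_{h_{\cal N}}(i_{\cal S}\O)({\cal X})=\O({\cal S},h_{\cal N}{\cal X})$, I would decompose ${\cal S}=h_{\cal N}{\cal S}+v_{\cal N}{\cal S}$: the Lagrangian property kills $\O(h_{\cal N}{\cal S},h_{\cal N}{\cal X})$, leaving $i_{h_{\cal N}}(i_{\cal S}\O)=\O(v_{\cal N}{\cal S},h_{\cal N}\,\cdot)$. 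Because $\V{\cal M}$ and $\H{\cal M}$ are complementary Lagrangian subbundles of the maximal-rank form $\O$, the latter pairs them nondegenerately, so this vanishes iff $v_{\cal N}{\cal S}=0$, which is (a)$\Leftrightarrow$(c). The equivalence with (b) I would obtain by the same symplectic bookkeeping: (c)$\Rightarrow$(b) follows on taking $\eta=i_{\cal S}\O$, while the converse uses the identification, through $\O^\flat$, of the semi-basic $1$-forms with $\O^\flat(\V{\cal M})$ and of $\{\eta:i_{h_{\cal N}}\eta=0\}$ with $\O^\flat(\H{\cal M})$, which together force $i_{\cal S}\O$ to annihilate $\H{\cal M}$. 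Finally, in each of the equivalent situations ${\cal S}$ is the canonical semispray of ${\cal N}={\cal N}_{\cal S}+\Upsilon$, so the uniqueness clause of Theorem~\ref{same S} identifies $\Upsilon$ with the strong torsion $\mathbb{T}_{\cal N}$, which is the closing assertion.
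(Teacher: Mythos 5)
Your proposal is correct and follows essentially the same route as the paper: the codimension-one argument for $\S=\{g({\cal C},{\cal C})=0\}$ via the local expression $g_{\a\b}y^\a y^\b$, the splitting of $g(\Upsilon\,\cdot\,,{\cal J}\,\cdot\,)$ into an antisymmetric part forced by the Lagrangian condition (through ${\cal H}_{\cal S}$) and a symmetric part of Grifone's form $i_{\cal C}\O\odot\o$ forced by $\Upsilon({\cal S})=\mathbb{H}_{\cal S}$, with the normalization of $\o$ by $g({\cal C},{\cal C})$ explaining the restriction to ${\cal M}_0$, and the same symplectic bookkeeping $i_{h_{\cal N}}(i_{\cal S}\O)=\O(v_{\cal N}{\cal S},h_{\cal N}\,\cdot\,)$ for the equivalences in Part (ii). The only cosmetic difference is that you impose the canonical-semispray condition first (via Theorem \ref{same S}) and the Lagrangian condition second, whereas the paper first builds an intermediate Lagrangian connection $\tilde{\cal N}$ and then corrects it; the two orderings yield the same formula.
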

\begin{Rem}\label{lagrangemetric}${}$\\
Classically, given a Lagrangian $L$ on a manifold $M$, we get a canonical symplectic form $\O$ on $TM$ and  $\cal S$ the    semispray  associated with $L$ \textit{i.e.}  $i_{\cal S}\O=d{\cal H}$ where ${\cal H}={\cal L}_{\cal C}L-L$, then  a  nonlinear  connection on $TM$ is called {\it conservative} if ${\cal N}$ is Lagrangian and
$$i_{h_{\cal N}}d{\cal H}=0.$$
In our case, the equivalence (a) and (c) can be seen as a generalization of such a result in our context.\\
\end{Rem}
\begin{Def}\label{cons}${}$\\
Let  $\O$ be an almost cotangent structure on $\T{\cal M}$ and $\eta$ a semi-Hamitonian on $\cal M$. A  nonlinear  connection ${\cal N}$  on $\T{\cal M}$ is called conservative (for $\eta$) if ${\cal N}$ is Lagangian and if $i_{h_{\cal N}}(\eta)=0$.\\
\end{Def}

\begin{Ex}\label{sprayHamilton}${}$\\
Assume that $\O$ is a semi-hamitonian almost structure on  ${\T}{\cal M}$ and consider a spray $S$   on $\T{\cal M}$. Then $\cal S$ is the canonical spray of ${\cal N}_{\cal S}$ (see Proposition \ref{Hprop}). On the other hand ${\cal N}_{\cal S}$ is $g_\O$-metric and Lagrangian if and only if $\cal S$ is locally Lagrangian (see Theorem \ref{SLagMet} and Definition \ref{SlocLag}). It follows that ${\cal N}_{\cal S}$ is the unique  $g_\O$-metric  Lagrangian connection whose canonical spray is $\cal S$. In fact, ${\cal N}_{\cal S}$ is conservative. We get a natural  generalization  of classical results about Lagrangian spray on the tangent bundle  (in particular Finslerian manifold) as we have seen in Remark \ref{lagrangemetric}.
\end{Ex}

\begin{proof}[Proof of Theorem \ref{exisLagScan}]${}$\\
We will use the context of subsection \ref{DynDer}. Consider the associated nonlinear  connection ${\cal N}_{\cal S}$. According to (\ref{lagmet2}), the      nonlinear  connection  $\tilde{{\cal N}}={\cal N}_{\cal S}- \D{\cal J}$ where $g_{\mathbb{A}}^2(\D)=\O({\cal H}_{\cal S},{\cal H}_{\cal S})$ is Lagrangian.

Now we  must find a semi-basic vector valued $1$-form $\Upsilon$ such that ${\cal N}=\tilde{\cal N}+\Upsilon$ satisfies
(1) ${\cal N}({\cal S})={\cal S}$, and

(2) if $D_{\cal N}$ is the dynamical derivation associated with ${\cal N}$ (relatively to $\cal S$), then $g^2({\cal N}-{\cal N}_{\cal S})$ belongs to $\mathbb{S}(V{\cal M})$.\\

Assume that we have built such a $\Upsilon$.  On the one hand,  from Property  (2) and  according to the proof of Theorem \ref{Lagmet}, then ${\cal N}$ is also Lagrangian.  On the other hand, from Property  (1),  ${\cal S}$ is the canonical   semispray  of ${\cal N}$ which  ends  the construction of ${\cal N}$ in Part (i).\\

We will prove the two previous properties ( 1) and (2).\\

Property (1) is  equivalent to $\Upsilon({\cal S})={\cal S}-\tilde{{\cal N}}({\cal S})=2v_{\tilde{{\cal N}}}({\cal S})={\cal S}^*$.

From Proposition \ref{alldyn}, there exists a unique $\V{\cal M}$-tensor $\bar{\D}$ such that $\Upsilon=\bar{\D}\circ{\cal J}$.
Assume that we have   found  a symmetric $2$-form $\Theta$ such that

$\Theta({\cal X},{\cal Y})=g(\Upsilon {\cal X},{\cal J }{\cal Y})$ and   $\Theta({\cal S},{\cal Y})=g({\cal S}^*,{\cal J}{\cal Y})$.\\
We use the same arguments as in \cite{Gr}, chapter II section 8. Therefore  Properties (1) and (2) will be true if there exists a $2$-form   $\Theta$ of type

$$\Theta({\cal X},{\cal Y})=i_{\cal C}\O\odot \o$$ where $\o$ is a  semi-basic scalar $1$-form and $\odot $ is the  symmetric product fulfilling the condition

$(i_{\cal C}\O\odot \o)({\cal S},{\cal Y})=g({\cal S}^*,{\cal J}{\cal Y}).$\\

 \noindent This last condition is equivalent to

 $i_{\cal C}\O({\cal S}) \o({\cal Y})+i_{\cal C}\O({\cal Y}) \o({\cal S})=i_{{\cal S}^*}\O({\cal Y}),$

  \noindent and finally we get the equivalent condition:

\begin{eqnarray}\label{lastcond}
g({\cal C},{\cal C})\o=\o({\cal S})i_{\cal C}\O -i_{{\cal S}^*}\O.
\end{eqnarray}

\noindent On the other hand,  the relation(\ref{lastcond}) also gives:

$2 g({\cal C},{\cal C})\o({\cal S})=-i_{{\cal S}^*}\O({\cal S})=g({\cal C},{\cal S}^*)$

Finally, to prove Properties (1) and (2), it remains to find such a semi-basic $1$-form $\o$ on ${\cal M}_0$.\\

\noindent Now  outside the subset $\S=\{m\in{\cal M} \textrm{ such that } g({\cal C}(m),{\cal C}(m))=0\}$ we can define
\begin{eqnarray}\label{ocanoique}
\o=\dis\frac{1}{g({\cal C},{\cal C})}[-i_{{\cal S}^*}\O+\dis\frac{g({\cal C},{\cal S}^*)}{2 g({\cal C},{\cal C})}i_{\cal C}\O].
\end{eqnarray}

\noindent It follows that $\o$ is well defined and  moreover,  since $\cal C$ and ${\cal S}^*$ are  vertical sections and  since $V{\cal M}$ is Lagrangian, it follows that  $\o$ is semi-basic.\\

Moreover note that since ${\Upsilon}={\cal N}-{\cal N}_{\cal S}$,  by construction, we have ${\Upsilon}({\cal S})={\cal S}-{\cal N}_{\cal S}({\cal S})=\bf{H}_{\cal S}$ and so from Theorem \ref{same S},  ${\Upsilon}$ is the strong torsion of ${\cal N}$. \\

\noindent In order to end the proof of Part (i) in Theorem \ref{exisLagScan}  we must show that $\S={\cal M}\setminus{\cal M}_0$ is locally contained in a submanifold of codimension at most $1$. But locally
we have ${g({\cal C},{\cal C})}=g_{\a\b}y^\a y^\b$. Therefore the local equation of $\S$ is $ g_{\a\b}y^\a y^\b=0$. As the matrix $(g_{\a\b})$ is invertible we can easily see that   the differential of this equation must not be identically zero on $\S$.\\

 In the context of  Point (ii), assume   that $\cal S$ is the canonical  semispray   of a Lagrangian connection ${\cal N}$ and denote simply by $h$  the horizontal projector of ${\cal N}$. Then we have:

$i_h(i_{\cal S}\O)({\cal X})=\O({\cal S}, h{\cal X})$ but from our assumption, $h({\cal S})={\cal S}$  and  ${\cal N}$ is Lagrangian, so $i_h(i_{\cal S}\O)=0$.\\
\noindent Now assume that  $i_h(i_{\cal S}\O)=0$. Then,  for $\eta=i_{\cal S}\O$, trivially  $i_{\cal S}\O - \eta$  is semi-basic and by assumption $i_{h}\eta=0$.

\noindent Now assume that there exists a $1$-form $\eta$ such that $i_{\cal S}\O-\eta$ is semi-basic and  $i_{h}\eta=0$. Since $i_{\cal S}\O-\eta$ is semi-basic we have

$i_h(i_{\cal S}\O-\eta)=i_{\cal S}\O-\eta$.

\noindent But ${\cal N}$ is Lagrangian thus, if $v$ is the vertical projector associated with ${\cal N}$, we have   $i_{\cal S}\O-\eta=\O(v{\cal S},\;)-\eta$. On the other hand, since  $i_h\eta=0$ we obtain
$\O(v{\cal S},({\cal X})-\eta({\cal X})= \O(v{\cal S},v{\cal X})-\eta{\cal X}+\O(v{\cal S},h{\cal X})=\O(v{\cal S},{\cal X})$.

\noindent It follows that $\eta=0$ and then $i_h(i_{\cal S}\O)=0$.

\noindent Finally, it remains to show that if  $i_h(i_{\cal S}\O)=0$ then $h{\cal S}={\cal S}$.

\noindent But  we have seen that $i_{\cal S}\O=\O(v{\cal S},\;)$ so if $i_h(i_{\cal S}\O)=0$. This implies  that

$\O(v{\cal S},h{\cal X})=-g(v{\cal S},{\cal J}{\cal X})=0$ for any section $\cal X$. But as $g$ is non degenerate, it follows that $v{\cal S}=0$.\\

\end{proof}

\subsection{Application to mechanical systems}\label{NHmeca}${}$\\
When ${\cal M}=TM$, classically a mechanical system    is a triple $(M,L,\o)$ where $L: TM\ap \R$ is a regular Lagrangian whose "vertical" hessian is positive definite and $\o$ is a semi-basic $1$-form. Then the $1$-form  $\theta=d({\cal L}_CL-L)+\o$ is a semi-Hamitonian. Conversely, if $\theta$ is a semi-hamitonian, such that its associated pseudo-riemannian metric is Riemannian,  there exists a (global) smooth function $L$ and a semi-basic $1$-form  $\o$ such that $\theta=d({\cal L}_CL-L)+\o$ (cf \cite{Mi}). In this subsection we adapt the concept of mechanical system to our context.\\

Consider a locally affine fibration $\pi:{\cal M}\ap M$. First of all recall that a regular Lagrangian on $\cal M$ is a smooth map $L:{\cal M}\ap \R$ such that, around each point $m\in {\cal M}$, there exists a coordinate system $(x^i,y^\a)$ (compatible with the locally affine fibration structure) such that the Hessian $(\dis\frac{\p^2 L}{\p y^\a \p y^\b} )$ is an invertible matrix. In these condition, we get a pseudo-Riemannian metric $g_L$ on the vertical bundle $\V{\cal M}$. 

\begin{Def}\label{mecha}${}$\\
  A {\it ${\cal A}$-mechanical system } is a  triple $({\cal M},L,\o)$ such that:
\begin{enumerate}
\item[-]    $\pi:{\cal M}\ap M$ is a locally affine  fibration;
\item[-] $L$ is a regular Lagrangian on $\cal M$ (according to the locally affine fibration structure on $\cal M$)  such that the associated metric $g_L$ is a Riemannian metric;
\item[-] $\o$ is a semi-basic $1$-form.
\end{enumerate}
\end{Def}
\begin{Exs}\label{Exmecha}${}$\\
{\bf 1.} If ${\cal M}$ is the tangent bundle of a manifold $M$   the previous definition gives rise to the classical definition of a mechanical system on $M$ (see \cite{Go} or  \cite{Mi}).

{\bf 2.} Assume that  ${\cal M}$ is the bundle ${\cal A} $, or more generally an open submanifold of ${\cal A} $ provided with its natural linear fibration structure. Now, given any Riemannian metric $g$ on $\cal A$ we get  ${\cal A}$-mechanical system $({\cal M}, g, 0)$.

{\bf 3.}  Given any $1$-homogeneous Lagrangian $L:\cal A\ap \R^+$  such that $L(x,u)=0$ if and only $u=0$  and such that $L^2$ is a non-degenerate Lagrangian, then we get a $\cal A$-mechanical system $({\cal A}, L^2,0) $,  which corresponds to the situation of singular  "sub-Finslerian manifold"  studied in  \cite{HPo}(see also  section \ref{Partialfins}).

{\bf 4.} According to the Example \ref{JOS6},   the manifold ${\cal M}=\mathbb{S}^{n-1}\times\mathbb{S}^{2n+1}$ is  a fibered manifold $\pi:{\cal M}\ap \mathbb{S}^{2n}$. A positive $2$-homogeneous  Lagrangian on $\cal M$ gives rise to a $\cal A$-mechanical system $({\cal M}, L,0) $ (see \cite{Va3}).  Other examples of this type on almost tangent manifold can be also found in  \cite{Va3}. In the same way when $\pi:\cal M\ap M$ is a Lagrangian submersion,   the relation between Lagrangian foliations and Riemaniann foliations in \cite{PoPo2}  can be seen as a $\cal A$-mechanical system $({\cal M}, L,0) $ problem.

.

\end{Exs}

To  a $\cal A$-mechanical system $({\cal M},L,\o)$  we can associate the $1$-form $\theta= d^{\cal P}({\cal L}^{\cal P}_{\cal C}L-L)+\o$ which   is a  regular semi-Hamitonian. When ${\cal A}={\cal M}$, by same arguments as used in Theorem 4 in \cite{Mi} we can show that the converse is true.  \\

From now on, assume moreover that there exists  an integrable almost structure $\cal J$ on  ${\T}{\cal M}$ which is compatible with an Euler section $\cal C$ such that ${\cal C}\not=0$ on $\cal M$. In general  if the  typical  fiber of $\pi:{\cal M}\ap M$ is not simply connected, for any semi-Hamitonian $\theta$,  locally, there exists a smooth function $L$ such that $\theta= d^{\cal P}({\cal L}^{\cal P}_{\cal C}L-L)+\o$  but not globally.  It follows that any semi-Hamiltonian $\theta$ on $\cal M$ gives rises to a "locally" ${\cal A}$-mechanical system $(U,L,\o)$ where $\o=\theta- \left(d^{\cal P}({\cal L}^{\cal P}_{\cal C}L-L)\right)$ is semi-basic (see Theorem 4 in \cite{Mi} ). Therefore we introduce:

\begin{Def}\label{geneAmeca}${}$\\
An $\cal A$-generalized mechanical system is a pair $({\cal M},\theta)$ where $\theta$ is a regular semi-Hamitonian (relative to the  locally affine fibration structure on $\cal M$  defined by $\cal J$) and such that  its associated  metric $g_\theta$ is riemannian.
\end{Def}

 We consider an {\it  fixed }  $\cal A$-generalized mechanical system $({\cal M},\theta)$.\\
Under the previous assumptions,
the $2$-form
 $\O=-d^{\cal P}({\cal J}^*\theta)$
is a semi-Hamitonian almost cotangent structure, compatible with $\cal J$ ({\it cf.} Proposition \ref{locLagch}). Moreover, ${\cal J}^*\theta$ is a generalized Liouville form ({\it cf.} subsection \ref{cal E}). According to Proposition \ref{propsemiH},  $({\cal M},{\cal J},\O)$ is  locally Lagrangian and moreover we have
$g_L({\cal J},\;)=\O(\;,\;)$  (see  (\ref{gJL})).\\
Again, from Proposition \ref{propsemiH},   we  get a well defined    semispray  $\cal S$ so that $i_{\cal S}\O={\cal L}^{\cal P}_{\cal C}\theta-\theta$  and ${\cal J}$ is compatible with $\cal S$ and   we have ${\cal J}{\cal S}={\cal C}$.\\
Now, according to Theorem \ref{S-coS}, there exists a local coordinate system $(x^i,y^\a)$  and an  local basis $\{{\cal X}_\a,{\cal V}_\b\}$  of ${\T}{\cal M}$ such that

${\cal J}{\cal X}_\a={\cal V}_\a,\;\;$  $\;\;{\cal C}=y^\a{\cal V}_\a\;\;$ and ${\cal S}=y^\a{\cal X}_\a+ {\cal S}^\b{\cal V}_\b$.


 and, if  locally $\theta=\theta_\a{\cal X}^\a+\dis\frac{\p {L}}{\p y^\b}{\cal V}^\b$ we have the relation (cf (\ref{Let-et}))
 \begin{eqnarray}\label{EuLageq}
\dis\frac{\p^2{L}}{\p y^\a \p y^\b}{\cal S}^\b+y^\d(\dis\frac{\p \theta_\a}{\p y^\d}-\dis\frac{\p^2 {L}}{\p x^i\p y^\d} \rho_\a ^i+\dis\frac{\p^2 {L}}{\p x^i\p y^\a} \rho_\d ^i-\frac{\p {L}}{\p y^\g}C_{\a\b}^\g)=\theta_\a.
\end{eqnarray}

When ${\cal M}={\cal A}$  and $(M,L,\o)$ is a classical mechanical system for $\theta=d^{\cal P}L+\o$, the previous relation are  the classical Euler-lagrange equations.
   Therefore  each integral curves of the vector field $\hat{\rho}({\cal S})$ is locally a solution of a differential system  of type (\ref{eqgeod})
   \begin{Def}\label{semSEleq}${}$\\
   ${\cal S}$ is called the  semispray associated with $({\cal M},\theta)$ and the equations (\ref{EuLageq}) are called the Euler Lagrange Equations of $({\cal M},\theta)$
   \end{Def}
   We then have the following results:
\begin{The}\label{mecasyst}${}$\\
 Let  $({\cal M},\theta)$ be a $\cal A$-generalized mechanical system and $\cal S$ its associated  semispray.
\begin{enumerate}
\item[(i)] the nonlinear connection ${\cal N}_S$ is the unique $g_L$-metric and Lagrangian connection if and only if, around any $m\in{\cal M}$,  there exists  a local semi-basic  $1$-form $\o$ such that $d^{\cal P}\o$ is semi-basic and locally we have:
$$\theta=d^{\cal P}({\cal L}^{\cal P}_{\cal C}(L)-L)+\o.$$
\item[(ii)] there exists a canonical conservative connection ${\cal N}_0$ for ${\cal L}^{\cal P}_{\cal C}\theta-\theta$. In particular, the geodesics of ${\cal N}_0$ are the solutions of the Euler Lagrange equations of $({\cal M},\theta)$.
\item[(iii)] Assume that  $ \theta=d^{\cal P}L$ where $L$ is a $2$-homogeneous regular Lagrangian. Then ${\cal N}_{\cal S}$ is the unique $g_L$-metric and conservative connection relative to $\cal S$ and we  have ${\cal N}_{\cal S}={\cal N}_0$
\end{enumerate}
\end{The}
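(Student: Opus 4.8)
The plan is to reduce each part to results already proved for semisprays and for almost cotangent structures of Liouville type, so that the only real work is bookkeeping in an adapted basis.

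For Part (i), the starting point is that $(\mathcal{M},\theta)$ being a generalized mechanical system makes $\Omega=-d^{\cal P}({\cal J}^*\theta)$ a semi-Hamiltonian almost cotangent structure compatible with $\cal J$, and the associated semispray $\cal S$ is characterized by $i_{\cal S}\Omega={\cal L}^{\cal P}_{\cal C}\theta-\theta$ (Proposition \ref{propsemiH}). I would apply Theorem \ref{SLagMet} verbatim to this $\cal S$: it asserts that ${\cal N}_{\cal S}$ is $g_\Omega$-metric (equivalently $g_L$-metric) and Lagrangian if and only if, around each $m$, there is a local semi-basic $1$-form $\xi$ with $d^{\cal P}\xi$ semi-basic and $i_{\cal S}\Omega=d^{\cal P}({\cal L}^{\cal P}_{\cal C}L-L)+\xi$. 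The task is then to convert this condition on $i_{\cal S}\Omega={\cal L}^{\cal P}_{\cal C}\theta-\theta$ into the stated condition on $\theta$. I would work in an adapted canonical basis furnished by Theorem \ref{S-coS} (so that ${\cal J}{\cal X}_\a={\cal V}_\a$, ${\cal C}=y^\a{\cal V}_\a$, and locally $\theta=\theta_\a{\cal X}^\a+\frac{\partial L}{\partial y^\b}{\cal V}^\b$), write $\theta=d^{\cal P}({\cal L}^{\cal P}_{\cal C}L-L)+\omega$ with $\omega$ semi-basic (always possible locally by the argument of Theorem 4 in \cite{Mi}), and then compare local coefficients, using (\ref{EuLageq}) for $\cal S$ and (\ref{GL}) for the Lagrangian coefficients, to show that the condition of Theorem \ref{SLagMet} holds exactly when $\omega$ can be chosen with $d^{\cal P}\omega$ semi-basic. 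Uniqueness of ${\cal N}_{\cal S}$ is then inherited directly from Theorem \ref{SLagMet}.

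For Part (ii), I would set $\eta:={\cal L}^{\cal P}_{\cal C}\theta-\theta=i_{\cal S}\Omega$, which is precisely the regular semi-Hamiltonian whose semispray is $\cal S$, and invoke Theorem \ref{exisLagScan}(i): on the dense open set ${\cal M}_0=\{g({\cal C},{\cal C})\neq 0\}$ there is a canonical Lagrangian nonlinear connection, call it ${\cal N}_0$, whose canonical semispray is exactly $\cal S$. By the equivalence (a)$\Leftrightarrow$(c) of Theorem \ref{exisLagScan}(ii), the fact that $\cal S$ is the canonical semispray of ${\cal N}_0$ is the same as $i_{h_{{\cal N}_0}}(i_{\cal S}\Omega)=0$, that is $i_{h_{{\cal N}_0}}\eta=0$; together with the Lagrangian property this is the definition of a connection conservative for $\eta$ (Definition \ref{cons}). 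Finally, since $\cal S$ is the canonical semispray of ${\cal N}_0$, the geodesics of ${\cal N}_0$ are by Definition \ref{geod} the integral curves of $\hat{\rho}({\cal S})$, which by Definition \ref{semSEleq} are exactly the solutions of the Euler--Lagrange equations (\ref{EuLageq}), giving the last assertion of (ii).

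For Part (iii), with $\theta=d^{\cal P}L$ and $L$ a $2$-homogeneous regular Lagrangian, Euler's identity gives ${\cal L}^{\cal P}_{\cal C}L=2L$, hence ${\cal L}^{\cal P}_{\cal C}L-L=L$ and $d^{\cal P}({\cal L}^{\cal P}_{\cal C}L-L)=d^{\cal P}L=\theta$; so the condition of Part (i) holds with $\omega=0$ (and trivially $d^{\cal P}\omega=0$ semi-basic), and Part (i) shows ${\cal N}_{\cal S}$ is the unique $g_L$-metric and Lagrangian connection. Moreover $2$-homogeneity of $L$ makes $\cal S$ a spray, so by Proposition \ref{Hprop}(ii) its tension vanishes and $\cal S$ is already the canonical semispray of ${\cal N}_{\cal S}$, whose strong torsion is then zero by the homogeneity results at the end of Subsection \ref{GC}. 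Since the connection ${\cal N}_0$ of Part (ii) also has canonical semispray $\cal S$, and in the spray case its strong torsion $\Upsilon={\cal N}_0-{\cal N}_{\cal S}$ vanishes, the Corollary following Theorem \ref{same S} (equal canonical semispray and equal strong torsion force equality of the connections) yields ${\cal N}_{\cal S}={\cal N}_0$; combining this with the conservative property of ${\cal N}_0$ and Example \ref{sprayHamilton} identifies ${\cal N}_{\cal S}$ as the unique $g_L$-metric conservative connection relative to $\cal S$.

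The one genuinely delicate step is the local translation in Part (i). Theorem \ref{SLagMet} is phrased through $i_{\cal S}\Omega$, whereas the mechanical-system hypothesis is phrased through $\theta$, and passing between them mixes ${\cal L}^{\cal P}_{\cal C}$ with $d^{\cal P}$. Because the Jacobi identity is not assumed, $d^{\cal P}\circ d^{\cal P}\neq 0$ and ${\cal L}^{\cal P}_{\cal C}$ does not commute with $d^{\cal P}$, so the conversion cannot be carried out by formal exterior-calculus identities; I expect instead to compare the two conditions at the level of the local coefficients, checking explicitly that the structure-function terms $C^\g_{\a\b}$ occurring in $\Omega$ (see (\ref{semihamO})) and in (\ref{EuLageq}) cancel so that ``$d^{\cal P}\omega$ semi-basic'' becomes equivalent to ``$\xi$ and $d^{\cal P}\xi$ semi-basic''. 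This is precisely the place where the independence of ${\cal N}_{\cal S}$ from the chosen almost bracket and the careful use of the adapted basis of Theorem \ref{S-coS} are needed.
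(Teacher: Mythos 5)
Your route is exactly the paper's: the published proof of Theorem \ref{mecasyst} consists of three citations (Theorem \ref{SLagMet} for (i), Theorem \ref{exisLagScan}(i) for (ii), Theorem \ref{SLagMet} together with Example \ref{sprayHamilton} for (iii)), and your elaborations of (ii) and (iii) fill those citations in correctly --- in particular the use of the equivalence (a)$\Leftrightarrow$(c) of Theorem \ref{exisLagScan}(ii) to identify ${\cal N}_0$ as conservative for ${\cal L}^{\cal P}_{\cal C}\theta-\theta$, and the use of $2$-homogeneity, Proposition \ref{Hprop}(ii) and the Corollary to Theorem \ref{same S} to force ${\cal N}_{\cal S}={\cal N}_0$ in (iii).

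The gap is the one you yourself flag in Part (i), and when one actually performs the coefficient comparison it does not close as stated. Work in the adapted basis, write $\theta=\theta_\a{\cal X}^\a+\dis\frac{\p L}{\p y^\a}{\cal V}^\a$ with $L$ the local function satisfying $(d^{\cal P}L)_{|\V{\cal M}}=\theta_{|\V{\cal M}}$, and set $F_\a=\theta_\a-\rho_\a^i\dis\frac{\p L}{\p x^i}$. The vertical components of $i_{\cal S}\O={\cal L}^{\cal P}_{\cal C}\theta-\theta$ and of $d^{\cal P}({\cal L}^{\cal P}_{\cal C}L-L)$ agree automatically, and a direct computation gives $\xi_\a=({\cal L}^{\cal P}_{\cal C}-1)F_\a$ for the form $\xi=i_{\cal S}\O-d^{\cal P}({\cal L}^{\cal P}_{\cal C}L-L)$ of Theorem \ref{SLagMet}, while the form $\o$ of Theorem \ref{mecasyst}(i) has $\o_\a=F_\a$ up to a basic term. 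Consequently ``$d^{\cal P}\o$ semi-basic'' is the condition $\dis\frac{\p F_\a}{\p y^\g}=0$, whereas ``$d^{\cal P}\xi$ semi-basic'' is only $y^\b\dis\frac{\p}{\p y^\b}\left(\dis\frac{\p F_\a}{\p y^\g}\right)=0$, i.e.\ $F_\a$ is basic \emph{modulo a term $1$-homogeneous in $y$}. So the ``if'' direction of (i) does follow from Theorem \ref{SLagMet}, but the ``only if'' direction requires eliminating that $1$-homogeneous residue, which neither your sketched comparison nor the paper's one-line citation does. To repair it you would have to exploit the sharper information in the \emph{proof} of Theorem \ref{SLagMet} (there $\xi_\a=g_{\a\b}\phi^\b$ with $\phi^\b$ basic), or else restate the condition of (i) on ${\cal L}^{\cal P}_{\cal C}\theta-\theta$ rather than on $\theta$.
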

\begin{proof}[Proof of Theorem \ref{mecasyst}]${}$\\
As $\O$ is semi-Hamitonian, point (i) is a consequence of Theorem \ref{SLagMet}.\\
Point (ii) is a consequence of Point (i) in  Theorem \ref{exisLagScan}. \\
Point (iii) is also a consequence of Theorem \ref{SLagMet} (see Example \ref{sprayHamilton}).\\
\end{proof}

\section{Extremals and connection associated with partial  hyperregular Lagrangian on a foliated anchored bundle}\label{hyperlag}
The  first purpose of this section is  to give characterizations of local minimizers of a partial convex   hyperregular Lagrangian. The second purpose is to give  a link between, on the one hand,  the canonical Lagrangian metric nonlinear connection associated with a partial  hyperregular Lagrangian and a pre-Lie algebroid structure, and, on the other hand,  the induced structures on each leaf of the foliation defined by the anchored bundle (see Theorem \ref{extremalN}).
\subsection{ Extremals of   a  partial convex hyperregular Lagrangian on a foliated anchored bundle  }\label{normalext}${}$\\
\noindent Consider an anchored bundle  $({\cal A},M,\rho)$.

\begin{Def}\label{partL}${}$\\
 A  {\bf partial  Lagrangian} on $({\cal A},M,\rho)$ is a function $L: {\cal M}\ap \R$ defined on
an open manifold $\cal M$ of $\cal A$ which is fibered on $M$.
\end{Def}


 If $\pi:{\cal M}\ap M$ is the projection,  given any $x_0\in M$, there exists a coordinate system $(x^i, y^\a)$ on $\cal A$ associated with a local basis $\{e_\a\}$ on an open neighbourhood $V$ of $x_0$ in $M$ such that $(x^i,y^\a)$ induces a coordinate system on $\pi^{-1}(U)$. Moreover this coordinate system induces a diffeomorphism from   $\pi^{-1}(U)$ onto an open set of type $U\times V\subset\R^n\times \R^k$
\begin{Def}\label{Madmiss}${}$\\
An absolutely continuous curve $c$ from an interval $I$ of $\R$ to M is called ${\cal M}$-admissible if there exists a curve $\bar{c}:I\ap {\cal M}$  such that
$c(t)=\pi (\bar{c}(t)) \textrm{ and } \dot{c}(t)=\rho(\bar{c}(t) \textit{ a.e. } \forall t\in I.$
The curve $\bar{c}$ will be called a $\cal M$-lift of $c$
\end{Def}

\begin{Rem}\label{charactMadmiss}${}$\\
A  $H^1$-curve \footnote{\textit{i.e} the curve $\dot{\bar{c}} :I\ap \T{\cal M}$ is absolutely continuous} $\bar{c}:I\ap{\cal M}$ gives rise to a ${\cal M}$-admissible curve $c=\pi\circ \bar{c}$ if and only if $\bar{c}$ is  an admissible absolutely continuous  curve ${\cal M}$  such that $\bar{c}$ is also an admissible curve according to the anchored bundle $(\T{\cal M},{\cal M},\hat{\rho})$.
\end{Rem}

Given two points $x_0$ et $x_1$ in $M$ we denote by
${\cal C}(x_0,x_1,{\cal M})$ the set of all ${\cal M}$-admissible $H^1$ curves $c:[a,b]\ap M$ which join $x_0$ to $x_1$ \textit{i.e.}  such that $c(a)=x_0$ and $c(b)=x_1$.\\

A first problem is to find conditions under which ${\cal C}(x_0,x_1, {\cal M})\not=\emptyset$.\\

\noindent According to the famous  Sussmann's accessibility Theorem, we have a foliation on $\cal M$ such  any two points of $\cal M$ can be joined by an admissible curve (relative to $\T{\cal M}$) if and only if they belongs to the same leaf of this foliation. Therefore Remark \ref{charactMadmiss} implies that ${\cal C}(x_0,x_1, {\cal M})\not=\emptyset$ if and only if $x_0$ and $x_1$ belongs to the projection by $\pi$ of some leaf of this foliation.\\

Assume now that $({\cal A},M,\rho)$ is a foliated anchored bundle. According to Proposition \ref{foliatedquasi} and Proposition \ref{prolonfolit} the foliation given is Sussmann's accessibility Theorem in $\cal M$ is the foliation defined by the integrable distribution $\hat{\rho}({\T{\cal M}})$. Any leaf of this foliation is of type $\pi^{-1}(N)$ where $N$ is a leaf of the foliation defined by $\rho({\cal A})$. Therefore ${\cal C}(x_0,x_1, {\cal M})\not=\emptyset$ if and only if  $x_0$ and $x_1$ belongs to a same integral manifold of $\rho({\cal A})$.


Now  we consider a  partial Lagrangian  $L:{\cal M} \ap \R$.  A classical optimal problem is to find the $\cal M$-admissible curves which minimizes the functional
   ${\cal I}_L(\bar{c})=\dis \int_a^b
L(\bar{c}(t))dt $:
\begin{equation}\label{pbopti}
 \inf \{ {\cal I}_L(\bar{c}) \;|\;\pi\circ \bar{c} \in {\cal C}(x_0,x_1, {\cal M})\}
\end{equation}
First of all notice that  the value   ${\cal I}_L(\g)$  is invariant
by translation on the parametrization of $\bar{c}$. We will denote by  ${\cal
C}(x_0,x_1, {\cal M}, T)$ the set  $\cal M$-admissible curves of class $H^1$ which are defined on an interval  of type $[a,a+T]$ and we will look for
the problem of minimization of ${\cal I}_L$  on the set   ${\cal
C}(x_0, x_1, {\cal M},T)$. Assume that  ${\cal I}_L$ has a  minimum on ${\cal C}(x_0,x_1,{\cal M},T)$.

\begin{Def}\label{minimisante} ${\;}$
\begin{enumerate}
\item A  curve ${c}\in {\cal C}(x_0,x_1,{\cal M},T)$  is called a minimizing curve or simply a {\bf minimizer}  in time $T$  if there exists a ${\cal M}$-lift  $\bar{c}$ of $c$ such that the minimum of  ${\cal I}_L$  on ${\cal C}(x_0,x_1,{\cal M},T)$ is ${\cal I}_L(\bar{c})$
\item We will say that   ${c}\in {\cal C}(x_0,x_1,{\cal M}, T)$, defined on an interval   $[a,a+T]$ is
a {\bf local minimizer} for the functional ${\cal I}_L$ if for any $t_0\in I$, there exists an open $ U$ in $ M$  and a $\cal M$-lift $\bar{c}$ such that $\bar{c}(t_0)$ belongs to $\pi^{-1}( U)$ and  for any interval
 $[t_1,t_2]\subset [a,a+T]$ such that $\bar{c}(t)\in \pi^{-1}({ U})$ for  $t \in [t_1,t_2]$, then the
restriction of $c_{|  [t_1,t_2]}$ is a minimizer in  ${\cal C}(\g(t_1),\g(t_2), {\cal U},t_2-t_1)$.
\end{enumerate}
\end{Def}
\indent It is clear that a minimizer is a local minimizer. Therefore it is natural to look for sufficient conditions under which a curve in ${\cal C}(x_0,x_1,{\cal M},T)$ is a local minimizer. Since the  problem of finding local minimizers  is a local problem, without loss of generality we may assume that $ U$ is a chart domain in $\cal M$ compatible with the fibration $\t: {\cal A}\ap M$.
Consider a canonical coordinate system $(x^i,y^\a)$ on $\cal M$  naturally associated with a local basis $\{e_\a\}$ of $\cal A$ on the chart domain $U$ of the coordinate system $(x^i)$ in $M$.
 With these notations, our problem  gives rise to the following optimal control problem:\\

 Find a curve $c$ which  is a solution of the differential system $\dot{x}^i=\rho^1_\a y^\a, i=1\dots,n$ which minimizes the functional $\dis \int_{t_1}^{t_2}
L(x(s),y(s))ds$.

 Therefore, by application of the Maximum Principle,  for $\n\in \R$ consider the Hamiltonian:

$H_\n:\pi^{-1}(U)\times_U T¨^*U\equiv U\times \R^k\times \R^n\ap \R$ defined by
\begin{eqnarray}\label{hamilPMP}
H_\n(x,y,\xi)=\xi_j\rho_\a^j y^\a -\n L(x,y).
\end{eqnarray}
  If a $\cal M$-admissible curve $c:[a,a+T]\ap U$ is minimizing  then there exists a lift $\xi:[0,T]\ap \R^n$ over $c$  and $\n\geq 0$ such that:

  \begin{eqnarray}\label{eqhamiltonienne}
\left\{\begin{array} [c]{l}
\dot{c}(t)=\dis\frac{\partial H_{\nu}}{\partial p}(c(t),y(t),\xi(t))\\
\dot{\xi}(t)=-\dis\frac{\partial H_{\nu}}{\partial x}(c(t),y(t),\xi(t))\\
H_\n(c(t),y(t),\xi(t))=\sup\{H_\n(c(t),z,\xi(t))\;|\; z\in V\} \textrm{ if } \pi^{-1}(U)\equiv U\times V.
\end{array}
\right.
\end{eqnarray}\\
Taking into account  the Equation (\ref{hamilPMP}), this condition is equivalent to:\\
$(c,y,\xi)$ is a solution of the constrained  differential system
 \begin{eqnarray}\label{eqhamiltonienne2}
\left\{\begin{array} [c]{l}
\dot{x}^i=\rho_\a^i y^\a\\
\dot{\xi}_i=- \xi_j\dis\frac{\partial \rho_\a^j}{\partial x^i}y^\a+\n\dis\frac{\partial L}{\partial x^i}(x^i,y^\a)\\
\rho_\b^j\xi_j=\n \dis\frac{\partial L}{\partial y^\b}(x^i,y^\a).\\
\end{array}
\right.  \label{eqhamiltonienne}
\end{eqnarray}\\

 On the other hand any curve $(c, y,\xi): [0,T]\ap \pi^{-1}(U)\times _UT^*U$  which satisfies Equations
        (\ref{eqhamiltonienne2})  is called a {\bf bi-extremal}. According to the value of $\n$
we obtain two  types  of bi-extremals:
\begin{enumerate}
\item if  $\n=0$ but  $\xi(t)\not=0$, the  bi-extremal $(c,y,\xi)$ called  \textbf{ abnormal bi-extremal }
 (such cuves depends only on $\cal M$).
\item If $\nu\not=0  $ (which can be normalized by $\n=1)$ the  bi-extremal $(c,y, \xi)$
is called  \textbf{ normal bi-extremal}.
\end{enumerate}
\begin{Def}${}$\label{biextremale}${}$\\
Consider   a $\cal M$-admissible curve $c :[0,T]\ap M$:\\
(1) we say that   $c$ is a normal extremal if for any $t_0\in [0,T] $ there exists a coordinate system $(x^i,y^\a)$  in $\cal M$ over   chart domain $U$ around $c(t_0)$ and a lift  $\check{c}=(c,y,\xi)$ of $c$  in ${\cal M}\times_M\ T^*M$ over $U$
such that  $\check{c}$ is a  normal bi-extremal;\\
(2) We say that  $c$ is   a strictly abnormal extremal for any $t_0\in [0,T] $, if for any coordinate system $(x^i,y^\a)$  in $\cal M$ over   chart domain $U$ around $c(t_0)$, and any lift  $\check{c}=(c, y,\xi)$ of $c$ in ${\cal M}\times_M\ T^*M$ over $U$ which is a solution of the differential system
(\ref{eqhamiltonienne}),  then $\check{c}$ is an abnormal  bi-extremal.
\end{Def}

\begin{Rem}\label{strictanor}${}$\\
 A local minimizer  $c$ gives rise to a
  bi-extremal $(\bar{c}, \xi)$, but it is well known in control theory
 that there exists local minimizer which are strictly abnormal extremals  (see for instance \cite{LS}). However, we will see that in our context  there is no strictly abnormal  local minimizer (see Remark \ref{noabnor}).\\
\end{Rem}
  A direct consequence of  the constrained differential system (\ref{eqhamiltonienne2}) is
\begin{Lem}\label{abnorm}${}$
 \begin{enumerate}
 \item A bi-extremal $(c,y,\xi)$ is  abnormal  if an only if  $\rho^*(c,\xi)=0$.
 \item Given a normal bi-extremal $(c,y,\xi)$ then $(c,y,\xi+\mu)$ is also a normal bi-extremal if an only if $(c,y,\mu)$ is an abnormal bi-extremal.
 \end{enumerate}
 \end{Lem}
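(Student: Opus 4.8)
The plan is to read both assertions directly off the constrained Hamiltonian system (\ref{eqhamiltonienne2}), after recording the coordinate form of the transpose anchor. In the dual basis $\{\epsilon^\b\}$ of $\{e_\b\}$ attached to the coordinate system $(x^i,y^\a)$, one has $\langle\rho^*\xi,e_\b\rangle=\langle\xi,\rho(e_\b)\rangle=\rho_\b^i\xi_i$, so that $\rho^*(c,\xi)=(\rho_\b^i\xi_i)\,\epsilon^\b$; hence $\rho^*(c,\xi)=0$ is equivalent to $\rho_\b^i\xi_i=0$ for every $\b$. With this in hand the third line of (\ref{eqhamiltonienne2}) becomes the pivot of the whole argument, since it reads $\rho_\b^j\xi_j=\n\,\dis\frac{\p L}{\p y^\b}$.

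For Part (1) the forward implication is immediate: if $(c,y,\xi)$ is abnormal then $\n=0$, and the third equation of (\ref{eqhamiltonienne2}) collapses to $\rho_\b^j\xi_j=0$ for all $\b$, i.e. $\rho^*(c,\xi)=0$. Conversely, suppose $(c,y,\xi)$ is a bi-extremal with $\rho^*(c,\xi)=0$; the third equation then forces $\n\,\dis\frac{\p L}{\p y^\b}(x,y)=0$ for all $\b$. Here I would invoke the hyperregularity of $L$: the fiber derivative $(x,y)\mapsto\bigl(x,\dis\frac{\p L}{\p y^\a}(x,y)\bigr)$ is a diffeomorphism of $\cal M$ onto an open subset of ${\cal A}^*$, so on the domain where the minimization problem is posed the covector $\bigl(\dis\frac{\p L}{\p y^\a}\bigr)$ does not vanish, whence $\n=0$ and the bi-extremal is abnormal. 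This backward step is the only non-formal point, and I expect it to be the main obstacle: one must check that the standing convexity and hyperregularity assumptions genuinely exclude $\dis\frac{\p L}{\p y}=0$ along a would-be normal extremal. In the Finsler setting $L=\mi{\cal F}^2$ of the final section this is automatic by Euler's homogeneity relation $y^\a\dis\frac{\p L}{\p y^\a}={\cal F}^2>0$; in the general convex case it is exactly the condition that the relevant fiber does not meet the minimum locus of $L(x,\cdot)$, which is what hyperregularity is meant to encode.

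For Part (2) I would exploit that, for a fixed value of $\n$, the system (\ref{eqhamiltonienne2}) is affine in the covector variable. Writing both $(c,y,\xi)$ and $(c,y,\xi+\mu)$ as normal bi-extremals, so that $\n=1$ in each and over the same base curve $(c,y)$, the first equation $\dot{x}^i=\rho_\a^i y^\a$ is automatically shared, while subtracting the second and third equations makes the terms $\dis\frac{\p L}{\p x^i}$ and $\dis\frac{\p L}{\p y^\b}$ cancel. What survives for $\mu$ is exactly
$$\dot{\mu}_i=-\mu_j\dis\frac{\p \rho_\a^j}{\p x^i}y^\a,\qquad \rho_\b^j\mu_j=0,$$
together with $\dot{x}^i=\rho_\a^i y^\a$; but these are precisely the relations (\ref{eqhamiltonienne2}) taken with $\n=0$ for $(c,y,\mu)$, that is, the defining equations of an abnormal bi-extremal (with $\mu\not\equiv 0$, which is implicit since otherwise $\xi+\mu=\xi$). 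Reading this elementary computation in both directions yields the announced equivalence, and I would close by noting that the degenerate case $\mu=0$ is trivial.
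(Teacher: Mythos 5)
Your computation is exactly what the paper intends: the lemma is stated there with no written proof, only the remark that it is ``a direct consequence of the constrained differential system (\ref{eqhamiltonienne2})'', and your reading of the third constraint $\rho_\b^j\xi_j=\n\,\partial L/\partial y^\b$ for Part (1) and your subtraction of the two normal systems for Part (2) is precisely that direct verification. The one caveat, which you already flag yourself, is that in the converse of Part (1) injectivity of the Legendre transformation does not by itself force $\partial L/\partial y\neq 0$ on all of $\cal M$ (a diffeomorphism onto an open subset of ${\cal A}^*$ may still take the value $0$ at one point of a fiber), so the implication $\rho^*(c,\xi)=0\Rightarrow\n=0$ genuinely uses that the zero covector is not in the range of $\L_L$ along the curve — automatic in the homogeneous Finsler setting of Section \ref{Partialfins}, and harmless for the way the lemma is used (Part (2) and the forward direction of Part (1)).
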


\subsection{Hamiltonian approach of  extremals for a partial convex hyperregular Lagrangian}\label{normalext}

We fix some partial 
Lagrangian $L:{\cal M}\ap \R$.  We consider  local coordinates $(x^i,\xi_j)$ on $T^*M$,  $(x^i,y^\a)$ on $\cal M$, and  $(x^i,\eta_\a)$ on ${\cal A}^*$ over a chart domain of the coordinate system  $(x^i)$ on $M$.  We denote by $\L_L:{\cal M}\ap {\cal A}^*$ the associated Legendre transformation {\it i.e.}   locally $\L_L$ is $\eta_\a=\dis\frac{\p L}{\p y^\a}$ in local canonical coordinates .

Choose an almost Lie bracket $[\;,\;]_{\cal A}$ such that $({\cal A},M,\rho,[\;,\;]_{\cal A})$ is a pre-Lie algebroid. If $J$ is the vertical endomorphism on $ \cal A$,  recall that on $\T{\cal M}$ we have a canonical symplectic form $\O_L=-d^{\cal P}J^*(dL)$ which only depends on $[\;,\;]_{\cal A}$. We also have a  semispray ${\cal S}_L$ associated with $\theta$ characterized by $i_{{\cal S}_L}\O_L=d^{\cal P}({\cal L}^P_CL-L)$ where $C$ is the canonical Euler section induced on $\cal M$.  (see Equation (\ref{iSO}))
  \begin{eqnarray}\label{SL}
\begin{cases}
{\cal S}_L=y^\a{\cal X}_\a+\bar{\cal S}^\b{\cal V}_\b \cr
g_{\a\b}{\cal S}^\b+\dis\frac{\p^2 L}{\p x^i\p y^\a} \rho_\d ^i y^\d+\frac{\p L}{\p y^\g}C_{\a\d}^\g y^\d-\rho_\a^i\frac{\p L}{\p x^i}=0\cr
\end{cases}
\end{eqnarray}
where $g_{\a\b}=\dis\frac{\p^2 L}{\p y^\a\p y^\b}$. In particular, $S_L$ depends only on $[\;,\;]_{\cal A}$ too.\\

\begin{Def}\label{hypereg}${}$
\begin{enumerate}
 \item A   partial hyperregular  Lagrangian on $({\cal A},M,\rho)$ is a partial Lagrangian  $L: {\cal M}\ap \R$  such that the Legendre transformation $\L_L$ is injective.
 \item A   partial convex  Lagrangian on $({\cal A},M,\rho)$ is a partial Lagrangian  $L: {\cal M}\ap \R$  such that the associated metric $g_L$ is Riemannian.  \\
  \end{enumerate}
\end{Def}

Moreover we  assume that $L$ is  hyperregular. Therefore the range ${\cal M}^{*}=\L_{L}({\cal M})$ is an open submanifold of ${\cal A}^{*}$ and $\L_{L}$ is a diffeomorphism from $\cal M$ to ${\cal M}^{*}$.  Denote by   $H^{\a}$ the local components of $\mathfrak{ H}_{L}=\L_{L}^{-1}$ \textit{i.e.}
  $\dis\frac{\p L}{\p y^\a}(x^i,H^{\a}(x^{i},\eta_{\b}))=\eta_{\b}$. Let   $\rho^*:T^*M\ap {\cal A}^*$ be  the transpose morphism of $\rho: {\cal A}\ap TM$.

Now  consider the bundle $\check{p}_*:{\cal M}\times_M T^*M\ap M$. Note that  $\check{p}_*:{\cal M}\times_MT^*M\ap {\cal M}$ is also the pull-back of ${p}_*:T^*M\ap M$ over $\pi:{\cal M}\ap M$ and let $\check{\pi} :{\cal M}\times_MT^*M \ap T^*M$ be  the canonical morphism which is an isomorphism in restriction to each fiber. Finally   the map $(x,y,\xi)\ap (x,\xi)$ is also  fibration of $\check{\pi}_*:{\cal M}\times_MT^*M\ap T^*M$.
 We consider the map $\mathfrak{g}:T^{*}M\ap {\cal M}\times_MT^*M$ defined by

$\mathfrak{g}(x,\xi)=(x,\mathfrak{ H}_{L}(x,\rho^*(x,\xi),\xi),$ and ${\cal G}=\left\{(x,y,\xi)\in :{\cal M}\times_MT^*M\;|\; \rho^*(x,\xi)=\L_L(x,y)\right\}$

\begin{Pro}\label{cal G}${}$\\
$\cal G$ is a submanifold of ${\cal M}\times_MT^*M$ of dimension $2n$ and $ \mathfrak{g}$ is a diffeomorphism from $T^*M$ to $\cal G$. We have $\check{p}_*({\cal G})=\L_L^{-1}\circ\rho^*(T^*M)$. \end{Pro}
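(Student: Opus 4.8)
The plan is to realize ${\cal G}$ directly as the image of an embedding, so that the submanifold structure and the statement that $\mathfrak{g}$ is a diffeomorphism come out together. First I would fix canonical coordinates $(x^i,y^\a)$ on ${\cal M}\subset{\cal A}$ attached to a local basis $\{e_\a\}$, the dual coordinates $(x^i,\xi_j)$ on $T^*M$ and $(x^i,\eta_\a)$ on ${\cal A}^*$, so that $(x^i,y^\a,\xi_j)$ are coordinates on ${\cal M}\times_M T^*M$. In these coordinates $\rho^*$ reads $\eta_\a=\rho_\a^j\xi_j$ and $\L_L$ reads $\eta_\a=\dis\frac{\p L}{\p y^\a}$, so that
\[
{\cal G}=\Big\{(x^i,y^\a,\xi_j)\;:\;\rho_\a^j\xi_j=\dis\frac{\p L}{\p y^\a}(x,y),\ \a=1,\dots,k\Big\}.
\]
Since $\dim({\cal M}\times_M T^*M)=(n+k)+n=2n+k$ and ${\cal G}$ is cut out by the $k$ scalar conditions above, the expected dimension is $2n$; the point to confirm is that these conditions are independent.

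For the submanifold claim and the diffeomorphism simultaneously, I would argue as follows. Because $L$ is hyperregular, $\L_L\colon{\cal M}\to{\cal M}^*$ is a diffeomorphism onto the open set ${\cal M}^*\subset{\cal A}^*$, so $\mathfrak{H}_L=\L_L^{-1}$ is a well-defined smooth map ${\cal M}^*\to{\cal M}$ with $\L_L\circ\mathfrak{H}_L=\mathrm{id}$ and $\mathfrak{H}_L\circ\L_L=\mathrm{id}$. On the open set $(\rho^*)^{-1}({\cal M}^*)\subset T^*M$ the map $\mathfrak{g}(x,\xi)=(x,\mathfrak{H}_L(\rho^*(x,\xi)),\xi)$ is smooth, and the projection $\check{\pi}_*\colon(x,y,\xi)\mapsto(x,\xi)$ satisfies $\check{\pi}_*\circ\mathfrak{g}=\mathrm{id}$. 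Hence $\mathfrak{g}$ is injective, is an immersion (its derivative admits the left inverse $T\check{\pi}_*$), and is a homeomorphism onto its image (with continuous inverse the restriction of $\check{\pi}_*$); that is, $\mathfrak{g}$ is a smooth embedding, so its image is a submanifold diffeomorphic to $T^*M$, of dimension $2n$. It then remains to identify this image with ${\cal G}$: the inclusion $\mathrm{Im}\,\mathfrak{g}\subset{\cal G}$ follows from $\L_L(\mathfrak{H}_L(\rho^*(x,\xi)))=\rho^*(x,\xi)$, while conversely any $(x,y,\xi)\in{\cal G}$ satisfies $\L_L(x,y)=\rho^*(x,\xi)\in{\cal M}^*$, whence $y=\mathfrak{H}_L(\rho^*(x,\xi))$ and $(x,y,\xi)=\mathfrak{g}(x,\xi)$. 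Thus ${\cal G}=\mathrm{Im}\,\mathfrak{g}$ is a $2n$-dimensional submanifold and $\mathfrak{g}$ is a diffeomorphism onto it, with inverse $\check{\pi}_*|_{\cal G}$.

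For the last assertion $\check{p}_*({\cal G})=\L_L^{-1}\circ\rho^*(T^*M)$ I would simply read off the projection to ${\cal M}$: every point of ${\cal G}$ has the form $(x,y,\xi)$ with $y=\mathfrak{H}_L(\rho^*(x,\xi))=\L_L^{-1}(\rho^*(x,\xi))$, so $\check{p}_*({\cal G})=\{\L_L^{-1}(\rho^*(x,\xi)):(x,\xi)\in T^*M\}=\L_L^{-1}(\rho^*(T^*M))$, with the understanding that $\L_L^{-1}$ is applied only on ${\cal M}^*$. Equivalently $\check{p}_*|_{\cal G}=\mathfrak{H}_L\circ\rho^*\circ\check{\pi}_*|_{\cal G}$, which makes the identity transparent.

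The only genuine subtlety — and the step I would treat most carefully — is the domain of $\mathfrak{g}$: strictly it is defined on $(\rho^*)^{-1}({\cal M}^*)$, so the phrase ``$\mathfrak{g}$ is a diffeomorphism from $T^*M$'' is literal only when $\rho^*(T^*M)\subset{\cal M}^*$; otherwise $T^*M$ should be read as this open subset throughout. As an independent check of the submanifold structure that does not invoke $\mathfrak{g}$, I would note that the vertical derivative of the defining map $(x,y,\xi)\mapsto\big(\rho_\a^j\xi_j-\dis\frac{\p L}{\p y^\a}\big)_\a$ in the $y$-directions is $-\big(\dis\frac{\p^2 L}{\p y^\a\p y^\b}\big)$, which is invertible precisely because $\L_L$ is a local diffeomorphism; hence $0$ is a regular value and ${\cal G}$ is a codimension-$k$, i.e. dimension-$2n$, submanifold, in agreement with the embedding argument.
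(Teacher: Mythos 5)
Your proof is correct and follows essentially the same route as the paper's: the same local equations cutting out $\cal G$, the same map $\mathfrak{g}$ with the projection $(x,y,\xi)\mapsto(x,\xi)$ as its inverse, and the same two-sided identification of $\mathrm{Im}\,\mathfrak{g}$ with $\cal G$. You merely supply details the paper leaves implicit — the left-inverse argument upgrading the injective immersion to an embedding, the regular-value check that the $k$ defining equations are independent because the vertical Hessian is invertible, and the caveat that $\mathfrak{g}$ is literally defined only on $(\rho^*)^{-1}({\cal M}^*)$ — all consistent with the paper's intent.
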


\begin{proof} ${}$\\
Considering  the fixed  local coordinate system, we get a coordinate system $(x^i,y^\a,\xi_j)$ on ${\cal M}\times_M T^*M$. It follows that locally $\cal G$ is defined by the equations:
\begin{eqnarray}\label{eqSigma}
\rho_\a^j\xi_j-\dis\frac{\p L}{\p y^\a}(x^i,y^\a)=0\;\; \a=1,\dots,k.
\end{eqnarray}
Since $L$ is a regular Lagrangian, the  previous equations are independent. 
Locally we have $\mathfrak{g}(x^i,\xi_j)=(x^i,\xi_j,H^\a(x^i,\rho_\b^j\xi_j))$. Since $\dis\frac{\p L}{\p y^\a}(x^i,H^\a(x,\eta_\b)=\eta_\a$, we have \\$\dis\frac{\p L}{\p y^\a}(x^i,H^\a(x,\rho_\b^j\xi_j)=\rho_\a^j\xi_j$. Therefore $\mathfrak{g}(T^{*}M)\subset {\cal G}$. On the other hand  if $(x,y,\xi)$ belongs to $\cal G$, we have $\mathfrak{ H}_{L}(x,\rho^*(x,\xi))=\rho^*(x,\xi)$ so  $\mathfrak{g}(T^{*}M)= {\cal G}$. Finally $\mathfrak{g}$ is clearly  injective and the rank of its Jacobian matrix  is $2n$. Therefore $\mathfrak{g}$ is a diffeomorphism. The last part is clear. \\
\end{proof}

On $ {\cal M}\times_MT^*M$ consider the Hamiltonian $\check{H}_L(x,y,\xi)=<\xi,\rho(x,y)>- L(x,y)$. Locally, we have $\check{H}_L(x^i,\xi_j,y^\a)=\rho_\a^j\xi_j y^\a-L(x^i,y^\a)$. According to the  Equation (\ref{hamilPMP})   we locally have $H_1=\check{H}_L$. Moreover, on $\cal G$ we have a symplectic form $\check{\O}$ such that $\mathfrak{g}^*\check{\O}=\O$. Let  $\check{X}_{H_L}$ be the Hamiltonian vector field associated with $(\check{H}_L)_{| {\cal G}}$. 

\begin{The}\label{locmin}${}$\\
Consider a partial hyperregular Langrangian  $L$ on $\cal M$ and a $H^1$ curve $c:I\ap M$. Then we have the following properties:
\begin{enumerate}
\item[(i)] $c$ is a normal extremal of $L$ if and only if there exists a curve $\check{c}:I\ap {\cal G}$ such that $\check{c}$ is an integral curve of $\check{X}_{H_L}$ and $\check{p}\circ\check {c}=c$, and in particular $c$ is smooth. Such a curve will be called a ${\bf {\cal G}}${\bf -lift } of $c$.
\item[(ii)] We  have  $T\check{p}_*(\check{X}_{H_L})={\cal S}_L$ over $\check{p}_*(\cal G)$. In particular, the restriction of ${\cal S}_L$ to $\tilde{p}_*(\cal G)$ is independent of the choice of the almost  Lie bracket  $[\;,\;]_{\cal A}$ and each normal extremal is contained in one (and only one) leaf of the distribution $\rho({\cal A})$ and is a projection of an integral curve of the restriction of ${\cal S}_L$ to $\check{p}_*(\cal G)$.
\item[(iii)] Moreover if $L$ is convex, any  integral curve $\check{c}$ of $\check{X}_{H_L}$ projects on a local minimizer of $L$.


\end{enumerate}
\end{The}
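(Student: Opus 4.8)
The plan is to read the whole statement through the Legendre picture, transporting the constrained Hamiltonian $(\check H_L)_{|{\cal G}}$ to $T^*M$ by the symplectomorphism $\mathfrak g$ of Proposition \ref{cal G}. For \emph{Part (i)}, first I would observe that the defining equation $\rho^*(x,\xi)=\L_L(x,y)$ of ${\cal G}$, written in the fixed local coordinates as $\rho_\b^j\xi_j=\dis\frac{\p L}{\p y^\b}$, is exactly the third (pointwise) equation of the normal bi-extremal system (\ref{eqhamiltonienne2}) with $\n=1$; hence a curve $\check c=(c,y,\xi)$ in ${\cal M}\times_M T^*M$ takes values in ${\cal G}$ iff it satisfies that constraint. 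Since $\mathfrak g$ satisfies $\mathfrak g^*\check\O=\O$, the field $\check X_{H_L}$ corresponds under $\mathfrak g$ to the Hamiltonian vector field of $H:=\check H_L\circ\mathfrak g$ on $(T^*M,\O)$, where $H(x,\xi)=\rho_\a^j\xi_j\,y^\a-L(x,y)$ with $y^\a=H^\a(x,\rho_\b^j\xi_j)$. Computing Hamilton's equations for $H$, and using the constraint $\rho_\a^j\xi_j=\p L/\p y^\a$ so that all terms involving $\dis\frac{\p y^\a}{\p\xi_i}$ and $\dis\frac{\p y^\a}{\p x^i}$ drop out, one gets $\dis\frac{\p H}{\p\xi_i}=\rho_\a^i y^\a$ and $\dis\frac{\p H}{\p x^i}=\dis\frac{\p\rho_\a^j}{\p x^i}\xi_j y^\a-\dis\frac{\p L}{\p x^i}$, which are precisely the first two equations of (\ref{eqhamiltonienne2}) with $\n=1$. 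Thus integral curves of $\check X_{H_L}$ on ${\cal G}$ are exactly the normal bi-extremals of Definition \ref{biextremale}, and smoothness of $c$ follows since $\check X_{H_L}$ is smooth. For the forward direction the local lifts furnished by the definition are integral curves of the single globally defined field $\check X_{H_L}$ on ${\cal G}$, so by uniqueness of integral curves they agree on overlaps and patch into a global ${\cal G}$-lift.

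For \emph{Part (ii)} the main task is to project $\check X_{H_L}$ to ${\cal M}$ and match it with (\ref{SL}). The horizontal component is immediate: $\dot x^i=\rho_\a^i y^\a$. For the vertical component I would differentiate the constraint $\rho_\a^j\xi_j=\p L/\p y^\a$ along the flow, substitute the two Hamilton equations obtained in Part (i), and then invoke the pre-Lie compatibility of Proposition \ref{characalgebroid}, namely $\rho_\a^i\dis\frac{\p\rho_\b^l}{\p x^i}-\rho_\b^i\dis\frac{\p\rho_\a^l}{\p x^i}=C_{\a\b}^\g\rho_\g^l$, together with $\rho_\g^j\xi_j=\p L/\p y^\g$ on ${\cal G}$; the resulting curvature-type term collapses to $-\dis\frac{\p L}{\p y^\g}C_{\a\d}^\g y^\d$, and comparison with (\ref{SL}) yields $\dot y^\a={\cal S}^\a$, i.e.\ $T\check p_*(\check X_{H_L})={\cal S}_L$ (read through $\hat\rho$) over $\check p_*({\cal G})$. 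The independence of the bracket is then structural: $\check X_{H_L}$ is built only from $\check H_L$ and the canonical form $\O$ of $T^*M$, neither using $[\;,\;]_{\cal A}$, so its projection, hence the restriction of ${\cal S}_L$ to $\check p_*({\cal G})=\L_L^{-1}\circ\rho^*(T^*M)$, is intrinsic. That each normal extremal lies in a single leaf of $\rho({\cal A})$ is admissibility together with Sussmann's accessibility theorem recalled above, and the last assertion just combines (i) and (ii).

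For \emph{Part (iii)}, convexity makes $g_L$ positive definite and $H$ the genuine maximized Hamiltonian $H(x,\xi)=\sup_y[\langle\xi,\rho(x,y)\rangle-L(x,y)]$, so that Fenchel's inequality $L(x,v)\ge\langle\xi,\rho(x,v)\rangle-H(x,\xi)$ holds for every fiber direction $v$, with equality exactly along the extremal control. Following the locality required by Definition \ref{minimisante}, I would restrict to a chart and a short subinterval and construct a local field of extremals around $c$, namely a solution $W$ of the Hamilton--Jacobi equation $H(x,dW)=\mathrm{const}$ whose characteristic flow contains $\check c$, so that $dW$ calibrates $L$. Integrating Fenchel's inequality along any ${\cal M}$-admissible competitor $\tilde c$ with the same endpoints, the exact term $\int\langle dW,\dot{\tilde x}\rangle\,dt$ depends only on the endpoints, whence ${\cal I}_L(\tilde c)\ge{\cal I}_L(c)$, with equality forcing $\tilde c=c$; this is local minimality.

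The hard part will be Part (iii): building the field of extremals (the local Hamilton--Jacobi solution transverse to the flow) in the anchored, foliated setting, and checking that the calibration comparison is carried out only among ${\cal M}$-admissible competitors remaining in the leaf. The computation in Part (ii) is the other delicate point, but there the cancellation of the bracket-dependent terms is forced by the pre-Lie compatibility of Proposition \ref{characalgebroid} and the constraint defining ${\cal G}$, which is what ultimately makes the restricted semispray independent of the chosen almost Lie bracket.
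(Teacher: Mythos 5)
Your proposal follows essentially the same route as the paper: for (i) you identify the constraint defining ${\cal G}$ with the stationarity condition $\p H_\nu/\p y=0$ at $\nu=1$ and transport everything through the symplectomorphism $\mathfrak g$, and for (ii) you differentiate the constraint along the Hamiltonian flow and use the pre-Lie compatibility of Proposition \ref{characalgebroid} to collapse the bracket-dependent terms and match (\ref{SL}) --- which is exactly the computation leading to the paper's system (\ref{solH-L}). The only divergence is Part (iii), where the paper gives no argument at all (it defers to the appendix of \cite{FaPe}) while you sketch the standard calibration/field-of-extremals argument via a local Hamilton--Jacobi solution; that sketch is the right idea but, as you note yourself, the construction of the calibrating $W$ in the foliated anchored setting is the step that would still need to be carried out in detail.
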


\begin{proof}${}$\\
For the proof of Point (i)it is a local problem.  But we have seen  that $H_1=\check{H}_L$. Therefore, by construction,  locally  $\check{\pi}(\cal G)$ is precisely   the set where $\dis\frac{\p H_\n}{\p y^\a}=0$ for $\n=1$. If follows that   if $c$ is a projection of an integral curve $\check{c}=(c,y,\xi)$ then $c$ is a normal extremal of $L$. Conversely, let $c:I\ap M$ be  a normal extremal. Therefore, for any $t_0\in I$ there exists an open neighbourhood $U$ around $c(t_0)$ and  a lift $ \check{c}=(c,y,\xi)$ which is a solution of the differential system (\ref{eqhamiltonienne2}) for $\n=1$; therefore $\check{c}$ is a local integral curve of $\check{X}_{H_L}$. 

 On the other hand, according to the constrained $\dis\frac{\p H_\n}{\p y}=0$,  for $\n=1$, and using the previous notations locally we have  $H_1(x^i,\xi_j)= \rho_\a^j\xi_j H^\a(x,\rho_\b^j\xi_j)- L(x^i,H^\a(x^i,\rho_\a^j\xi_j ))$. Therefore, for $\n=1$ any solution of the constrained system (\ref{eqhamiltonienne}) is an integral curve of $\mathfrak{g}^{-1}_*\check{X}_{H_L}$. This ends the proof of Point (i).\\

For point (ii) we need the local equations of an integral curve of $\tilde{X}_{H_L}$. On the one hand, if $X_{H_1}$ is the Hamiltonian vector field of $H_1$ on $T^*M$, we have already seen that    $\mathfrak{g}_*X_{H_1}=\tilde{X}_{H_L}$. On the other hand, the integral curves of $X_{H_1}$ are locally the solutions of the following differential system:
\begin{eqnarray}\label{solH1}
\begin{cases}
\dot{x}^i=\rho_\a^i H^\a(x^i, \rho_\b^j\xi_j)\\
\dot{\xi}_i=\dis\frac{\p L}{\p x^i}(x^i,H^\a(x^i,\rho_\b^j\xi_j))- \xi_j\dis\frac{\p \rho_\a^j}{\p x^i} H^\a(x^i, \rho_\b^j\xi_j).\cr
\end{cases}.
\end{eqnarray}
From the equation $y^\a=H^\a(x^i, \rho_\b^j\xi_j)$ of $\cal G$ in ${\cal M}\times_MT^*M$ we get (see Section 1.4 in  \cite{PoPo})
\begin{eqnarray}\label{equcomp}
\dot{y}^\a=g^{\a\b}\left(\rho_\b^i\dis\frac{\p L}{\p x^i}-\dis\frac{\p^2L}{\p x^i\p y^\b}\rho_\g^i  y^\g+
\xi_i(\dis\frac{\p \rho_\b^i}{\p x^j}\rho_\g^j-\dis\frac{\p \rho_\g^i}{\p x^j}\rho_\b^j)y^\g\right).
\end{eqnarray}
Now from the choice of the local basis $\{e_\a\}$ of $\cal A$ we have $[e_\a,e_\b]_{\cal A}=C_{\a\b}^\g e_\g$. On the other hand we set
\begin{eqnarray}\label{rho1}
[\rho(e_\a),\rho(e_\b)]-\rho([e_\a,e_\b]_{\cal A})=\rho_{\a\b}^i\dis\frac{\p}{\p x^i}.
\end{eqnarray}
Therefore  we have
$\rho_{\a\b}^i=\dis\frac{\p \rho_\b^i}{\p x^j}\rho_\a^j-\dis\frac{\p \rho_\a^i}{\p x^j}\rho_\b^j-\rho_\g^i C_{\a\b}^\g$, and it follows that Equation (\ref{equcomp}) can be written ((see Section 1.4 in  \cite{PoPo}):
\begin{eqnarray}\label{eqcompC}
\dot{y}^\a=g^{\a\b}\left(\rho_\b^i\dis\frac{\p L}{\p x^i}-\dis\frac{\p^2L}{\p x^i\p y^\b}\rho_\g^i  y^\g+
\xi_i\rho_{\g\b}^i y^\g +\dis\frac{\p L}{\p y^\d} C_{\g\b}^\d y^\g\right).
\end{eqnarray}
Recall that on $\cal G$ we have $\xi_l\rho_\b^l(x^i)=\dis\frac{\p L}{\p y^\b}(x^i,y^\a)$. It follows that we obtain
\begin{eqnarray}\label{rhoL}
\dis\frac{\p \rho_\b^l}{\p x^i}\xi_l=\dis\frac{\p^2 L}{\p x^i \p y^\b}(x^i, y^\a).
\end{eqnarray}
Therefore  on $\cal G$, the integral curve of $\tilde{X}_{H_L}$ are locally the solution of the differential system
\begin{eqnarray}\label{solH-L}
\begin{cases}
\dot{x}^i=\rho_\a^i y^\a\\
\dot{\xi}_i=\dis\frac{\p L}{\p x^i}- \dis\frac{\p^2 L}{\p x^i \p y^\a}y^\a\cr
\dot{y}^\a=g^{\a\b}\left(\rho_\b^i\dis\frac{\p L}{\p x^i}-\dis\frac{\p^2L}{\p x^i\p y^\b}\rho_\g^i  y^\g+
\xi_i\rho_{\b\g}^i y^\g +\dis\frac{\p L}{\p y^\d} C_{\g\b}^\d y^\g\right).
\end{cases}.
\end{eqnarray}
\smallskip
The proof of the first part of Point (iii)   can be found in the appendix of \cite{FaPe}.

\end{proof}

\subsection{Extremals of  a partial convex hyperregular Lagrangian and reduction to a leaf}\label{restleaf}
The purpose of this  Subsection is the following characterizations of the local minimizers of a partial convex hyperregular Lagrangian

\begin{The}\label{extremalN}${}$
\begin{enumerate}
\item Let $L:{\cal M}\ap \R$ be a partial convex hyperregular Lagrangian on $\cal A$. For each leaf $N$ of the foliation defined by $\rho({\cal A})$, the Lagrangian $L$ induces  a canonical partial convex  hyperregular  Lagrangian  $L_N$ defined on an open set $\bar{\cal M}_N=\rho({\cal M}_N)$ of $TN$ such that each integral curve of  $\hat{\rho}(S_L)$ projects on $M$ onto  one and only one integral curve of $S_{L_N}$ if this projection is contained in the leaf $N$.
\item Denote by ${\cal N}_{S_L}$ and ${\cal N}_{S_{L_N}}$ the nonlinear connections associated to the semisprays $S_L$ and $S_{L_N}$ respectively. Then via the projection  $\T{\cal A}_N\ap \left(\T{\cal A}_N\right)/ \left({\bf K} {\cal A}_N\oplus(J({\bf K} {\cal A}_N)\right)\equiv T(TN)$ ({\it cf.}  Proposition \ref{reducleaf}),
${\cal N}_{S_L}$ gives rise to a natural  endomorphism of $T\bar{\cal M}_N$ which is exactly ${\cal N}_{S_{L_N}}$.
\item  Assume that $\cal L$ is also convex.  Any $H^1$-curve $c:I\ap M$ is  contained in a leaf $N$ and  we have the following equivalence:
\begin{enumerate}
\item[(i)] $c$ is a normal extremal of $L$;
\item[(ii)] $c$ is the projection on $M$  of some integral curve $\bar{c}$ of $\hat{\rho}(S_L)$;
\item[(iii)] $c$ is a local minimizer of $L$;
\item[(iv)] $c$ is a local minimizer of  $L_N$;
\item[(v)] $c$ is an integral curve of $S_{L_N}$.
\end{enumerate}
Moreover in anyone of these situation, $c$ is smooth.
\end{enumerate}
\end{The}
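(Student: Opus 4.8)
The plan is to reduce all three parts to two previously established pillars: the Hamiltonian description of normal extremals in Theorem \ref{locmin}, and the leaf-reduction of the prolongation in Proposition \ref{reducleaf}; the Lagrangian $L_N$ itself will be produced by fibrewise minimization. Throughout I would fix a leaf $N$ of dimension $q$ and work in an adapted chart from Remark \ref{spbasis}, with coordinates $(x^i,y^\alpha)$ and a basis $\{e_\alpha\}$ such that $\rho(e_\alpha)=\partial/\partial x^\alpha$ for $\alpha\le q$ and $\{e_{q+1},\dots,e_k\}$ spans $\mathcal{K}_N$ on $U\cap N$, so that $\rho^j_\alpha=0$ for $\alpha>q$ along $N$.

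For Part (1), the affine fibre $\rho^{-1}(v)\cap\mathcal{M}_N$ over $v\in\bar{\mathcal{M}}_N$ is modelled on $\mathcal{K}_N$, and $L$ restricted to it is strictly convex because $g_L=(\partial^2 L/\partial y^\alpha\partial y^\beta)$ is positive definite; hence $L$ attains a unique minimum at a point $\sigma_N(v)$ characterised by $\partial L/\partial y^\alpha=0$ for $\alpha>q$. The relevant Jacobian is the lower-right block of a positive-definite Hessian, so the implicit function theorem makes $\sigma_N$ smooth and I set $L_N(v):=L(\sigma_N(v))$. Computing the Hessian of $L_N$ as the Schur complement of the Hessian of $L$ shows $L_N$ is convex, so it is in particular hyperregular. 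For the curve statement I would note that the Hamiltonian $H_1$ depends on $\xi$ only through $\rho^*(x,\xi)$, which on the leaf factors through the projection $T^*M|_N\to T^*N$; thus $H_1$ descends to a Hamiltonian $H_N$ on an open subset of $T^*N$ whose inverse Legendre transform is exactly $L_N$, and the Hamiltonian fields $\check{X}_{H_L}$ and $X_{H_N}$ are $\mathfrak{g}$- and $\mathfrak{g}_N$-related. By Theorem \ref{locmin}(ii) the projection of an integral curve of $\hat{\rho}(S_L)$ lying in $N$ then coincides with the projection of an integral curve of $X_{H_N}$, that is, with an integral curve of $S_{L_N}$.

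For Part (2), Proposition \ref{reducleaf} provides the projection $\T{\cal A}_N\to(\T{\cal A}_N)/({\bf K}{\cal A}_N\oplus J({\bf K}{\cal A}_N))\cong T(TN)$ intertwining $[\;,\;]_{\cal P}$ with $[\;,\;]_{TN}$, under which the images $\{\bar{\cal X}_\alpha,\bar{\cal V}_\alpha\}_{\alpha\le q}$ form a canonical basis with $\mathcal{J}\bar{\cal X}_\alpha=\bar{\cal V}_\alpha$. Since $\mathcal{N}_{S_L}=-\mathcal{L}^{\cal P}_{S_L}\mathcal{J}$ (Proposition \ref{JG}(iii)) and both $S_L$ and $\mathcal{J}$ project (to $S_{L_N}$ by Part (1), and to the canonical almost-tangent structure on $TN$), the connection $\mathcal{N}_{S_L}$ descends to an endomorphism of $T(TN)\cong T\bar{\mathcal{M}}_N$. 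To identify it I would use the coefficient formula \ref{locGS}, namely $\mathcal{N}^\beta_\alpha=\tfrac12(-\partial S_L^\beta/\partial y^\alpha+C^\beta_{\alpha\delta}y^\delta)$: the double quotient kills every index exceeding $q$, the surviving structure functions $C^\gamma_{\alpha\beta}$ ($\alpha,\beta,\gamma\le q$) are those of the induced bracket $[\;,\;]_N$, and $S_L^\beta$ restricts to $S_{L_N}^\beta$, so the surviving coefficients are precisely those of $\mathcal{N}_{S_{L_N}}$; the independence of the almost Lie bracket is inherited from Proposition \ref{reducleaf} and Theorem \ref{locmin}(ii).

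For Part (3), that every admissible $H^1$-curve lies in a single leaf is immediate from $\dot{c}=\rho(\bar{c})\in\rho(\mathcal{A})$ and the integrability of $\rho(\mathcal{A})$ (Sussmann's accessibility theorem, recalled earlier). The equivalence (i)$\Leftrightarrow$(ii) and the implication (ii)$\Rightarrow$(iii) are Theorem \ref{locmin}(i)--(iii), and smoothness follows since a $\mathcal{G}$-lift is an integral curve of the smooth field $\check{X}_{H_L}$; applying the same theorem to the honest convex hyperregular Lagrangian $L_N$ on $(TN,N,\mathrm{id})$ yields (iv)$\Leftrightarrow$(v). The bridge (iii)$\Leftrightarrow$(iv) rests on the pointwise inequality $L(\bar{c}(t))\ge L_N(\dot{c}(t))$ coming from the definition of $L_N$ as a fibrewise minimum, with equality exactly when $\bar{c}(t)=\sigma_N(\dot{c}(t))$: integrating gives $\mathcal{I}_L(\bar{c})\ge\mathcal{I}_{L_N}(c)$, so a minimizer of $L_N$ lifts through $\sigma_N$ to a minimizer of $L$ and, conversely, a minimizer of $L$ must run along $\sigma_N$ and hence project to a minimizer of $L_N$, while hyperregularity rules out strictly abnormal minimizers (Remark \ref{noabnor}). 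The main obstacle I anticipate is Part (2): matching the connection coefficients through the \emph{double} quotient by ${\bf K}{\cal A}_N\oplus J({\bf K}{\cal A}_N)$ forces one to check simultaneously that the vertical $\mathcal{K}_N$-directions and their $J$-images drop out consistently and that no cross-terms with indices exceeding $q$ survive in $\mathcal{L}^{\cal P}_{S_L}\mathcal{J}$; a secondary difficulty is the rigidity half of the minimizer bridge, where the strict convexity of $L$ on fibres is exactly what is needed to upgrade equality in $L\ge L_N$ to the identity $\bar{c}=\sigma_N\circ\dot{c}$.
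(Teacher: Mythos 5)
Your overall architecture --- reduce everything to Theorem \ref{locmin} and Proposition \ref{reducleaf} --- is the paper's, but two of your ingredients are genuinely different. First, you build $L_N$ by minimizing $L$ over each affine fibre of $\rho$, whereas the paper never minimizes: it pushes the Hamiltonian $H^*_L=H_L\circ\rho^*$ down to $T^*N\equiv T^*M/(TN)^0$ (Lemma \ref{dualham}), shows $\bar{\mathfrak{H}}=\rho\circ\mathfrak{H}\circ\bar{\rho}^*$ is a diffeomorphism onto $\bar{\cal M}_N$ (Lemma \ref{reducN}), and defines $L_N$ as the Legendre dual of the reduced Hamiltonian, the identity $L_N\circ\rho=L$ on $\check{p}_*({\cal G})\cap{\cal M}_N$ being a computation. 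The two definitions agree where both make sense, since your stationarity condition $\partial L/\partial y^\a=0$ for $\a>q$ is exactly the equation of $\cal G$ along the leaf (where $\rho^j_\a=0$ for $\a>q$). Second, your bridge (iii)$\Leftrightarrow$(iv) via the pointwise inequality $L\geq L_N\circ\rho$, with equality exactly on the section $\s_N$, is a direct variational argument the paper does not make; the paper closes the cycle (iii)$\Rightarrow$(iv)$\Rightarrow$(v)$\Rightarrow$(i)$\Rightarrow$(ii)$\Rightarrow$(iii), getting (iv)$\Rightarrow$(v) from the Maximum Principle on $(TN,N,Id)$. Your route is more self-contained for the minimization statement and yields the rigidity $\bar{c}=\s_N\circ\dot{c}$ for free.

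Three steps need repair. (a) ``$L$ restricted to the fibre is strictly convex, hence attains a unique minimum'' is false as stated: a strictly convex function on an open set need not attain its infimum, and the fibres $\rho^{-1}(v)\cap{\cal M}_N$ are open subsets of affine spaces that need not be convex, since $\cal M$ is only assumed open. You should define $\s_N$ as the solution of $\partial L/\partial y^\a=0$, $\a>q$, i.e.\ as $\mathfrak{H}\circ\bar{\rho}^*$, whose existence and smoothness over $\bar{\cal M}_N$ is what Lemma \ref{reducN} supplies via hyperregularity of $L$, not via convexity. (b) ``Convex, so in particular hyperregular'' conflates two hypotheses the paper keeps separate (Definition \ref{hypereg}): a Riemannian vertical Hessian makes the Legendre map a local diffeomorphism but not injective on a non-convex fibre; hyperregularity of $L_N$ does hold, but because its Legendre map is $\bar{\mathfrak{H}}^{-1}$, inheriting injectivity from that of $\L_L$. (c) In Part (2), the identification of $S_L^\b$ with $S_{L_N}^\b$, and hence of the derivatives $\partial S^\b/\partial y^\a$ entering (\ref{locGS}), only holds along the section $\check{p}_*({\cal G})\cap{\cal M}_N$, not on all of ${\cal M}_N$; the paper avoids this by descending the intrinsic formula ${\cal N}_{S_L}({\cal X})=J[S_L,{\cal X}]_{\cal P}-[S_L,J{\cal X}]_{\cal P}$ through the bracket homomorphism $\bar{\pi}_N$ of Proposition \ref{reducleaf}, which is the safer computation and is what actually delivers the independence of the choice of $[\;,\;]_{\cal A}$.
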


\begin{Rem}\label{noabnor}${}$\\
The equivalence \textrm{ (i)} and   \textrm{ (iii)} in Point (3)  means that there does not exist strictly abnormal local minimizers when $ L$ is convex.
\end{Rem}

The proof of this Theorem needs the following auxiliary results which will be also used in the next Subsection:

 \begin{Lem}\label{dualham}${}$
  \begin{enumerate}
\item[(i)] The vector field $\hat{\rho}(S_L)$ is tangent to any leaf ${\cal M}_N=\pi^{-1}(N)\subset {\cal M}$ where $N$ is a maximal integral manifold of   $\rho({\cal A})$.
\item[(ii)] Consider  the canonical symplectic form $\O_{\cal A}$  on $\T{\cal A}^*$ (see Point (2) in Remark \ref{dPsbasic}). Then we have  $\O_L=\L_L^*\O_{\cal A}$. Consider the Hamiltonian    $H_L=({\cal L}^{\cal P}_CL-L)\circ \mathfrak{H}_L$  on ${\cal M}^*$ and $X_{H_L}$ its Hamiltonian field  (relative to $\O_{L}$) on ${\cal M}^*$ we also have $\T\L_L({\cal S}_L)=X_{H_L}\circ\L_L$.
\item[(iii)]  Let $\O$ be the canonical symplectic form on $T^*M$ and ${H}^*_L=H_L\circ\rho^*$. If $X_{{H}^*_L}$ is the associated hamiltonian field we have $T\rho^*(X_{{H}^*_L})=X_{H_L}\circ\rho^*$. Moreover we also have $\mathfrak{g}_*(\check{X}_{H_L})=X_{{H}^*_L}$
\item[(iv)]
If $(TN)^0$  is the annulator of $TN$ in $T^*M_{| N}$,  the Hamiltonian $H^*_L$ induces by projection a   Hamiltonian $H^*_{L_N}$ on $T^*N\equiv T^*M/(TN)^0$ with the following property:

an ${\cal M}$-admissible  curve $\bar{c}:I\ap {\cal M}_N$ is an integral curve of  $\hat{\rho}(S_L)$  if and only if there exists a curve $\check{c}:I\ap TN$  which is an integral curve of the hamiltonian field of $H^*_{L_N}$  and such that  $\bar{c}$  and $\check{c}$ have the same projection $c:I\ap N$ on $N$.
 \end{enumerate}
 \end{Lem}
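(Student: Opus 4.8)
The plan is to take the four statements in order, treating (i)--(iii) as functoriality facts and (iv) as the genuine reduction they feed. \textbf{For Part (i)} I would use only the commutative square $T\pi\circ\hat\rho=\rho\circ\pi_{\cal A}$ from Subsection \ref{prolongM}. From the local form (\ref{SL}), $S_L=y^\a{\cal X}_\a+\bar{\cal S}^\b{\cal V}_\b$, and since $\pi_{\cal A}({\cal X}_\a)=e_\a$ while the ${\cal V}_\b$ are vertical, $\pi_{\cal A}(S_L)=y^\a e_\a$. Hence $T\pi(\hat\rho(S_L))=\rho(y^\a e_\a)\in\rho({\cal A})=TN$ over every point of $N$. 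As $T{\cal M}_N=\{v:T\pi(v)\in TN\}$, this is exactly the tangency of $\hat\rho(S_L)$ to ${\cal M}_N=\pi^{-1}(N)$.

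\textbf{For Part (ii)} I would prove $\O_L=\L_L^*\O_{\cal A}$ at the level of primitives. A computation in a canonical basis gives the Cartan form $\theta_L=J^*(d^{\cal P}L)=\dis\frac{\p L}{\p y^\a}{\cal X}^\a$, whereas the Liouville form on $\T{\cal A}^*$ is $\theta=\eta_\a{\cal X}^\a$. Since $\L_L$ covers the identity on $M$ and on the ${\cal A}$-factor, its prolongation $\T\L_L$ fixes each ${\cal X}^\a$ and pulls $\eta_\a$ back to $\dis\frac{\p L}{\p y^\a}$, so $\theta_L=(\T\L_L)^*\theta$. Because $\T\L_L$ is an isomorphism of the prolongation algebroids (both brackets being the prolongation of $[\;,\;]_{\cal A}$ and $\L_L$ a diffeomorphism over $\mathrm{id}_M$), $d^{\cal P}$ is natural under $(\T\L_L)^*$, and applying $-d^{\cal P}$ yields $\O_L=(\T\L_L)^*\O_{\cal A}$. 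For $\T\L_L(S_L)=X_{H_L}\circ\L_L$ I would use $H_L\circ\L_L={\cal L}^{\cal P}_CL-L$, giving $d^{\cal P}({\cal L}^{\cal P}_CL-L)=(\T\L_L)^*d^{\cal P}H_L$; feeding this into $i_{S_L}\O_L=d^{\cal P}({\cal L}^{\cal P}_CL-L)$, transporting through $\T\L_L$, and invoking nondegeneracy of $\O_{\cal A}$ forces the claimed identity.

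\textbf{For Part (iii)} the identity $T\rho^*(X_{H^*_L})=X_{H_L}\circ\rho^*$ is the functoriality of Hamiltonian fields under a linear almost Poisson morphism: by Proposition \ref{characalgebroid}, $\rho^*:T^*M\ap{\cal A}^*$ is such a morphism, and $H^*_L=H_L\circ\rho^*$, so the fact recalled in Subsection \ref{Adual} (if $\bar h\circ\digamma=h$ then $T\digamma({\cal X}_h)=\bar{\cal X}_{\bar h}\circ\digamma$) applies with $\digamma=\rho^*$. For $\mathfrak{g}_*(\check X_{H_L})=X_{H^*_L}$ I would first verify $\check H_L\circ\mathfrak{g}=H^*_L$: on ${\cal G}$ one has $\rho^*\xi=\L_L(x,y)$, so $\langle\xi,\rho(x,y)\rangle=\langle\rho^*\xi,y\rangle$ and $\check H_L(\mathfrak{g}(x,\xi))$ collapses to the Legendre transform $H_L(x,\rho^*\xi)=H^*_L(x,\xi)$; since $\mathfrak{g}$ is a symplectomorphism $(T^*M,\O)\ap({\cal G},\check\O)$ by Proposition \ref{cal G} together with $\mathfrak{g}^*\check\O=\O$, the two Hamiltonian fields are $\mathfrak{g}$-related.

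\textbf{Part (iv)} is the crux and the expected obstacle. The kernel of $\rho^*_x$ for $x\in N$ is the annihilator of $\rho({\cal A}_x)=T_xN$, i.e. $\ker\rho^*_{|N}=(TN)^0$; hence $\rho^*$ factors through an injection $T^*N\equiv T^*M_{|N}/(TN)^0\hookrightarrow{\cal A}^*_{|N}$, and $H^*_L=H_L\circ\rho^*$ descends to a well-defined $H^*_{L_N}$ on $T^*N$. The delicate point is the curve correspondence, which I would get by coisotropic reduction: $W=T^*M_{|N}$ is coisotropic in $T^*M$ with characteristic foliation $(TN)^0$ and reduced space $T^*N$. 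I would show $X_{H^*_L}$ is tangent to $W$ (this is where Part (i) re-enters, transported to $T^*M$ through (ii)--(iii)) and projectable along $(TN)^0$ since $H^*_L$ is constant on the characteristic leaves, its projection being $X_{H^*_{L_N}}$. Unwinding (ii)--(iii), an integral curve $\bar c$ of $\hat\rho(S_L)$ in ${\cal M}_N$ then corresponds through $\L_L$ and $\mathfrak{g}^{-1}$ to an integral curve of $X_{H^*_L}$ in $W$, which projects to an integral curve of $X_{H^*_{L_N}}$ in $T^*N$ over the same base curve $c$ on $N$; the remaining freedom in the lift is exactly the $(TN)^0$-direction, i.e. the abnormal directions of Lemma \ref{abnorm}. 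Keeping the several projections and the tangency/projectability bookkeeping coherent is the principal difficulty.
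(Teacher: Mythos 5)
Your Parts (i)--(iii) are correct and stay close to the paper's own arguments. For (i) the paper argues in one line that $\hat\rho(S_L)(m)\in\hat{\rho}(\T_m{\cal M})=T_m{\cal M}_N$ because ${\cal M}_N$ is a leaf of the foliation defined by $\hat{\rho}(\T{\cal M})$ (Proposition \ref{prolonfolit}); your route through $T\pi\circ\hat{\rho}=\rho\circ\pi_{\cal A}$ is equally valid and more elementary. For (ii) the paper compares the local expressions of $\O_L$ and $\O_{\cal A}$ directly, whereas you work at the level of the primitives via $\theta_L=(\T\L_L)^*\theta$ and invoke naturality of $d^{\cal P}$ under $\T\L_L$; this is legitimate because $\L_L$ is a fibered diffeomorphism over $\mathrm{id}_M$, so $\T\L_L$ sends projectable sections to projectable sections and intertwines the two prolongation brackets. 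Part (iii) is the same argument as the paper's: Proposition \ref{characalgebroid} plus the Poisson-morphism functoriality recalled in Subsection \ref{Adual}, and $\check{H}_L\circ\mathfrak{g}=H^*_L$ together with $\mathfrak{g}^*\check{\O}=\O$.

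The genuine gap is in Part (iv), exactly where you anticipated it. The coisotropic-reduction framework is sound ($H^*_L=H_L\circ\rho^*$ is constant along the characteristic leaves of $W=T^*M_{| N}$ since $\ker\rho^*_{| N}=(TN)^0$, so $X_{H^*_L}$ is tangent to $W$ and descends to $X_{H^*_{L_N}}$ on $T^*N$), but the curve correspondence --- which is the actual content of (iv) --- is not established. In the direction $\bar{c}\Rightarrow\check{c}$ you pass $\L_L\circ\bar{c}$ through $\mathfrak{g}^{-1}$, which requires $\L_L(\bar{c}(t))\in\rho^*(T^*M)$ for all $t$; for an arbitrary integral curve of $\hat{\rho}(S_L)$ lying in ${\cal M}_N$ this containment is precisely the invariance of $\check{p}_*({\cal G})$ under the flow of $\hat{\rho}(S_L)$, and it must be proved or sidestepped. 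The paper sidesteps it by working in the adapted coordinates of Remark \ref{spbasis}: it writes the Hamilton system (\ref{chamreduc}) for $X_{H^*_{L_N}}$ and the second-order system (\ref{creduc}) for $\hat{\rho}(S_L)$ on ${\cal M}_N$, and matches solutions explicitly via $\bar{\xi}_\a=\dis\frac{\p L}{\p y^\a}(c,y)$ for $\a=1,\dots,q$ (a truncation of the Legendre transform, not an inversion of $\rho^*$) and conversely via $y^\a=\bar{H}^\a(c,\bar{\xi})$. Moreover the converse direction $\check{c}\Rightarrow\bar{c}$ is absent from your sketch: given an integral curve of $X_{H^*_{L_N}}$ over $c$ you must exhibit an ${\cal M}$-admissible $\bar{c}$ over $c$ that is an integral curve of $\hat{\rho}(S_L)$, which again needs either the explicit formula $y=\bar{H}(c,\bar{\xi})$ or the tangency of $\hat{\rho}(S_L)$ to $\L_L^{-1}(\rho^*(T^*M))$. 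Until these two steps are supplied, the ``if and only if'' of (iv) is not proved.
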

 \smallskip
\noindent Fix some leaf $N$ of the foliation defined by $\rho({\cal A})$. We set ${\cal M}_N^*={\cal A}^*_N\cap {\cal M^*}$  and $\bar{\cal M}_N=\rho({\cal M}_N)$.  Denote by $\bar{\rho}^*:T^*M_{| N}/\ker\rho^*\equiv T^*N\ap {\cal A}^*_N$ the isomorphism bundle induced by $\rho^*: T^*M_{| N}\ap{\cal A}_N^*$ and  consider the  subset  $\bar{\cal M}^*_N =(\bar{\rho}^*)^{-1}({\cal M}^*_N)$ of $T^*N$  and $\bar{\cal M}_N=\rho({\cal M}_N)$ of $TN$. We then have:

\begin{Lem}\label{reducN}${}$
\begin{enumerate}
\item[(i)]  $\check{p}_*({\cal G})\cap {\cal M}_N=\mathfrak{H}\circ\bar{\rho}^*(\bar{\cal M}^*_N)$,  $\bar{\mathfrak{H}}=\rho\circ \mathfrak{H}\circ \bar{\rho}^*$ induces a diffeomorphism from $\bar{\cal M}^*_N$ onto $\bar{\cal M}_N$  and $\rho$ induces a diffeomorphism from $\mathfrak{H}\circ\bar{\rho}^*(\bar{\cal M}^*_N)$  onto $\bar{\cal M}_N$.
\item[(ii)]  $L$ induces on $N$ a partial convex hyperregular Lagrangian $L_N:\bar{\cal M}_N\ap \R$ on $TN$ characterized by  by $L_N\circ \rho_{| \mathfrak{H}\circ\bar{\rho}^*(\bar{\cal M}^*_N)}=L_{| \mathfrak{H}\circ\bar{\rho}^*(\bar{\cal M}^*_N)}$. Moreover,  if  $S_{L_N}$ the associated semispray of $L_N$  on $\bar{\cal M}_N$  we have  $T\rho\circ\hat{\rho}({S_L}_{| \check{p}_*({\cal G})\cap {\cal M}_N})=S_{L_N}$.
\item[(iii)] An ${\cal M}$-admissible  curve $\bar{c}:I\ap {\cal M}_N$ gives rise to an integral curve $\rho(c)$  of  $\hat{\rho}(S_L)$ if and only if  the projection $\tau\circ \bar{c}$ is an integral curve of $S_{L_N}$.
\end{enumerate}
\end{Lem}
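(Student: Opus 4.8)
The plan is to work throughout in the adapted local coordinates furnished by Remark \ref{spbasis} and the proof of Proposition \ref{foliatedquasi}: a chart $(x^i)$ on $M$ with $(x^1,\dots,x^q)$ a chart on $N$, a local basis $\{e_\a\}$ of $\cal A$ with $\rho(e_\a)=\p/\p x^\a$ for $\a\le q$ and $\{e_{q+1},\dots,e_k\}$ spanning ${\cal K}_N$ over $U\cap N$, and the induced coordinates $(x^i,y^\a)$ on $\cal M$, $(x^i,\eta_\a)$ on ${\cal A}^*$, $(x^i,\xi_i)$ on $T^*M$. Over $N$ one then has $\rho^i_\a=\delta^i_\a$ for $\a,i\le q$ and $\rho_\a\equiv 0$ for $\a>q$, so $\rho^*$ sends $(x,\xi)$ to $\eta_\a=\xi_\a$ ($\a\le q$), $\eta_\a=0$ ($\a>q$); hence $\rho^*(T^*M_{|N})=\bar\rho^*(T^*N)=\{\eta_{>q}=0\}$. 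For part (i)(a) I would start from $\check p_*({\cal G})=\mathfrak{H}_L(\rho^*(T^*M)\cap{\cal M}^*)$ (Proposition \ref{cal G}) and intersect with ${\cal M}_N=\pi^{-1}(N)$: a point lies in the intersection iff its base is in $N$ and its Legendre image lies in $\rho^*(T^*M_{|N})\cap{\cal M}^*_N=\bar\rho^*(\bar{\cal M}^*_N)$, which is exactly $\mathfrak{H}_L\circ\bar\rho^*(\bar{\cal M}^*_N)$. Since $\mathfrak{H}_L=\L_L^{-1}$ is a diffeomorphism and $\bar\rho^*$ is a fibrewise isomorphism onto its image, $\mathfrak{H}_L\circ\bar\rho^*$ is a diffeomorphism of $\bar{\cal M}^*_N$ onto this set, and both diffeomorphism claims in (i) reduce to showing that $\bar{\mathfrak{H}}=\rho\circ\mathfrak{H}_L\circ\bar\rho^*$ is a diffeomorphism onto $\bar{\cal M}_N$.

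The heart of the argument is therefore to identify $\bar{\mathfrak{H}}$ with the inverse Legendre transform of a reduced Lagrangian. Splitting the indices into $I=\{1,\dots,q\}$ and $J=\{q+1,\dots,k\}$, a point of $\mathfrak{H}_L\circ\bar\rho^*(\bar{\cal M}^*_N)$ has $y^I$ free and $y^J$ solving $\p L/\p y^J=0$; the latter has a unique solution $y^J=y^J(x,y^I)$ by the implicit function theorem and strict monotonicity, since the block $(\p^2 L/\p y^\b\p y^\g)_{\b,\g\in J}$ is a principal submatrix of the positive definite metric $g_L$, hence invertible and positive definite. I would then define $L_N$ locally by $L_N(x^I,\dot x^I)=L(x^I,0,\dot x^I,y^J(x,\dot x))$ (the coordinate form of the intrinsic prescription $L_N\circ\rho=L$ on $\mathfrak{H}_L\circ\bar\rho^*(\bar{\cal M}^*_N)$, once (i) is available) and compute, using $\p L/\p y^J=0$ and the chain rule, that $\p L_N/\p\dot x^\a=\p L/\p y^\a$ and that the vertical Hessian of $L_N$ is the Schur complement $g_{II}-g_{IJ}(g_{JJ})^{-1}g_{JI}$. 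As a Schur complement of a positive definite matrix this is positive definite, so $L_N$ is convex, and the first identity shows precisely that $\bar{\mathfrak{H}}=\mathfrak{H}_{L_N}=\L_{L_N}^{-1}$. Convexity makes $\bar{\mathfrak{H}}$ a local diffeomorphism; its global injectivity follows from the injectivity of $\mathfrak{H}_L$, of $\bar\rho^*$, and of $\rho$ on the set (two points with the same horizontal image share $y^I$ and each has $y^J$ the unique solution of $\p L/\p y^J=0$, hence coincide); surjectivity onto $\bar{\cal M}_N$ follows by matching the two fibrewise descriptions of the image. This proves (i)(b),(c) together with the convex hyperregularity of $L_N$ asserted in (ii).

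For the remaining assertion of (ii), namely $T\rho\circ\hat\rho(S_L)=S_{L_N}$ over $\check p_*({\cal G})\cap{\cal M}_N$, I would pass to the Hamiltonian side and thread the relations of Lemma \ref{dualham}. By \ref{dualham}(ii) the Legendre map $\L_L$ intertwines $S_L$ with the Hamiltonian field $X_{H_L}$ of $H_L=({\cal L}^{\cal P}_C L-L)\circ\mathfrak{H}_L$; by \ref{dualham}(iii) the anchor transpose $\rho^*$ intertwines $X_{H_L}$ with $X_{H^*_L}$ on $T^*M$; and by \ref{dualham}(iv) the projection $T^*M_{|N}\ap T^*N$ intertwines $X_{H^*_L}$ with the Hamiltonian field of the reduced Hamiltonian $H^*_{L_N}$. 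The key check is that $H^*_{L_N}$ coincides with the energy Hamiltonian $({\cal L}_C L_N-L_N)\circ\mathfrak{H}_{L_N}$ of $L_N$; this I would verify from $L_N\circ\rho=L$ together with $\bar{\mathfrak{H}}=\mathfrak{H}_{L_N}$ and the compatibility of the Euler sections, after which the Legendre map $\L_{L_N}$ carries the reduced Hamiltonian flow back to $S_{L_N}$. Composing these intertwinings and using the diffeomorphism of part (i) yields the stated identity; equivalently, one may restrict the explicit system (\ref{solH-L}) to $\{x^J=0,\ \eta_J=0\}$ and recognize the resulting second-order system in $(x^I,y^I)$ as the semispray equations of $L_N$. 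Finally, part (iii) follows formally: the set $\check p_*({\cal G})\cap{\cal M}_N$ is invariant under $\hat\rho(S_L)$ because $\cal G$ is invariant under $\check X_{H_L}$ and $T\check p_*(\check X_{H_L})=S_L$ (Theorem \ref{locmin}(ii)); on this set $\rho$ is a diffeomorphism onto $\bar{\cal M}_N$ intertwining $\hat\rho(S_L)$ with $S_{L_N}$ by (ii), so an admissible $\bar c$ is an integral curve of $\hat\rho(S_L)$ iff $\rho\circ\bar c$ is an integral curve of $S_{L_N}$, iff $\tau\circ\bar c$ solves the semispray equations of $L_N$.

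I expect the main obstacle to be the semispray intertwining in (ii): identifying the reduced Hamiltonian $H^*_{L_N}$ with the energy of $L_N$ and matching the two second-order flows requires tracking the Schur-complement relation between $g_{L_N}$ and $g_L$ through the whole chain of Legendre and anchor maps, and is the one place where a genuine computation, rather than a formal diagram chase, seems unavoidable.
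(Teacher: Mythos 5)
Your proposal is correct in substance and, for part (i), follows the paper's own strategy (Proposition \ref{cal G} to identify $\check p_*({\cal G})\cap{\cal M}_N$ with $\mathfrak{H}\circ\bar\rho^*(\bar{\cal M}^*_N)$, then a coordinate analysis of $\bar{\mathfrak{H}}$ in the adapted frame of Remark \ref{spbasis}); but in two places you take a genuinely different route. First, you build $L_N$ and prove its convex hyperregularity on the Lagrangian side: you describe the constraint set by $\p L/\p y^J=0$, solve for $y^J(x,y^I)$ via the implicit function theorem on the positive definite block $g_{JJ}$, and exhibit the vertical Hessian of $L_N$ as the Schur complement $g_{II}-g_{IJ}(g_{JJ})^{-1}g_{JI}$, which also yields $\bar{\mathfrak{H}}=\mathfrak{H}_{L_N}$ directly. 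The paper instead defines $L_N$ as the fibrewise Legendre dual of the reduced Hamiltonian $H^*_{L_N}$ (equation (\ref{LN})) and reads regularity off the Hessian of $H^*_{L_N}$; the two definitions agree (expanding (\ref{LN}) gives exactly $L\circ\mathfrak{H}\circ\bar\rho^*$), and your Schur-complement computation is the more self-contained, since it does not rest on the vanishing of $\p\bar H^\g/\p\xi_\b$ for $\g>q$ that the paper invokes. Second, for the semispray identity in (ii) you chain the Hamiltonian intertwinings of Lemma \ref{dualham} (or restrict the explicit system (\ref{solH-L})), after checking that $H^*_{L_N}$ is the Legendre energy of $L_N$; the paper reserves that Hamiltonian chain for part (iii) and proves the semispray identity instead through the quotient machinery of Proposition \ref{reducleaf}: $\bar\pi_N$ is a Lie algebra morphism, $d^{\cal P}L=\bar\pi_N^*(d_NL_N)$, hence $\bar\pi_N^*\O_{L_N}=\O_L$ and $\bar\pi_N(S_L)=S_{L_N}$. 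Both are valid, but the paper's route also delivers the symplectic identification that is reused for the nonlinear-connection statement in Theorem \ref{extremalN}, whereas yours is shorter if one only wants the present Lemma. Two cautions: the identity $T\rho\circ\hat\rho(S_L)=S_{L_N}$ obtained by differentiating $\rho=\bar{\mathfrak{H}}\circ(\bar\rho^*)^{-1}\circ\L_L$ is legitimate only because $\hat\rho(S_L)$ is tangent to $\check p_*({\cal G})\cap{\cal M}_N$ --- you invoke this invariance for (iii), but it is already needed in (ii); and the global injectivity of $\bar{\mathfrak{H}}$ via uniqueness of the critical $y^J$ tacitly uses convexity of the fibres of ${\cal M}$ (the paper's own injectivity argument has the same gap, so this is not a defect relative to the source).
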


\begin{proof}[Proof of Lemma \ref{dualham}]${}$\\
Choose a point $m\in{\cal M}$ and let ${\cal M}_N$ the leaf of the foliation defined by  $\hat{\rho}(\T{\cal M})$  which contains $m$. Since  $\hat{\rho}(m,\;)=T_m{\cal M}_N$ then necessary $\hat{\rho}({\cal S}_L (m))$ belongs to $T_m{\cal M}_N$.\\

Since $\L_L$ is a diffeomorphism it is sufficient to prove the result locally. Consider a canonical local basis $\{{\cal X}_\a,{\cal P}^\b\}$ on $\T{\cal A}^*$  around a point $(x,\eta)\in {\cal M}^*$ associated with a coordinate system $(x^i,y^\a)$  and a local basis $\{e_\a\}$ of $\cal A$. Recall that $\O_{\cal A}= {\cal X}^\a\wedge{\cal P}_\a+\dis\frac{1}{2}\eta_\g C_{\a\b}^\a{\cal X}^\a\wedge{\cal X}^\b$  (see Point (2) in  Remark \ref{dPsbasic}).  From the local expression of $\L_L$ and $\O_L$  (see \ref{semihamO})  we have easily   $\L_L^*(\O_{\cal A})=\O_L$. Since $\L_L$ is a diffeomorphism  we get $\O_{\cal A}=  \mathfrak{H}_L^*\O_L$. But ${\cal S}_L$ is the Hamiltonian field of ${\cal L}^{\cal P}_CL-L$ according to $\O_L$. Therefore we obtain $\T\L_L({\cal S}_L)=X_{H_L}\circ\mathfrak{H}_L$. This ends the proof of  the first part of Point (ii).\\

 Now since $({\cal A},M,\rho,[\;,\;]_{\cal A})$ is a pre-Lie algebroid, we have  $(\rho^*)(\O_L)=\O_{\cal A}$ according to Proposition \ref{characalgebroid}. Then the proof of Point (iii) ends as  the proof of the first Point (ii).\\

The annulator $(TN)^0$ of $T¨^*M_{| N}$ of $TN$ is equal to the kernel of  the restriction of $\rho^*$  to $T¨^*M_{| N}$. Since $H^*_L=H_L\circ\rho^*$, the restriction of $H^*_L$ to $(TN)^0$ is zero.  
 Therefore by projection, $H^*_L$ induces on $T^*N\equiv T^*M/(TN)^0$ a hamiltonian $H^*_{L_N}$. Now we denote by $X_{L_N}$ the hamiltonian vector field of $H^*_{L_N}$ on $T^*N$. Note that the announced property of Point (iv) is a local property.  For this proof we will use some notations used in  the proof of Proposition \ref{reducleaf}.\\
 We can choose a chart  $(U, (x^1,\cdots,x^n))$ such that $(U\cap N,(x^1,\cdots,x^q))$ is a chart for $N$ and $\cal A$ is trivializable over $U$. We can also   choose a coordinate system $(U, (x^1,\cdots,x^n))$  and a local basis $\{{e}_\a\}$ of $\cal A$ such that, over $U\cap N$ we have $\rho(e_\a)=\dis\frac{\p}{\p x^\a}$ for $\a=1,\cdots,q$ and $\rho(e_\a)=0$ for $\a=q+1,\dots,k$ .\\ Denote by $(x^1,\cdots,x^q,y^1,\cdots,y^k)$ the induced coordinates system on ${\cal U}=\tau_N^{-1}(U)$. Therefore  $\dis\frac{\p}{\p y^{q+1}},\cdots,\dis\frac{\p}{\p y^{k}}$ is a basis of $\ker T\rho$ over $\cal U$. Now let  $(x^1,\cdots,x^q,\dot{x}^1,\dots,\dot{x}^q)$ be the coordinate system on $TN$ associate to $(U\cap N,(x^1,\cdots,x^q))$. By  composition of the projection $\bar{\tau}_N:{\cal A}_N\ap {\cal A}_N/{\cal K}_N$   the  local coordinate system $(x^1,\cdots,x^n,y^1,\cdots,y^k)$ gives rise to a local coordinate system $(\bar{x}^1,\cdots,\bar{x}^q,\bar{y}^1,\cdots \bar{y}^q)$ of $ {\cal A}_N/{\cal K}_N$ on the open $\bar{U}=\bar{\tau}_N(U)$ and the local expression $\rho_N:{\cal A}_N/{\cal K}_N\ap TN$ is $x^\a=\bar{x}^\a$ and $\dot{x}^\a=\bar{y}^\a$ for $\a=1,\cdots,q$. \\
 Now if  $(x^1,\dots,x^q,\xi_1,\dots\xi_n)$ and  $(x^1,\dots,x^q,\eta_1,\dots\eta_k)$ are the associated coordinate system of $T^*M_{| U\cap N}$ and ${\cal A}^*_{| U\cap N}$ then $\rho^*$  is the map $\eta_\a=\xi_\a$ for $\a=1,\dots,q$.
 With these  notations we can   consider that  $(\bar{x}=(\bar{x}^1,\dots,\bar{x}^q),\bar{\xi}=(\bar{\xi}_1=\xi_1,\dots,\bar{\xi}_q=\xi_q))$ is a coordinate system on $T^*N$. Now, we denote by $\bar{H}^\a(\bar{x},\eta)$ the restriction of $H^\a$ to ${\cal M}_N$ for $\a=1,\dots,k$.  In these notations  we have $\bar{H}^\a\circ\rho^*(\bar{x},\xi)=\bar{H}^\a(\bar{x},\bar{\xi})$.
 Therefore  we have:\footnote{in this proof all indices  $\a,\b,\g,\dots$ affecting coordinates $(\bar{x}^1,\dots,\bar{x}^q)$ or $(\bar{\xi}_1,\dots,\bar{\xi}_q)$ vary in the set $\{1,\dots,q\}$ and also in summations with Einstein convention }
\begin{eqnarray}\label{HLN}
H^*_{L_N}(\bar{x},\bar{\xi})=\bar{\xi}_\a \bar{H}^\a(\bar{x},\bar{\xi)}-L(\bar{x},\bar{H}^\a(\bar{x},\bar{\xi})).
\end{eqnarray}

%
But in the one hand we have $\dis\frac{\p \bar{H}^\g}{\p \xi_\b}(\bar{x}, \bar{\xi})=0$ for $\g=q+1,\dots,k$ and on the other hand we have $\dis\frac{\p \bar{H}^\b}{\p \xi^\g}(\bar{x}, \bar{\xi})=\dis\frac{\p ^2H^*_{L_N}}{\p \bar{\xi}_\b \p \bar{\xi}_\g}(\bar{x},\bar{\xi})$. It follows that $H^*_{L_N}$ is a regular Hamiltonian.

Now  an integral curve of the associated Hamiltonian vector field $X_{L_N}$  satisfies the differential system
\begin{eqnarray}\label{chamreduc}
\begin{cases}
\dot{\bar{x}}^\a=\bar{H}^\a(\bar{x},\bar{\xi}) \\
\dot{\bar{\xi}}_\a=-\bar{\xi}_\b\dis\frac{\p \bar{H}^\b}{\p\bar{ x}^\b}(\bar{x},\bar{\xi})+\dis\frac{\p L}{\p\bar{ x}^\b}(\bar{x}, \bar{H}^\a(\bar{x},\bar{\xi}))+\dis\frac{\p \bar{H}^\g}{\p\bar{ x}^\b}(\bar{x},\bar{\xi})\dis\frac{\p L}{\p{ y}^\g}(\bar{x}, H^\a(\bar{x},\bar{\xi})).
\end{cases}.
\end{eqnarray}

But  in the previous coordinates, if $\bar{c}:I\ap {\cal M}_N\cap\pi^{-1}(U)$ is a $\cal M$-admissible curve such that $\rho(c)$ is an integral curve of $\hat{\rho}(S_L)$ then $c$ is a solution of the following differential system:
\begin{eqnarray}\label{creduc}
\begin{cases}
\dot{\bar{x}}^\a=y^\a,\; \a=1,\dots,q\\
g_{\b\a}\dot{y}^\a=\dis\frac{\p L}{\p\bar{ x}^\b}-\dis\frac{\p^2L}{\p \bar{x}^\g\p y^\b}  y^\g,\; \b=1,\dots,k.
\end{cases}.
\end{eqnarray}

Now if $\bar{c}:I\ap {\cal M}_N\cap\pi^{-1}(U)$  is such that $\rho(c)= (c,y)$ satisfies the differential system (\ref{creduc}), if  ${\xi}_\a=\dis\frac{\p L}{\p{ y}^\a}(c,y)$ for $\a=1,\cdots,k$ and $\bar{\xi}=(\xi_1,\dots,\xi_q)$ then $\check{c}=(c,\bar{\xi})$ satisfies the differential system (\ref{chamreduc})  since $\bar{H}^\a(c,\dis\frac{\p L}{\p{ y}^\b}(c,y))=y^\a$ and $\dis\frac{\p L}{\p{ y}^\b}(c,\bar{H}^\b(c,{\xi}))={\xi}_\a=\bar{\xi}_\a$.

Conversely, if  $\check{c}=(c,\bar{\xi})$ is a solution of  the differential system (\ref{chamreduc}), for $y^\a=\bar{H}^\a(c,\bar{\xi})$ the curve $(c,y)$ is a solution of the differential system (\ref{creduc}) by similar arguments.

\end{proof}

\begin{proof}[Proof of Lemma \ref{reducN}]${}$\\
We continue to use the notations used in the previous Subsections. \\

 Note that  ${\cal M}_N^*$ is open in ${\cal A}^*_N$ 
 and $\bar{\cal M}_N$ is an open subset of $TN$. We set $\bar{\cal M}^*_N=(\bar{\rho}^*)^{-1}({\cal M}_N)$. Then $\bar{\cal M}^*_N$ is an open subset of $T^*N$ and  we consider the composition:
   \begin{eqnarray}\label{compoH}
\begin{matrix}
 &\bar{\rho}^*& &\mathfrak{H}&&\rho&\\
  \bar{\cal M}_N^*&\ap&{\cal M}_N^*&\ap&{\cal M}_N&\ap&TN.\\
 \end{matrix}
\end{eqnarray}
and we set $\bar{\mathfrak{H}}=\rho\circ\mathfrak{H}\circ \bar{\rho}^*$.

\noindent On the other hand, according to Proposition \ref{cal G} we have $\check{p}_*({\cal G})=\L_L^{-1}\circ\rho^*(T^*M)=\mathfrak{H}(\rho^*(T^*M))$. It follows that we get $\check{p}_*({\cal G})\cap {\cal M}_N=\mathfrak{H}\circ \bar{\rho}^*(\bar{\cal M}^*_N).$

\noindent {\it We first prove that $\bar{\mathfrak{H}}$ is a diffeomorphism} from $\bar{\cal M}^*_N$ to $\bar{\cal M}_N$.

 Consider  the coordinate systems and notations used in the proof of Point (iv) of Lemma \ref{dualham}.  We have $\bar{\mathfrak{H}}(\bar{x},\bar{\xi})=(\bar{x},\bar{H}^1(\bar{x},\bar{\xi}),\dots,\bar{H}^q(\bar{x},\bar{\xi}))$.
Now  from the definition of the function $H^\a$ recall that we have
$$\dis\frac{\p L}{\p y^\a}(\bar{x}, H^\b(\bar{x},\rho^*\xi))=\dis\frac{\p L}{\p y^\a}(\bar{x}, \bar{H}^\b(\bar{x},\bar{\xi}))=\bar{\xi}_\a \textrm{ for } \a=1\dots,q.$$
Therefore we get
$\dis\frac{\p ^2L}{\p y^\a \p y^\g}(\bar{x}, \bar{H}^\g(\bar{x},\bar{\xi}))\dis\frac{\p \bar{H}^\g}{\p \xi_\b}(\bar{x}, \bar{\xi})=\d_\a^\b$.
But  we have $\dis\frac{\p \bar{H}^\g}{\p \xi_\b}(\bar{x}, \bar{\xi})=0$ for $\g=q+1,\dots,k$. It follows that  $\bar{\mathfrak{H}}$  is locally invertible. By construction  the range of $\bar{\mathfrak{H}}$ is $\cal N$. Assume that $\bar{\mathfrak{H}}(\bar{x},\bar{\xi})=\bar{\mathfrak{H}}(\bar{x}',\bar{\xi}')$. Then we must have $\bar{x}=\bar{\x}'$ and  so $ \bar{H}^\a(\bar{x},\bar{\xi})= \bar{H}^\a(\bar{x},\bar{\xi}')$ for  $\a=1,\dots,q$. Since    $\bar{\mathfrak{H}}$  is locally invertible it follows that $\bar{\xi}=\bar{\xi}'$ and so $\bar{\mathfrak{H}}$ is a diffeomorphism.

Now,  in the  associated coordinate system on ${\cal M}_N$, the local equations of  $\mathfrak{H}\circ\bar{\rho}^*(\bar{\cal M}^*_N)$ are $y^\a=0$ for $\a=q+1,\dots,k$ and the restriction of $\rho$ to $\mathfrak{H}\circ\bar{\rho}^*(\bar{\cal M}^*_N)$ in theses coordinates is  $y^\a=\dot{x}^\a$ for $\a=1,\dots,q$ since $\bar{\mathfrak{H}}$ is injective, it follows that $\rho$ induces a diffeomorphism from  $\mathfrak{H}(\bar{\cal M}^*_N)$ onto $\bar{\cal M}_N$. This ends the proof of Point (i). \\

\noindent From the  expression (\ref{HLN}) of the Hamiltonian $H^*_{L_N}$ on $T^*N$ we have $\dis\frac{\p \bar{H}^\b}{\p \xi^\g}(\bar{x}, \bar{\xi})=\dis\frac{\p ^2H^*_{L_N}}{\p \bar{\xi}_\b \p \bar{\xi}_\g}(\bar{x},\bar{\xi})$. Therefore  $H^*_{L_N}$ is a convex hyperregular Hamiltonian on $\bar{\cal M}^*_N$ and we get a partial convex hyperregular  Lagrangian $L_N$ defined on $\bar{\cal M}_N$  classically by
\begin{eqnarray}\label{LN}
L_N(\bar{x},\dot{\bar{ x}})=<\bar{\xi},\dot{\bar{x}}> -H^*_{L_N}(\bar{x},\bar{y}), \textrm{ with } (\bar{x},\dot{\bar{x}})=\bar{\mathfrak{H}}(\bar{x},\bar{\xi}).
\end{eqnarray}
  If $S_{L_N}$ is the semispray on $\cal N$ associated with $L_N$ we have $\bar{\mathfrak{H}}_*(X_{L_N})=S_{L_N}$.
Point (iii) is then a consequence of Point (iv) of Lemma \ref{dualham}. \\

For the proof of Point (ii), we  begin by the proof  of the relation $L_N\circ \rho_{| \mathfrak{H}\circ\bar{\rho}^*(\bar{\cal M}^*_N)}=L_{| \mathfrak{H}\circ\bar{\rho}^*(\bar{\cal M}^*_N)}$. In the previous local notations $\mathfrak{H}\circ \bar{\rho}^*(\bar{\cal M}^*_N)$, is locally the set $(\bar{x}, \bar{H}^1(\bar{x},\bar{\xi}),\dots,\bar{H}^q(\bar{x},\bar{\xi}))$. 
Since we have $L(x,y)=<\eta, y> -H_L\circ \L_L(x,y))$ it follows that   locally we have:

$L\circ \mathfrak{H}\circ\bar{\rho}^*(\bar{x},\bar{\xi})=L(\bar{x}, \bar{H}^1(\bar{x},\bar{\xi}),\dots,\bar{H}^q(\bar{x},\bar{\xi}))$

$\;\;\;\;\;\;\;\;\;\;\;\;\;\;\;\;\;\;\;\;\;\;\;\;=\bar{\xi}_\a \bar{H}^\a(\bar{x},\bar{\xi})-H_L(\bar{x},\rho^j_\a,{\xi}_j)$

$\;\;\;\;\;\;\;\;\;\;\;\;\;\;\;\;\;\;\;\;\;\;\;\;=\bar{\xi}_\a \bar{H}^\a(\bar{x},\bar{\xi})-H_L^*(\bar{x},\bar{\xi})$

According to the choice of the coordinate system we can locally  identify $\bar{\cal M}^*_N$ with $\bar{\rho}^*(\bar{\cal M}^*_N)$ and  $\mathfrak{H}\circ\bar{\rho}^*(\bar{\cal M}^*_N)$  with $\bar{\cal M}_N$ and then with this identifications, we get $\bar{\xi}_\a \bar{H}^\a(\bar{x},\bar{\xi})-H_L^*(\bar{x},\bar{\xi})=L_N(\bar{x},\bar{y})$
where $(\bar{x},\bar{y})=(\bar{x},(\bar{H}^1(\bar{x},\bar{\xi}),\dots,\bar{H}^q(\bar{x},\bar{\xi}))$. This ends the proof of the relation $L_N\circ \rho_{| \mathfrak{H}\circ\bar{\rho}^*(\bar{\cal M}^*_N)}=L_{| \mathfrak{H}\circ\bar{\rho}^*(\bar{\cal M}^*_N)}$.\\

 We now apply apply  the context of Proposition \ref{reducleaf}. 
to the open set ${\cal M}_N$ of ${\cal A}_N$.\\
Recall that we have an isomorphism $\hat{\rho}_{TN} : \left(\T{\cal A}_N\right)/ \left({\bf K} {\cal A}_N\oplus(J({\bf K} {\cal A}_N)\right)\ap T(TN)$ over the isomorphism $\rho_N: {\cal A}_N/{\cal K}_N\ap TN$.
On the other hand, from the Diagram (\ref{compoH}), we have  $\rho_N({\cal M}_N)=\bar{\cal M}_N$. Since ${\cal M}_N$ is an open set of ${\cal A}_N$,  then via the  projection
$\T{\cal A}_N\ap \left(\T{\cal A}_N\right)/ \left({\bf K} {\cal A}_N\oplus(J({\bf K} {\cal A}_N)\right)$, the set  $\T{\cal M}_N={\T{\cal A}_N}_{| {\cal M}_N}$ gives rise to a quotient  set   denoted by
$\left(\T{\cal M}_N\right)/ \left({\bf K} {\cal A}_N\oplus(J({\bf K} {\cal A}_N)\right)$. Moreover  the restriction of $\hat{\rho}_{TN}$ to $ \left(\T{\cal M}_N\right)/ \left({\bf K} {\cal A}_N\oplus(J({\bf K} {\cal A}_N)\right)$ is an isomorphism onto $T\bar{\cal M}_N$. Denote by
$$\bar{\pi}_N: \T{\cal M}_N\ap   \left(\T{\cal M}_N\right)/ \left({\bf K} {\cal A}_N\oplus(J({\bf K} {\cal A}_N)\right)$$
 the natural projection.
Since $\rho_N$ is also a Lie algebra isomorphism on the one hand  we  can identify ${\cal A}_N/{\cal K}_N$ with $TN$.

On the other hand, if $p_N:{\cal A}_N\ap {\cal A}_N/{\cal K}_N$ is the canonical projection  we have $\rho_{| {\cal A}_N}=\rho_N\circ p_N$.   We set  $\mathcal{\check{ M}}_N=  \check{p}_*({\cal G})\cap {\cal M}_N=\mathfrak{H}\circ\bar{\rho}^*(\bar{\cal M}^*_N)$ and so,  according to our previous identification we get $p_N(\mathcal{\check{ M}}_N)\equiv \rho(\mathcal{\check{ M}}_N)=\bar{\cal M}_N$. Moreover $p_N$ induces a diffeomorphism $\bar{p}_N$ from $\mathcal{\check{ M}}_N$ onto $\bar{\cal M}_N$. With these last conventions,  we then have $L_{| \mathcal{\check{ M}}_N}=L_N\circ p_N$. Now for any section $\cal X$  such that  $\hat{\rho}({\cal X})$ is tangent to $\bar{\cal M}_N$   using  the previous local coordinates we can see that $\bar{\pi}_M({\cal X})$ is well defined and $(\bar{p}_N)^{-1}_*(\bar{\pi}_M({\cal X}))$ is tangent to $\mathcal{\check{ M}}_N$. Therefore since  $d^{\cal P} L({\cal X})= dL(\hat{\rho}({\cal X}))$ when $\hat{\rho}({\cal X})$ is tangent to $\bar{\cal M}_N$ we have
\begin{eqnarray}\label{dnLN}
d^{\cal P}L({\cal X})=d L_{| \mathcal{\check{ M}}_N}((\bar{p}_N)^{-1}_*(\bar{\pi}_M({\cal X}))=d_NL_N(\bar{\pi}_N({\cal X})).
\end{eqnarray}


Now from  Proposition  \ref{reducleaf} again we can also identify $ \left(\T{\cal M}_N\right)/ \left({\bf K} {\cal A}_N\oplus(J({\bf K} {\cal A}_N)\right)$ with  $T\bar{\cal M}_N$ and then  the classical Lie bracket on $\bar{\cal M}_N$ coincides with the bracket $[\;,\;]_{TN}$ induced by $[\;,\;]_{\cal P}$ by projection onto $ \left(\T{\cal M}_N\right)/ \left({\bf K} {\cal A}_N\oplus(J({\bf K} {\cal A}_N)\right)$. We then have
$$\bar{\pi}_N: \T{\cal M}_N\ap   \left(\T{\cal M}_N\right)/ \left({\bf K} {\cal A}_N\oplus(J({\bf K} {\cal A}_N)\right)\equiv T\bar{\cal M}_N.$$
 If  $J$ and $J_N$ are the vertical endomorphisms  in $\T{\cal A}$ and $T(TN)$  we have $\bar{\pi}_N\circ J_{| \T{\cal A}_N}=J_N\circ\bar{\pi}_N$.
Since $\bar{\pi}_N$ is a Lie algebra homomorphism (see Point (ii) in Proposition \ref{reducleaf}) we obtain $\bar{\pi}_N^*\circ  d_N =d^{\cal P}\circ \bar{\pi}_N^*$ where $d_N$ denote the classical exterior  differential on $N$ and $\bar{\pi}_N^*:T^*\bar{\cal M}_N\ap(\T{\cal M}_N)^*$ is the transpose map of $\bar{\pi}_N$. Finally the symplectic form  associated with $L_N$ is
$\O_{L_N}=-d_N\circ J_N^*(d_N L_N)$.

Therefore according to Equation  (\ref{dnLN}) we have:
$$\bar{\pi}_N^*\O_{L_N}=-\bar{\pi}_N^*\circ d_N\circ J_N^*(d_NL_N)=-d_N\circ J_N^*\circ \bar{\pi}_N^*( d_NL_N)=\O_{L}.$$
It follows that we obtain $\bar{\pi}_N(S_L)=S_{L_N}$ with our identification which intrinsically means
$$T\rho(\hat{\rho}({S_L}_{|\mathcal{\check{ M}}_N}))= S_{L_N}.$$


\end{proof}

\begin{proof}[Proof of Theorem \ref{extremalN}]${}$\\
Point (1)  is a consequence of Lemma  \ref{reducN}.

 For the proof of Point (2), again, we use the identifications  described in the  end of the proof of  Lemma \ref{reducN} and the associated results $L_{| \mathcal{\check{ M}}_N}=L_N\circ\bar{\pi}_N$, $\O_{L}=\bar{\pi}_N^*(\O_{L_N})$ and  $\bar{\pi}_N(S_L)=S_{L_N}$. Since $\bar{\pi}_N$ is a Lie algebra homomorphism  if $S$ and $X$ are vector fields on $\bar{\cal M}_N$ and ${\cal S}$ and $\cal X$ are $\bar{\pi}_N$-lifts of $S$ and $X$ respectively we have:
$\bar{\pi}_N\circ J([{\cal S},{\cal X}]_{\cal P})=J_N[S,X]$ and $ \bar{\pi}_N([{\cal S},J{\cal X}]_{\cal P})=[S,JX]$.
Now recall that we have ${\cal N}_{S_L}({\cal X})={ J}[S_L,{\cal X}]_{\cal P}-[S_L,  J{\cal X}]_{\cal P}$. Therefore $\overline{\cal N}_L(X)=\bar{\pi}_N\circ {\cal N}_{S_L}({\cal X})$   for some $\bar{\pi}_N$-lift  $\cal X$  of $X$ is a well defined  endomorphism of $T\bar{\cal M}_N$ and we have $\overline{\cal N}_L(X)={\cal N}_{S_{L_N}}(X)$.\\

 Now we look for the equivalences in Point (iii).\\

 The equivalence  (i)$\Leftrightarrow$(ii) is a direct consequence of Point (i) and (ii) of Theorem \ref{locmin} and Point (iii) of Lemma \ref{dualham}. In particular $c$ is smooth

 The implication (ii)$\Rightarrow$(iii) is a consequence of Theorem \ref{locmin}, Point (iii) and of Lemma \ref{dualham}, Point (iii).

 The implication (iii)$\Rightarrow$(iv) is a consequence of  the definition of a local minimizer, Theorem \ref{locmin}, Point (ii) and of Lemma \ref{reducN}, Point (ii).

 Consider the particular case of the  anchored bundle $(TN, N,Id_N)$ from the definition of an abnormal extremal, it follows  ({\it as it is well known}) that any extremal of $L_N$ is a normal extremal of $L_N$. Now from the Maximum Principle if $c$ is a local minimizer of $L_N$, then  $c$ must a normal  extremal of $L_N$. Finally from the previous equivalence (ii)$\Leftrightarrow$(iii) (already proved) applied in this particular context we obtain the implication (iv)$\Rightarrow$(v)

 The equivalence (v)$\Leftrightarrow$(i) is a consequence of Point (iii) of  Lemma \ref{reducN}.

 For the proof of Point (iii), again we use  the identifications  described in the  end of the proof of  Lemma \ref{reducN} and the associated results $L_{| \mathcal{\check{ M}}_N}=L_N\circ\bar{\pi}_N$, $\O_{L}=\bar{\pi}_N^*(\O_{L_N})$ and  $\bar{\pi}_N(S_L)=S_{L_N}$. Since $\bar{\pi}_N$ is a Lie algebra homomorphism  if $S$ and $X$ are vector fields on $\bar{\cal M}_N$ and ${\cal S}$ and $\cal X$ are $\bar{\pi}_N$-lifts of $S$ and $X$ respectively we have:
$\bar{\pi}_N\circ J([{\cal S},{\cal X}]_{\cal P})=J_N[S,X]$ and $ \bar{\pi}_N([{\cal S},J{\cal X}]_{\cal P})=[S,JX]$.
Now recall that we have ${\cal N}_{S_L}({\cal X})={ J}[S_L,{\cal X}]_{\cal P}-[S_L,  J{\cal X}]_{\cal P}$. Therefore $\overline{\cal N}_L(X)=\bar{\pi}_N\circ {\cal N}_{S_L}({\cal X})$   for some $\bar{\pi}_N$-lift  $\cal X$  of $X$ is a well defined  endomorphism of $T\bar{\cal M}_N$ and we have $\overline{\cal N}_L(X)={\cal N}_{S_{L_N}}(X)$.\\
\end{proof}

\section{Finsler  geometry on an anchored foliated bundle}\label{Partialfins}
Consider an anchored bundle $({\cal A},M,\rho)$.  In this section  ${\cal M}$ will be a conic open subset of  $\cal A$ without the zero section that is $\cal M$ is stable under the action of $\R^+$ on each fiber of $\cal A$  and such that  $\pi=\t_{\cal M}:{\cal M}\ap M$ is fibered manifold. We extend to this context the result of \cite{Ja}. More precisely, we will define the notion of partial Finsler structure on  $({\cal A},M,\rho)$  and, given an almost Lie bracket $[\;,\;]_{\cal A}$ on $\cal A$ such that $({\cal A},M,\rho,[\;,\;]_{\cal A})$ is a pre-Lie algebroid, we will associate to this structure a "Finsler connection" and a "Chern connection" such that   we recover the usual properties of Finsler connection and Chern connection associated with a Finsler metric on each leaf of the foliation defined by the distribution $\rho({\cal M})$. In particular these connections induced on such leaves does not depend on the choice of the bracket $[\;,\;]_{\cal A}$.  Analogous properties are also true for the curvature and the flag curvature.

\subsection{Partial  Finsler metric  and sectional linear connections }\label{subfinrap}${}$

Let  $\cal M$ be a conic  open subset of $\cal A$ (as introduced previously).

\begin{Def}\label{parfin}${}$\\
A   partial Finsler metric  on the anchored bundle  $({\cal A},M,\rho)$  is  a continuous map ${\cal F}:{\cal M}\ap[0,+\infty[$ such that:

 (i) $\cal F$ is smooth on  ${\cal M}$;

 (ii) ${\cal F}(\l u)=\l{\cal F}(u)$ for all $u$ in the fiber ${\cal M}_x$,  all $x\in M$ and all $\l>0$ (positive homogeneity)

 (iii) the quadratic form $g_{u}(v,w):=\dis\frac{1}{2}\frac{\p^2 {\cal F}^2}{\p s \p t}(u+sv+tw)_{| s,t=0}$ is definite positive for all $v,w\in {\cal A}_x$, and
   $x\in M$
  \end{Def}

\begin{Ex}\label{exfinsler}${}$

{\bf 1.} When $\cal A=TM$    then we obtain  the context of quadratic Finsler metric on $M$ as in \cite{Ja}.

{\bf 2.} When $\cal M$ is the complementary of the zero section in $\cal A$, and if $\cal F$ is the restriction to $\cal M$ of a continuous map $\bar{\cal F}:{\cal A}\ap [0,+\infty[$, we get the classical notion of Finsler metric on $\cal A$. In particular, if  we consider a Riemannian metric $g$ on $\cal A$, we get naturally a  Finsler metric on $\cal A$ defined by:
$${ \cal F}(x,u)=\sqrt{g_x(u,u)}.$$

{\bf 3.} Given a pseudo-Riemannian metric $g$ on $\cal A$, the set ${\cal A}^+$ of  vectors  $u\in {\cal A}$ such that $g(u,u)>0$ is a conic open of  $\cal A$ and so
${\cal F}(u)=\dis\frac{1}{2}\sqrt{g(u,u)}$ is a Partial Finsler metric. In a symmetric way, the set ${\cal A}^-$ of  vectors  $u\in {\cal A}$ such that $g(u,u)<0$ is a conic open of  $\cal A$ and so
${\cal F}(u)=\dis\frac{1}{2}\sqrt{-g(u,u)}$ is a partial Finsler metric.

{\bf 4.} The distribution $E=\rho({\cal A})$ on $M$ generated by $\cal A$ is integrable.  Therefore, we can define     a partial Finsler metric  $F_N$ on the tangent space  $TN$ of a leaf $N$ of the foliation defined  by $\rho({\cal A})$ in the following way:
$$F(X)=\inf\{{\cal F}(u), \textrm{ such that } \rho(u)=X,\;\; X\in \Xi(N) \}.$$
As we have already seen  $\ker \rho$ is a subbundle of ${\cal A_N}$ and  $\rho$ induces  a canonical  morphism ${\rho}_N:{\cal A}_N/{\cal K}_N\ap TN$ which is an isomorphism.  If ${\cal F}_N$ is the restriction of $\cal F$ on  ${\cal A}_N$, we obtain a Finsler metric $\bar{\cal F}_N$ on $({\cal A}_N/{\cal K}_N\,N,{\rho}_N)$ such that $F_N(\hat{\rho}(a))=\bar{\cal F}_N(a)$.
 In particular, if $\cal A$ is an integrable subbundle of $TM$,  then the Finsler metric ${F}_N$ on a leaf $N$ is nothing but the restriction of $\cal F$ to $TN$.
\end{Ex}

Consider an anchored bundle $({\cal A},M,\rho)$, a conic  open subset  $\cal M$ of $\cal A$ and $\cal F$ a partial finsler metric defined on $\cal M$.\\
According to the beginning of Subsection \ref{prolongM}, we have already defined the pull-back of the bundle $\t:{\cal A}\ap M$ over $\pi: {\cal M}\ap M$ so that we have the following commutative diagrams:


 $$ \begin{array}{ccccc}
 &\tilde{\pi}&\\
\tilde{\cal A}& \longrightarrow  &{\cal A} \\
\tilde{\tau} \;  \Big\downarrow &  & \Big\downarrow \;\tau \\
{\cal M} &  \longrightarrow & { M}   \\
& \pi &
\end{array}\;\;\;\;\;\;
 \begin{array}{ccccc}
 &{\pi}_{\tilde{\cal A}}&\\
{\T}{\cal M}& \longrightarrow  &\tilde{\cal A} \\
\hat{\pi} \;  \Big\downarrow &  & \Big\downarrow \;\tilde{\tau} \\
{\cal M} &  \longrightarrow & {\cal M}   \\
& Id &
\end{array}
$$

  Therefore the quadratic form $g_u$ defined in (iii), gives rise to a Riemannian metric on $\tilde{\cal A}$ which  is called {\it the fundamental tensor} of $\cal F$ and  denoted $g_{\cal F}$.   We have a natural isomorphism $\vartheta$ from the bundle  $\bar{\tau}: \tilde{\cal A}\ap{\cal M}$ to the vertical bundle ${\bf V}{\cal M} \ap  {\cal M}$ given by:
$$(x,u,v) \textrm{  associates the vertical lift } (x,u,\dis\frac{d}{dt}(u+tv)_{| t=0}).$$ 

 According to Example \ref{exsmH}\;{\bf  2}, ${\cal L}={\cal F}^2$ is a regular Lagrangian on $\cal M$ and   the associated  Riemannian metric  is exactly $g_{\cal L}$.  Via the previous isomorphism $\vartheta$ we have $g_{\cal F}=\vartheta^*g_{\cal L}$

\begin{Def}\label{Cartan}${}$\\
Let
 $\cal F$ be  a partial Finsler metric on  $({\cal A},M,\rho)$. We denote by  $\mathbb{A}_u$  is the trilinear form:
$$\mathbb{A}_u(v_1,v_2,v_3)=\dis\frac{1}{4}\dis\frac{\p^3 {\cal F}^2}{\p s_1 \p s_2 \p s_3}(u+\sum_{i=1}^3s_iv_i)_{| s_1=s_2=s_3=0}$$ for any $u\in {\cal M}_x$ and any $x\in M$
\end{Def}

\begin{Rem}\label{Au=0}${}$
\begin{enumerate}
\item From the definition of  $\mathbb{A}_u$ we also have
$ \mathbb{A}_u(v_1,v_2,v_3)=\dis\frac{1}{2}\frac{\p}{\p s}[g_{u+sv_1}(v_2,v_3)]_{| s=0}.$
\item For $u\in {\cal M}_x$, we have $\mathbb{A}_u(u,v_2,v_3)=0$ for any $v_2, v_3$ in ${\cal A}_x$
 (same argument as in \cite{Ja} Proposition 2.6).
 \item When $\cal F$ is a Finsler metric on $\cal A$,  then $g_{\cal F}$ is a Riemannian metric if and only if $\mathbb{A}_{\cal F}\equiv 0$
\end{enumerate}
\end{Rem}

The definition of   $\mathbb{A}_u$ implies   that  its value does not depend on the order $v_1, v_2$ and $v_3$.  By  the same previous  argument  $\mathbb{A}_u$ gives rises to a symmetric  $\tilde{\cal A}$-tensor of type $(3,0)$    which is called the {\it Cartan tensor} of $\cal F$ and denoted $\mathbb{A}_{\cal F}$.\\

 We now provide the anchored bundle $({\cal A},M,\rho)$ with an almost Lie bracket $[\;,\;]_{\cal A}$ such that $({\cal A},M,\rho),[\;,\;]_{\cal A}$ is a pre-Lie algebroid .

 \begin{Def}\label{partial Finsler}
 The quadruple $ ({\cal A}, M, \rho, [\;,\;]_{\cal A}, {\cal F})$) is called a partial Finsler  pre-Lie algebroid.
 \end{Def}

Consider  an open set $O$  in $M$ and $U$ a local section of  $\cal M$ over $O$. Then we introduce:

\begin{Def}\label{glinearconn}${}$\\
Consider a linear connection $\nabla$ on ${\cal A}_{| O}$. We say that:

(1) $\nabla$ is torsion-free if
$$\nabla_XY-\nabla_YX=[X,Y]_{\cal A}$$
for all sections $X$ and $Y$ of $\cal A$ defined on $O$

(2) $\nabla$ is almost $g_{\cal F}$ compatible along  $V$ if
$$\rho(X)(g_{U(x)}(Y,Z))=g_{U(x)}(\nabla_XY,Z)+g_{U(x)}(Y,\nabla_XZ)+2\mathbb{A}_{U(x)}(\nabla_XU,Y,Z)$$
for all sections $X$, $Y$ and $Z$ of $\cal A$ defined on $O$ and all $x\in O$

\end{Def}

\begin{Rem}\label{nablasg}${}$\\
As usual the connection $\nabla$ can be defined for any $\cal A$-tensor. Therefore,  the almost $g_{\cal F}$ compatibility of $\nabla$ along $V$ is equivalent to
$$\nabla_X(g_U)(Y,Z)=\mathbb{A}_U(\nabla_XU,Y,Z)$$
for all sections $X$ and $Y$ of $\cal A$ defined on $O$.
\end{Rem}

Then, as in \cite{Ja} we have

\begin{Pro}\label{exisnablas}${}$\\
Given a section $U$ of $\cal M$ over an open set $O$, there exists a unique linear connection $\nabla^U$ on  ${\cal A}_{| O}$ which is torsion-free and almost $g_{\cal F}$ compatible along $V$. Moreover for any $x\in O$, the value of the section $\nabla^U _XY$ at $x$ only depends  on the values of $U(x)$ and $X(x)$.
\end{Pro}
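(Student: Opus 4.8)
The plan is to run a Koszul/Levi--Civita argument, adapted to the pre-Lie algebroid bracket and to the base-point dependence of $g_{\cal F}$. Fix the section $U$ over $O$ and abbreviate $g:=g_U$ (a Riemannian metric on ${\cal A}_{|O}$) and $\mathbb{A}_U$ for the Cartan tensor evaluated at $U$; recall from Remark \ref{Au=0} that $\mathbb{A}_U$ is fully symmetric and that $\mathbb{A}_U(U,\cdot,\cdot)=0$. Writing the compatibility relation of Definition \ref{glinearconn}(2) three times with the cyclic permutations of $(X,Y,Z)$, forming the combination (I)$+$(II)$-$(III), and using torsion-freeness $\nabla_XY-\nabla_YX=[X,Y]_{\cal A}$ to cancel the mixed terms, I expect to obtain the Koszul-type identity
\begin{eqnarray*}
2g(\nabla_X Y,Z)&=&\rho(X)g(Y,Z)+\rho(Y)g(Z,X)-\rho(Z)g(X,Y)\\
&&+\,g([X,Y]_{\cal A},Z)-g([X,Z]_{\cal A},Y)-g([Y,Z]_{\cal A},X)\\
&&-\,2\big[\mathbb{A}_U(\nabla_X U,Y,Z)+\mathbb{A}_U(\nabla_Y U,X,Z)-\mathbb{A}_U(\nabla_Z U,X,Y)\big].
\end{eqnarray*}
Since $g$ is nondegenerate, this already forces uniqueness \emph{once the right-hand side is known}; the difficulty is that the right-hand side still contains the unknown derivatives $\nabla_\bullet U$.

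The key step is to resolve this implicit system. First I would take $X=Y=U$: then in every Cartan correction at least two arguments equal $U$, so all three correction terms vanish by $\mathbb{A}_U(U,\cdot,\cdot)=0$, and the identity determines $\nabla_U U$ explicitly. Next I set $Y=U$ with $X$ arbitrary: the corrections $\mathbb{A}_U(\nabla_X U,U,\cdot)$ and $\mathbb{A}_U(\nabla_{(\cdot)} U,X,U)$ vanish and only $\mathbb{A}_U(\nabla_U U,X,\cdot)$ survives, so since $\nabla_U U$ is now known, this determines $\nabla_X U$ for every $X$. Finally, with $\nabla_X U,\nabla_Y U,\nabla_Z U$ all determined, the full identity determines $\nabla_X Y$ for all $X,Y$. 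I would then \emph{define} $\nabla^U$ by reading this off through the nondegeneracy of $g$.

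It then remains to verify that the object so defined is an ${\cal A}$-linear connection that is torsion-free and almost $g_{\cal F}$-compatible along $U$. $\R$-bilinearity is immediate, and the two tensoriality/Leibniz properties $\nabla_{fX}Y=f\nabla_XY$ and $\nabla_X(fY)=\rho(X)(f)\,Y+f\nabla_XY$ follow by substituting $fX$ and $fY$ into the Koszul formula and using the anchor identity $\rho(fX)=f\rho(X)$ together with the almost-Lie-bracket Leibniz rule $[fX,Y]_{\cal A}=f[X,Y]_{\cal A}-\rho(Y)(f)\,X$; the extra anchor-derivative terms cancel exactly as in the Riemannian case. Torsion-freeness is obtained by antisymmetrizing the formula in $(X,Y)$, and compatibility by reconstituting the cyclic sum. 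I emphasize that only the Leibniz rule and the anchor compatibility $\rho([X,Y]_{\cal A})=[\rho X,\rho Y]$ of the pre-Lie algebroid structure are used, so the Jacobi identity plays no role here.

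For the last assertion I would work in a local basis $\{e_\alpha\}$ with fundamental-tensor components $g_{\alpha\beta}(x,u)$, so that $\partial g_{\alpha\beta}/\partial u^\gamma$ is (twice) a component of $\mathbb{A}$. The only way a derivative of $U$ can enter $\nabla^U_X Y(x)$ is through differentiating the base point $U(x)$ inside the terms $\rho(\cdot)g_U(\cdot,\cdot)$, which produces a factor $\mathbb{A}_U$ applied to the plain directional derivative of $U$, and through the explicit corrections $\mathbb{A}_U(\nabla_\bullet U,\cdot,\cdot)$. Since the difference between the directional derivative of $U$ and $\nabla_\bullet U$ is the Christoffel correction, which is algebraic (tensorial) in $U$, these two $U$-derivative contributions cancel, leaving a formula whose $U$-dependence is only through the pointwise value $U(x)$ (via $g_{U(x)}$, $\mathbb{A}_{U(x)}$ and the connection coefficients) and whose $X$-dependence is tensorial, i.e. through $X(x)$. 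This cancellation, which is the analogue of the corresponding computation in \cite{Ja}, is the main obstacle: both the resolution of the implicit $\nabla_\bullet U$ system and, above all, the verification that the derivative-of-$U$ terms cancel are where the real work lies, whereas the connection axioms, torsion, and compatibility are routine once the Koszul formula is in place.
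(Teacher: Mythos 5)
Your proposal is correct and follows essentially the same route as the paper's proof: the cyclic Koszul identity obtained from the compatibility condition and torsion-freeness, the resolution of the implicit system by first taking $X=Y=U$ (where all Cartan corrections vanish) and then $Y=U$ to pin down $\nabla_UU$ and $\nabla_XU$, the routine verification of the connection axioms, and the local-coordinate cancellation showing the Christoffel symbols depend only on $x$ and $U(x)$. The cancellation of the derivative-of-$U$ terms that you highlight is exactly what the paper performs implicitly in passing from its global Koszul formula to the local expression of the coefficients.
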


\begin{Def}\label{sectcoont}${}$\\
Given a section $U$ of $\cal M$ over an open set $O$ in $M$, the linear connection characterized in Proposition \ref{exisnablas} will be denoted $\nabla^U$ and called a sectional linear connection on $O$ along $U$.
\end{Def}

\smallskip
\begin{proof} ({\it cf.} Proof of Proposition 2.12 of \cite{Ja})${}$\\
 As in the classical case of linear connection which are compatible with a Riemannian metric, if we apply three times  the almost $g_{\cal F}$-compatibily on circular permutation of $X,Y$ and $Z$, we add these relations and if we take in account the torsion-free relation, we obtain  that any connection $\nabla$ which satisfies  the properties (1) and (2) of Definition \ref{glinearconn}, is characterized by:
\begin{eqnarray}\label{kozul}
2g_U(\nabla_XY,Z)=\rho(X)(g_U(Y,Z))+\rho(Y)(g_U(X,Z))-\rho(Z)(g_U(X,Y))\nonumber\\
{}\;\;\;\;\;\;\;\;\;\;\;\;\;\;\;\;\;\;\;\;\;\;\;\;\;\;\;\;\;\;\;\;\;\;\;\;\;+g_U([X,Y]_{\cal A},Z)+g_U([Z,X]_{\cal A},Y)-g_U([Y,Z]_{\cal A},X)\nonumber\\
{}\;\;\;\;\;\;\;\;\;\;\;\;\;\;\;\;\;\;\;\;\;\;\;\;\;\;\;\;\;\;\;\;\;\;\;\;\;\;\;\;\;\;\;\;\;-2(\mathbb{A}_U(\nabla_XU,Y,Z)+2\mathbb{A}_U(\nabla_YU,X,Z)-\mathbb{A}_U(\nabla_ZU,X,Y)).
\end{eqnarray}

 If we apply (\ref{kozul}) for $X=Y=U$, according to Remark \ref{Au=0} (2), we get:
 \begin{eqnarray}\label{nablaV}
 2g_U(\nabla_UU,Z)=2\rho(U)(g_U(U,Z))-\rho(Z)(g_U(U,U))+2g_U([Z,U]_{\cal A},U).
\end{eqnarray}
Therefore $\nabla_UU$ is uniquely defined. In the same way, if we apply (\ref{kozul}) for $Y=U$ we obtain:
\begin{eqnarray}\label{Y=V}
2g_U(\nabla_XU,Z)=\rho(X)(g_U(U,Z))+\rho(U)(g_U(X,Z))-\rho(Z)(g_U(X,U))\nonumber\\
{}\;\;\;\;\;\;\;\;\;\;\;\;\;\;\;\;\;\;\;\;\;\;\;\;\;\;\;\;\;\;\;\;\;\;\;\;\;+g_U([X,U]_{\cal A},Z)+g_U([Z,X]_{\cal A},U)-g_U([Y,Z]_{\cal A},X)-2\mathbb{A}_U(\nabla_UU,X,Z).
\end{eqnarray}

Again $\nabla_XU$ is then well defined and moreover,  from this relation we obtain
$\nabla_{fX}U=f\nabla_XU$ for all smooth function $f$ on $O$.\\
Finally as  $\nabla_UU$ and $\nabla_XU$ are well defined for any $X$, the relation (\ref{kozul}) defines uniquely $\nabla_XY$.\\
 Moreover  using the fact that
$\nabla_{fX}U=f\nabla_XU$ and as $\mathbb{A}_U$ is a tensor, we   also obtain  $\nabla_{fX}Y=f\nabla_XY$. Finally  using again the fact that $\mathbb{A}_U$ is tensorial,
from the relation  (\ref{kozul})  we obtain

$\nabla_X(f)Y=\rho(X)(f)Y+f\nabla_XY$ for any smooth function $f$ on $O$.\\

For the proof of the last part, we may assume that $O$  is a chart domain and $\cal A$ is a trivial bundle over $O$.
Let $(x^i)$ be  a coordinates system on $O$ and $\{e_\a\}$ a basis of $\cal A$ over $O$. Therefore we obtain a coordinate system $(x^i,y^\a)$ on $\t^{-1}(O)$.
Then we have the local decomposition:

$U=U^a e_\a$;

$[e_\a,e_\b]_{\cal A}=C_{\a\b}^\g e_\g$;

$\rho(e_\a)=\rho_\a^i\dis\frac{\p}{\p x^i}$;

$g_U(e_\a,e_\b)=g_{\a\b}(V)$;

$\nabla^U_{e_\a}e_\b=(\G^U) _{\a\b}^\g (V)e_\g$.

Now  in the relation (\ref{kozul}) we put $X=e_\a$, $Y=e_\b$ and $Z=e_\g$. Therefore we get:
\begin{eqnarray}\label{kozulloc}
(\G^U)_{\g\a\b}=g_{\l\g}(\G^U)_{\a\b}^\l=S_{\g\a\b}
-U^\l((\G^U)_{\a\l}^\mu\mathbb{A}_{\mu\b\g}+(\G^U)_{\b\l}^\mu\mathbb{A}_{\mu\a\g}-(\G^U)_{\g\l}^\mu\mathbb{A}_{\mu\a\b}),
\end{eqnarray}
where
\begin{eqnarray}\label{Sabc}
S_{\g\a\b}=\dis\frac{1}{2}[(\rho_\a^i\dis\frac{\p g_{\b\g}}{\p x^i}+\rho_\b^i\dis\frac{\p g_{\a\g}}{\p x^i}-\rho_\g^i\dis\frac{\p g_{\a\b}}{\p x^i})+(g_{\l\g}C_{\a\b}^\l+g_{\l\b}C_{\g\a}^\l+g_{\l\a}C_{\g\b}^\l)].
\end{eqnarray}

In (\ref{nablaV}), if we put $U=U^a e_\a$ and $Z=e_\g$ and we  obtain
\begin{eqnarray}\label{nablaVloc}
U^\a U^\b(\G^U)_{\a\b}^\g=g^{\g\l}U^\a U^\b(\G^U)_{\l\a\b}=g^{\g\l}U^\a U^\b S_{\l\a\b}=U^\a U^\b S_{\a\b}^\g.
\end{eqnarray}
Now, from (\ref{kozulloc}) by using (\ref{nablaVloc}) we then have:
\begin{eqnarray}
U^\b(\G^U)_{\a\b}^\g=U^\b g^{\g\l}(\G^U)_{\l\a\b}=U^\b S^\g_{\a\b}-U^\b U^\epsilon S^\mu_{\b\epsilon}g^{\g\l}\mathbb{A}_{\mu\a\l}.
\end{eqnarray}
If we set
\begin{eqnarray}\label{NU}
({\cal N}^U)_\a^\b=U^\b S^\g_{\a\b}-U^\b U^\epsilon S^\mu_{\b\epsilon}g^{\g\l}\mathbb{A}_{\mu\a\l},
\end{eqnarray}
  the relation (\ref{kozulloc}) gives rise to:
\begin{eqnarray}\label{kozullocfin}
(\G^U)_{\a\b}^\g=S_{\a\b}^\g+g^{\g\l}(-({\cal N}^U)_\a^\mu\mathbb{A}_{\mu\b\l}-({\cal N}^U)_\b^\mu\mathbb{A}_{\mu\a\l}+({\cal N}^U)_\l^\mu\mathbb{A}_{\mu\a\b}).
\end{eqnarray}
Now, from their expression the value the quantities  $S_{\a\b}^\g$, $g^{\g\l}$ and $\tilde{{\cal N}}_\a^\b$  at each $x\in O$ depend only on $x$ and $V(x)$. Therefore from
(\ref{kozullocfin}) we deduce that $(\G^U)_{\a\b}^\g$ also depends on $x$ and $V(x)$.\\
\end{proof}

\subsection{Finsler and Chern connections on a partial Finsler  algebroid}\label{partfin}${}$\\
{\rm 
{\it A partial Finsler pre-Lie algebroid}  $(\cal A,M,\rho,[\;,\;]_{\cal A},{\cal F})$ is a partial Finslerian  metric ${\cal F}$  on an anchored bundle $(\cal A,M,\rho)$  provided with an almost Lie bracket $[\;,\;]_{\cal A}$ such that $(\cal A,M,\rho,[\;,\;]_{\cal A})$ is a pre-Lie algebroid.
According to  \cite{HPo} or subsection \ref{subfinrap},  the Finslerian metric $\cal F$ is an $1$-homogeneous Lagrangian on
 $\cal M$ and  the associated $2$-homogeneous Lagrangian ${\cal L}=\dis\frac{1}{2}{\cal F}^2$ is a partial convex  hyperregular Lagrangian.
 Since  we have already seen in Subsection \ref{normalext}  from the canonical Lie structures on $\cal A$,   we get on  $\cal M$ a canonical almost tangent structure ${\cal J}$
 and an Euler section $\cal C$ a Riemannian metric $g_{\cal L}$ on
 the vertical sub-bundle   of ${\T}{\cal M}$.  On the other hand,  a  canonical  almost cotangent structure
 $\O_{\cal L}=-d^{\cal P}{\cal J}^*d^{\cal P}{\cal L}$, a canonical spray ${\cal S}_{\cal L}$, and also a canonical nonlinear connection
  ${\cal N}_{{\cal S}_{\cal L}}$ which is $g_{\cal L}$-metric Lagrangian and conservative. In particular the geodesics of ${\cal N}_{\cal L}$ are
 the integral curves of ${\cal S}_{\cal L}$. 

 From now on, the partial Finsler  quasi-algebroid  $(\cal A,M,\rho,[\;,\;]_{\cal A}, {\cal  F})$ is fixed. Then we have the following geometrical objets associated with this structure:

 $\bullet$  {\it The Riemannian metric $g_{\cal F}$ called the   Finsler Riemannian metric or the fundamental tensor.}

 $\bullet$ {\it  The Cartan tensor $\mathbb{A}$   denoted $\mathbb{A}_{\cal F}$ in the sequel.}

 $\bullet$  {\it The spray ${\cal S}_{\cal L}$  called the Finsler spray and  denoted $S_{\cal F}$ in the sequel.}

 $\bullet$ {\it The nonlinear  connection  ${\cal N}_{{\cal S}_{\cal L}}$  called the Finsler connection and    denoted ${\cal N}_{\cal F}$ in the sequel.}\\

   Denote by  $\bf{H}{\cal M}$   the  horizontal space in $\T{\cal M}$ associated with ${\cal N}_{\cal F}$. According to the decomposition $\T{\cal M}=\bf{H}{\cal M}\oplus\bf{V}{\cal M}$, we have the horizontal  and   the vertical projector denoted $\bf{h}_{\cal F}$ and  $\bf{v}_{\cal F}$ respectively. We will denote by $\theta:\T{\cal M}\ap \tilde{\cal A}$ the composition $\vartheta\circ \bf{v}_{\cal F}$. The announced linear  Chern connection $\nabla$ \footnote{such a connection must satisfy the classical property of a linear connection adapted to this context:\\
 $-\;\; \nabla_{f{\cal X}}Y=f\nabla_{\cal X}Y,\;\; \nabla_{{\cal X}+{\cal Y}}Z=\nabla_{\cal X}Z+\nabla_{\cal Y}Z$\\
 $-\;\; \nabla_{\cal X}(Y+Z)=\nabla_{\cal X}Y+\nabla_{\cal X}Z$\\
 $-\;\; \nabla_{\cal X}fY=\hat{\rho}({\cal X})(f) Y+f\nabla_{\cal X}Y$\\
 for all sections ${\cal X}$ and ${\cal Y}$ of $\T{\cal M}$, all sections $Y$ and $Z$ of $\tilde{\cal A}$ and smooth function $f$ on $\cal M$.} on $\tilde{\cal A}$ and its relation with the set of sectional linear connections $\nabla^U$ is given in the following result:

  \begin{The}\label{chern}${}$
  \begin{enumerate}
 \item There exists a unique linear connection
 $$ \begin{matrix}
 \nabla:& \Xi(\T{\cal M})\times \Xi(\tilde{\cal A})&\ap& \Xi(\tilde{\cal A})\\
 &({ \cal X},\tilde{Y})&\ap& \nabla_{\cal X}\tilde{Y}\\
 \end{matrix}$$
which satisfies  the following properties  for any ${\cal X}\in \Xi(\T{\cal M})$ and $\tilde{Y}\in\Xi(\tilde{\cal A})$:
 \begin{enumerate}
 \item[(a)] (torsion free property)
 $$\nabla_{\cal X}({\pi}_{\tilde{\cal A}}({\cal Y}))-\nabla_{\cal Y}({\pi}_{\tilde{\cal A}}({\cal X}))={\pi}_{\tilde{\cal A}}([{\cal X},{\cal Y}]_{\cal P});$$
 \item[(b)] (almost $g_{\cal F}$-compatibility)
 $$\hat{\rho}({\cal X})(g_{\cal F}(\tilde{Y},\tilde{Z}))=g_{\cal F}(\nabla_{\cal X} \tilde{Y}, \tilde{Z})+g_{\cal F}(\tilde{Y},\nabla_{\cal X}\tilde{Z})+\mathbb{A}_{\cal F}(\theta({\cal X}),\tilde{Y},\tilde{Z})$$
for all sections $\cal X$ of $\T{\cal M }$ and $\tilde{Y}$ and $\tilde{Z}$ of  $\tilde{\cal A}$.
\end{enumerate}
\item  For any (local) section $Y$ of $\cal A$, we denote by $\tilde{Y}$ the associate section of $\tilde{\cal A}$ such that $\tilde{\pi}\circ \tilde{Y}=Y\circ \pi$. Given any section of $\cal M$ defined on an open $O\subset M$ and any section $X$ and $Y$ defined on $O$, we have:
\begin{eqnarray}\label{globsec}
\tilde{\pi}(\nabla_{\cal X}\tilde{Y}(x,U(x)))=\nabla^U_XY(x)
\end{eqnarray}
where   $\cal X$  is  any section of $\T{\cal M}$  which is projectable on $X$  (see Definition \ref{Mproj}).
\end{enumerate}
 \end{The}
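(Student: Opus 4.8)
The plan is to obtain $\nabla$ from a Koszul-type formula extracted from conditions (a) and (b), in exact parallel with the derivation of (\ref{kozul}) in the proof of Proposition \ref{exisnablas}, and then to read off Part 2 by comparing local coefficients. First I would prove Part 1. Fix three sections ${\cal X},{\cal Y},{\cal Z}$ of $\T{\cal M}$ and write the almost $g_{\cal F}$-compatibility (b) with $\tilde Y=\pi_{\tilde{\cal A}}({\cal Y})$, $\tilde Z=\pi_{\tilde{\cal A}}({\cal Z})$, together with its two cyclic permutations. Adding the ${\cal X}$- and ${\cal Y}$-permutations, subtracting the ${\cal Z}$-permutation, and eliminating the three brackets by the torsion-free identity (a) gives
\[
2\,g_{\cal F}(\nabla_{\cal X}\pi_{\tilde{\cal A}}({\cal Y}),\pi_{\tilde{\cal A}}({\cal Z}))
=\hat{\rho}({\cal X})\big(g_{\cal F}(\pi_{\tilde{\cal A}}({\cal Y}),\pi_{\tilde{\cal A}}({\cal Z}))\big)+\cdots
-\mathbb{A}_{\cal F}(\theta({\cal X}),\pi_{\tilde{\cal A}}({\cal Y}),\pi_{\tilde{\cal A}}({\cal Z}))-\cdots,
\]
an expression in which only $\hat{\rho}$, $g_{\cal F}$, $\mathbb{A}_{\cal F}$ and $[\;,\;]_{\cal P}$ appear. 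Since $g_{\cal F}$ is Riemannian, hence non-degenerate on each fibre of $\tilde{\cal A}$, this relation determines $\nabla_{\cal X}\pi_{\tilde{\cal A}}({\cal Y})$ uniquely, which yields the uniqueness assertion at once: any $\nabla$ satisfying (a) and (b) must obey the displayed identity.

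For existence I would use that $\pi_{\tilde{\cal A}}$ is a fibrewise surjection with kernel $\V{\cal M}$, so that by paracompactness every section $\tilde Y$ of $\tilde{\cal A}$ admits a lift ${\cal Y}$ with $\pi_{\tilde{\cal A}}({\cal Y})=\tilde Y$; the Koszul formula then defines $\nabla_{\cal X}\tilde Y$. The essential point is well-definedness: replacing ${\cal Y}$ by ${\cal Y}+{\cal V}$ with ${\cal V}$ a vertical section must leave the right-hand side unchanged. This holds because $g_{\cal F}$ and $\mathbb{A}_{\cal F}$ are pull-back tensors, so $\pi_{\tilde{\cal A}}({\cal V})=0$ annihilates the terms in which ${\cal V}$ enters directly, while the surviving bracket contributions $\pi_{\tilde{\cal A}}([{\cal X},{\cal V}]_{\cal P})$ cancel against the corresponding derivative terms by the Leibniz property of $[\;,\;]_{\cal P}$ and the identity $\mathbb{A}_u(u,\cdot,\cdot)=0$ of Remark \ref{Au=0}(2). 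The connection axioms, namely $C^\infty({\cal M})$-linearity in ${\cal X}$, additivity, and the Leibniz rule $\nabla_{\cal X}(f\tilde Y)=\hat{\rho}({\cal X})(f)\,\tilde Y+f\,\nabla_{\cal X}\tilde Y$, then follow from the formula by the same tensorial bookkeeping as in Proposition \ref{exisnablas}.

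Finally, for Part 2, I would work over a chart $O\subset M$ with a basis $\{e_\a\}$ and the associated canonical basis $\{{\cal X}_\a,{\cal V}_A\}$ of $\T{\cal M}$, and compute the local coefficients of $\nabla$ from the Koszul formula. The key identification is that the vertical projector $\mathbf{v}_{\cal F}$ of the Finsler connection ${\cal N}_{\cal F}$, whose coefficients are the $({\cal N}^U)^\b_\a$ of (\ref{NU}), sends ${\cal X}_\a$ evaluated along the section $U$ to the vertical vector corresponding under $\vartheta$ to $\nabla^U_{e_\a}U$; hence $\theta({\cal X}_\a)$ restricted to $U$ is exactly the argument occurring in the sectional compatibility of Definition \ref{glinearconn}(2), up to the normalization built into $\theta$. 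Matching the correction term $\mathbb{A}_{\cal F}(\theta({\cal X}),\cdot,\cdot)$ along $U$ with $2\mathbb{A}_{U}(\nabla_X U,\cdot,\cdot)$, the Koszul formula for $\nabla$ specializes at $(x,U(x))$ to formula (\ref{kozul}) for $\nabla^U$, which gives (\ref{globsec}); projectability of ${\cal X}$ guarantees that the left-hand side of (\ref{globsec}) depends only on $X$, in agreement with the last assertion of Proposition \ref{exisnablas}. The main obstacle I anticipate is precisely the well-definedness step, where the pull-back nature of $g_{\cal F}$ and $\mathbb{A}_{\cal F}$ and the relation $\mathbb{A}_u(u,\cdot,\cdot)=0$ must combine to make a formula written through lifts descend to $\tilde{\cal A}$, and, in Part 2, the correct identification of $\theta({\cal X}_\a)|_U$ with $\nabla^U_{e_\a}U$ through the explicit nonlinear-connection coefficients $({\cal N}^U)^\b_\a$.
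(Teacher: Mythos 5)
Your route for Part 1 is genuinely different from the paper's. You derive the global Koszul formula directly from (a) and (b) by cyclic permutation and then establish existence by checking that the formula is independent of the choice of lifts ${\cal Y},{\cal Z}$ of $\tilde Y,\tilde Z$ modulo vertical sections. The paper instead first builds the Levi-Civita connection ${\cal D}^H$ of the horizontal metric $g^H$ on ${\bf H}{\cal M}$ (Lemma \ref{LC}), extends it to the Bott connection $\cal D$ with the three properties of Proposition \ref{Bott}, and only then transports $\cal D$ to $\tilde{\cal A}$ via the isomorphism $\varpi$, arriving at the same characterizing identity (\ref{kozulnabla}) that you obtain. Your approach is more direct and avoids the auxiliary lifted Cartan tensor $\hat{\mathbb{A}}$, at the price of having to do the descent computation by hand; the paper's approach buys the well-definedness for free (the Bott connection is manifestly well defined) and produces the horizontal/vertical splitting $\nabla=\nabla^H+\nabla^V$ that is used later (Remark \ref{relsectionchern}, the curvature decomposition). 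For Part 2 your strategy coincides with the paper's: compute the Christoffel symbols from the Koszul formula in a canonical basis and use ${\cal N}_\a^\b(x,U(x))=({\cal N}^U)_\a^\b(x)$ to match them with (\ref{kozullocfin}); this is exactly Lemma \ref{christof}.

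One point in your well-definedness step needs correcting. The fundamental tensor $g_{\cal F}$ is \emph{not} a pull-back tensor from $M$: $g_u$ varies with the fibre point $u\in{\cal M}$, so for a vertical section $\cal V$ the term $\hat{\rho}({\cal V})(g_{\cal F}(\tilde X,\tilde Z))$ does not vanish. The cancellation that makes the formula descend is not driven by $\mathbb{A}_u(u,\cdot,\cdot)=0$ (that identity is what forces $\nabla_UU$ to be the metric value in (\ref{nablaV}), i.e.\ it enters Part 2), but by Remark \ref{Au=0}(1), namely $\mathbb{A}_u(v_1,v_2,v_3)=\frac{1}{2}\frac{\p}{\p s}[g_{u+sv_1}(v_2,v_3)]_{|s=0}$: the vertical derivative of $g_{\cal F}$ produces exactly $2\mathbb{A}_{\cal F}(\vartheta({\cal V}),\cdot,\cdot)$ plus derivative-of-components terms, the first of which is absorbed by the correction $-2\mathbb{A}_{\cal F}(\theta({\cal V}),\cdot,\cdot)$ and the second of which cancels against $g_{\cal F}(\pi_{\tilde{\cal A}}([{\cal V},\cdot]_{\cal P}),\cdot)$. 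With that substitution your argument closes; as written, the stated mechanism would not.
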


 If ${\cal A}=TM$,  and $\cal F$ is a Finsler metric on $M$, then $\cal M$ is the complementary of the zero section in $TM$ and then $\T{\cal M}=T{\cal M}$ and $\tilde{\cal A}=\pi^*TM$ where again $\pi:{\cal M}\ap M$ is the canonical projection. Then the  linear connection $\nabla$ defined in Theorem \ref{chern} is the classical Chern connection $\pi^*TM$ ({\it cf.} \cite{Dj}). Therefore we have:\\

 \begin{Def}\label{Chernnabla}${}$\\
 The linear connection $\nabla$ defined in Theorem \ref{chern} is called the Chern connection of the partial Finsler metric $\cal F$.\\
 \end{Def}

\begin{Rem}\label{relsectionchern}${}$\\
According to the decomposition $\T{\cal M}={\bf H}{\cal M}\oplus\V{\cal M}$  each ${\cal X}\in \Xi(\T{\cal M})$can be written  ${\cal X}={\cal X}^H+{\cal X}^V$. Therefore we can decompose $\nabla$ as a sum $\nabla^H+\nabla^V$ where $\nabla^H_{\cal X}=\nabla_{{\cal X}^H}$ and $\nabla^V_{\cal X}=\nabla_{{\cal X}^V}$. It follows that in Point (iii) in Theorem \ref{chern}, for any local section $X\in \Xi(M)$, there exists a unique $\pi_{\cal A}$-horizontal lift $\cal X$ of $X$. Thus,  in the Equation (\ref{globsec}) we have
$$\tilde{\pi}(\nabla^H_{\cal X}\tilde{Y}(x,U(x)))=\nabla^U_XY(x)$$
where   $\cal X$  is  any section of $\T{\cal M}$.\\
\end{Rem}
{\it We end this subsection by the proof of Theorem \ref{chern} but for this purpose we need to introduce the Bott connection on the horizontal bundle in $\T{\cal M}$ associates to the Finsler connection ${\cal N}_{\cal F}$.}\\

 \bigskip

 We will denote by
  $\varpi :\bf{H}{\cal M}\ap \tilde{\cal A}$ the isomorphism which is  the restriction of  ${\pi}_{{\tilde{\cal A}}}$ to $\bf{H}{\cal M}$. Each section $\cal X$ of $\T{\cal M}$ can be written ${\cal X}={\cal X}^H+{\cal X}^V$ where ${\cal X}^H=\bf{h}_{\cal F}({\cal X})$ and ${\cal X}^V=\bf{v}_{\cal F}({\cal X})$. \\
 On the one hand we have a natural almost Lie bracket $[\;,\;]_{\bf{H}{\cal M}}$ on $\bf{H}{\cal M}$ defined by the projection of $[\;,\;]_{\cal P}$ on $\bf{H}{\cal M}$ by $\bf{h}_{\cal F}$ and so we get a natural structure of almost Lie algebroid on ${\bf H}{\cal M}$. Since   the  $\varpi$ is a bundle isomorphism we obtain a structure of almost Lie algebroid on $\tilde{\cal A}$. We denote by $[\;,\;]_{\tilde{\cal A}}$ the induced almost bracket on $\tilde{\cal A}$. 
 Consider  a local  basis $\{{\cal X}_\a,{\cal V}_\b\}$ of   $\T{\cal M}$ associated with a coordinate system $(x^i)$  and a local basis $\{e_\a\}$ of $\cal A$ on an open set $O$ of $M$. The horizontal component of ${\cal X}_\a$ is ${\cal X}^H_\a={\cal X}_\a-{\cal N}_\a^\b{\cal V}_\b$ and  $\{{\cal X}^H_\a,{\cal V}_\a\}$ is also a local basis of $\T{\cal M}$ such that $\{{\cal X}^H_\a\}$ is a basis of $\bf{H}{\cal M}$. Since  $({\pi}_{\tilde{\cal A}})_{| \bf{H}{\cal M}}$ is a bundle isomorphism  from $\bf{H}{\cal M}\ap {\cal M}$ on $ \tilde{\cal A}\ap {\cal M}$  we have ${\pi}_{\tilde{\cal A}}({\cal X}^H_\a)=\tilde{e}_\a$ where   $\{\tilde{e}_\a\}$ is the local basis of $\tilde{\cal A}$ over $\pi^{-1}(O)$ characterized by
  $$\tilde{e}_\a\circ \pi=\tilde{\pi}\circ e_\a\textrm{ for all } \a.$$
According to (\ref{locbracalg}) we then have:
\begin{eqnarray}\label{locbractildeA}
\bar{\pi}([\tilde{e}_\a,\tilde{e}_\b]_{\tilde{\cal A}})=C_{\a\b}^\a\tilde{e}_{\g}=[{e}_\a,e_\b]_{\cal A}.
\end{eqnarray}
 For any section ${\cal X}$ of $\T{\cal M}$, we have ${\pi}_{\tilde{\cal A}}({\cal X})={\pi}_{\tilde{\cal A}}({\cal X}^H)$ and so each section of $\T{\cal M}$ is projected by ${\pi}_{\tilde{\cal A}}$ on a section of $\tilde{\cal A}$. Note that we  have ${\pi}_{\tilde{\cal A}}({\cal X})=\vartheta\circ (J{\cal X})$ and also $\varpi=\vartheta\circ J_{| \bf{H}{\cal M}}$\\

  {\it For the sake of simplicity   we denote by $\tilde{X}$, $\tilde {Y}$, $\tilde{Z},\dots$ the projection of sections $\cal X$, $\cal Y$, ${\cal Z},\dots$ of  $\T{\cal M}$. }\\

 On the other hand we have an isomorphism ${\cal H}_{\cal F}:\bf{V}{\cal M}\ap \bf{H}{\cal M}$ ({\it cf.} Proposition \ref{JG}) and so we get a Riemannian metric $g^H$ on $\bf{H}{\cal M}$ defined by
 $$g^H({\cal X},{\cal Y})=g_{\cal F}({\cal H}_{\cal F}({\cal X}),{\cal H}_{\cal F}({\cal Y})).$$
 If we require that $\bf{V}{\cal M}$  and $\bf{H}{\cal M}$ are orthogonal we obtain a Riemannian metric $g$ on $\T{\cal M}$.  Then we have

 \begin{Lem}\label{LC}${}$\\
 There exists a unique linear connection\footnote{in the sense of section \ref{ALbracket}} ${\cal D^H}$ on the almost Lie algebroid $(\bf{H}{\cal M},{\cal M},\hat{\rho},[\;,\;]_{\cal P})$ such that for all sections $\cal X$, $\cal Y$ and $\cal Z$ of $\bf{H}{\cal M}$ we have

$\hat{\rho}({\cal X})(g^H({\cal Y},{\cal Z}))=g^H({\cal D}^H_{\cal X}{\cal Y},{\cal Z})+g^H({\cal D}^H_{\cal X}{\cal Z},{\cal Y}) $

${\cal D}^H_{\cal X}{\cal Y}-{\cal D}^H_{\cal Y}{\cal X}=[{\cal X},{\cal Y}]_{\bf{H}{\cal M}}$

This connection is called the {\it Levi-Civita} connection.
\end{Lem}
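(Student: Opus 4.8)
The plan is to prove this exactly as one proves the fundamental theorem of Riemannian geometry, transposed to the almost Lie algebroid $(\H{\cal M},{\cal M},\hat{\rho},[\;,\;]_{\H{\cal M}})$. The first thing I would record is that this quadruple really is an almost Lie algebroid with anchor $\hat{\rho}$: since $[\;,\;]_{\H{\cal M}}=\mathbf{h}_{\cal F}\circ[\;,\;]_{\cal P}$ and $\mathbf{h}_{\cal F}$ is a $C^\infty({\cal M})$-linear projector, antisymmetry is inherited from $[\;,\;]_{\cal P}$, while for a horizontal section ${\cal Y}$ the Leibniz rule $[{\cal X},f{\cal Y}]_{\H{\cal M}}=f[{\cal X},{\cal Y}]_{\H{\cal M}}+\hat{\rho}({\cal X})(f){\cal Y}$ follows from the Leibniz rule for $[\;,\;]_{\cal P}$ together with $\mathbf{h}_{\cal F}({\cal Y})={\cal Y}$. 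This is the only structural input the construction needs.

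For uniqueness I would assume that such a ${\cal D}^H$ exists, write the metric-compatibility identity for the three cyclic orderings of $({\cal X},{\cal Y},{\cal Z})$, add the first two and subtract the third, and then eliminate the symmetric combinations using the torsion-free relation ${\cal D}^H_{\cal X}{\cal Y}-{\cal D}^H_{\cal Y}{\cal X}=[{\cal X},{\cal Y}]_{\H{\cal M}}$. This yields the Koszul formula
\begin{align*}
2g^H({\cal D}^H_{\cal X}{\cal Y},{\cal Z})
&=\hat{\rho}({\cal X})\bigl(g^H({\cal Y},{\cal Z})\bigr)+\hat{\rho}({\cal Y})\bigl(g^H({\cal X},{\cal Z})\bigr)-\hat{\rho}({\cal Z})\bigl(g^H({\cal X},{\cal Y})\bigr)\\
&\quad+g^H([{\cal X},{\cal Y}]_{\H{\cal M}},{\cal Z})-g^H([{\cal Y},{\cal Z}]_{\H{\cal M}},{\cal X})+g^H([{\cal Z},{\cal X}]_{\H{\cal M}},{\cal Y}).
\end{align*}
Because $g^H$ is Riemannian, hence non-degenerate on each fibre of $\H{\cal M}$, the right-hand side determines ${\cal D}^H_{\cal X}{\cal Y}$ uniquely.

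For existence I would turn this around and \emph{define} ${\cal D}^H_{\cal X}{\cal Y}$ by the Koszul formula, which is legitimate once I check that the right-hand side is $C^\infty({\cal M})$-linear in the argument ${\cal Z}$; replacing ${\cal Z}$ by $f{\cal Z}$, the derivative terms created in the first line cancel exactly against the bracket terms in the second line, using only the Leibniz property of $[\;,\;]_{\H{\cal M}}$ and of $\hat{\rho}({\cal X})$ acting on functions. It then remains to verify the two connection axioms of Section \ref{ALbracket}, namely ${\cal D}^H_{f{\cal X}}{\cal Y}=f{\cal D}^H_{\cal X}{\cal Y}$ and ${\cal D}^H_{\cal X}(f{\cal Y})=\hat{\rho}({\cal X})(f){\cal Y}+f{\cal D}^H_{\cal X}{\cal Y}$, and finally to read metric compatibility and torsion-freeness off the symmetric and antisymmetric parts of the defining identity; these are routine term-by-term manipulations identical to the classical Riemannian case.

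The point I would emphasise is that, although $[\;,\;]_{\H{\cal M}}$ need not satisfy the Jacobi identity, this causes no difficulty at all: the Koszul construction of the Levi-Civita connection never invokes the Jacobi identity, only antisymmetry and the Leibniz rule, both of which hold here by the remark above. Consequently the only genuine verification, and the place where a sign slip is easiest, is the tensoriality of the Koszul right-hand side in ${\cal Z}$ together with the second connection axiom; everything else is forced. I would therefore concentrate the written proof on those two computations and merely indicate that the remaining identities follow by the same cyclic bookkeeping.
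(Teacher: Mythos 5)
Your proposal is correct and is exactly the argument the paper intends: the paper's proof of this lemma consists of a single sentence deferring to the standard Koszul-formula construction for Riemannian Lie algebroids (citing Boucetta, Section 3.1), and your write-up simply makes that argument explicit, including the correct observation that only antisymmetry and the Leibniz rule of $[\;,\;]_{\bf{H}{\cal M}}$ are needed, never the Jacobi identity. Your Koszul formula also matches the sign conventions the paper uses elsewhere (e.g.\ in the proof of Proposition \ref{exisnablas}), so there is nothing to add.
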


 The proof of  this Lemma is formally the same as in the context of Riemannian Lie algebroid (see \cite{Bo} section 3.1).\\

 Following the definitions of Cartan tensors of \cite{SV} section 3.1, we consider the  $\T{\cal M}$-tensor  $\hat{ \mathbb{A}}$  of type $(3,0)$ characterized by
 $$\hat{\mathbb{A}}({\cal X},{\cal Y},{\cal Z})=({\cal L}^{\cal P}_{{\cal X}^V}J^*g)({\cal Y}{\cal Z})$$
 for all ${\cal X}$, ${\cal Y}$ and $\cal Z$ in $\Xi(\T{\cal M})$.\\

 \begin{Rem}\label{hatAgF}${}$\\
 Since  $J^*g({\cal X},{\cal Y})=g_{\cal F}(J{\cal X}, J{\cal Y})$ it follows that we have
 $$\hat{\mathbb{A}}({\cal X},{\cal Y},{\cal Z})=({\cal L}^{\cal P}_{{\cal X}^V}g_{\cal F})(J{\cal Y}, J{\cal Z}).$$
 In particular if $\cal X$ is horizontal $\hat{\mathbb{A}}({\cal X},{\cal Y},{\cal Z})=0$  as soon as ${\cal Y}$ or $\cal Z$ are vertical.
  \end{Rem}

The tensor $ \hat{\mathbb{A}}$ will be called the {\it lift Cartan tensor}. The link between $\hat{ \mathbb{A}}$ and $ \mathbb{A}_{\cal F}$ is given by Point (iii) in the following Proposition:

 \begin{Pro}\label{Bott}${}$
 \begin{enumerate}
\item[(i)] The map ${\cal D}:\Xi(\T{\cal M})\times\Xi(\bf{H}{\cal M})\ap \Xi(\bf{H}{\cal M})$ is defined by :

${\cal D}_{\cal X}^H{\cal Y}={\cal D}^H_{{\cal X}^H}{\cal Y}$

${\cal D}_{\cal X}^V{\cal Y}=\bf{h}_{\cal F}[{\cal X}^V,{\cal Y}]_{\cal P}$
For all ${\cal X}\in \Xi(\T{\cal M})$ and ${\cal Y}\in \Xi(\bf{H}{\cal M})$

is a linear connection \footnote{in the obvious same sense as footnote (6)}.
  \item[(ii)] The connection $\cal D$ is the unique linear connection which satisfies  the following properties for any ${\cal X}, {\cal Y}, {\cal Z}\in \Xi(\T{\cal M})$ :
 \begin{enumerate}
 \item[(a)] (torsion free property)
 $${\cal D}_{\cal X}({\cal Y}^H)-{\cal D}_{\cal Y}({\cal X}^H)=\bf{h}_{\cal F}([{\cal X},{\cal Y}]_{\cal P});$$
\item[(b)] (horizontal  $g^H$ compatibility)
$$\hat{\rho}({\cal X}^H(g^H({\cal Y},{\cal Z})=g^H({\cal D}_{\cal X}{\cal Y},{\cal Z})+g^H({\cal D}_{\cal X}{\cal Z},{\cal Y}); $$
 \item[(c)] (almost vertical  $g^H$ compatibility)
 $$\hat{\rho}({\cal X^V})(g^H({\cal Y},{\cal Z}))=g_{H}({\cal D}_{{\cal X}^V}{\cal Y},{\cal Z})+g_{H}({\cal Y},{\cal D}_{{\cal X}^V}{\cal Z})+2\hat{\mathbb{A}}(({\cal X}, {\cal Y},{\cal Z});$$
\end{enumerate}
\item[(iii)] we have the relation
$$\hat{\mathbb{A}}({\cal X},{\cal Y},{\cal Z})=\vartheta^*\mathbb{A}_{\cal F}({\cal X}^V,J{\cal Y},J{\cal Z})$$
for all sections $\cal X$, $\cal Y$, $\cal Z$ of $\T{\cal M}$
\end{enumerate}
\end{Pro}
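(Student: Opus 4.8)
The final Proposition \ref{Bott} asserts that the map $\mathcal{D}$ (defined piecewise by the Levi-Civita connection $\mathcal{D}^H$ on horizontal inputs and by a Bott-type formula $\mathbf{h}_{\mathcal F}[\mathcal X^V,\mathcal Y]_{\mathcal P}$ on vertical inputs) is a linear connection, that it is the unique connection satisfying three compatibility/torsion properties (a), (b), (c), and that the lift Cartan tensor $\hat{\mathbb A}$ is identified with the pullback $\vartheta^*\mathbb{A}_{\mathcal F}$ evaluated on $J$-images. Let me think about how to structure this.

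**Plan.** The plan is to treat the three claims in order, reducing each to computations already prepared in the excerpt. For Part (i), I would verify the two defining axioms of an $\mathbf{H}\mathcal{M}$-linear connection separately on the horizontal and vertical slots. On horizontal inputs $\mathcal{D}^H$ is linear by Lemma \ref{LC}, so $\mathcal{D}_{\mathcal X}^H\mathcal Y=\mathcal{D}^H_{\mathcal X^H}\mathcal Y$ inherits $C^\infty$-linearity in $\mathcal X$ and the Leibniz rule in $\mathcal Y$ directly. On vertical inputs, I would check that $\mathcal{D}^V_{\mathcal X}\mathcal Y=\mathbf{h}_{\mathcal F}[\mathcal X^V,\mathcal Y]_{\mathcal P}$ is $C^\infty$-linear in $\mathcal X^V$ (because $[f\mathcal X^V,\mathcal Y]_{\mathcal P}=f[\mathcal X^V,\mathcal Y]_{\mathcal P}-\mathcal{L}^{\mathcal P}_{\mathcal Y}(f)\mathcal X^V$, and the second term is vertical hence killed by $\mathbf{h}_{\mathcal F}$ since $\mathcal Y$ being horizontal means $\hat\rho(\mathcal Y)(f)$ survives but $\mathcal X^V$ is vertical) and satisfies the Leibniz rule in $\mathcal Y$ with the anchor $\hat\rho(\mathcal X^V)$; this is exactly the standard Bott connection check, which I would carry out in a canonical basis $\{\mathcal X^H_\alpha,\mathcal V_\beta\}$ using the bracket relations (\ref{locbracalg}). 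Summing the two pieces gives a bona fide linear connection.

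**Uniqueness and the three properties.** For Part (ii), the strategy is the usual Koszul-formula argument, performed separately in the horizontal and vertical directions. Splitting $\mathcal X=\mathcal X^H+\mathcal X^V$, property (a) restricted to two horizontal arguments together with (b) reproduces the defining equations of the Levi-Civita connection $\mathcal D^H$ of Lemma \ref{LC}, forcing $\mathcal D_{\mathcal X^H}=\mathcal D^H_{\mathcal X^H}$; this is where the symmetric Koszul identity pins down the horizontal part uniquely. For the vertical direction, property (a) with one vertical argument, combined with the almost vertical compatibility (c), determines $\mathcal D_{\mathcal X^V}$ uniquely: the torsion-free condition shows $\mathcal D_{\mathcal X^V}(\mathcal Y^H)-\mathcal D_{\mathcal Y^H}(\mathcal X^V\text{-lift})$ is fixed, and I would verify that the candidate $\mathbf{h}_{\mathcal F}[\mathcal X^V,\mathcal Y]_{\mathcal P}$ satisfies (a) and (c) by a direct local computation, again in the adapted basis, invoking Remark \ref{hatAgF} to handle the $\hat{\mathbb A}$ term. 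Existence then follows from Part (i), and uniqueness from the two Koszul-type computations.

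**The identification of the Cartan tensors; expected obstacle.** For Part (iii), I would compute $\hat{\mathbb A}(\mathcal X,\mathcal Y,\mathcal Z)=(\mathcal L^{\mathcal P}_{\mathcal X^V}g_{\mathcal F})(J\mathcal Y,J\mathcal Z)$ from its definition (using Remark \ref{hatAgF}), expand the Lie derivative in a canonical basis where $g_{\mathcal F}$ has components $g_{\alpha\beta}=\partial^2\mathcal L/\partial y^\alpha\partial y^\beta$, and recognize $\hat\rho(\mathcal X^V)=\partial/\partial y^\gamma$ acting on $g_{\alpha\beta}$, which by Remark \ref{Au=0}(1) yields precisely $2\mathbb A_U$ in the components $\mathbb A_{\mu\beta\gamma}$; the isomorphism $\vartheta$ translating vertical lifts to $\tilde{\mathcal A}$-vectors then gives $\vartheta^*\mathbb A_{\mathcal F}(\mathcal X^V,J\mathcal Y,J\mathcal Z)$. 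The main obstacle I anticipate is the careful bookkeeping in (ii): keeping the horizontal and vertical projectors, the two brackets $[\,;\,]_{\mathcal P}$ and $[\,;\,]_{\mathbf{H}\mathcal M}$, and the pull-back identifications $\varpi=\vartheta\circ J_{|\mathbf{H}\mathcal M}$ consistent, since the torsion property (a) mixes horizontal and vertical components through $\mathbf{h}_{\mathcal F}$. Once the local expressions are set up correctly the algebra is routine, but ensuring that the vertical part of the torsion axiom is compatible with the Bott formula (rather than over- or under-determining $\mathcal D$) is the delicate point that requires the almost-Lie-algebroid structure and the semi-basic nature of $\hat{\mathbb A}$ recorded in Remark \ref{hatAgF}.
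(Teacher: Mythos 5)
Your proposal follows essentially the same route as the paper: part (i) by direct verification of the connection axioms (which the paper dismisses as elementary), part (ii) by checking that the candidate satisfies (a)--(c) and then deriving uniqueness from the uniqueness of the Levi--Civita connection for the horizontal part and a Koszul-type identity for the vertical part, and part (iii) by the same local computation in a canonical basis using Remarks \ref{Au=0} and \ref{hatAgF}. The only minor difference is that for the vertical uniqueness you invoke (a) together with (c), whereas (a) alone with one vertical argument already forces ${\cal D}_{{\cal X}^V}{\cal Y}^H=\mathbf{h}_{\cal F}([{\cal X}^V,{\cal Y}]_{\cal P})$; this is harmless and in fact slightly shortens the paper's Koszul argument.
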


Note that when ${\cal A}=TM$  and $\cal F$ is a Finsler metric on $M$, we have already seen that  ${\cal M}$ is the complementary of the zero section in $\cal A$ and  $\T{\cal M}=T{\cal M}$. With these notations, $\cal D$ is the Bott connection of $\bf{H}{\cal M}$ considered as the normal bundle of the vertical foliation of $T{\cal M}$. Therefore we have:
\begin{Def}\label{defbott}${}$\\
The linear connection $\cal D$ is called the Bott connection of $\bf{H}{\cal M}$.
\end{Def}
\begin{proof}[Proof of Proposition \ref{Bott}]${}$\\
Point (i) is elementary.\\
For Point (ii) we  must show first that the Bott connection $\cal D$ satisfies properties (a), (b) and (c).\\
Since  we have $[{\cal X}^V,{\cal Y}^V]_{\cal P}$ always belongs to $\Xi({\bf{V}(\cal M})$ property (a) is a consequence of the properties of ${\cal D}^H$ and the definition of ${\cal D}_{\cal X}^V{\cal Y}$.\\
Property (b) is a direct consequence of properties of ${\cal D}^H$.\\

We have

$\hat{\rho}({\cal X^V})(g^H({\cal Y},{\cal Z}))-g^{H}({\cal D}_{{\cal X}^V}{\cal Y},{\cal Z}-g{H}({\cal Y},{\cal D}_{{\cal X}^V}{\cal Z})$\\
${}\;\;\;\;\;\;\;\;\;\;\;=\hat{\rho}({\cal X^V})(g^H({\cal Y},{\cal Z}))-g^{H}(h_{\cal F}([{\cal X}^V,{\cal Y}]_{\cal P}),{\cal Z})-g^{H}(h_{\cal F}([{\cal X}^V,{\cal Z}]_{\cal P}),{\cal Y})$\\
${}\;\;\;\;\;\;\;\;\;\;\;=\hat{\rho}({\cal X^V})(g_{\cal F}(J{\cal Y},J{\cal Z}))-g_{\cal F}(J([{\cal X}^V,{\cal Y}]_{\cal P}),J{\cal Z})-g_{\cal F}(J([{\cal X}^V,{\cal Z}]_{\cal P}),J{\cal Y})$.

From the definition of $\mathbb{A}_{\cal F}$ and the fact that ${\cal H}_{\cal F}\circ J={\bf h}_{\cal F}$ we obtain:

$2\hat{\mathbb{A}}({\cal X}, {\cal Y},{\cal Z})= \hat{\rho}({\cal X}^V)(g(J{\cal Y},J{\cal Z}))-g(J([{\cal X}^V,{\cal Y}]_{\cal P}),J{\cal Z})-g(J([{\cal X}^V,{\cal Z}]_{\cal P}),J{\cal Y}) $\\
${}\;\;\;\;\;\;\;\;\;\;\;\;\;\;\;\;\;\;\;\;\;\;\;\;= \hat{\rho}({\cal X}^V)(g^H({\cal Y},{\cal Z}))-g^H({\bf h}_{\cal F}([{\cal X}^V,{\cal Y}]_{\cal P}),{\cal Z})-g^H({\bf h}_{\cal F}([{\cal X}^V,{\cal Z}]_{\cal P}),{\cal Y}) $\\
${}\;\;\;\;\;\;\;\;\;\;\;\;\;\;\;\;\;\;\;\;\;\;\;\;= \hat{\rho}({\cal X}^V)(g^H({\cal Y},{\cal Z}))-g({\cal D}_{{\cal X}^V}{\cal Y},{\cal Z})-g^H({\cal D}_{{\cal X}^V}{\cal Z},{\cal Y}). $

For the uniqueness  assume that ${\cal D}'$ is a linear connection on $\bf{H}{\cal M}$ (in the sense of the linear  Bott connection) which satisfies the properties of Point (ii). We first decompose such a connection  in ${\cal D}'_{\cal X}={\cal D}'_{{\cal X}^H}+{\cal D}'_{{\cal X}^V}$ and we set ${{\cal D}'}^H_{\cal X}={\cal D}'_{{\cal X}^H}$ and  ${{\cal D}'}^V_{\cal X}={\cal D}'_{{\cal X}^V}$. From (a) and (b) and the uniqueness  of the Levi-Civita connection, we must have ${{\cal D}'}^H={\cal D}^H$. As classically  in the case of the Levi-Civita  connection, by standard computation (as in the proof of Proposition \ref{exisnablas}) ${{\cal D}'}^V$ must satisfies the relation:
\begin{eqnarray}\label{kozulD}
2g^H({{\cal D}'}_{{\cal X}}^V{\cal Y}^H,{\cal Z}^H)=\hat{\rho}({\cal X}^V)(g^H({\cal Y}^H,{\cal Z}^H))+\hat{\rho}({\cal Y}^V)(g^H({\cal X}^H,{\cal Z}^H))-\hat{\rho}({\cal Z}^V)(g^H({\cal X}^H,{\cal Y}^H)\;\;\;\;\;\;\;\;\;\;\;\;\;\;\;\;\;\;\;\;\;\;\;\;\;\;\;\;{}\nonumber\\
{}\;\;\;\;\;\;\;\;\;\;\;\;\;\;\;\;\;\;\;\;\;\;\;\;\;\;+g^H({\bf h}_{\cal F}([{\cal X}^V,{\cal Y}^H
]_{\cal P}),{\cal Z}^H)+g^H({\bf h}_{\cal F}([{\cal Z}^V,{\cal X}^H]_{\cal P}),{\cal Y}^H)-g^H({\bf h}_{\cal F}([{\cal Y}^V,{\cal Z}^H]_{\cal P}),{\cal X}^H)\nonumber\\
-2(\hat{\mathbb{A}}({\cal X},Y,Z)+\hat{\mathbb{A}}({\cal Y},{\cal X},{\cal Z})-\hat{\mathbb{A}}({\cal Z},{\cal X},{\cal Y})).\;\;\;\;\;\;\;\;\;\;\;\;\;\;\;\;\;\;\;\;\;\;\;\;\;\;\;\;\;\;\;\;\;\;\;\;\;\;\;\;\;\;\;\;\;\;\;\;\;\;\;\;\;\;\;\;\;\;\;\;{}
\end{eqnarray}
Since $\hat{\mathbb{A}}$ is tensorial, as in the classical case of the Levi-Civita connection, the relation define a unique differential operator which is of course ${\cal D}^V$.\\

Clearly, point (iii) is a local property. Therefore consider   a  coordinate system  $(x^i)$ on $O$ and   a local basis $\{e_\a\}$ of $\cal A$ over $O$. Let $(x^i,y^\a)$ the associated coordinate system on $\cal M$ and  $\{{\cal X}_\a,{\cal V}_\a\}$ be the associated basis on $\T{\cal M}$;\\
Since  $(\mathbb{A}_{\cal F})_{(x,u)}(w_1,w_2,w_3)=\mathbb{A}_{u}(w_1,w_2,w_3)$, from Remark \ref{Au=0} and via the isomorphism $\vartheta$ between $\tilde{\cal A}$ and $\bf{V}{\cal M}$
we have: \\

$2\vartheta^*(\mathbb{A}_{\cal F})({\cal V}_\a,{\cal V}_\b,{\cal V}_\g)=\dis\frac{\p g_{\cal F}}{\p y^\a}({\cal V}_\b,{\cal V}_\g)$\\
${}\;\;\;\;\;\;\;\;\;\;\;\;\;\;\;\;\;\;\;\;\;\;\;\;\;\;\;\;\;\;\;\;\;\;\;\;\;=({\cal L}^{\cal P}_{{\cal V}_\a} g_{\cal F})({\cal V}_\b,{\cal V}_\g)$\\
${}\;\;\;\;\;\;\;\;\;\;\;\;\;\;\;\;\;\;\;\;\;\;\;\;\;\;\;\;\;\;\;\;\;\;\;\;\;=({\cal L}^{\cal P}_{{\cal V}_\a} g^H)({\cal H}_{\cal F}{\cal V}_\b,{\cal H}_{\cal F}{\cal V}_\b)$\\
${}\;\;\;\;\;\;\;\;\;\;\;\;\;\;\;\;\;\;\;\;\;\;\;\;\;\;\;\;\;\;\;\;\;\;\;\;\;=({\cal L}^{\cal P}_{{\cal V}_\a} g_{\cal F})(J{\cal X}_\b,J{\cal X}_\b)$.\\

From Remark \ref{hatAgF} we then  obtain
$$\vartheta^*(\mathbb{A}_{\cal F})({\cal V}_\a,{\cal V}_\b,{\cal V}_\g)=\hat{\mathbb{A}}({\cal X}_\a,{\cal X}_\b,{\cal X}_\b).$$
This ends the proof of Point (iii).

\end{proof}
\bigskip
\begin{proof}[Proof of Theorem \ref{chern}]${}$\\
 According to Proposition \ref{Bott}
 we define  $\nabla: \Xi({\cal M})\times \Xi(\tilde{\cal A})\ap \Xi(\tilde{\cal A})$ by:
 $$\nabla_{\cal X}\tilde{Y}=\varpi({\cal D}_{\cal X}(\varpi^{-1}(\tilde{ Y})).$$
We denote by $\nabla^H$ and $\nabla^V$ the corresponding decomposition of ${\cal D}$ into ${\cal D}^H$ and ${\cal D}^V$. Since  $\varpi$ is an isomorphism between $\bf{H}{\cal M}$ and $\tilde{\cal A}$, according to the properties (a),  (b) and (c) of Proposition \ref{Bott}   and Point (iii) in Proposition \ref{Bott}, it follows easily that $\nabla$ is  the unique  linear connection on $\tilde{\cal A}$ which satisfies Point (1) in  Theorem \ref{chern}. More precisely, taking into account the fact that ${\cal D}^H$ is the Levi-Civita connection, the relation  (\ref{kozulD}) for ${\cal D}^v$, Remark \ref{hatAgF}, and the relation $\pi_{\tilde{\cal A}}=\vartheta\circ J$,  it follows that    $\nabla_{\cal X}\tilde{Y}$ is characterized by
 \begin{eqnarray}\label{kozulnabla}
2g_{\cal F}({\nabla}_{{\cal X}}\tilde{ Y},\tilde{ Z})=\hat{\rho}({\cal X})(g_{\cal F}(\tilde{ Y},\tilde{ Z}))+\hat{\rho}({\cal Y})(g_{\cal F}(\tilde{\cal X}, \tilde{ Z}))-\hat{\rho}({\cal Z})(g_{\cal F}(\tilde{ X},\tilde{Y})\;\;\;\;\;\;\;\;\;\;\;\;\;\;\;\;\;\;\;\;\;\;\;\;\;\;\;\;{}\nonumber\\
{}\;\;\;\;\;\;\;\;\;\;\;\;\;\;\;\;\;\;\;\;\;\;\;\;\;\;\;\;\;\;\;\;\;\;\;\;\;\;\;\;\;\;\;\;+g_{\cal F}(\pi_{\tilde{\cal A}}([{\cal X},{\cal Y}
]_{\cal P}), \tilde{ Z})-g_{\cal F}(\pi_{\tilde{\cal A}}([{\cal Y},{\cal  Z}]_{\cal P}),\tilde{X})-g_{\cal F}(\pi_{\tilde{\cal A}}([{\cal Z},{\cal X}]_{\cal P}),\tilde{Y})\nonumber\\
{}\;\;\;\;\;\;\;\;\;\;\;\;\;\;\;\;\;\;\;\;\;\;\;\;\;\;\;\;\;\;\;\;\;\;\;\;\;\;\;\;\;\;\;\;\;\;\;\;\;\;\;\;\;\;\;\;\;\;\;\;
-2({\mathbb{A}}_{\cal F}(\theta({\cal X}),\tilde{Y},\tilde{Z})+{\mathbb{A}}_{\cal F}(\theta({\cal Y}),\tilde{ X},\tilde{Z})-{\mathbb{A}}_{\cal F}(\theta({\cal Z}),\tilde{ X},\tilde{Y}))\end{eqnarray}
where $\cal Y$ and $\cal Z$ are such that $\tilde{Y}=\pi_{\tilde{\cal A}}({\cal Y})$  and $\tilde{Z}=\pi_{\tilde{\cal A}}({\cal Z})$ respectively.\\

 For the proof of the last Point we need:

 Consider   a local basis $\{{\cal X}_\a,{\cal V}_\b\}$  of $\T{\cal M}$ associated with a coordinate system $(x^i)$ on $M$ and a basis $\{e_\a\}$ of $\cal A$. Denote by $\{\tilde{e}_\a\}$  the basis of $\tilde{\cal A}$ characterized by $\tilde{\pi}\circ\tilde{e}_\a=e_\a\circ \pi$.

\begin{Lem}\label{christof}${}$
\begin{enumerate}
\item  If we consider the decomposition
$\nabla_{{\cal X}_\a}\tilde{e}_\b=\G_{\a\b}^\g\tilde{e}_\g$ then we have
 \begin{eqnarray}\label{Christoffel}
\G_{\a\b}^\g=S_{\a\b}^\g+g^{\g\l}(-{{\cal N}}_\a^\mu\mathbb{A}_{\mu\b\l}-{{\cal N}}_\b^\mu\mathbb{A}_{\mu\a\l}+{{\cal N}}_\l^\mu\mathbb{A}_{\mu\a\b})
\end{eqnarray}
where
$${}\;\;\;S_{\a\b\g}=g_{\g\l}{\cal S}^\l_{\a\b}=-\dis\frac{1}{2}[(\rho_\a^i\dis\frac{\p g_{\b\g}}{\p x^i}+\rho_\b^i\dis\frac{\p g_{\a\g}}{\p x^i}-\rho_\g^i\dis\frac{\p g_{\a\b}}{\p x^i})+(g_{\l\g}C_{\a\b}^\l+g_{\l\b}C_{\g\a}^\l+g_{\l\a}C_{\g\b}^\l)],$$
${\cal N}_\a^\b$ are the coefficients of the Finsler connection in the basis  $\{{\cal X}_\a,{\cal V}_\b\}$\\
 $\mathbb{A}_{\a\b\g}=\mathbb{A}_{\cal F}(\tilde{e}_\a,\tilde{e}_\b,\tilde{e}_\g)$.
\item Given a section $U$ of $\cal M$ defined on an open $O$ in $M$   we consider the decomposition $$\nabla^U_{e_\a}e_\b=(\G^U)_{\a\b}^\g e_\g.$$
Then we have
\begin{eqnarray}\label{kozullocfin2}
(\G^U)_{\a\b}^\g =S_{\a\b}^\g+g^{\g\l}(-({\cal N}^U)_\a^\mu\mathbb{A}_{\mu\b\l}-({\cal N}^U)_\b^\mu\mathbb{A}_{\mu\a\l}+({\cal N}^U)_\l^\mu\mathbb{A}_{\mu\a\b})
\end{eqnarray}
where $({\cal N}^U)_\a^\b(x)={{\cal N}}_\a^\b(x,U(x))$. In particular we have  $\G_{\a\b}^\g(x,U(x))=(\G^U)_{\a\b}^\g(x)$
\end{enumerate}
\end{Lem}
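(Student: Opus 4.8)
The plan is to prove Part (1) by a direct application of the Koszul-type characterization (\ref{kozulnabla}) of the Chern connection $\nabla$ established in Theorem \ref{chern}, and then to obtain Part (2) by comparison with the sectional-connection formula (\ref{kozullocfin}) already derived in Proposition \ref{exisnablas}. So Part (2) is mostly a matter of identifying the coefficients, and the only genuinely new computation there is a spray identity.

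For Part (1), I would fix the coordinate system $(x^i)$, the basis $\{e_\a\}$ of $\cal A$ and the associated basis $\{{\cal X}_\a,{\cal V}_\b\}$ of $\T{\cal M}$, recalling $\hat{\rho}({\cal X}_\a)=\rho_\a^i\frac{\p}{\p x^i}$ and $\hat{\rho}({\cal V}_\a)=\frac{\p}{\p y^\a}$, so that the ${\cal N}_{\cal F}$-horizontal lift ${\cal X}_\a^H={\cal X}_\a-{\cal N}_\a^\mu{\cal V}_\mu$ satisfies $\hat{\rho}({\cal X}_\a^H)=\rho_\a^i\frac{\p}{\p x^i}-{\cal N}_\a^\mu\frac{\p}{\p y^\mu}$ and $\pi_{\tilde{\cal A}}({\cal X}_\a^H)=\pi_{\tilde{\cal A}}({\cal X}_\a)=\tilde{e}_\a$. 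I would then evaluate (\ref{kozulnabla}) with ${\cal X}={\cal X}_\a$, ${\cal Y}={\cal X}_\b^H$, ${\cal Z}={\cal X}_\g^H$, so that $2g_{\cal F}(\nabla_{{\cal X}_\a}\tilde{e}_\b,\tilde{e}_\g)=2g_{\l\g}\G_{\a\b}^\l$. Three types of terms arise. The $\hat{\rho}$-derivative terms give $\rho_\a^i\frac{\p g_{\b\g}}{\p x^i}$ together with the $y$-derivatives $-{\cal N}_\b^\mu\frac{\p g_{\a\g}}{\p y^\mu}$ and $+{\cal N}_\g^\mu\frac{\p g_{\a\b}}{\p y^\mu}$ coming from the vertical parts of the two horizontal lifts; by Remark \ref{Au=0} one has $\frac{\p g_{\b\g}}{\p y^\mu}=2\mathbb{A}_{\mu\b\g}$, which turns these into Cartan components. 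The bracket terms, computed from (\ref{locbracalg}) together with $\V{\cal M}=\ker\pi_{\tilde{\cal A}}$, contribute exactly the structure-constant combination $g_{\l\g}C_{\a\b}^\l$, $g_{\l\b}C_{\g\a}^\l$, $g_{\l\a}C_{\g\b}^\l$. Finally the explicit Cartan terms of (\ref{kozulnabla}) use $\theta({\cal X}_\a)={\cal N}_\a^\mu\tilde{e}_\mu$ and $\theta({\cal X}_\b^H)=\theta({\cal X}_\g^H)=0$ (the latter because these lifts are horizontal), producing $-2{\cal N}_\a^\mu\mathbb{A}_{\mu\b\g}$.

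Collecting terms, the $x$-derivative part and the structure-constant part reassemble (up to the sign convention fixed in the statement) into $S_{\a\b\g}$, while the three sources of $\mathbb{A}$ — the two $y$-derivatives in slots two and three and the single explicit Cartan term from slot one — combine into $-{\cal N}_\a^\mu\mathbb{A}_{\mu\b\g}-{\cal N}_\b^\mu\mathbb{A}_{\mu\a\g}+{\cal N}_\g^\mu\mathbb{A}_{\mu\a\b}$; raising the free index with $g^{\g\l}$ yields precisely (\ref{Christoffel}). A pleasant structural feature worth emphasizing is that the three Cartan corrections in (\ref{Christoffel}) come from three distinct mechanisms, which is why the formula is symmetric only after all of them are accounted for.

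For Part (2), the formula (\ref{kozullocfin2}) is the fiberwise transcription of (\ref{kozullocfin}), and the final assertion $\G_{\a\b}^\g(x,U(x))=(\G^U)_{\a\b}^\g(x)$ will follow by evaluating (\ref{Christoffel}) at the point $(x,U(x))$ and matching it against (\ref{kozullocfin2}), since $S_{\a\b}^\g$, $g^{\g\l}$ and $\mathbb{A}$ denote the same objects in both formulas. The one remaining point is the identity $({\cal N}^U)_\a^\b(x)={\cal N}_\a^\b(x,U(x))$, namely that the purely metric-algebraic quantity (\ref{NU}) coincides with the coefficient of the Finsler nonlinear connection evaluated along $U$. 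I would establish this by writing ${\cal N}_\a^\b=\frac12(-\frac{\p{\cal S}^\b}{\p y^\a}+C_{\a\d}^\b y^\d)$ from (\ref{locGS}), substituting the Finsler spray ${\cal S}={\cal S}_{\cal F}$ solving (\ref{SL}), and recognizing the classical Finsler relation in which the half-spray coefficients equal $S^\b_{\g\d}y^\g y^\d$ up to Cartan corrections, so that differentiating in $y^\a$ reproduces exactly (\ref{NU}). I expect this spray identification to be the main obstacle: reconciling the spray-defined nonlinear connection with the algebraic expression (\ref{NU}) requires carefully unwinding the constrained system (\ref{SL}) and exploiting the homogeneity of ${\cal F}$, along the lines of the computations of \cite{PoPo}. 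Once it is in hand, Parts (1) and (2) fit together immediately.
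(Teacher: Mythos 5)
Your proposal is correct and follows essentially the same route as the paper: Part (1) via the Koszul characterization (\ref{kozulnabla}) of the Chern connection evaluated on $\{{\cal X}_\a,{\cal X}_\b^H,{\cal X}_\g^H\}$ (the paper compresses this step into ``thus we obtain (\ref{Christoffel})'' after computing the spray components ${\cal S}_\a=-S_{\a\g\d}y^\g y^\d$ from (\ref{iSO}) and the connection coefficients from (\ref{GSS})), and Part (2) by matching against (\ref{kozullocfin}) together with the spray identity that reconciles (\ref{NU}) with the coefficients of ${\cal N}_{{\cal S}_{\cal L}}$. You correctly identify that spray identification as the only genuinely delicate point, and your hedge about the sign convention on $S_{\a\b\g}$ is warranted, since the paper's (\ref{Sabc}) and the lemma statement differ by a sign.
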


Now,  for any section $X$ of $\cal A$ we denote by  $\tilde{X}$ the associated section of $\tilde{\cal A}$ characterized by $\tilde{\pi}\circ \tilde{X}=X\circ \pi$.
 We will say that such  a section  $\tilde{X}$ is projectable.  Note that, if   ${\cal X}$ is a projectable section  of $\T{\cal M}$  on a section $X$ of $\cal A$ then $\pi_{\tilde{\cal A}}\circ {\cal X}=\tilde{X}$ is a  section of $\tilde{\cal A}$ which is  also  projectable on $X$. Moreover given any section $X$ of $\cal A$,  there always exists a  section $\cal X$ of $\T{\cal M}$ which is projectable  on $X$ and the difference of two such sections is vertical and so $\pi_{\tilde{\cal A}}\circ {\cal X}$ is independent of the choice of such a projectable section $\cal X$. From now to the end of this proof we only consider  projectable  sections of $\T{\cal M}$. Therefore,  in this case   for any  function $f$ on $M$  we have  $\hat{\rho}({\cal X})(f\circ \pi)=\rho(X)(f)$\\

 Fix some  section $U$   of $\cal M$ over an open $O$ of $M$. Given any section $X$ and $Y$  of $\cal A$ defined on $O$ we consider the decompositions
 $$U=U^\a e_\a,\;\;\;\; X=X^\a e_\a\;\;\;\; Y=Y^\a e_\a.$$
 Any   section $\cal X$ of $\T{\cal M}$ projectable on $X=X^\a e_\a$ can be written
 ${\cal X}=X^\a{\cal X}_\a+ \bar{X}^\b{\cal V}^\b$.\\
 On  the one hand we have
 $$\nabla^U_XY=[X^\a\rho_\a^i\dis\frac{\p Y^\b}{\p x^i}+(\G^U)_{\a\b}^\g X^\a Y^\b]e_\b.$$
 On the other hand,    from the construction of $\nabla$ from the Bott connection $\cal D$ we have $\nabla _{{\cal V}_\a}\tilde{e}_\b=0$.\\
Therefore we obtain:\\
 $\nabla_{\cal X}\tilde{Y}=[\hat{\rho}({\cal X})(Y^\b)+{\G}_{\a\b}^\g X^\a Y^\b] \tilde{e}_\g$.\\
 Since $Y^\b$ is a function on $M$, we get:\\
 $\hat{\rho}({\cal X})(Y^\b)=\rho(X)(Y^\b)=X^\a\rho_\a^i\dis\frac{\p Y^\b}{\p x^i}.$\\
Now, according to Lemma \ref{christof}, on $U(O)\subset {\cal M}$  we have:\\
$\G_{\a\b}^\g(x,U(x))=(\G^U)_{\a\b}^\g(x)$\\
 Therefore we obtain:\\
 $\tilde{\pi}(\nabla_{\cal X}\tilde{Y}(x,U(x)))=\nabla^U_XY(x).$\\
\end{proof}

\bigskip
\begin{proof}[Proof of Lemma \ref{christof}]${}$\\
In the context of the Lemma we have the decomposition ${\cal S}_{\cal F}=y^\a{\cal X}_\a+{\cal S}^\b{\cal V}_\b$. According to (\ref{iSO}), each component ${\cal S}_\a=g_{\a\b}{\cal S}^b$ is given by
\begin{eqnarray}\label{Sa}
-\dis\frac{1}{2}(\rho_\g^i\frac{\p g_{\a\d}}{\p x^i}+\rho_\d^i\frac{\p g_{\a\g}}{\p x^i}-\rho_\a^i\frac{\p g_{\d\g}}{\p x^i}+g_{\g\mu}C_{\a\d}^\mu+g_{\d\mu}C_{\a\g}^\mu+g_{\a\mu}C_{\g\d}^\mu)y^\g y^\d.
\end{eqnarray}
Therefore from  (\ref{Sabc}) we get $S_\a=-S_{\a\g\d}y^\g y^\d.$
Now, according to  (\ref{GSS}) the coefficient ${\cal N}_{\a\b}=g_{\a\g}{\cal N}_{\a}^\g$  of  the Finsler connection ${\cal N}_{\cal F}$ is:

$$\dis\frac{1}{2}[(\rho_\b^i\frac{\p g_{\a\d}}{\p x^i}+\rho_\d^i\frac{\p g_{\a\b}}{\p x^i}-\rho_\a^i\frac{\p g_{\b\d}}{\p x^i}+g_{\g\d}C_{\a\b}^\g+g_{\b\g}C_{\a\d}^\g+g_{\a\g}C_{\b\d}^\g)y^\d+S^\g\mathbb{A}_{\g\a\b}].$$

Thus we obtain (\ref {Christoffel}).

Now  (\ref{kozullocfin2}) is exactly (\ref{kozullocfin}) (see proof of Proposition \ref{exisnablas})

Since  $S_\a(x,y)=-S_{\a\g\d}y^\g y^\d$   according  to (\ref{NU})  we obtain ${\cal N}_\a^\b (x,U(x))=({\cal N}^U)_\a^\b(x)$. \\
\end{proof}

\begin{Def} \label{symchristoffel}${}$\\
The set of all coefficients  $\G_{\a\b}^\g$ (see (\ref {Christoffel})) are called the Christoffel symbols of  $\cal F$.
\end{Def}

 \begin{Rem}\label{Gab}${}$\\
  When ${\cal A}=TM$,  the basis  $\{e_\a\}$ of  $\cal A$ is $ \{\dis\frac{\p}{\p x^i}\}$ and classically (see \cite{BCS} for instance)  the value of  $\G_{ij}^k$  is formally the expression (\ref {Christoffel}) with $\a\equiv i,\b\equiv j, \g\equiv k$,  where  all the coefficients  of type $C_{\g\d}^\mu$ are identically zero and  all the coefficients of type $\bar{{\cal N}}_{\a}^\b$ are the coefficients of the nonlinear  Finsler connection ${\cal N}_{\cal F}$.\\
  On the other hand,  the horizontal component of ${\cal X}_\a$ is ${\cal X}^H_\a={\cal X}_\a-{\cal N}_\a^\b{\cal V}_\b$.  We denote by $\d_\a$ the vector field $\hat{\rho}({\cal X}^H_\a)$. With these notations,  the expression (\ref {Christoffel}) of  $\G_{\a\b}^\g$ becomes:
\begin{eqnarray}\label{nablad}
{}\;\;\;\;\nabla_{\g\a\b}=g_{\g\l}\nabla_{\a\b}^\l=\dis\frac{1}{2}[(\d_\a(g_{\b\g})+\d_\b( g_{\a\g})-\d_\g(g_{\a\b})+(g_{\l\g}C_{\a\b}^\l+g_{\l\b}C_{\g\a}^\l+g_{\l\a}C_{\g\b}^\l)].
\end{eqnarray}
Again when ${\cal A}=TM$ we get a classical expression of the Christoffel symbols of the Chern connection in Finsler geometry  (see \cite{BCS} for instance).

\end{Rem}}

\subsection{Finsler structure on a pre-Lie algebroid and  induced structure on leaves.}\label{reduc Chen}${}$\\
{\rm Consider a partial Finsler metric on an anchored bundle $({\cal A},M,\rho)$ defined on conic open set $\cal M$ of $\cal A$. We provide $({\cal A},M,\rho)$ with an almost Lie bracket $[\;,\;]_{\cal A}$ such that $({\cal A},M,\rho,[\;,\;]_{\cal A})$ is a pre-Lie algebroid.} When ${\cal A}=TM$ and $[\;,\;]_{\cal A}$ is the classical Lie bracket of vector fields and ${\cal M}=TM\setminus\{0\}$, we obtain the classical Finsler and Chern connection on $M$. When ${\cal M}$ is a conic open set of $TM$ we get the Finsler and Chern connection associated with a partial Finsler structure on $M$ as described  in \cite{Ja} under the term of quadric metric.
Let $N$ be a leaf of the foliation defined by $\rho({\cal A})$. According to Example \ref{exfinsler} {\bf 4.}, ${\cal F}$ induces a partial partial Finsler metric $F_N$ on $N$  defined on an open set  $\bar{\cal M}_N$ of $TN$. Therefore, by application to the previous result to the partial Finsler  algebroid $(TN,N,Id,[\;,\;], F_N)$, we get a Finsler connection ${\cal N}_{F_N}$ and  and a Chern connection $\nabla^{F_N}$ canonically associated with the  partial Finsler metric  $F_N$ induced on $N$.\\

 On the other hand, the Lagrangian ${\cal L}$ associated with $\cal F$ is a partial convex  hyperregular Lagrangian on $\cal A$ and so
 according to Subsection \ref{hyperlag}, we obtain a characterization of locally minimizers of $\cal L$. Again as in the previous cases,  when ${\cal A}=TM$  or when $N$ is leaf of the foliation defined by $\rho({\cal A})$ we recover the classical notion of geodesics of a  Finsler metric on a manifold or more generally for a partial Finsler metric (see  \cite{Ja} for more detail in this last  case). The following result gives the link between these differential objects defined on a pre-Lie algebroid and on each leaf of the foliation defined by the range of the  anchor. 


\begin{The}\label{allpropfinsler}:${}$\\
Consider a partial Finsler metric on an anchored bundle $({\cal A},M,\rho)$ defined on a conic open set $\cal M$ of $\cal A$. We provide $({\cal A},M,\rho)$ with an almost Lie bracket $[\;,\;]_{\cal A}$ such that $({\cal A},M,\rho,[\;,\;]_{\cal A})$ is a pre-Lie algebroid. Then we have the following properties:
\begin{enumerate}
\item[(i)]  The  Finsler connection ${\cal N}_{\cal F}$ is the unique $g_{\cal F}$- metric and Lagrangian connection (relative to $\O_{\cal F}$) associated with $S_{\cal F}$. Moreover $S_{\cal F}$ is the canonical spray of  ${\cal N}_{\cal F}$. Given a leaf $N$ of the foliation defined by $\rho({\cal A})$, let  $S_{F_N}$ the canonical spray associated with $F_N$. Then ${\cal N}_{\cal F}$ induces
a nonlinear connection ${\cal N}_{F_N}$ which is the Finsler connection associated with $F_N$ via   the projection $\T{\cal A}_N\ap\left(\T{\cal A}_N\right)/ \left({\bf K} {\cal A}_N\oplus(J({\bf K} {\cal A}_N)\right)\equiv T(TN)$ (see Proposition \ref{reducleaf}). In particular the induced nonlinear connection  ${\cal N}_{F_N}$ is independent of the choice of the almost Lie bracket $[\;,\;]_{\cal A}$.
\item[(ii)] The projection  on $M$ of any geodesic of ${\cal N}_{\cal L}$ is contained in some leaf $N$ of  the foliation defined by $\rho({\cal A})$ and is also a geodesic of ${\cal N}_{F_N}$. Moreover such a curve is both  a local minimizer of ${\cal L}$ and of $L_N=\dis\frac{1}{2}F_N^2$.
\item[(iii)]Denote by $\widetilde{TN}$ the pulback of $TN\ap N$ by the map projection $\bar{\cal M}_N \ap N$. Via the associated  projection of $\T{\cal A}_N\times \tilde{\cal A}_{N}\ap T(TN)\times \widetilde{TN}_{| \bar{\cal M}_N}$, the Chern connection $\nabla^{\cal F}$ of $\cal F$ induces  the Chern connection $\nabla^{F_N}$  of $F_N$ for any leaf $N$. In particular, $\nabla^{\cal F}$ is independent of the choice of the almost Lie bracket $[\;,\;]_{\cal A}$.
\end{enumerate}
\end{The}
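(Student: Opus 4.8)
The plan is to prove the three assertions by systematically transporting, via the projection maps established in Proposition \ref{reducleaf} and Proposition \ref{reducN}, the canonical objects attached to $\cal F$ on the prolongation $\T{\cal A}$ down to the quotient $\left(\T{\cal A}_N\right)/\left({\bf K}{\cal A}_N\oplus J({\bf K}{\cal A}_N)\right)\equiv T(TN)$, and then recognizing the projected objects as the corresponding canonical objects of the induced partial Finsler metric $F_N$. The central observation throughout is that all the relevant constructions---the fundamental tensor, the symplectic form $\O_{\cal F}=-d^{\cal P}{\cal J}^*d^{\cal P}{\cal L}$, the Finsler spray $S_{\cal F}$, and the nonlinear connection ${\cal N}_{\cal F}$---are built only from $g_{\cal F}$, the bracket $[\;,\;]_{\cal P}$, and the canonical almost tangent structure $J$, and that the projection $\bar{\pi}_N$ of Proposition \ref{reducN} is a Lie algebra homomorphism intertwining $J$ with $J_N$ (i.e. $\bar\pi_N\circ J_{|\T{\cal A}_N}=J_N\circ\bar\pi_N$), which is exactly the structure used already in the proof of Theorem \ref{extremalN}.

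For Point (i), I would first invoke Example \ref{exsmH} {\bf 2.} and Theorem \ref{Lagmet} (together with Theorem \ref{SLagMet} and Example \ref{sprayHamilton}) to assert that ${\cal N}_{\cal F}={\cal N}_{S_{\cal L}}$ is the unique $g_{\cal F}$-metric Lagrangian connection relative to $\O_{\cal F}$, and that since ${\cal L}=\tfrac12{\cal F}^2$ is $2$-homogeneous, $S_{\cal F}$ is a spray whose canonical spray (via the homogeneity results at the end of subsection \ref{GC}) is again $S_{\cal F}$. The reduction statement is then precisely Point (2) of Theorem \ref{extremalN}: since $F_N=\sqrt{2L_N}$ and $L_N$ is the Lagrangian induced by ${\cal L}$ on the leaf (Lemma \ref{reducN}), the nonlinear connection ${\cal N}_{S_{\cal L}}$ projects to ${\cal N}_{S_{L_N}}={\cal N}_{F_N}$ under $\bar\pi_N$. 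Independence from $[\;,\;]_{\cal A}$ follows from the independence clause already proved in Proposition \ref{reducleaf}(ii) and the fact, noted in Remark \ref{dPsbasic}, that the projected data depend only on the leaf structure.

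Point (ii) is essentially a restatement of Theorem \ref{extremalN}(3) specialized to the convex hyperregular Lagrangian ${\cal L}=\tfrac12{\cal F}^2$: the geodesics of ${\cal N}_{\cal L}$ are the integral curves of $S_{\cal F}$, each such projected curve lies in a single leaf $N$ (Lemma \ref{dualham}(i)), is a local minimizer of ${\cal L}$ by Theorem \ref{locmin}(iii) (using convexity), and coincides with an integral curve of $S_{L_N}$ hence a local minimizer of $L_N$; smoothness comes from Theorem \ref{locmin}(i). So I would simply assemble these already-established equivalences.

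Point (iii) is where the genuine work lies, and I expect the main obstacle to be verifying that the Chern connection $\nabla^{\cal F}$ projects correctly, because $\nabla^{\cal F}$ is defined through the Bott connection $\cal D$ on ${\bf H}{\cal M}$ and the isomorphism $\varpi=\vartheta\circ J_{|{\bf H}{\cal M}}$ (Proposition \ref{Bott} and Theorem \ref{chern}), all of which must be shown to descend compatibly to $\widetilde{TN}$. My strategy is to use the Koszul-type characterization \eqref{kozulnabla}, whose right-hand side involves only $g_{\cal F}$, the anchor $\hat\rho$, the bracket $[\;,\;]_{\cal P}$, the projections $\pi_{\tilde{\cal A}}$, and the Cartan tensor $\mathbb{A}_{\cal F}$ through $\theta=\vartheta\circ{\bf v}_{\cal F}$. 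Since $\bar\pi_N$ intertwines $J$, ${\bf h}_{\cal F}$, ${\bf v}_{\cal F}$ (the last because ${\cal N}_{\cal F}$ projects to ${\cal N}_{F_N}$ by Point (i)) and the two brackets, and since by Example \ref{exfinsler} {\bf 4.} the restriction of $g_{\cal F}$ and of $\mathbb{A}_{\cal F}$ to the leaf give $g_{F_N}$ and $\mathbb{A}_{F_N}$, each term of \eqref{kozulnabla} maps to the corresponding term of the analogous formula for $\nabla^{F_N}$; uniqueness of the connection satisfying \eqref{kozulnabla} then forces $\nabla^{\cal F}$ to project onto $\nabla^{F_N}$. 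Alternatively, and more cleanly, I would carry this out at the level of Christoffel symbols using Lemma \ref{christof}: on the adapted chart of Remark \ref{spbasis} the indices split into $\{1,\dots,q\}$ and $\{q+1,\dots,k\}$, the coefficients $S^\g_{\a\b}$, $g^{\g\l}$, ${\cal N}^\mu_\a$ and $\mathbb{A}_{\mu\a\b}$ all restrict to their leaf counterparts for indices in $\{1,\dots,q\}$ on $\bar{\cal M}_N$, and formula \eqref{Christoffel} for $\G^\g_{\a\b}$ reduces verbatim to the formula for the Chern symbols of $F_N$. Independence of $[\;,\;]_{\cal A}$ is then immediate since the projected symbols involve only leaf-intrinsic quantities, completing the proof.
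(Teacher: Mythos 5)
Your proposal is correct and follows essentially the same route as the paper: Points (i) and (ii) are, as you say, direct assemblies of Theorem \ref{mecasyst}/\ref{SLagMet} and of Theorem \ref{extremalN} (the connection reduction being exactly Point (2) there), and for Point (iii) the paper likewise descends the ingredients of the Chern connection through $\bar{\pi}_N$ — it phrases this as showing that the Levi-Civita connection ${\cal D}^H$ and the Bott connection ${\cal D}$ project to their leaf counterparts and then uses $\nabla_{\cal X}\tilde{Y}=\varpi({\cal D}_{\cal X}(\varpi^{-1}(\tilde{Y})))$, which is equivalent in substance to your Koszul-formula argument via \eqref{kozulnabla} and to your Christoffel-symbol check (which the paper itself records in Remark \ref{locexpG}). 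The only cosmetic caveat is that the identification of the restricted $g_{\cal F}$ and $\mathbb{A}_{\cal F}$ with $g_{F_N}$ and $\mathbb{A}_{F_N}$ rests on $L_{|\mathcal{\check{M}}_N}=L_N\circ\bar{\pi}_N$ from Lemma \ref{reducN} rather than on Example \ref{exfinsler} alone.
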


\begin{Rem}\label{locexpG}${}$\\
We consider the local basis $ \{\tilde{e}_\a\}$ of $\tilde{\cal A}$ and $\{{\cal X}_\a^H,{\cal V}_\b\}$  defined in Subsection \ref{partfin} after Remark \ref{relsectionchern}. Recall that we have:
 $\nabla_{{\cal X}_\a^H}\tilde{e}_\b=\G_{\a\b}^\g \tilde{e}_\g$ and $\nabla_{{\cal V}_\a}\tilde{e}_\b=0$. Moreover  if we choose the local basis of $\cal A$ and the local coordinate system as in the proof of Proposition \ref{reducleaf},  from Point (iii) of Theorem \ref{extremalN} we will have

  $\bar{\pi}_N({\cal X}_\a^H)=\dis\frac{\d}{\d x^\a}=\dis\frac{\p}{\p x^\a}-\dis\sum_{\g=1}^q {\cal N}_\a^\g\dis\frac{\p}{\p y^\g}$ for $\a=1,\dots,q$  and $\bar{\pi}_N({\cal X}_\a^H)=0$ for $\a=q+1,\dots,k$

 $\bar{\pi}_N({\cal V}_\a)=\dis\frac{\p}{\p y^\a}$  for $\a=1,\dots,q$ and $\bar{\pi}_N({\cal V}_\a)=0$ for $\a=q+1,\dots,k$

With these notations,  it follows that the Christoffel symbols of ${\nabla}^{F_N}$ are $\G_{\a\b}^\g$ for $1\leq\a,\b,\g\leq q$.
\end{Rem}

\begin{proof}${}$\\
First of all note that  the Lagrangian $\cal L$ associated with $\cal F$ is homogeneous of degree $2$ and is convex. The first consequence is that  $({\cal M}, {\cal L},0)$ is a $\cal A$-mechanical system (see  Subsection \ref{NHmeca}) and   so   the first part of Point (i) comes from Theorem \ref{mecasyst}.  Again the same argument implies that  $L_N$ is homogeneous of degree $2$ and is convex and then the last part is a consequence of  Point (iii) of Theorem \ref{extremalN}.\\

From the same arguments Point (ii)  is a consequence of Point (i) of Theorem \ref{extremalN} according to the definition of geodesics of a connection and the fact that $S_{\cal F}$ and also $S_{F_N}$ are sprays.\\

For the Point (iii)  we use again the identifications  described at the  end of the proof of Lemma  \ref{reducN} and the construction of the Chern connection (see Subsection \ref{partfin}). Recall that in the one hand we have a projection $$\bar{\pi}_N: \T{\cal M}_N\ap   \left(\T{\cal M}_N\right)/ \left({\bf K} {\cal A}_N\oplus(J({\bf K} {\cal A}_N)\right)\equiv T\bar{\cal M}_N$$
which is also a Lie algebra homomorphism relatively to the brackets $[\;,\;]_{\cal P}$ on $\T{\cal M}_N$ and $[\;,\;]$ on $T\bar{\cal M}_N$. On the other hand we have ${\cal N}_{F_N}=\bar{\pi}_N\circ {{\cal N}_{\cal F}}_{| \T{\cal M}_N}$. Therefore  if we set ${\bf H}{\cal M}_N={\bf H}{\cal M}_{| {\cal M}_N}$ we obtain $\bar{\pi}_N({\bf H}{\cal M}_N)={\bf H}\bar{\cal M}_N$ that is the horizontal bundle in $T\bar{\cal M}_N$ associated with ${\cal N}_{F_N}$.

Now  if $\widetilde{TN}$ is the pullback of $TN$ over the  open set $\bar{\cal M}_N$, we have a natural projection of $\tilde{A}_N\ap \widetilde{TN}$ which is induced by the projection of ${\cal A}_N\ap TN$. It follows that we obtain a projection of $\T{\cal M}_N\times \tilde{\cal A}_{N}\ap T{\cal M}_N\times \widetilde{TN}$. Again since ${\cal N}_{F_N}=\bar{\pi}_N\circ{ {\cal N}_{\cal F}}_{| \T{\cal M}_N}$,  we have ${\bf v}_{F_N}=\bar{\pi}_N\circ{ {\bf v}_{\cal F}}_{| \T{\cal M}_N}$. Moreover   since  $\bar{\pi}_N\circ J_{| \T{\cal A}_N}=J_N\circ\bar{\pi}_N$,
by restriction to $\T{\cal M}_N$ the projection  $\theta=\vartheta\circ \bf{v}_{\cal F}:\T{\cal M}\ap \tilde{\cal A}$  induces a projection  $\theta_N=\vartheta\circ\bar{\pi}_N\circ \bf{v}_{\cal F}:T{\cal M}_N\ap \widetilde{TN}$ so that the following diagrams are commutative:
\begin{eqnarray}\label{barpiN}
\begin{array}{ccccc}
 &\theta&\\
T{\cal M}_N& \longrightarrow  &\tilde{\cal A}_{| {\cal M}_N} \\
\bar{\pi}_N \;  \Big\downarrow &  & \Big\downarrow \tilde{p}_N \\
T\bar{\cal M}_N &  \longrightarrow &\widetilde{TN} \\
& \theta_N &
\end{array}\;\;\;\;
\begin{array}{ccccc}
 &\varpi&\\
{{\bf H}{\cal M}_N}_{| {\cal M}_N}& \longrightarrow  &\tilde{\cal A}_{| {\cal M}_N} \\
\bar{\pi}_N \;  \Big\downarrow &  & \Big\downarrow \tilde{p}_N \\
{\bf H}\bar{\cal M}_N &  \longrightarrow &\widetilde{TN} \\
& \varpi_N &
\end{array}
\end{eqnarray}
where $\varpi_N$ is the  restriction  of the natural projection of $\T\bar{\cal M}_N=T\bar{\cal M}_N\ap \widetilde{TN}$ to ${\bf H}\bar{\cal M}_N$.

On the one hand, since $L_{| \mathcal{\check{ M}}_N}=L_N\circ\bar{\pi}_N$ is the Levi-Civita connection ${\cal D}^H$ on $({\bf H}{\cal M},{\cal M},\hat{\rho},[\;,\;]_{\cal P})$ (see Lemma \ref{LC}) restricted to the $\bar{\pi}_M$-projectable section induces clearly the Levi-Civita connection of the Riemannian metric $g_N^H$ associated with $L_N$ on  $({\bf H}\bar{\cal M}_N,\bar{\cal M}_N,Id,[\;,\;])$.\\
On the other hand,  according the Diagrams (\ref{barpiN}) and  the all other  the properties of $\bar{\pi}_N$, it is easy to see   that  the The Bott connection restricted to $\bar{\pi}_N$-projectable sections, induces the Bott connection of ${\bf H}\bar{\cal M}_N$.

Finally since the Chern connection is characterized by $\nabla_{\cal X}\tilde{Y}=\varpi({\cal D}_{\cal X}(\varpi^{-1}(\tilde{ Y}))$,  the previous property of the Bott connection and the second diagram of (\ref{barpiN}) gives rise to the announced result in Point (iii).
\end{proof}

\subsection{Curvature and flag curvature.}\label{curvatureChern}${}$\\
The partial Finsler pre-Lie algebroid $({\cal A},M,\rho,[\;,\;]_{\cal A},{\cal F}$ being fixed we simply denote by $\nabla$ its Chern connection. As classically, the {\it (total) curvature} of $\nabla$ is defined by
$$\Phi({\cal X},{\cal Y})(\tilde{Z})=\nabla_{\cal X}\nabla_{\cal Y}\tilde{Z}-\nabla_{\cal Y}\nabla_{\cal X}\tilde{Z}-\nabla_{[{\cal X},{\cal Y}]_{\cal P}}\tilde{Z}$$
for all $\cal X$, ${\cal Y}\in\Xi(\T{\cal M})$ and $\tilde{Z}\in \Xi(\tilde{\cal A})$.
We can decompose $\nabla$ in $\nabla^H+\nabla^V$ where $\nabla^H_{\cal X}=\nabla_{{\cal X}^H}$ and $\nabla^V_{\cal X}=\nabla_{{\cal X}^V}$ (see Remark \ref{Chernnabla}). Therefore we get the following decomposition:
$$\Phi({\cal X},{\cal Y})=\Phi^{HH}({\cal X},{\cal Y})+\Phi^{HV}({\cal X},{\cal Y})+\Phi^{VH}({\cal X},{\cal Y})+\Phi^{VV}({\cal X},{\cal Y})$$
with:\\
 $\Phi^{HH}({\cal X},{\cal Y})=\Phi({\cal X}^H,{\cal Y}^H)=\nabla^H_{\cal X}\nabla^H_{\cal Y}-\nabla^H_{\cal Y}\nabla^H_{\cal X}-\nabla_{[{\cal X}^H,{\cal Y}^H]_{\cal P}}$;\\
$\Phi^{HV}({\cal X},{\cal Y})=\Phi({\cal X}^H,{\cal Y}^V)=\nabla^H_{\cal X}\nabla^V_{\cal Y}-\nabla^V_{\cal Y}\nabla^H_{\cal X}-\nabla_{[{\cal X}^H,{\cal Y}^V]_{\cal P}}$;\\
$\Phi^{VH}({\cal X},{\cal Y})=\Phi({\cal X}^V,{\cal Y}^H)=\nabla^V_{\cal X}\nabla^H_{\cal Y}-\nabla^H_{\cal Y}\nabla^V_{\cal X}-\nabla_{[{\cal X}^V,{\cal Y}^H]_{\cal P}}$;\\
$\Phi^{VV}({\cal X},{\cal Y})=\Phi({\cal X}^V,{\cal Y}^V)=\nabla^V_{\cal X}\nabla^V_{\cal Y}-\nabla^V_{\cal Y}\nabla^V_{\cal X}-\nabla_{[{\cal X}^V,{\cal Y}^V]_{\cal P}}$.

From these definitions, it follows that each of the four  components of $\Phi$ is a map from $\Xi(\T{\cal M})\times\Xi(\T{\cal M})$ in the set of endomorphisms of the bundle $\tilde{A}\ap {\cal M}$. Moreover as in the classical case of a Finsler manifold  we have:

\begin{Lem}\label{HV}${}$\\
.\\
$\Phi^{HV}({\cal X},{\cal Y})=-\Phi^{VH}({\cal Y},{\cal X});$\\
 $\Phi^{HH}$ is antisymmetric;\\
  $\Phi^{VV}\equiv 0$.
\end{Lem}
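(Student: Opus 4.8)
The plan is to prove the three identities of Lemma \ref{HV} by direct computation, exploiting the algebraic properties of the Chern connection established in Theorem \ref{chern} together with the structure of the prolonged bracket $[\;,\;]_{\cal P}$. The key observations driving everything are that the vertical bundle $\V{\cal M}$ is involutive (indeed $[\V{\cal M},\V{\cal M}]_{\cal P}\subset\V{\cal M}$, since $[{\cal V}_A,{\cal V}_B]_{\cal P}=0$ locally by (\ref{locbracalg})), that $\nabla^V_{\cal X}\tilde Y = \vartheta^{-1}\bigl({\bf h}_{\cal F}[{\cal X}^V,\varpi^{-1}\tilde Y]_{\cal P}\bigr)$-type formulas reduce vertical differentiation to brackets and projections, and that $\nabla_{{\cal V}_\a}\tilde e_\b=0$ in an adapted local basis (established at the end of the proof of Theorem \ref{chern}). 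I would work mostly in a local canonical basis $\{{\cal X}_\a,{\cal V}_\b\}$, since all three claims are tensorial and hence local.

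First I would prove $\Phi^{VV}\equiv 0$. Writing out $\Phi^{VV}({\cal X},{\cal Y})=\nabla^V_{\cal X}\nabla^V_{\cal Y}-\nabla^V_{\cal Y}\nabla^V_{\cal X}-\nabla_{[{\cal X}^V,{\cal Y}^V]_{\cal P}}$, the last term is purely vertical differentiation because $[{\cal X}^V,{\cal Y}^V]_{\cal P}$ is vertical (involutivity of $\V{\cal M}$ under $[\;,\;]_{\cal P}$). In the adapted basis one has $\nabla_{{\cal V}_\a}\tilde e_\b=0$, and since the Christoffel symbols $\G_{\a\b}^\g$ depend on the fiber coordinates only through the vertical derivatives encoded in the connection, I would show that all three terms, applied to a basis section $\tilde e_\g$, reduce to expressions in the vanishing vertical Christoffel symbols; the curvature then collapses to zero. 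Concretely, $\nabla^V_{\cal X}\nabla^V_{\cal Y}\tilde e_\g$ involves $\nabla_{{\cal V}_\a}(\cdots\tilde e_\d)=\hat\rho({\cal V}_\a)(\cdots)\tilde e_\d$ with no Christoffel contribution, so the antisymmetrized second derivatives cancel against the bracket term by the equality of mixed partials $\tfrac{\p^2}{\p y^\a\p y^\b}=\tfrac{\p^2}{\p y^\b\p y^\a}$.

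Next I would treat the relation $\Phi^{HV}({\cal X},{\cal Y})=-\Phi^{VH}({\cal Y},{\cal X})$. This is essentially formal: interchanging the roles of the two arguments swaps horizontal and vertical parts, and I would verify it by expanding both sides and using the antisymmetry of the bracket $[\;,\;]_{\cal P}$, namely $[{\cal X}^H,{\cal Y}^V]_{\cal P}=-[{\cal Y}^V,{\cal X}^H]_{\cal P}$. Each of the three defining terms of $\Phi^{HV}({\cal X},{\cal Y})$ matches, up to sign, the corresponding term of $\Phi^{VH}({\cal Y},{\cal X})$, so the identity follows directly from the definitions without further structural input. The antisymmetry of $\Phi^{HH}$ is the easiest: $\Phi^{HH}({\cal X},{\cal Y})$ is the curvature of $\nabla$ evaluated on the horizontal arguments $({\cal X}^H,{\cal Y}^H)$, and the curvature tensor $\Phi$ is antisymmetric in its first two slots by its very definition (the bracket term uses the antisymmetric $[\;,\;]_{\cal P}$), so swapping ${\cal X}^H$ and ${\cal Y}^H$ changes the sign.

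The main obstacle I anticipate is the bookkeeping in the $\Phi^{VV}\equiv 0$ step: one must be careful that the decomposition $\nabla=\nabla^H+\nabla^V$ interacts correctly with the projection $\varpi$ and the vertical lift $\vartheta$, and that the vanishing of $\nabla_{{\cal V}_\a}\tilde e_\b$ genuinely propagates through the iterated derivatives rather than being an artifact of a single basis. To make this rigorous I would argue that $\nabla^V$ acts as an ordinary directional derivative on the components relative to the projectable basis $\{\tilde e_\g\}$, so that $\Phi^{VV}$ measures the failure of vertical second derivatives to commute — which, by involutivity of $\V{\cal M}$ and equality of mixed fiber partials, vanishes identically. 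I would also record that these three identities are exactly the pre-Lie algebroid analogues of the classical Finsler curvature symmetries, so the proofs are structurally the same as in the tangent bundle case once (\ref{locbracalg}) and $\nabla_{{\cal V}_\a}\tilde e_\b=0$ are in hand.
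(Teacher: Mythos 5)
Your proposal is correct and follows essentially the same route as the paper: the first two identities are read off formally from the definitions and the antisymmetry of $[\;,\;]_{\cal P}$, and $\Phi^{VV}\equiv 0$ is obtained by observing that $\nabla_{{\cal X}^V}\tilde e_\a=0$ so that $\nabla^V$ acts as a directional derivative on components, after which the curvature collapses via the compatibility of $\hat\rho$ with the brackets (equivalently, on the basis sections ${\cal V}_\a$, the vanishing bracket and equality of mixed fiber partials). No gaps.
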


\begin{proof}${}$\\
The two  first relations are clearly a consequence of the previous definition of $\Phi^{VV}$ $\Phi^{HV}$ and $\Phi^{VH}$.\\
For the last part it is sufficient to prove this property locally. Consider  the notations introduced in Subsection \ref{partfin} just after Definition \ref{Chernnabla}. First of all if $\tilde{Z}=\tilde{Z}^\a\tilde{e}_\a$, for any ${\cal X}\in \Xi(\T{\cal M})$,  we have
$$\nabla_{\cal X}^V\tilde{Z}=\hat{\rho}({\cal X}^V)(\tilde{Z}^\a)\tilde{e}_\a+\tilde{Z}_\a\nabla_{\cal X}^V\tilde{e}_\a$$
From the construction of $\nabla$ we obtain:

$\nabla_{\cal X}^V\tilde{e}_\a=\varpi\circ \mathbf{h}_{\cal F}([{\cal X}^V,{\cal X}_\a^H]_{\cal P}=\varpi\circ {\bf h}_{\cal F}[{\cal X}^V,{\cal X}_\a]_{\cal P}=0,$

\noindent since the bracket of vertical sections is vertical and $[{\cal X}_\b,{\cal X}_\a]_{\cal P}=0$ this also implies that we get an analogous result for $\nabla_{[{\cal X}^V,{\cal Y}^V]_{\cal P}}$.

\noindent Therefore we have $\nabla_{{\cal X}^V}\tilde{Z}=\hat{\rho}({\cal X}^V)(\tilde{Z}^\a)\tilde{e}_\a$ for any vertical section. It follows that we have
$$\Phi^{VV}({\cal X}^V,{\cal Y}^V)(\tilde{Z})=\left (\hat{\rho}({\cal X}^V)\circ\hat{\rho}({\cal Y}^V)-\hat{\rho}({\cal Y}^V)\circ\hat{\rho}({\cal X}^V)-\hat{\rho}([{\cal X}^V,{\cal Y}^V]_{\cal P})\right)(\tilde{Z}^\a)\tilde{e}_\a.$$
Since $\hat{\rho}$ is  a Lie algebra morphism it follows that this last expression is zero.\\
\end{proof}

As usually in Finsler geometry we set
\begin{eqnarray}\label{RPQ}
\begin{cases}
R=\Phi^{HH} \;\;\;\; \;\;\;\; \;\;\;\;\;\;(hh\rm{-curvature})\cr
P=\Phi^{HV}+\Phi^{VH}\;\;(hv\rm{-curvature}).\cr
\end{cases}
\end{eqnarray}

 $R$ and $P$  are also called the {\it Riemannian curvature} and the {\it Minkowski curvature} of the partial Finsler structure. Therefore $R$ and $P$ are the two components of the total curvature $\Phi$.\\

 \begin{Rem}\label{locexp}${}$\\
 We consider the local basis $ \{\tilde{e}_\a\}$ of $\tilde{\cal A}$ and $\{{\cal X}_\a^H,{\cal V}_\b\}$  defined in Subsection \ref{partfin} after Remark \ref{relsectionchern}. Recall that we have:
 $\nabla_{{\cal X}_\a^H}\tilde{e}_\b=\G_{\a\b}^\g \tilde{e}_\g$ and $\nabla_{{\cal V}_\a}\tilde{e}_\b=0$. Therefore by a classical calculus we obtain

 $R({\cal X}_\g^H,{\cal X}_\d^H)\tilde{e}_\a=R_{\a\g\d}^{\b}\tilde{e}_\b$ with $R_{\a\g\d}^{\b}=\hat{\rho}({\cal X}_\d^H)(\G^\b_{\a\g})-\hat{\rho}({\cal X}_\g^H)(\G^\b_{\a\d})+\G^\b_{\l\g}\G^\l_{\a\d}-\G^\b_{\l\d}\G^\l_{\a\g}$

 $P({\cal X}_\g^H,{\cal V}_\d)\tilde{e}_\a=P^\b_{\a\g\d}\tilde{e}_\b$ with $P^\b_{\a\g\d}\tilde{e}_\b=-\dis\frac{\p \G_{\a\g}^\b}{\p y^\d}$
 \end{Rem}

{\it We now look for the link between the curvature of $\nabla$ and the curvature of all sectional linear connections defined in Subsection \ref{subfinrap} and we will also define the flag curvature.}\\

 According to the notation of these Subsection, given a section $U$ of $\cal M$ over an open set $O$ in $M$, we consider the associated linear connection $\nabla^U$. Therefore  as classically, we can define the curvature of $\nabla^U$ that is:
\begin{eqnarray}\label{curvectureloc}
R^U(X,Y)Z=\nabla^U_X\nabla^U_YZ-\nabla^U_Y\nabla^U_XZ-\nabla^U_{[X,Y]}Z
\end{eqnarray}
for all sections  $X,Y$ and $ Z$ of ${\cal A}_{| O}$.
According to Theorem \ref{chern} and Remark \ref{relsectionchern} for any section $X, Y$ of  of ${\cal A}_{| O}$ we have:
\begin{eqnarray}\label{RRU}
\tilde{\pi}\left(R({\cal X},{\cal Y})(x,U(x)\right)=R^U(X,Y)(x)
\end{eqnarray}
for any $x\in O$ any ${\cal X}$ and $\cal Y$ in $\Xi(\T{\cal M})$ which projects on $X$ and $Y$ respectively.\\

Again as in the classical case of a Finsler manifold we have:
\begin{Def}\label{flag}${}$
\begin{enumerate}
\item An ${\cal M}$-flag at $x\in M$ is a pair $(u,\Pi)$ of a point $u$ in the fiber  ${\cal M}_x$  and a $2$-dimensional subspace $\Pi$ in the fiber ${\cal A}_x$  which contains $u$.
\item The flag curvature of $(u,\Pi)$ is the quantity:
$$K(u,\Pi)=\dis\frac{g_{\cal F}(R({\cal U},{\cal X})u,v)}{g_{\cal F}(u,u)g_{\cal F}(v,v)-\left(g_{\cal F}(u,v)\right)^2}$$
where $v$ belongs to $\Pi$ so that $\{u,v\}$ is a basis of $\Pi$ and $\cal U$ and $\cal V$ are elements of $\T{\cal M}$ such that $\pi_{\cal A}({\cal U})=u$ and $\pi_{\cal A}({\cal V})=v$.
\end{enumerate}
\end{Def}

According to the Relation (\ref{RRU}) and notations and results in Subsection \ref{subfinrap} we also have:
$$K(u,\Pi)=\dis\frac{g_u(R^u(u,v)u,v)}{g_{u}(u,u)g_{u}(v,v)-\left(g_{u}(u,v)\right)^2}.$$
Of course $K(u,\Pi)$ is independent of the choice of $v$ such that $\{u,v\}$ is a basis of $\Pi$. In particular, we can choose $v$ such that $g_u(v,v)=1$ and $v$ is orthogonal to $u$ relatively to $g_u$. Then we get $K(u,\Pi)=g_u(R^u(u,v)u,v)$.

We end this subsection with a result which  gives a link between the curvature  and the curvature flag of partial Finsler algebroid and the same objects for the induced partial Finsler structure induced on each leaf.

\begin{Pro}\label{curvlink}${}$\\
Consider a leaf $N$ of the foliation defined by $\rho({\cal A})$.
\begin{enumerate}
\item Via the   projections $\bar{\pi}_N:\T{\cal M}_N\ap T\bar{\cal M}_N$ and $\rho_N:{\cal A}_N\ap TN$,  the Riemannian curvature $R$ and the Minkowski curvature $P$ of $\cal F$ induces  the the Riemannian curvature $R_N$ and the Minkowski curvature $P_N$ of  the partial Finsler structure on $N$ defined par $F_N$ characterized by
$$R_N(X,Y)Z=\rho_N\circ R({\cal X},{\cal Y})\tilde{Z}\;\;\;\;\; P_N(X,Y)=\rho_N\circ P({\cal X},{\cal Y})\tilde{Z}$$
where ${\cal X}$ and ${\cal Y}$ are sections of $\T{\cal M}_N$ such that $\bar{\pi}_N({\cal X})=X$,  $\bar{\pi}_N({\cal Y})=Y$ and $\tilde{\i}_N(\tilde{ Z})=Z$
\item Consider  a flag $(\bar{u},\bar{\Pi})$  at $x\in N$ ({\it i.e.} $u\in \bar{\cal M}_N$ and $\bar{\Pi}$ a $2$-plane in $T_xN$ with $\bar{u}\in\bar{\Pi}$). Then the flag curvature of $(\bar{u},\bar{\Pi})$ is given by
$$K_N(\bar{u},\bar{\Pi})=K({u}, {P})$$
where $\rho_N({u})=\bar{u}$ and $\Pi$ is a $2$-plane in ${\cal A}_N$ such that $\rho_N(P)=\bar{P}$ with ${u}\in \Pi$.
\end{enumerate}
\end{Pro}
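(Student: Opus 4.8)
The plan is to deduce both points from the single structural fact established in Theorem \ref{allpropfinsler}, namely that the Chern connection $\nabla$ of $\cal F$ projects onto the Chern connection $\nabla^{F_N}$ of $F_N$ through the maps $\bar{\pi}_N:\T{\cal M}_N\ap T\bar{\cal M}_N$ and $\tilde{p}_N:\tilde{\cal A}_{| {\cal M}_N}\ap \widetilde{TN}$ (the latter being induced by $\rho_N$), together with the fact that $\bar{\pi}_N$ is a homomorphism of Lie algebras relative to $[\;,\;]_{\cal P}$ and the usual bracket on $T\bar{\cal M}_N$ (Proposition \ref{reducleaf}, Theorem \ref{extremalN}). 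First I would record that the total curvature $\Phi$ of $\nabla$ is $C^\infty({\cal M})$-multilinear in its three arguments, so that $\Phi({\cal X},{\cal Y})\tilde{Z}$ depends only on the pointwise values of ${\cal X},{\cal Y},\tilde{Z}$; the same holds for the curvature $\Phi_N$ of $\nabla^{F_N}$. This tensoriality is exactly what makes the proposed formulas $R_N(X,Y)Z=\rho_N\circ R({\cal X},{\cal Y})\tilde{Z}$ and $P_N(X,Y)Z=\rho_N\circ P({\cal X},{\cal Y})\tilde{Z}$ independent of the chosen lifts ${\cal X},{\cal Y}$ of $X,Y$ and $\tilde{Z}$ of $Z$, since two lifts differ by a section annihilated by the relevant projection.

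For Point (1), I would compute $\tilde{p}_N(\Phi({\cal X},{\cal Y})\tilde{Z})$ for $\bar{\pi}_N$-projectable sections ${\cal X},{\cal Y}$ and $\tilde{p}_N$-projectable $\tilde{Z}$ by expanding the definition
$$\Phi({\cal X},{\cal Y})\tilde{Z}=\nabla_{\cal X}\nabla_{\cal Y}\tilde{Z}-\nabla_{\cal Y}\nabla_{\cal X}\tilde{Z}-\nabla_{[{\cal X},{\cal Y}]_{\cal P}}\tilde{Z}$$
and pushing each term through the projections. The intertwining $\tilde{p}_N\circ\nabla_{\cal X}=\nabla^{F_N}_{\bar{\pi}_N{\cal X}}\circ\tilde{p}_N$ on projectable sections (Theorem \ref{allpropfinsler} (iii)) handles the first two terms, while $\bar{\pi}_N([{\cal X},{\cal Y}]_{\cal P})=[\bar{\pi}_N{\cal X},\bar{\pi}_N{\cal Y}]$ handles the third, giving $\tilde{p}_N(\Phi({\cal X},{\cal Y})\tilde{Z})=\Phi_N(\bar{\pi}_N{\cal X},\bar{\pi}_N{\cal Y})(\tilde{p}_N\tilde{Z})$. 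Because $\bar{\pi}_N$ preserves the horizontal/vertical splitting (one has ${\bf v}_{F_N}=\bar{\pi}_N\circ{\bf v}_{\cal F}$ and likewise for the horizontal projectors, as in the proof of Theorem \ref{allpropfinsler}), the four pieces $\Phi^{HH},\Phi^{HV},\Phi^{VH},\Phi^{VV}$ project onto the corresponding pieces of $\Phi_N$; hence $R=\Phi^{HH}$ projects onto $R_N$ and $P=\Phi^{HV}+\Phi^{VH}$ onto $P_N$, which is exactly the asserted formula once $\tilde{p}_N$ is identified with $\rho_N$ through the pull-back structure of $\tilde{\cal A}$. In particular, since $R_N$ and $P_N$ are the intrinsic curvatures of the classical Finsler structure $(TN,N,Id,[\;,\;],F_N)$, they do not involve the chosen bracket $[\;,\;]_{\cal A}$, establishing the stated independence.

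For Point (2), I would first check that the fundamental tensor $g_{\cal F}$ projects onto $g_{F_N}$ under $\rho_N$. This follows from Example \ref{exfinsler} and Lemma \ref{reducN}: the relation $L_N\circ\rho=L$ on $\mathfrak{H}\circ\bar{\rho}^*(\bar{\cal M}^*_N)$ (equivalently $F_N\circ\rho_N=\bar{\cal F}_N$) gives, after differentiating twice in the fibre directions, $g_{F_N}(\rho_N\bar a,\rho_N\bar b)=g_{\bar{\cal F}_N}(\bar a,\bar b)$ for classes transverse to ${\cal K}_N$; the kernel directions contribute nothing because $\rho$ annihilates ${\cal K}_N$. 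Substituting this together with the projection $\rho_N\circ R=R_N$ of Point (1) into the defining quotient of Definition \ref{flag},
$$K(u,\Pi)=\dis\frac{g_{\cal F}(R({\cal U},{\cal V})u,v)}{g_{\cal F}(u,u)g_{\cal F}(v,v)-\left(g_{\cal F}(u,v)\right)^2},$$
converts the numerator into $g_{F_N}(R_N(\bar u,\bar v)\bar u,\bar v)$ and the denominator into the corresponding Gram expression for $\bar u,\bar v$, whence $K(u,\Pi)=K_N(\bar u,\bar\Pi)$. The main obstacle I anticipate is bookkeeping: keeping straight the three distinct projections ($\bar{\pi}_N$ on $\T{\cal M}_N$, $\tilde{p}_N$ on $\tilde{\cal A}$, and $\rho_N$ on ${\cal A}_N/{\cal K}_N$) together with their compatibilities with the horizontal lift and the vertical endomorphism, and verifying that the fibrewise second derivative defining $g_{\cal F}$ genuinely descends — that is, that the directions in ${\cal K}_N$ play no role — so that the flag-curvature quotient is truly preserved.
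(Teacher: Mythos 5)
Your proof of Point (1) is correct but takes a genuinely different route from the paper's. The paper argues as follows: since $R$ and $P$ are tensorial it suffices to evaluate them on the adapted canonical basis $\{{\cal X}^H_\a,{\cal V}_\b\}$ of Remark \ref{locexpG}, and then the explicit local formulas of Remark \ref{locexp} for $R^\b_{\a\g\d}$ and $P^\b_{\a\g\d}$ in terms of the Christoffel symbols $\G^\g_{\a\b}$, together with the fact that the Christoffel symbols of $\nabla^{F_N}$ are precisely the $\G^\g_{\a\b}$ with $1\leq\a,\b,\g\leq q$, yield the two displayed identities by inspection. You instead push the intrinsic definition of $\Phi$ through the projections, using the intertwining $\tilde{p}_N\circ\nabla_{\cal X}=\nabla^{F_N}_{\bar{\pi}_N{\cal X}}\circ\tilde{p}_N$ from Theorem \ref{allpropfinsler} (iii) and the bracket-compatibility of $\bar{\pi}_N$. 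This is cleaner and makes transparent that the result is just functoriality of curvature under a connection-preserving projection; the paper's coordinate computation buys concreteness by reusing formulas already derived. Both arguments ultimately rest on the same two ingredients (Theorem \ref{allpropfinsler} (iii) and Proposition \ref{reducleaf}). One small logical reordering: the independence of the formulas from the choice of lifts is not a consequence of tensoriality alone, as your opening suggests, since one must also know that lifts in the kernel of $\bar{\pi}_N$ are annihilated after composing with $\rho_N$; this does follow, but only from your subsequent intertwining computation applied to a projectable lift with trivial projection, so it should be stated after that computation rather than before it.

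For Point (2) the paper offers no argument beyond ``easy consequence of Point (1)'', so your proof is already more detailed than the original. It does, however, carry the caveat you yourself flag: $g_{\cal F}$ is positive definite on all of ${\cal A}_N$ and hence does not vanish on ${\cal K}_N$-directions, so the identities $g_{\cal F}(u,v)=g_{F_N}(\bar{u},\bar{v})$ needed to match numerator and denominator of the flag-curvature quotient hold only when $u$ and the representative $v$ of the flag are taken in the distinguished complement of ${\cal K}_N$ where $L=L_N\circ\rho$ (Lemma \ref{reducN}), not for an arbitrary plane $\Pi$ with $\rho_N(\Pi)=\bar{\Pi}$. With that restriction made explicit your computation goes through; without it the statement itself, as written in the paper, is already imprecise.
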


\begin{proof}
Since $R$ and $P$ are tensorial, it is sufficient to prove Point (1)  in some well  adapted choice of a coordinate system and of  a basis $\{e_\a\}$ of $\cal A$. We choose the same context used in Remark \ref{locexpG}. In this case, for $\a=1,\dots,q$,  $\rho_N$ sends $e_\a$ on $\dis\frac{\p}{\p x^\a}$ for $\a=1,\dots,q$, and $\bar{\pi}_N$ sends ${\cal X}_\a$ on $\dis\frac{\p}{\x^\a}$ and ${\cal V}_\a$ on $\dis\frac{\p}{\p y^\a}$. Then according to Diagram \ref{barpiN}  and Remark \ref{locexpG} and Remark \ref{locexp} we obtain:

$\rho_N\circ R({\cal X}^H_\g,{\cal X}^H_\l)\tilde{e}_\a=R_N(\dis\frac{\d}{\d x^\g},\dis\frac{\d}{\d x^\l})\dis\frac{\p}{\p x^\a}$,

$\rho_N\circ P({\cal X}^H_\g,{\cal V}_\l)\tilde{e}_\a=P_N(\dis\frac{\d}{\d x^\g}^H,\dis\frac{\p}{\p y^\l}^H)\dis\frac{\p}{\p x^\a}$.

This ends the proof of Point (1).\\

Point (2) is an easy consequence of Point (1).
\end{proof}

\end{document}